\newtheorem{theorem}{Theorem}[section]
\newtheorem{lemma}[theorem]{Lemma}
\newtheorem{corollary}[theorem]{Corollary}
\newtheorem{definition}[theorem]{Definition}
\newtheorem{proposition}[theorem]{Proposition}
\newtheorem{remark}[theorem]{Remark}
\newtheorem*{remark 1}{Remark}
\numberwithin{equation}{section}
\newcommand{\norm}[1]{\left\|#1\right\|}
\newcommand{\abs}[1]{\left|#1\right|}
\DeclarePairedDelimiter{\ceil}{\lceil}{\rceil}
\newcommand{\T}{\ensuremath{\mathbb{T}}}
\newcommand*{\R}{\ensuremath{\mathbb{R}}}
\newcommand*{\N}{\ensuremath{\mathbb{N}}}
\newcommand*{\Z}{\ensuremath{\mathbb{Z}}}
\newcommand*{\C}{\ensuremath{\mathbb{C}}}
\newcommand*{\tr}{\ensuremath{\mathrm{tr\,}}}
\newcommand*{\Id}{\ensuremath{\mathrm{Id}}}
\newcommand*{\pal}{\ensuremath{
\partial^{\boldsymbol{\alpha}}}}
\newcommand*{\pab}{\ensuremath{
\partial^{\boldsymbol{\beta}}}}
\newcommand*{\RR}{\ensuremath{\mathcal{R}}}
\newcommand*{\pat}{\ensuremath{\partial_t}}
\newcommand*{\pas}{\ensuremath{\partial_s}}
\def\div{\mathop{\rm div}\nolimits}    %divergence
\def\supp{\mathop{\rm supp}\nolimits}    %support
\def\curl{\mathop{\rm curl}\nolimits}    %curl
\def\Xint#1{\mathchoice
{\XXint\displaystyle\textstyle{#1}}%
{\XXint\textstyle\scriptstyle{#1}}%
{\XXint\scriptstyle\scriptscriptstyle{#1}}%
{\XXint\scriptscriptstyle\scriptscriptstyle{#1}}%
\!\int}
\def\XXint#1#2#3{{\setbox0=\hbox{$#1{#2#3}{\int}$ }
\vcenter{\hbox{$#2#3$ }}\kern-.6\wd0}}
\def\dashint{\Xint-}
\DeclareSymbolFont{fouriersymbols}{FMS}{futm}{m}{n}
\DeclareSymbolFont{fourierlargesymbols}{FMX}{futm}{m}{n}
\DeclareMathDelimiter{\VE}{\mathord}{fouriersymbols}{152}{fourierlargesymbols}{147}
\title{Anomalous dissipation and Euler flows}
\author[J. Burczak]{Jan Burczak}
\address{Mathematisches Institut, Leipzig University, Augustusplatz 10, 04109 Leipzig, Germany and Max Planck Institute for Mathematics in the Sciences, Inselstraße 22-26, 04103 Leipzig, Germany}
\email{burczak@math.uni-leipzig.de}
\author[L. Sz{\'{e}}kelyhidi]{ L\'{a}szl\'{o} Sz{\'{e}}kelyhidi, Jr.}
\thanks{LSz gratefully acknowledges the support of the  Deutsche Forschungsgemeinschaft (DFG, German Research Foundation) through GZ SZ 325/2-1.}
\address{Max Planck Institute for Mathematics in the Sciences, Inselstraße 22-26, 04103 Leipzig, Germany}
\email{szekelyhidi@mis.mpg.de}
\author[B. Wu]{Bian Wu}
\address{Max Planck Institute for Mathematics in the Sciences, Inselstraße 22-26, 04103 Leipzig, Germany}
\email{bian.wu@mis.mpg.de}
\date{\today}
\thanks{The authors would like to thank Theodore Drivas and Massimo Sorella for  their helpful remarks on an earlier version of this paper.}
\begin{document}

\begin{abstract}
We show anomalous dissipation of scalars advected by weak solutions to the incompressible Euler equations with  $C^{(\sfrac{1}{3})^-}$ regularity, for an arbitrary initial datum in $\dot H^1 (\T^3)$. This is the first rigorous derivation of zeroth law of scalar turbulence, where the scalar is advected by solution to an equation of hydrodynamics (unforced and deterministic). As a byproduct of our method, we provide a typicality statement for the drift, and recover certain desired properties of turbulence, including a lower bound on scalar variance commensurate with the Richardson pair dispersion hypothesis.
\end{abstract}

\maketitle

%\tableofcontents

\section{Introduction}

We consider the Cauchy problem for the linear advection-diffusion equation
\begin{equation}\label{e:advectiondiffusion}
\begin{split}
\partial_t\rho_{\kappa}+u\cdot\nabla \rho_\kappa-\kappa\Delta\rho_{\kappa}&=0,\\
\rho_{\kappa}|_{t=0}&=\rho_{in},
\end{split}
\end{equation}
on the $3$-dimensional flat torus $\T^3$. The parameter  $\kappa>0$ represents molecular diffusivity, and the quantity $\rho_{\kappa}: \T^3\times [0,T] \to \R$ is a passive scalar which is advected by a given divergence-free vector field $u: \T^3\times [0,T] \to \R^3$ for some time $T>0$. The energy equality reads 
\begin{equation*}
\frac12\|\rho_{\kappa}(T)\|_{L^2}^2=\frac12\|\rho_{in}\|_{L^2}^2-\kappa\int_0^T\|\nabla\rho_\kappa(t)\|^2_{L^2}\,dt.	
\end{equation*}
Even though the drift $u$ does not feature in this identity, it is well known that the nature of $u$ crucially affects the decay rate of $\frac12\|\rho_{\kappa}(T)\|_{L^2}^2$. Indeed, one typically expects advection by the flow to generate small scales which are then much more efficiently damped by the diffusion. In the context of scalar turbulence, where $u$ is the velocity field of a turbulent flow, the expectation is that the effect of small-scale generation via advection balances diffusion, so that the rate of dissipation of scalar variance
\begin{equation}\label{e:ad}
    \kappa\int_0^T\|\nabla\rho_\kappa(t)\|_{L^2}^2\,dt
\end{equation}
is bounded from below by a positive constant, independently of $\kappa>0$. This is referred to as \emph{anomalous dissipation}. The purpose of this work is to establish anomalous dissipation for a large class of drifts, which are themselves weak solutions of the incompressible Euler equations
\begin{equation}\label{e:Euler}
\begin{cases}
\partial_t u+u\cdot \nabla u+\nabla p=0,\\
\div u=0.
\end{cases}
\end{equation}
Our main result, in simplified form, reads as follows.
\begin{theorem}\label{t:main}
Let $0<T_0<\infty$ and $0<\beta<1/3$. There exists a weak solution $u\in C^{\beta}(\T^3\times [0,T_0])$ to \eqref{e:Euler} such that, for every non-zero initial datum $\rho_{in}\in H^1(\T^3)$ with zero mean, the family of unique solutions $\{\rho_{\kappa}\}_{\kappa>0}$ to \eqref{e:advectiondiffusion} satisfies for any $T\le T_0$
\begin{equation}\label{e:anomalousdissipation}
\limsup_{\kappa\to 0} \; \kappa\int_0^T\|\nabla\rho_\kappa\|_{L^2}^2\,dt\geq c_0 \|\rho_{in}\|_{L^2}^2,
\end{equation}
where $c_0>0$ depends only on $\beta$, $\frac{\|\rho_{in}\|_{L^2}}{\|\nabla \rho_{in}\|_{L^2}}$ and $T$. 
\end{theorem}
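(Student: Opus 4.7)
The plan is to construct the drift $u$ via a convex-integration / Nash iteration whose high-frequency corrections simultaneously cancel the Reynolds stress \emph{and} act as efficient mixers on the passive scalar at a geometric cascade of spatial scales. By the energy identity the bound \eqref{e:anomalousdissipation} is equivalent to showing
\[
\|\rho_\kappa(T)\|_{L^2}^2\le(1-2c_0)\|\rho_{in}\|_{L^2}^2
\]
along some sequence $\kappa_j\to 0$, so the task reduces to engineering $u$ so that it drives $\rho_\kappa$ down to its own Obukhov--Corrsin diffusive scale, while preserving a definite fraction of its $L^2$-mass, within time $T_0$. The building blocks will be space-time localised intermittent Mikado / shear flows in the spirit of recent mixing constructions for passive scalars; the role of convex integration is to splice them into a genuine $C^\beta$ weak Euler solution by absorbing the residual Reynolds stress at each stage.

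Concretely, I would set $\lambda_q=\lambda_0^{b^q}$ with $b>1$, amplitudes $a_q\sim\lambda_q^{-\beta}$, and temporal supports $I_q\subset[0,T_0]$ of length $\tau_q\sim(a_q\lambda_q)^{-1}=\lambda_q^{\beta-1}$, which is summable since $\beta<1$. The injection scale $\lambda_0$ is fixed by the ratio $\|\rho_{in}\|_{L^2}/\|\nabla\rho_{in}\|_{L^2}$, which is how this quantity enters $c_0$. Given $\kappa$, one chooses $q=q(\kappa)$ by the Obukhov--Corrsin matching $\sqrt{\kappa\tau_q}\sim\lambda_q^{-1}$, equivalent to $\kappa\sim\lambda_q^{-(1+\beta)}$. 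A cascaded transport estimate from $q=0$ up to $q(\kappa)-1$, at which scales diffusion is negligible because $\kappa\lambda_q^2\tau_q\ll 1$, shows that the scalar preserves a definite fraction of $\|\rho_{in}\|_{L^2}^2$. On $I_{q(\kappa)}$ the mixing building block imprints small-scale oscillations on $\rho_\kappa$ yielding $\|\nabla\rho_\kappa\|_{L^2}^2\gtrsim\lambda_q^2\|\rho_\kappa(t_q)\|_{L^2}^2$, whence
\[
\kappa\int_{I_{q(\kappa)}}\|\nabla\rho_\kappa\|_{L^2}^2\,dt\;\gtrsim\;\kappa\lambda_q^2\tau_q\,\|\rho_\kappa(t_q)\|_{L^2}^2\;\sim\;\|\rho_{in}\|_{L^2}^2,
\]
which is exactly \eqref{e:anomalousdissipation}; the unit scaling $\kappa\lambda_q^2\tau_q\sim 1$ is forced by the matching above. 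Typicality of $u$ would then follow from a Baire-category argument on the closure of all convex-integration iterates realising such a cascade structure.

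The main obstacle is the coupling between the Nash iteration and the advection-diffusion equation: the mixing estimate for a single correction $w_{q+1}=u_{q+1}-u_q$ at scale $\lambda_{q+1}^{-1}$ must remain valid in the presence of the full high-frequency tail $u-u_{q+1}=\sum_{r>q+1}w_r$, whose amplitude is summably small but whose oscillations are arbitrarily faster than those driving the shear at level $q+1$. The correct statement is thus a quantitative stability lemma, uniform in $\kappa$ and in the cascade depth $q\to\infty$, asserting that on $I_{q+1}$ the solution $\rho_\kappa$ is close in $L^2$ to its transport by $u_{q+1}$ alone; proving it likely requires interleaving the convex-integration step with the mixing estimate rather than applying them as independent black boxes. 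A secondary technical point is that the building blocks must mix uniformly over the whole class of admissible $\rho_{in}\in\dot H^1$, which one normally arranges by randomising the Mikado directions and absorbing the resulting geometric constant into $c_0$.
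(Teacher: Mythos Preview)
Your scaling heuristics are broadly consistent with the paper's (in particular the Obukhov--Corrsin matching $\kappa\sim\lambda_q^{-(1+\beta)}$ appears there too, as the $b\to 1$ limit of their $\kappa_q=\lambda_q^{-\theta}$), but the \emph{mechanism} you propose is not the one that works, and the plan has a structural flaw. You frame the argument as a cascade-to-small-scales mixing estimate: the building block at scale $q(\kappa)$ is supposed to ``imprint small-scale oscillations'' so that $\|\nabla\rho_\kappa\|_{L^2}^2\gtrsim\lambda_q^2\|\rho_\kappa\|_{L^2}^2$ on a short time interval $I_{q(\kappa)}$. But the perturbations $w_{q+1}$ in the $C^\beta$ Euler scheme are Mikado flows---stationary pipe flows in the Lagrangian frame of $\bar u_q$---constrained by the quadratic identity $\sum a_{\vec k}^2\,\vec k\otimes\vec k=R$. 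They are not mixers in any obvious sense, and you give no indication how to force a genuine mixing estimate out of them while keeping the Reynolds-stress cancellation. The paper's route is completely different: it shows, via an explicit corrector adapted to the Mikado structure, that advection by $w_{q+1}$ with diffusivity $\kappa_{q+1}$ \emph{homogenizes} to advection by $\bar u_q$ with an enhanced effective diffusivity $\kappa_q=\delta_{q+1}/(\lambda_{q+1}^2\kappa_{q+1})$. The stability you correctly flag as the main obstacle is then resolved not by a Lagrangian closeness lemma but by a five-step comparison (antisymmetric reformulation, spatial homogenization with explicit corrector, diagonal reduction, temporal averaging of the gluing-induced oscillations in the effective coefficient, and a vector-potential estimate for the gluing error), yielding $|D_{q+1}-D_q|\le\tfrac12\lambda_q^{-\gamma}D_q$ iterable over all $q$.

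There is also a logical gap in your setup: you fix the injection scale $\lambda_0$ in terms of $\ell_{in}=\|\rho_{in}\|_{L^2}/\|\nabla\rho_{in}\|_{L^2}$, but the theorem demands a \emph{single} $u$ that works for \emph{every} $\rho_{in}\in\dot H^1$, so $u$ cannot see $\ell_{in}$. In the paper $u$ is built once and for all; the dependence of $c_0$ on $\ell_{in}$ enters only afterward, through a mollification of $\rho_{in}$ at the data-dependent scale $\lambda_{q_I}^{-1}$ that selects where in the fixed cascade the enhanced diffusion begins to dominate. Relatedly, your disjoint temporal supports $I_q$ (with different scales acting at different times) is exactly the alternating-shear paradigm the paper is designed to avoid: it tends to concentrate the anomaly at a single critical time and is hard to reconcile with ``for any $T\le T_0$''. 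In the paper all scales are active at all times (up to the gluing cutoffs at the eddy turnover rate), and the dissipation lower bound is obtained from the pure heat-equation estimate at level $q_I$ together with the telescoping product $\prod_q(1-\tfrac12\lambda_q^{-\gamma})>\tfrac12$.
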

In order to prove this result, we develop an approach that is able to simultaneously handle two important but so far unrelated recent developments in the field: the construction of weak solutions of the incompressible Euler equations via convex integration, leading to a resolution of the celebrated Onsager's conjecture \cite{DSz13, Isett2018, BDSV}; and the work of S.~Armstrong and V.~Vicol on implementing iterated homogenization to construct incompressible time-dependent vector fields in 2D, exhibiting anomalous dissipation of scalar variance \cite{ArmstrongVicol}. Apart from highlighting the strong similarities at the technical level, this combination leads naturally to a more general statement than Theorem \ref{t:main}, which highlights two key features of the construction. The general statement is deferred to Section \ref{s:main_proof} (see Theorem \ref{t:main_general}), but for the purposes of this introduction let us briefly comment on the main features. 

\subsection{h-principle}\label{ss:hprinciple} First of all, our approach yields not just a single vector field, but a large class of such fields. Indeed, it is well known that there is a close analogy between weak solutions constructed via convex integration and Gromov's h-principle, a principle referring to a high level of non-uniqueness and flexibility. We refer the interested reader to one of the many surveys on the subject \cite{DeSz2012-survey,Sz-2014survey,DeSz2017-survey,BuckVicol2019-survey}. In particular, within the context of weak solutions of the Euler equations, one obtains certain density statements. In the setting of this paper this means:
\begin{itemize}
\item[{\bf Strong density:}] For any smooth Euler solution $\bar u$ there is sequence $\{u^{(n)}\}_{n \in \N}$ of Euler solutions with properties as in Theorem \ref{t:main}, such that $\|u^{(n)} - \bar u \|_{C^0 (\T^3\times [0,T])} \to 0$ as $n \to 0$. 
\item[{\bf Weak density:}] For any smooth solenoidal $\bar u$ there is sequence $\{u^{(n)}\}_{n \in \N}$ of Euler solutions with properties as in Theorem \ref{t:main}, such that $\|u^{(n)} - \bar u \|_{ C([0,T]; H^{-1} (\T^3))} \to 0$ as $n \to 0$.
\end{itemize}
The constant $c_0$ in Theorem \ref{t:main} depends in both of cases above additionally on $n$.

We remark that obtaining analogous statements within other constructions of vector fields inducing anomalous dissipation (see Section \ref{s:literature}) seems challenging.

\subsection{Continuous-in-time dissipation and Richardson hypothesis}\label{ss:dissipation} Secondly, as in the work of Armstrong-Vicol \cite{ArmstrongVicol}, in our construction anomalous dissipation occurs continuously in time, in agreement with the postulate of statistical stationarity in homogeneous isotropic turbulence. In fact, the constant $c_0$ of \eqref{e:anomalousdissipation} is computed explicitly in Theorem \ref{t:main_general} in terms of the data. In particular, the lower bound \eqref{e:main_const} in Theorem \ref{t:main_general} implies the following: for any $0<\beta<1/3$ there is $\varepsilon_\beta$ such that for any $0<\varepsilon<\varepsilon_\beta$ the statement of Theorem \ref{t:main} holds with constant $c_0$ satisfying
\begin{comment}
\begin{equation}\label{e:c_0explicit}
 c_0  \ge \begin{cases} c_\varepsilon\ell_0^{-2p_\varepsilon}\ell_{in}^{2p_\varepsilon-2}T^{p_\varepsilon} & \text{ if  } T \ll 1\,, \\ 
  c_\varepsilon\ell_{in}^{2\varepsilon} & \text{ if  $T \gg 1$}\,,
\end{cases}
\end{equation}
\end{comment}
\begin{equation}\label{e:c_0explicit}
    c_0  \ge c_\varepsilon 
        \min \big\{ \ell_0^{-2p_\varepsilon}\ell_{in}^{2p_\varepsilon-2}T^{p_\varepsilon},
        \ell_{in}^{2\varepsilon} \big\},
\end{equation}
where $p_\varepsilon=\frac{2+\varepsilon}{1-\varepsilon\frac{2+\varepsilon}{1+\varepsilon} - (1+\varepsilon) \beta}$, $\ell_0 := \sup_{f \in \dot H^1 (\T^3)} \frac{\|f\|_{L^2}} {\|\nabla f\|_{L^2}}$, and 
$ \ell_{in}:=\frac{\|\rho_{in}\|_{L^2}} {\|\nabla \rho_{in}\|_{L^2}}$.
Observe that $p_\varepsilon\to \frac{2}{1-\beta}$ as $\varepsilon\to 0$, leading to the asymptotic rate $T^\frac{2}{1-\beta}$ in \eqref{e:c_0explicit}. This is in agreement with the Richardson hypothesis on pair dispersion in turbulence\footnote{We are grateful to Theodore Drivas for pointing out this connection to us.}. In modern language this hypothesis reads as follows. Denote by $(X^\kappa_t)_t$ the stochastic trajectories of the advection-diffusion \eqref{e:advectiondiffusion}. Then $\rho_{\kappa}(t) = \mathbb{E} (\rho_{in} \circ X^\kappa_t)$, and one can write $\|\rho_{in}\|_{L^2}^2=  \mathbb{E} \|\rho_{in} \circ X^\kappa_t\|_{L^2}^2$, thus
\begin{equation}
\begin{split}
\kappa\int_0^t\|\nabla\rho_\kappa\|_{L^2}^2\,ds &= \frac12 \left( \|\rho_{in}\|_{L^2}^2 -  \|\rho_{\kappa}(t) \|_{L^2}^2  \right)  = \frac12 \int_{\T^3} {\rm{Var}} (\rho_{in} \circ X^\kappa_t).
\end{split}
\end{equation}
In particular, when $\rho_{in}$ has uniform random space gradient with isotropic statistics $G \,\Id$ then, see  \cite{DriEyi17} formula (2.17), 
\[
 \int_{\T^3} {\rm{Var}} (\rho_{in} \circ X^\kappa_t)  = \frac14 G^2 \int_{\T^3} \mathbb{E}^{(1),(2)} |X^{\kappa, (1)}_t -X^{\kappa, (2)}_t|^2 =: \delta_t^2,
 \]
where superscripts ${(1),(2)}$ indicate two independent Brownian ensembles. Thus 
\[
\begin{split}
\dot\delta_t &\lesssim \left( \int_{\T^3} \mathbb{E}^{(1),(2)} |\dot X^{\kappa, (1)}_t -\dot X^{\kappa, (2)}_t|^2 \right)^{\frac12} = \left( \int_{\T^3} \mathbb{E}^{(1),(2)} |u \circ X^{\kappa, (1)}_t -u \circ X^{\kappa, (2)}_t|^2 \right)^{\frac12} \\
&\sim  \left( \int_{\T^3} \mathbb{E}^{(1),(2)} |X^{\kappa, (1)}_t -X^{\kappa, (2)}_t|^{2\beta} \right)^{\frac12} \lesssim \delta^\beta_t,
\end{split}
 \]
where the second line uses $C^{\beta}$ regularity of the drift. Assuming that in the above the inequalities $\lesssim$ can be replaced by $\sim$, we obtain the ODE $\dot\delta_t \sim \delta^\beta_t$, and deduce $\delta^2_t \sim T^\frac{2}{1-\beta}$. 
Observe that, in particular, the limit in the allowed regularity of the drift: $T^\frac{2}{1-\beta} \overset{\beta \to \sfrac13}{\to} T^3$ recovers the classical Richardson (or Richardson-Obukhov) scaling.

Let us now discuss in more detail the physical and mathematical context of our work.

\subsection{Turbulence}\label{ssec:tur}

In the 1949 paper \cite{Obu49} titled 'Structure of the Temperature Field in Turbulent Flow' A.~N.~Obukhov  writes
\begin{center}
    \textit{
In other words, the turbulent motion inside a thermally heterogeneous medium with gradients which are initially weak can contribute to the local gradients of temperature, which are subsequently smoothed out by the action of molecular heat conductivity.}
\end{center}

This statement is precisely a prediction that dissipation caused by little ('molecular') $\kappa$ present in \eqref{e:advectiondiffusion} can be amplified through advection $u \cdot \nabla$ by a turbulent velocity field $u$. 
The responsible mechanism  should be transfer of modes towards higher frequencies/smaller scales, where eventually diffusivity at the `molecular' length-scale steps in. This prediction of Obukhov has been further corroborated by phenomenological, experimental and numerical arguments, see the classical Yaglom \cite{Yag49}, Corrsin \cite{Cor51}, Batchelor \cite{Bat59}, 
and more recent \cite{SreSch10}, \cite{DonSreYou05}, \cite{ShrSig00}. 
Furthermore, the expected amplification of dissipation is supposed to be sufficient to keep the quantity \eqref{e:ad} bounded away from zero, thus causing ``anomalous dissipation''. 
Thus, it is strongly believed that scalar anomalous dissipation is a key feature and manifestation of turbulence.

On the other hand, it is argued that turbulent flows are irregular. More precisely, L.~Onsager conjectured  in his famous 1949 note on statistical hydrodynamics \cite{Ons49} that the threshold regularity for the validity of energy conservation in the class of H\"older continuous weak solutions of the Euler equations is the exponent $1/3$. Onsager’s interest in this issue came from an effort to explain the primary mechanism of energy dissipation in turbulence, one that persists even in the absence of viscosity. Thus, the implicit suggestion was that H\"older continuous weak solutions of the Euler equations may be an appropriate mathematical description of turbulent flows in the inviscid limit, corresponding to a form of \emph{ideal turbulence}. 

We recall that the central analytical model of passive scalar turbulence, the Kraichnan 1968 model \cite{Kra68}, is based on Gaussian random velocity fields consisting of spatially non-smooth velocities with a K41-type scaling of increments and white-noise correlation in time. This model is able to recover various scaling laws, including the Obukhov-Corrsin $k^{-5/3}$ spectrum in the inertial-convective regime (Schmidt number $=O(1)$) and the Batchelor $k^{-1}$ spectrum in the viscous-convective regime (Schmidt number $\gg 1$). In particular, Kraichnan's model is known to exhibit anomalous dissipation; for more details we refer to \cite{BGK1998,FGV2001,Sreeni2019, DriEyi17,Row23}. Lifting these results to a rigorous PDE setting has remained a central challenge in the field. 

In a recent breakthrough work \cite{BedBluPS22a,BedBluPS22b} J.~Bedrossian, A.~Blumenthal and S.~Punshon-Smith were able to show almost sure exponential mixing of Lagrangian trajectories and consequently verify Batchelor’s law on the cumulative power spectrum for velocity fields given by solutions of randomly forced 2D Navier-Stokes and hyperviscous 3D Navier-Stokes at finite Reynolds number. However, in these examples realizations of the velocity field are almost surely uniformly bounded in $C^1$ so that anomalous dissipation in the sense of \eqref{e:anomalousdissipation} is not possible (see below). In contrast, our focus in this work is on the complementary Obukhov-Corrsin range, where, in analogy with Onsager's conjecture and its relation to Kolmogorov's K41 theory, we study Obukhov's prediction  of anomalous dissipation in the context of ideal turbulence. 

\subsection{Anomalous dissipation in PDEs}\label{s:literature}
For any fixed initial datum, the unique solution $\rho_{\kappa}$ of \eqref{e:advectiondiffusion} converges weakly along subsequences $\kappa_j\to 0$ to a distributional solution of the transport equation $\partial_t\rho+u\cdot\nabla \rho = 0$. If the advecting velocity field $u$ is sufficiently regular (classically $u \in L^1 (W^{1, \infty})$, but $u \in L^1 (W^{1, 1})$ or even  $u \in L^1 (BV)$ suffices, see, respectively, \cite{DiPernaLions} and \cite{Amb04}), then the transport equation admits a uniquely defined measure-preserving Lagrangian flow map, which can be used to define a unique renormalized solution. In turn, the renormalized solution will be the unique inviscid limit \cite{BoCiCr2022} - however, we mention that other, non-renormalized solutions of the transport equation may nevertheless exist \cite{MoSz2018,MoSz2019}. Thus, for sufficiently regular velocity fields anomalous dissipation is excluded, since measure preservation implies conservation of the $L^2$ norm. 

On the other hand, for less regular velocity fields $u$ anomalous dissipation as well as non-uniqueness in the inviscid limit may occur, as has been shown in a number of examples, beginning with the work of T.~Drivas, T.~Elgindi, G.~Iyer and I.-J.~Jeong \cite{DriElgIyeJeo22}. In \cite{DriElgIyeJeo22} the authors constructed, for any initial datum $\rho_{in}\in H^2$ and any $\alpha<1$, a divergence-free velocity field $u\in L^1 ([0,T_0]; C^\alpha (\T^2))$ for which one has anomalous dissipation in the sense that \eqref{e:anomalousdissipation} holds for $T=T_0$. The construction of the advecting velocity $u$ is based on alternating shear flows with a carefully chosen sequence of length- and time-scales concentrating at the critical final time $t=T$. In particular, there is no anomalous dissipation for any time $T<T_0$. This construction was later substantially improved in \cite{ColCriSor23}, on the one hand by complementing with uniform estimates on the solution of the type $\rho\in L^{\infty}([0,T_0]; C^\beta (\T^2))$ with $\alpha+2\beta<1$ (in agreement with the power-law prediction of Obukhov-Corrsin) and on the other hand by showing non-uniqueness of the inviscid limit. Recently in \cite{ElgLis23}, Elgindi and Liss have constructed a divergence-free vector field which satisfies $u \in C^\infty ([0,T]; C^\alpha (\T^2))$ for every $\alpha<1$ and which gives rise to anomalous dissipation in equation \eqref{e:advectiondiffusion} for every smooth initial datum; in fact $u$ is merely logarithmically below Lipschitz regularity and the initial datum is only required to be in $\rho_{in}\in H^{(\sfrac75)^+}\cap W^{1,\infty}$. All these constructions have a common feature: the velocity fields are based on alternating shear flows, where one has very good control over mixing properties of the Lagrangian flow-map (inspired by \cite{DeP03, Pie94, Aiz78}), and the dissipation anomaly takes place at a single ``critical'' time $t=T_0$. 

Further, in the recent papers \cite{BruDeL23}, \cite{Bru_etals22}, partially drawing from \cite{JeoYon21, JeoYon22}, it was shown that the alternating shear-flow constructions above can also be seen as velocity fields which are solutions of the \emph{forced} Navier-Stokes equations exhibiting anomalous dissipation of kinetic energy, thereby giving rigorous verification of the `zeroth law of turbulence', at the cost of introducing a forcing term. In a different direction, in \cite{Hofmanova2023} the authors consider the Navier-Stokes equations with large stochastic forcing and friction, and showed for such velocity fields \emph{total} dissipation at the critical time, i.e. $\lim_{t\to T_0}\|\rho_\kappa(t)\|_{L^2}=0$ for any $\kappa>0$, by letting the strength of the forcing and of the friction blow up as $t\to T_0$. Finally, we mention the recent interesting work \cite{huysmans2023nonuniqueness}, where the authors construct a velocity field which, although does not exhibit anomalous dissipation in the sense of \eqref{e:anomalousdissipation}, leads to a (non-unique) solution in the inviscid limit $\rho_{\kappa}\rightharpoonup \rho$ as $\kappa\to 0$ with energy that jumps down and then up again.

\subsubsection{Homogenization, the Armstrong-Vicol approach, and our result} 

The classical concept of eddy diffusivity is a prediction that molecular dissipation is amplified by presence of small-scale fluid eddies (see U. Frisch \cite{Frisch95}, Section 9.6). This concept assumes the existence of two well-separated scales,  the macroscopic integral scale and the microscopic scale of eddies, and has been successfully formalised within the classical homogenization theory, see
\cite{PapPir81,McLaughlin1985,Fri89,Fannjiang1994,MajdaKramer1999}. However, fully developed turbulence has neither merely two scales, nor are the scales well-separated. Even though the homogenization methods has been developed to iterated quantitative homogenization, which includes multiple scales and non-infinite separation (see e.g.~\cite{NiuShenXu2020}), the lack of any apparent separation of scales in turbulence has been perceived as a major obstacle to applying homogenization to modern turbulence theory (see e.g.~the discussion in Section 3 of \cite{MajdaKramer1999}). Despite these reservations, S.~Armstrong and V.~Vicol  \cite{ArmstrongVicol} have recently successfully combined homogenization methods with an infinite iterative scheme to construct a divergence-free vector field with infinitely many active scales, which induces anomalous dissipation. This crucial development has substantially inspired our work. On a heuristic level, the lack of scale separation in fully developed turbulent flow is overcome by the choice of slightly super-exponential growth of active wavenumbers/scales, which does lead to an increasing separation of scales along the iteration - a technical trick which to our knowledge has been introduced into the field in the paper \cite{DSz13} and has been used since ubiquitously.

The methodology of \cite{ArmstrongVicol} has several advantages over the previously mentioned ways of constructing velocities that give rise to anomalous diffusion:
First of all, it is not based on the analysis of the inviscid limit and on mixing properties of the Lagrangian flow map, consequently it becomes compatible with the existence of an inertial range in turbulence. Secondly, in works based on alternating shear flows focussing towards a singular time, the anomalous dissipation only occurs at only one instant of time, a major drawback in the context of turbulent flows in light of the statistical time-invariance, whereas in \cite{ArmstrongVicol} the dissipation anomaly has fractal structure in time. Thirdly, as in classical homogenization, the method of \cite{ArmstrongVicol} yields anomalous dissipation for arbitrary initial data, in contrast with previous constructions based on convex integration \cite{MoSz2018,MoSz2019,BruCoDe2021,MoSa2020}, where the density is constructed in parallel to the velocity field scale-by-scale.  

It is worth mentioning that, although the framework of iterative homogenization comes with several advantages mentioned above, it has one major drawback in comparison with the tools developed based on mixing and alternating shear flows summarized in Section \ref{s:literature} above: the technique seems to be restricted to velocity fields with spatial regularity $C^{(\sfrac{1}{3})^-}$ - although this seems natural in light of Onsager's conjecture and weak solutions of \eqref{e:Euler} and the scaling analysis of effective diffusivities (both in \cite{ArmstrongVicol} and in our work) gives compelling evidence that the bound $\beta<1/3$ in Theorem \ref{t:main} is not merely technical and appears even if $u$ is not required to solve any PDE.

The main step forward that we make compared with \cite{ArmstrongVicol} is that our velocity field solves incompressible Euler equation. To our knowledge this is the first result on anomalous dissipation, where the advecting vector field is itself the solution of an unforced, deterministic PDE. Consequently we provide the first deterministic link on the PDE level between the Obukhov-Corrsin theory for scalar transport and the Onsager theory of ideal turbulence. On the methodological level, we interpret iterative quantitative homogenization in parallel with convex integration, with the basic principle in mind that whilst convex integration is a form of ``inverse renormalization" strategy, iterative quantitative homogenization can be seen as ``forward renormalization". On a more technical level we need to overcome a number of challenges: 
\begin{itemize}
\item First of all, a key issue in applying homogenization techniques is to obtain good control of the corrector. In our setting, which is not based on shear flows but on Mikado flows, obtaining the required control from elliptic estimates seems challenging (indeed, Appendix A in \cite{ArmstrongVicol} points to possible obstruction). Instead, we modify the construction of Mikado flows in such a way that allows us to \emph{explicitly} construct the corrector with a Mikado-like structure.
\item The construction of weak solutions to the Euler equations in \cite{BDSV} relies on the gluing technique introduced by P.~Isett in \cite{Isett2018}, which requires introducing an alternating arrangement of time intervals over which the small-scale fluctuations are turned on and off - and both states have to persist over time-scales which are of the order of the eddy turnover time. In order to have sufficient control of the dissipation rate,  this requires a careful fine-tuning of the parameters and sharper bounds in the convex integration scheme of \cite{BDSV}. 
\item Due to the gluing technique in the convex integration construction, the effective diffusivity coefficient obtained from the homogenization step is oscillating in time, and an additional temporal averaging step is needed to obtain a global effective diffusivity. Our approach to deal with this issue is again inspired by \cite{ArmstrongVicol}, but due to the requirements given by the gluing scheme we need sharper bounds. Indeed, in this paper we have identified and isolated this as an issue of independent interest, and therefore present the temporal averaging in Section \ref{s:time} as a standalone result that can be applied independently. 
\end{itemize}

\subsection{Related topics: enhanced dissipation, mixing, and beyond}
Within or above Lipschitz regularity for $u$, divergence-free drift can of course also assist dissipation, but in a weaker sense than anomalous dissipation. This is usually referred to as enhanced dissipation. The enhanced dissipation by a single shear flow, or another simple ``lower dimensional'' flow is well studied, with roots in the computations by Kelvin and Kolmogorov \cite{Kol34}, compare \cite{BedCZ17,CZElgWid20,ColCZWid21,AlbBeeNov22,CZGal} and references therein. In such cases one needs to restrict the initial datum to a class not orthogonal to the shearing direction, but the setting usually allows to extract precise decay rates. For a more general related approach see \cite{Vuk21}. The general (spectral) condition on arbitrary $u$ for dissipation enhancement in the Lipschitz case has been provided in the seminal \cite{ConKisRyzZla08} (for an example of a related optimal rate, see \cite{ElgLissMatt2023}), whose results has been recently strenghtened and reproved by a different method in \cite{Wei21}. 

Let us point out, without getting into details, natural connections between enhanced dissipation and mixing (which is the phenomenon of inviscid-case migration towards smaller scales caused by a vector field), see \cite{CZDelElg20} and the recent survey \cite{CZsurvey} with its references. Mixing phenomena are present also naturally in the kinetic setting (e.g. Landau damping) cf. \cite{MouVil08}, and in suppression of chemotactic blowup, including the recent beautiful \cite{HuKisYao23}. 

There are also interesting developments in the stochastic setting. In this context we mention the recent work \cite{Otto2023}, inspired by \cite{BriKup91}, \cite{SznZei06}, where optimal quantitative enhanced dissipation estimates are obtained for a random drift given by the Gaussian free field - the strategy there, based on a renormalization strategy of computing effective diffusivities scale-by-scale, is very much reminiscent of our point of view.

\subsection{Notation}
We use mostly standard notation, the less standard is explained where it appears. To follow several vector calculus identitites, one should keep in mind that the vectors are column vectors. We will often write $a \lesssim b$, which means there is an (implicit) constant $C$, such that $a\leq C b$. Importantly, such implicit constants are uniform over iterative steps we will perform.

\subsection{Paper outline}
Section \ref{s:blocks} presents fundamental building blocks for our construction. The technical results needed in Section \ref{s:blocks}: the homogenization Proposition \ref{p:hom_trho} (and its corollaries), the time averaging Proposition \ref{p:t_avg} and the convex integration Proposition \ref{p:Onsager} are proved, respectively, in Section \ref{s:homogenization}, Section \ref{s:time}, Section \ref{s:Onsager}. Section \ref{s:main_proof} contains the proofs of our main results. Section \ref{s:energy}  gathers mostly standard energy estimates needed in the paper. We have tried to present the results of Sections \ref{s:energy}  - \ref{s:Onsager} in a self-contained manner, as we believe they may be of independent interest.

\section{Building blocks for proof of Theorem \ref{t:main}}\label{s:blocks}
In this section we provide the key ingredients for our proofs, and postpone technical details to subsequent sections. As mentioned in the introduction, our strategy involves a "backward renormalization" in constructing the velocity field in Section \ref{s:vectorfield} (in the sense of an iteration from large-scale to small-scale, $u_q\mapsto u_{q+1}\mapsto\dots$, based on  convex-integration ideas), and a "forward renormalization" in constructing the passive tracer in Section \ref{s:enhanceddis} (in the sense of an iteration from small-scale to large-scale, $\rho_{q+1}\mapsto\rho_q\mapsto\dots$, based on homogenization-related arguments). Both of these sections have an inductive flavour. In order to initialise the induction, we will need also an h-principle argument, presented in Section \ref{s:hprinciple}.

\subsection{Construction of the vector field - Convex integration}
\label{s:vectorfield}

The general scheme for producing H\"older continuous weak solutions of the Euler equations \eqref{e:Euler} is by now well understood. Our approach below closely follows the presentation in \cite{BDSV}. One proceeds via an inductive process on a sequence of approximate solutions $u_q$ with associated Reynolds defect $\mathring{R}_q$ and pressure $p_q$, for $q=0,1,2,\dots$, which satisfy the \emph{Euler-Reynolds system}
\begin{equation}\label{e:EulerReynolds}
\begin{split}
\partial_tu_q+\div(u_q\otimes u_q)+\nabla p_q&=\div \mathring{R}_q\,,\\	
\div u_q&=0\,,
\end{split}
\end{equation}
with constraints 
\begin{equation}\label{e:ERconstraints}
\tr \mathring{R}_q(x,t)=0,\quad \int_{\T^3}u_q(x,t)\,dx=0,\quad \int_{\T^3}p_q(x,t)\,dx=0.	
\end{equation}
We note in passing that these normalizations determine the pressure $p_q$ uniquely from $(u_q,\mathring{R}_q)$ so that one may speak of the pair $(u_q,\mathring{R}_q)$ being a solution of \eqref{e:EulerReynolds}.

\subsubsection{Inductive assumptions}

The induction process involves a set of \emph{inductive estimates}. These estimates are in terms of a frequency parameter $\lambda_q$ and amplitude $\delta_q$, which are given by
\begin{equation}\label{e:lambdadelta}
\lambda_q:= 2\pi \ceil{a^{(b^q)}},\quad 
\delta_q:=\lambda_q^{-2\beta}
\end{equation}
 where $\ceil{x}$ denotes the smallest integer $n\geq x$, $a\gg 1$ is a  large parameter, $b>1$ is close to $1$ and $0<\beta<\sfrac13$ is the exponent of Theorem \ref{t:main}. The parameters $a$ and $b$ are then related to $\beta$. 
With these parameters the inductive estimates take the form\footnote{In \cite{BDSV} an additional estimate is added for $\|u_q\|_{C^0}$, but it turns out this can be avoided. The only place where a bound on $\|u_q\|_{C^0}$ was needed in \cite{BDSV} is in \cite[Proposition 5.9]{BDSV}, but as we show below, the (much worse) bound induced by \eqref{e:u_q_inductive_est} for $n=1$ suffices to control the relevant terms - see Proposition \ref{p:est_mollification} and Lemma \ref{l:boundsonbc1}.}
\begin{align}
\norm{\mathring R_q}_{C^0}&\leq  \delta_{q+1}\lambda_q^{-\gamma_R},\label{e:R_q_inductive_est}\\
\norm{u_q}_{C^n}&\leq M \delta_q^{\sfrac12}\lambda_q^n\quad\textrm{ for }n=1,2,\dots,\bar{N},\label{e:u_q_inductive_est}\\
\left|e(t)-\int_{\T^3}\abs{u_q}^2\,dx-\bar{e}\delta_{q+1}\right|&\leq \delta_{q+1}\lambda_q^{-\gamma_E},\label{e:energy_inductive_assumption}
\end{align}
where $\gamma_R,\gamma_E>0$ are small parameters, $\bar{N}\in\N$ is a large parameter to be chosen suitably (depending on $\beta>0$ and $b>1$), and $\bar{e}>0$, $M\geq 1$ are universal constants, which will be fixed throughout the iteration and depend on the particular geometric form of the perturbing building blocks - they will be specified in Definition \ref{d:defebar} and \ref{d:defM}. We remark that in \cite{BDSV} only the case $\bar{N}=1$ is required in \eqref{e:u_q_inductive_est} and \eqref{e:energy_inductive_assumption} is slightly weaker. Moreover, in \cite{BDSV} a generic small parameter $\alpha>0$ is used in place of $\gamma_R,\gamma_E$, but for our purposes we need to choose these small corrections more carefully.

For notational convenience we introduce 
\begin{equation*}
\mathring{\delta}_{q+1}:=\delta_{q+1}\lambda_q^{-\gamma_R}=\lambda_q^{-2b\beta-\gamma_R},	
\end{equation*}
so that \eqref{e:R_q_inductive_est} can be written as $\norm{\mathring{R}_q}_{C^0}\leq \mathring{\delta}_{q+1}$.

\subsubsection{Parameter choices}

The inductive construction for passing from $u_q$ to $u_{q+1}$ involves three steps: \emph{mollification}, \emph{gluing} and \emph{perturbation}. In these steps two more scales are introduced, an adjusted length-scale $\ell_q$ and an adjusted time-scale $\tau_q$. In our case these will be defined as
\begin{equation}\label{e:elltau}
\ell_q:=\lambda_q^{-1-\gamma_L},\quad \tau_q:=\lambda_q^{-1+\beta-\gamma_T},	
\end{equation}
where $\gamma_L,\gamma_T$ are additional small parameters to be chosen suitably in dependence of $\beta,b$. It is worth pointing out that, if one were to set $\gamma_L=\gamma_T=0$, then $\ell_q,\tau_q$ would be the natural (dimensionally consistent) length- and time-scales induced by the velocity field $u_q$ (cf.~\eqref{e:lambdadelta} and \eqref{e:u_q_inductive_est}). Indeed, we can think of $\delta_q^{\sfrac12}$ having physical dimension of velocity, i.e. $LT^{-1}$, and $\lambda_q^{-1}$ having physical dimension of length $L$.

Moreover, setting $\gamma_R=0$ would be the consistent estimate in light of the basic principle in convex integration, that the error $R_q$ is cancelled by the new average stress $\langle (u_{q+1}-u_q)\otimes (u_{q+1}-u_q)\rangle$. 

In addition to these small parameters we will also use $\alpha>0$, as is also done in \cite{BDSV}, to take care of the lack of Schauder estimates in $C^0, C^1,\dots$ spaces. Thus, in summary, we will use the following set of additional parameters, which will all be chosen depending on $\beta,b$:
\begin{itemize}
\item $\gamma_R\in(0,1)$: smallness of $\|\mathring{R}_q\|_{C^0}$ with respect to $\|R_q\|_{C^0}$,
\item $\gamma_L\in(0,1)$: smallness of $\ell_q$ with respect to $\lambda_q^{-1}$,
\item $\gamma_T\in(0,1)$: smallness of $\tau_q$ with respect to $(\delta_q^{\sfrac12}\lambda_q)^{-1}$,
\item $\gamma_E\in(0,1)$: smallness of energy gap with respect to $\|R_q\|_{C^0}$,
\item $\alpha\in(0,1)$: Schauder exponent,
\item $\bar{N}\in\N$: number of derivatives in the induction.
\end{itemize}
With a suitable choice of these parameters we have the following analogue of \cite[Proposition 2.1]{BDSV}:
\begin{proposition}\label{p:Onsager}
There exist universal constants $M\geq 1$, $\bar{e}>0$ with the following property. Assume $0<\beta<\frac13$ and
\begin{equation}\label{e:b_beta_rel}
1<b<\frac{1-\beta}{2\beta}\,.
\end{equation}
Further, assume that $\gamma_T,\gamma_R,\gamma_E>0$ satisfy
\begin{equation}\label{e:Onsager_Conditions}
	\max\{\gamma_T+b\gamma_R,\gamma_E\}<(b-1)\bigl(1-(2b+1)\beta\bigr).
\end{equation}
{For any sufficiently small $\gamma_L>0$ there exists $\bar{N}\in\N$, depending on $\beta, b, \gamma_T,\gamma_R,\gamma_E$, such that for any sufficiently small $\alpha>0$ and any strictly positive smooth function $e:[0,T]\to\R$ there exists $a_0\gg 1$ such that the following holds:}

  Let $(u_q,\mathring R_q)$ be a smooth solution of \eqref{e:EulerReynolds} satisfying the estimates \eqref{e:R_q_inductive_est}--\eqref{e:energy_inductive_assumption} with $\lambda_q,\delta_q$ given by \eqref{e:lambdadelta} for any fixed $a\geq a_0$. Then there exists another solution  $(u_{q+1}, \mathring R_{q+1})$ to \eqref{e:EulerReynolds} satisfying \eqref{e:R_q_inductive_est}--\eqref{e:energy_inductive_assumption} with $q$ replaced by $q+1$, and we have 
\begin{equation}
 \norm{u_{q+1}-u_q}_{C^0}+\frac{1}{\lambda_{q+1}}\norm{u_{q+1}-u_q}_{C^1} 
\leq M\delta_{q+1}^{\sfrac12},\label{e:v_diff_prop_est}
\end{equation}
\end{proposition}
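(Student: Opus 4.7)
The strategy is the three-stage Isett--BDSV convex integration scheme \cite{Isett2018,BDSV}, carried out at intermediate scales $\ell_q = \lambda_q^{-1-\gamma_L}$ and $\tau_q = \lambda_q^{-1+\beta-\gamma_T}$. In the first stage I mollify $u_q$ in space at scale $\ell_q$ to obtain $u_\ell$. The associated Reynolds stress $\mathring R_\ell$ differs from $\mathring R_q * \varphi_{\ell_q}$ by a standard commutator term of order $\delta_q \lambda_q^2 \ell_q^2 = \delta_q \lambda_q^{-2\gamma_L}$, which, provided $\gamma_L$ is small relative to $\gamma_R$, $b$ and $\beta$, stays below $\mathring\delta_{q+1}$. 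The strengthened bound \eqref{e:u_q_inductive_est} on $\bar N$ derivatives allows the standard mollification estimates $\|u_\ell\|_{C^{n+\alpha}} \lesssim \delta_q^{1/2} \lambda_q \ell_q^{1-n-\alpha}$ to propagate beyond $n=1$. In the second stage I glue: on each interval $[t_i,t_{i+1}]$ of length $\tau_q$ I solve classical Euler with datum $u_\ell(\cdot,t_i)$ to get $v_i$, and patch the $v_i$ with a partition of unity. Since $u_\ell$ lives at spatial frequency $\ell_q^{-1}$ and speed $\delta_q^{1/2}$, its classical lifespan $\sim \ell_q \delta_q^{-1/2}$ exceeds $\tau_q$ as soon as $\gamma_T$ is small. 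The patched velocity $\bar u_q$ solves Euler--Reynolds with a new stress $\mathring{\bar R}_q$ supported on short sub-intervals and enjoying improved bounds on iterated material derivatives $\|D_t^k \mathring{\bar R}_q\|_N \lesssim \delta_{q+1}\tau_q^{-k}\ell_q^{-N}$.

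In the third stage I set $u_{q+1} := \bar u_q + w_{q+1}^{(p)} + w_{q+1}^{(c)}$, where $w_{q+1}^{(p)}$ is a superposition of Mikado flows supported on the gluing sub-intervals, with frequency $\lambda_{q+1}$ and amplitude $\sim \delta_{q+1}^{1/2}$, tuned so that the low-frequency part of $w_{q+1}^{(p)} \otimes w_{q+1}^{(p)}$ cancels $\mathring{\bar R}_q$ and simultaneously carries the isotropic energy correction $\bigl(e(t) - \int |u_q|^2 - \bar e\delta_{q+1}\bigr)\,\Id$; the corrector $w_{q+1}^{(c)}$ restores $\div w_{q+1} = 0$. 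The universal constants $M$ and $\bar e$ are read off from the Mikado geometry and fixed once and for all. The new Reynolds stress $\mathring R_{q+1}$ decomposes into transport, Nash, oscillation, mollification and corrector errors; after applying the inverse divergence operator, each contribution is bounded in terms of $\delta_q,\delta_{q+1},\lambda_q,\lambda_{q+1},\tau_q,\ell_q$ with an additional $\lambda_{q+1}^{\alpha}$ Schauder loss per derivative.

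The crux is an explicit exponent audit: each error bound is converted into powers of $\lambda_q$ and required to be at most $\mathring\delta_{q+2} = \lambda_q^{-b(2b\beta+\gamma_R)}$. The dominant transport/oscillation contributions collapse, after cancelling a $\beta(2b+1)(b-1)$ term, to the inequality $\gamma_T + b\gamma_R < (b-1)\bigl(1-(2b+1)\beta\bigr) - b\alpha$, and a parallel computation for the energy iterate \eqref{e:energy_inductive_assumption} produces $\gamma_E < (b-1)\bigl(1-(2b+1)\beta\bigr) - O(\alpha)$; the remaining Nash, mollification and corrector terms are then absorbed by taking $\gamma_L$ sufficiently small. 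The derivative estimate \eqref{e:u_q_inductive_est} for $n \leq \bar N$ at level $q+1$ is inherited from the Mikado building blocks via $\bar N$ Schauder applications, which forces $\bar N$ to depend on $\beta, b$ and on the $\gamma$-budget, and $\alpha$ to be small enough that $\bar N\alpha$ does not close \eqref{e:Onsager_Conditions}. The estimate \eqref{e:v_diff_prop_est} is immediate from the Mikado ansatz, since $w_{q+1}$ has amplitude $M\delta_{q+1}^{1/2}$ and frequency $\lambda_{q+1}$. The main obstacle, as the authors flag, is that one cannot hide everything inside a single generic small parameter $\alpha$ as in \cite{BDSV}: the tolerances $\gamma_R$ and $\gamma_E$ are dictated by the homogenization step in Section \ref{s:homogenization}, so the exponent computation must be redone by hand to isolate the three quantitative smallness conditions packaged in \eqref{e:Onsager_Conditions}.
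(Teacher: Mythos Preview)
Your outline matches the paper's three-stage architecture, and the exponent bookkeeping you describe for the transport, oscillation and energy errors is correct. There is, however, a genuine gap in the mollification step.

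You write that the commutator term in $\mathring R_\ell$ is of order $\delta_q\lambda_q^2\ell_q^2=\delta_q\lambda_q^{-2\gamma_L}$ and that this stays below $\mathring\delta_{q+1}$ provided $\gamma_L$ is \emph{small}. The direction is wrong: $\delta_q\lambda_q^{-2\gamma_L}=\lambda_q^{-2\beta-2\gamma_L}$ while $\mathring\delta_{q+1}=\lambda_q^{-2b\beta-\gamma_R}$, so the required inequality is $2\gamma_L\geq 2(b-1)\beta+\gamma_R$. This would force $\gamma_L$ to be \emph{large} (at least $(b-1)\beta$), which is exactly the BDSV choice. But the proposition demands that the construction work for any sufficiently small $\gamma_L>0$ (and the paper later needs $\gamma_L<(b-1)\beta$, cf.\ \eqref{e:comparison}); with such $\gamma_L$ the standard Constantin--E--Titi commutator estimate is simply too weak, and the mollified Reynolds stress fails \eqref{e:R_q_inductive_est}.

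The paper's remedy is to replace the standard mollifier by a \emph{deep smoothing operator} of depth $\bar N$, i.e.\ a kernel whose moments of order $1,\dots,\bar N$ vanish (Lemma~\ref{l:mollify}). Combined with the strengthened inductive hypothesis \eqref{e:u_q_inductive_est} on $\bar N$ derivatives of $u_q$, this yields $\|u_q-u_\ell\|_{C^0}\lesssim \ell_q^{\bar N}\|u_q\|_{C^{\bar N}}\lesssim \delta_q^{1/2}\lambda_q^{\bar N}\ell_q^{\bar N}$, and the commutator piece of $\mathring R_\ell$ is bounded by $\delta_q\lambda_q^{\bar N+1}\ell_q^{\bar N}$. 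Condition \eqref{e:gluing1}, namely $2(b-1)\beta+1+\gamma_R<\bar N\gamma_L$, then dictates how large $\bar N$ must be to make this fall below $\mathring\delta_{q+1}$. This is the primary reason for carrying $\bar N>1$ derivatives through the induction and for the $\bar N$-dependence on $\beta,b,\gamma_T,\gamma_R,\gamma_E$ asserted in the proposition; your proposal attributes the role of $\bar N$ only to propagating $C^n$ bounds on $u_\ell$, which misses the point.

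Two minor remarks: the classical Euler lifespan is governed by $\|\nabla u_\ell\|_{C^\alpha}^{-1}\sim(\delta_q^{1/2}\lambda_q)^{-1}$, not $\ell_q\delta_q^{-1/2}$ (though the conclusion $\tau_q$ is short enough survives); and the isotropic energy correction carried by the Mikado amplitude is $e(t)-\int|\bar u_q|^2-\bar e\delta_{q+2}$, with index $q+2$ rather than $q+1$ (see \eqref{e:sigmaq}).
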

The new velocity field $u_{q+1}$ is obtained as
\begin{equation}\label{e:iteration}
u_{q+1}:=\bar{u}_q+w_{q+1},	
\end{equation}
where $\bar{u}_q$ is constructed from $u_q$ via Isett's gluing procedure \cite{Isett2018}, and $w_{q+1}$ is the new perturbation consisting of a deformed family of Mikado flows. In Sections \ref{s:gluingsketch}-\ref{s:Mikado} below we now sketch the proof of Proposition \ref{p:Onsager}. The detailed proof, which is very much based on \cite{BDSV}, is given in Section \ref{s:Onsager}.

\subsubsection{Gluing procedure}\label{s:gluingsketch}

The gluing procedure amounts to the following statement:
\begin{proposition}\label{p:gluing}
Within the setting of Proposition \ref{p:Onsager} we have the following statement. Let $(u_q,\mathring R_q)$ be a smooth solution of \eqref{e:EulerReynolds} satisfying the estimates \eqref{e:R_q_inductive_est}--\eqref{e:energy_inductive_assumption}. Then there exists another solution $(\bar{u}_q,\mathring{\bar{R}}_q)$ to \eqref{e:EulerReynolds} such that
\begin{equation}%\label{e:gluedsupport}
\supp\mathring{\bar{R}}_q\subset \T^3\times \bigcup_{i\in\N}(i\tau_q+\tfrac{1}{3}\tau_q,i\tau_q+\tfrac{2}{3}\tau_q)
\end{equation}
and the following estimates hold for any $N\geq 0$:
\begin{align}
\|\bar{u}_q\|_{C^{N+1}}&\lesssim \delta_q^{\sfrac12}\lambda_q\ell_q^{-N}\,,\label{e:gluingu}\\
\|\mathring{\bar{R}}_{q}\|_{C^{N+\alpha}}&\lesssim \mathring{\delta}_{q+1}\ell_q^{-N-2\alpha}\,,\label{e:gluingR}\\
\|(\partial_t+\bar{u}_q\cdot\nabla)\mathring{\bar{R}}_{q}\|_{C^{N+\alpha}}&\lesssim \tau_q^{-1}\mathring{\delta}_{q+1}\ell_q^{-N-2\alpha}\,,\label{e:gluingDR}\\
\left|\int_{\T^3}|\bar{u}_q|^2-|u_q|^2\,dx\right|&\lesssim\mathring{\delta}_{q+1}\,.\label{e:gluingE}
\end{align}
Moreover, the vector potentials of $u_q$ and $\bar{u}_q$ satisfy
\begin{equation}\label{e:gluingz}
\|z_q-\bar{z}_{q}\|_{C^\alpha}\lesssim \tau_q\mathring{\delta}_{q+1}\ell_q^{-\alpha}\,.
\end{equation}
\end{proposition}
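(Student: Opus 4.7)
The plan is to follow the gluing scheme of Isett (as adapted in \cite{BDSV}), adjusted to yield estimates in the present $\gamma_R,\gamma_L,\gamma_T$-based regime. First, I would partition $[0,T]$ into intervals $I_i=[i\tau_q,(i+1)\tau_q]$ and introduce overlapping enlarged intervals $\tilde I_i$ of comparable length. On each $\tilde I_i$ I solve the \emph{exact} Euler system (with zero Reynolds stress) with initial data at $t=i\tau_q$ equal to a spatial mollification $u_\ell$ of $u_q$ at scale $\ell_q$, obtaining a classical solution $v_i$. The fact that $\tau_q\ll (\delta_q^{\sfrac12}\lambda_q)^{-1}$ together with the mollified Lipschitz bound ensures that local well-posedness delivers $v_i$ on $\tilde I_i$; the derivative estimates for $v_i$, obtained from vorticity transport and Schauder theory, match \eqref{e:gluingu} with $\ell_q^{-N}$ loss per extra derivative.

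Next, I would quantify the difference $v_i-v_{i+1}$ on the overlap $\tilde I_i\cap \tilde I_{i+1}$. Since both solve Euler from initial data consistent (modulo $\mathring R_q$) with $u_q$ at times $\tau_q$ apart, the perturbation equation for the difference is driven by $\mathring R_q$ and by the mollification defect, integrated over a time of length $\tau_q$. Careful energy-type estimates for the difference and its vorticity — in the spirit of \cite[Prop.~3.3]{BDSV} — give
\begin{equation*}
\|v_i-v_{i+1}\|_{C^{N+\alpha}}\lesssim \tau_q\,\mathring{\delta}_{q+1}\ell_q^{-N-2\alpha},
\end{equation*}
where the $\tau_q$ factor is the crucial gain and the $\ell_q^{-N-2\alpha}$ loss accounts for Schauder estimates at each derivative. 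From this the bound on $\bar z_q-z_q$ in \eqref{e:gluingz} follows, since $\bar u_q-u_q$ is, up to the mollification defect of order $\ell_q(\delta_q^{\sfrac12}\lambda_q)$, controlled by the same quantity and the Biot–Savart operator gains one derivative.

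I would then choose a smooth partition of unity $\{\chi_i(t)\}_i$ with $\chi_i\equiv 1$ on a neighbourhood of $[i\tau_q-\tfrac13\tau_q,\,i\tau_q+\tfrac13\tau_q]$, so that the transition regions lie in $(i\tau_q+\tfrac13\tau_q,i\tau_q+\tfrac23\tau_q)$, and define
\begin{equation*}
\bar u_q(x,t):=\sum_i \chi_i(t)\,v_i(x,t).
\end{equation*}
Since each $v_i$ is divergence-free and solves Euler exactly, a direct computation shows that $(\bar u_q,\mathring{\bar R}_q)$ solves \eqref{e:EulerReynolds}, with new Reynolds stress localized precisely in the transition regions and expressible schematically as $\mathcal R\bigl(\partial_t\chi_i\cdot(v_i-v_{i+1})\bigr)$ plus a quadratic piece of the form $\chi_i\chi_{i+1}(v_i-v_{i+1})\otimes(v_i-v_{i+1})$, where $\mathcal R$ denotes the standard inverse-divergence operator. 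Combining $|\partial_t\chi_i|\lesssim\tau_q^{-1}$ with the difference estimate above and the Leibniz rule yields \eqref{e:gluingR}. The energy identity \eqref{e:gluingE} then follows by integrating $|\bar u_q|^2-|u_q|^2$ pointwise in time and controlling the transition-region contribution by the same difference bound.

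The main obstacle I foresee is \eqref{e:gluingDR}, the advective-derivative estimate, where the natural $\partial_t$ produces a bad factor $\tau_q^{-2}$. The key observation is that $(\partial_t+v_i\cdot\nabla)v_i=-\nabla p_i$ exactly, so when one writes $(\partial_t+\bar u_q\cdot\nabla)\mathring{\bar R}_q$ and distributes the material derivative along $\bar u_q$ (which in each transition region interpolates between $v_i$ and $v_{i+1}$), the terms involving $\partial_t\chi_i$ combined with Euler-type identities produce one genuine time-derivative cancellation, leaving only a single $\tau_q^{-1}$ loss. Executing this cancellation cleanly, while tracking $\ell_q^{-N-2\alpha}$ dependence on spatial derivatives and the remainder from $\mathcal R$, is the technical heart of the proof and mirrors \cite[Section~4]{BDSV}; our only adjustment is that the parameter constraints \eqref{e:Onsager_Conditions} and the sharper exponents $\gamma_R,\gamma_T$ must be respected throughout.
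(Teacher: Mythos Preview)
Your outline follows the paper's approach (mollify, solve exact Euler locally in time, glue via a temporal partition of unity), but there is a concrete error in the stability estimate that propagates through the rest of the argument. The bound you state,
\[
\|v_i-v_{i+1}\|_{C^{N+\alpha}}\lesssim \tau_q\,\mathring{\delta}_{q+1}\ell_q^{-N-2\alpha},
\]
is too strong by a factor $\ell_q$: the correct velocity-level bound is $\|v_i-v_{i+1}\|_{C^{N+\alpha}}\lesssim \tau_q\mathring{\delta}_{q+1}\ell_q^{-N-1-2\alpha}$, because the pressure difference is driven by $\|\mathring R_\ell\|_{C^{1+\alpha}}\sim\mathring{\delta}_{q+1}\ell_q^{-1-\alpha}$ before one integrates over $\tau_q$. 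With this corrected bound, $\partial_t\chi_i\,\mathcal R(v_i-v_{i+1})$ does \emph{not} give \eqref{e:gluingR}. The paper's fix is to pass to vector potentials already in the stability argument: one proves $\|z_i-z_{i+1}\|_{C^{N+\alpha}}\lesssim \tau_q\mathring{\delta}_{q+1}\ell_q^{-N-2\alpha}$ and $\|(\partial_t+u_\ell\cdot\nabla)(z_i-z_{i+1})\|_{C^{N+\alpha}}\lesssim \mathring{\delta}_{q+1}\ell_q^{-N-2\alpha}$, then writes the linear part of the glued stress as $\partial_t\chi_i\,(\mathcal R\curl)(z_i-z_{i+1})$, where $\mathcal R\curl$ is a zero-order Calder\'on--Zygmund operator. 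This is what yields \eqref{e:gluingR} and also \eqref{e:gluingDR}: the $\tau_q^{-2}$ from $\partial_t^2\chi_i$ does appear, but it multiplies $\|z_i-z_{i+1}\|$ and is therefore absorbed by the $\tau_q$ gain in the $z$-difference; there is no separate ``Euler cancellation'' to invoke.

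A second gap concerns the mollification defect. Under the present parameter regime the naive error $\ell_q\delta_q^{\sfrac12}\lambda_q$ is \emph{not} dominated by $\tau_q\mathring{\delta}_{q+1}$ (check the exponents: one would need $1+2(b-1)\beta+\gamma_T+\gamma_R\leq\gamma_L$, which is impossible). The paper therefore uses mollifiers of depth $\bar N$ (vanishing moments up to order $\bar N$), so that the mollification error at the $z$-level becomes $\delta_q^{\sfrac12}\lambda_q^{\bar N}\ell_q^{\bar N+1-\alpha}$; condition \eqref{e:gluing1} on $\bar N$ is chosen exactly to make this $\lesssim\mathring{\delta}_{q+1}\ell_q^{1-\alpha}$, after which \eqref{e:gluingz} follows by combining with the $z$-stability bound above.
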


Here and in the sequel, the vector potential $z$ of a divergence-free velocity field $u$ is given by the Biot-Savart operator on $\T^3$, defined as $\mathcal{B}=(-\Delta)^{-1}\curl$, so that $z=\mathcal{B}u$ is the unique solution of 
\begin{equation}\label{e:Biot_Savart}
\div z=0\qquad\textrm{ and }\qquad\curl z=u  \,,
\end{equation}
 recalling that we assume zero spatial average of $u$.

Proposition \ref{p:gluing} is restated below as Corollary \ref{c:gluing} and is a direct consequence of Corollary \ref{c:mollification} and Proposition \ref{p:p_gluing}.

\subsubsection{The perturbation}\label{s:Mikado}
The formula for the new perturbation $w_{q+1}$ uses Mikado flows, which we recall here briefly.
Mikado flows were introduced originally in \cite{DaSz2017} and widely used since in applications of convex integration to fluid dynamics. 

We fix a finite set $\Lambda\subset\R^3$ consisting of nonzero vectors $\vec{k}\in\R^3$ with rational coordinates, and for each $\vec{k}\in\Lambda$ let $\varphi_{\vec{k}}$ be a periodic function with the properties
\begin{itemize}
    \item  $\vec{k}\cdot\nabla\varphi_{\vec{k}}=0$,
    \item For any $\vec{k}\neq \vec{k}'\in\Lambda$ we have $\supp\varphi_{\vec{k}}\cap \supp\varphi_{\vec{k}'}=\emptyset$,
    \item $\dashint_{\T^3}\varphi_{\vec{k}}(\xi)\,d\xi=0$ and $\dashint_{\T^3}|\Delta\varphi_{\vec{k}}(\xi)|^2\,d\xi=\dashint_{\T^3}|\nabla\varphi_{\vec{k}}(\xi)|^2\,d\xi=1$.
\end{itemize}
Next, let $\psi_{\vec{k}}=\Delta\varphi_{\vec{k}}$ and define
\begin{equation}\label{e:defUW}
W_{\vec{k}}(\xi)=\psi_{\vec{k}}(\xi)\vec{k},\quad  U_{\vec{k}}(\xi)=\vec{k}\times\nabla\varphi_{\vec{k}} (\xi),
\end{equation}
so that\footnote{Here we use the vector calculus identity $\curl(F\times G)=F\div G-G\div F+(G\cdot\nabla)F-(F\cdot\nabla)G$.} $\curl U_{\vec{k}}=W_{\vec{k}}$  and, for any $a_{\vec{k}}$, $\vec{k}\in \Lambda$, the vector field $W=\sum_{\vec{k}\in\Lambda}a_{\vec{k}}W_{\vec{k}}$ satisfies 
$$
\dashint W\,d\xi=0,\quad \dashint W\otimes W\,d\xi=\sum_{\vec{k}\in\Lambda}a_{\vec{k}}^2\vec{k}\otimes\vec{k}. 
$$
Further, we define $H_{\vec{k}}=H_{\vec{k}}(\xi)$ to be the antisymmetric zero-mean matrix with the property that for any $v \in \R^3$ we have $H_{\vec{k}} v = - U_{\vec{k}} \times v$. 

The following lemma (originating in the work of Nash \cite{Nash54}) is crucial:
\begin{lemma}\label{l:Mikado}
For any compact set $\mathcal{N}\subset \mathcal{S}^{3\times 3}_{+}$ there exists a finite $\Lambda\subset\mathbb{Q}^3$ such that there exists smooth functions $a_{\vec{k}}:\mathcal{N}\to\R_+$ with 
\begin{equation}\label{e:MikadoProperty}
\sum_{\vec{k}\in\Lambda}a_{\vec{k}}^2(R)\vec{k}\otimes\vec{k}=R\quad\textrm{ for any }R\in\mathcal{N}.
\end{equation}
\end{lemma}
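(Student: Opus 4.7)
The plan is to first establish the decomposition locally near any fixed $R_0 \in \mathcal{N}$ and then patch the local decompositions via a smooth partition of unity on the compact set $\mathcal{N}$. Concretely, I would fix a single finite set $\Lambda \subset \mathbb{Q}^3$ up front with two properties: (i) the rank-one matrices $\{\vec{k}\otimes\vec{k}: \vec{k}\in\Lambda\}$ linearly span the six-dimensional symmetric space $\mathcal{S}^{3\times 3}$, and (ii) every $R \in \mathcal{N}$ lies in the interior of the convex cone they generate inside $\mathcal{S}^{3\times 3}_+$. Property (i) holds already for modest choices such as $e_i$ and $e_i\pm e_j$ for $i<j$, which yield six linearly independent rank-one matrices. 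Property (ii) is possible because $\mathcal{S}^{3\times 3}_+$ coincides with the interior of the conic hull of all rank-one matrices $vv^T$ and rational directions are dense in $\R^3$, so a sufficiently large finite rational $\Lambda$ captures the compact set $\mathcal{N}$.

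With such $\Lambda$ fixed, for each $R_0\in\mathcal{N}$ I would pick a strictly positive vector $(\alpha_{\vec{k}}^{R_0})_{\vec{k}\in\Lambda}$ solving $\sum_{\vec{k}}\alpha_{\vec{k}}^{R_0}\vec{k}\otimes\vec{k} = R_0$. Since the underlying linear map $\R^{|\Lambda|}\to\mathcal{S}^{3\times 3}$ is a surjection, a smooth local right inverse is obtained by selecting six coordinates whose associated rank-one matrices form a basis, freezing the remaining coefficients at their values at $R_0$, and solving the resulting $6\times 6$ linear system as $R$ varies. This produces smooth functions $\alpha_{\vec{k}}(R)$ on an open neighborhood $U_{R_0}$ of $R_0$ with $\alpha_{\vec{k}}(R)>0$ on a (possibly smaller) $U_{R_0}$ by continuity. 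Extracting a finite subcover $\{U_j\}$ of $\mathcal{N}$ with corresponding positive decompositions $\alpha_{\vec{k}}^j$ and a subordinate smooth partition of unity $\{\chi_j\}$, I set
\[
\alpha_{\vec{k}}(R):=\sum_j\chi_j(R)\alpha_{\vec{k}}^j(R),\qquad a_{\vec{k}}(R):=\sqrt{\alpha_{\vec{k}}(R)}.
\]
Since each local decomposition reproduces $R$ and $\sum_j\chi_j\equiv 1$, we obtain $\sum_{\vec{k}\in\Lambda}a_{\vec{k}}^2(R)\vec{k}\otimes\vec{k}=R$; smoothness of the square root follows because each $\alpha_{\vec{k}}^j>0$ on $\supp\chi_j$, forcing $\alpha_{\vec{k}}(R)>0$ on all of $\mathcal{N}$.

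The main obstacle, in my view, is the initial choice of $\Lambda$: one must simultaneously arrange that $\Lambda$ is finite, rational, and rich enough that every $R\in\mathcal{N}$ lies in the \emph{open} positive cone generated by $\{\vec{k}\otimes\vec{k}:\vec{k}\in\Lambda\}$. The openness of $\mathcal{S}^{3\times 3}_+$, compactness of $\mathcal{N}$, and density of rational directions make this possible by a finite Carath\'eodory-type selection, after which the implicit function and partition-of-unity steps are standard.
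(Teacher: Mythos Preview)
The paper does not actually prove Lemma~\ref{l:Mikado}; it is stated as a known fact ``originating in the work of Nash~\cite{Nash54}'' and used without proof. Your argument is the standard one and is correct: the key steps---choosing a finite rational $\Lambda$ whose rank-one tensors generate a cone containing $\mathcal{N}$ in its interior, producing a local smooth right inverse by freezing all but six coefficients, and globalizing via a partition of unity---are all sound. One small point worth making explicit is why interior points of the polyhedral cone admit a decomposition with \emph{all} coefficients strictly positive: if $R_0$ is interior then $R_0-\epsilon\sum_{\vec{k}}\vec{k}\otimes\vec{k}$ remains in the cone for small $\epsilon>0$, and adding $\epsilon$ back to each coefficient of its nonnegative decomposition yields the desired strictly positive one. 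With that detail supplied, your proof is complete.
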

The corresponding vector field $W=W(R,\xi)=\sum_{\vec{k}\in\Lambda}a_{\vec{k}}(R)W_{\vec{k}}(\xi)$ is called a Mikado flow. In the following we will fix $\mathcal{N}:=B_{1/2}(\Id)$, the metric ball of radius $1/2$ around the identity matrix in $\mathcal{S}^{3\times 3}_{+}$ and denote the corresponding Mikado vector field by $W=W(R,\xi)$. 

Following \cite{BDSV} the new perturbation is then defined as
\begin{equation}\label{e:defwq+1}
	w_{q+1}=\frac{1}{\lambda_{q+1}}\curl\left[\sum_{i\in\N}\sum_{\vec{k}\in\Lambda}\eta_i\sigma_{q}^{\sfrac12}a_{\vec{k}}(\tilde R_{q,i})\nabla\Phi_i^TU_{\vec{k}}(\lambda_{q+1}\Phi_i)\right]\,,
\end{equation}
where 
\begin{itemize}
\item $\eta_i=\eta_i(x,t)$, $i\in\N$, are smooth nonnegative cutoff functions with pairwise disjoint supports such that
\begin{equation}\label{e:propertyofetai}
\|\partial_t^m\nabla^n_x\eta_i\|_{L^\infty}\leq C_{n,m}\tau_q^{-m},\quad \sum_i\eta_i^2(x,t)=\bar{\eta}^2(x,\tau_q^{-1}t)\,,	
\end{equation}
where $\bar{\eta}=\bar{\eta}(x,t)$ is a (universal) function, $1$-periodic in $t$, such that
\begin{equation*}
	\dashint_{\T^3}\bar{\eta}^2(x,t)\,dx=c_0,\quad \dashint_0^1\bar{\eta}^2(x,t)\,dt=c_1
\end{equation*}
for some universal constants $c_0,c_1>0$.
\item $\sigma_q=\sigma_q(t)$ is a positive scalar function with the property
\begin{equation}\label{e:propertyofsigmaq}
	|\sigma_q(t)-c_1^{-1}\delta_{q+1}|
 \leq C \delta_{q+1}(\lambda_q^{-\gamma_E}+\lambda_q^{-\gamma_R}+\lambda_q^{-(b-1)\beta}), \quad |\pat \sigma_q(t) | \le C \delta_{q+1} \tau_q^{-1}. 
\end{equation}
\item The maps $\Phi_i=\Phi_i(x,t)$ are the volume-preserving diffeomorphisms defined as the inverse flow map of the velocity field $\bar{u}_q$, which satisfy for $(x,t)\in\supp\eta_i$:
\begin{equation}\label{e:propertyofPhii}
\begin{split}
&\|\nabla\Phi_i-\Id\|_{L^\infty}\leq C\lambda_q^{-\gamma_T}, \\
&\|\nabla\Phi_i(x,t)\|_{C^n}+\|(\nabla\Phi_i)^{-1}(x,t)\|_{C^n}\leq C_n\ell_q^{-n},\quad  \|\nabla^n D_t \nabla\Phi_i\|_{L^\infty} \leq C_n \delta_q^{1/2}\lambda_q\ell_q^{-n},	
    \end{split}
\end{equation}
where $D_t=\partial_t+\bar{u}_q\cdot\nabla$.
\item $\tilde R_{q,i}=\tilde R_{q,i}(x,t)$, $i\in\N$ is given by
\begin{equation}\label{e:propertyoftildeRqi}
	\tilde R_{q,i}=\nabla\Phi_i(\Id-\sigma_q^{-1}\mathring{\bar{R}}_q)\nabla\Phi_i^T
\end{equation}
and satisfies
\begin{equation}\label{e:propertyoftildeRqi2}
 \|\tilde R_{q,i}\|_{C^n}\leq C_n\ell_q^{-n}, \quad \|\nabla^n D_t \tilde R_{q,i}\|_{L^\infty}\leq C_n \tau_q^{-1} \ell_q^{-n}.
\end{equation}
\end{itemize}

These properties will be obtained in Section \ref{s:perturbation}, where we will conclude the proof of Proposition \ref{p:Onsager}.

\subsection{Anomalous dissipation via iterative stages}\label{s:enhanceddis}
In this section we present the building block for our construction of the advected scalar.
We define an additional $q$-dependent parameter which plays a key role in this paper:
\begin{equation}\label{e:defkappaq}
\kappa_q:=\lambda_q^{-\theta},\quad \theta=\frac{2b}{b+1}(1+\beta)\,.
\end{equation}
Using \eqref{e:lambdadelta} it is easy to verify the recursive identity
\begin{equation}\label{e:kappaqkappaq+1}
	\kappa_q=\frac{\delta_{q+1}}{\lambda_{q+1}^2\kappa_{q+1}}.
\end{equation}
In turn, this identity indicates that we can think of $\kappa_q$ as having physical dimension of diffusion coefficient $L^2T^{-1}$.

We consider the sequence of advection-diffusion equations on $\T^3\times[0,T]$:
\begin{equation}\label{e:equationq}
\begin{split}
\partial_t\rho_{q}+u_{q}\cdot\nabla\rho_{q}&=\kappa_{q}\Delta\rho_{q}\,,\\	
\rho_{q}|_{t=0}&=\rho_{in}\,,
\end{split}
\end{equation}
where $u_q$ is the sequence of velocity fields obtained in Section \ref{s:vectorfield} via Proposition \ref{p:Onsager}. 
We are interested in comparing the cumulative dissipation for subsequent values of $q$, given by
\begin{equation}\label{e:defDq}
	D_q:=\kappa_q\int_0^T\|\nabla\rho_q\|_{L^2}^2\,dt=\frac{1}{2}(\|\rho_{in}\|_{L^2}^2-\|\rho_q(T)\|_{L^2}^2).
\end{equation}

The  main result of this section is

\begin{proposition}\label{p:main}
Within the setting of Proposition \ref{p:Onsager} let us assume in addition that $\beta,b$ and $\gamma_L, \gamma_T,\gamma_R$ satisfy

\begin{equation}\label{e:Dissipation}
\gamma_T<	\frac{b-1}{b+1}(1-(2b+1)\beta)<\gamma_R+\gamma_T,
\end{equation}
\begin{equation}\label{e:Dissipation2}
2\gamma_L<	\frac{b-1}{b+1} (1+\beta).
\end{equation}
Then there exists $\tilde{N}\in\N$ and $\gamma>0$ such that for any sufficiently small $\alpha>0$ there exists $a_0\gg 1$ with the following property:

For any initial datum $\rho_{in}\in L^2(\T^3)$ with $\int_{\T^3}\rho_{in}\,dx=0$ such that 
\begin{equation}\label{e:boundoninitialdatum}
    \|\rho_{in}\|_{H^n} \leq \lambda_q^n \min \{ D_q^{\sfrac12}, D_{q+1}^{\sfrac12} \}
    \qquad\textrm{ for }1\leq n\leq \tilde N,
\end{equation}
 we have
 \begin{equation}\label{e:mainstability}
 	|D_{q+1}-D_q|\leq \tfrac{1}{2}\lambda_q^{-\gamma}D_q
 \end{equation}
 and
 \begin{equation}\label{e:mainstability2}
  \sup_{t \le T} \|\rho_{q+1}(t) - \rho_{q}(t)\|_{L^2}^2 \le \lambda_q^{-2\gamma}D_{q}.
\end{equation}
\end{proposition}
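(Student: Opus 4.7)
The plan is to establish \eqref{e:mainstability2} via a three-step chain of $L^2$-comparisons, from which \eqref{e:mainstability} follows from the energy identity in \eqref{e:defDq}. The overarching idea is that, writing $u_{q+1}=\bar u_q+w_{q+1}$, the Mikado perturbation $w_{q+1}$ oscillates at the small length-scale $\lambda_{q+1}^{-1}$, so the equation \eqref{e:equationq} at level $q+1$ should homogenize to an effective advection-diffusion equation with drift $\bar u_q$ and an effective diffusivity built from the variance of the Mikado modes. The critical scaling match is that, by \eqref{e:kappaqkappaq+1}, the natural effective diffusivity produced by a flow of amplitude $\delta_{q+1}^{\sfrac12}$ at scale $\lambda_{q+1}^{-1}$ in a background diffusion $\kappa_{q+1}$ is exactly $\delta_{q+1}/(\lambda_{q+1}^2\kappa_{q+1})=\kappa_q$, which is the level-$q$ diffusivity.

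First I would invoke Proposition \ref{p:hom_trho} with the data $(\bar u_q, w_{q+1}, \kappa_{q+1})$. The homogenization corrector has an explicit Mikado-like form obtained from the cell problem associated with $U_{\vec k}$, $\psi_{\vec k}$, so all $C^n$ bounds on the corrector follow from \eqref{e:propertyofPhii}, \eqref{e:propertyoftildeRqi2} and the structural properties of $W_{\vec k}$, bypassing the elliptic-regularity obstruction that motivated the shear-flow setting of \cite{ArmstrongVicol}. The output is that $\rho_{q+1}$ is approximated in $L^\infty_t L^2_x$ by some $\tilde\rho$ satisfying
\begin{equation*}
\partial_t \tilde\rho + \bar u_q\cdot\nabla \tilde\rho = \div\bigl(K_q(x,t)\nabla\tilde\rho\bigr),
\end{equation*}
where the effective diffusion tensor $K_q(x,t)$ is concentrated on the active intervals $(i\tau_q+\tfrac13\tau_q, i\tau_q+\tfrac23\tau_q)$ through the cut-offs $\eta_i$ and carries amplitude proportional to $\sigma_q/(\lambda_{q+1}^2\kappa_{q+1})$. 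The remainder terms are estimable as $\lambda_q^{-\gamma}D_q^{\sfrac12}$ exactly under condition \eqref{e:Dissipation2}, which guarantees adequate separation between the corrector scale $\lambda_{q+1}^{-1}$ and the mollification scale $\ell_q$; the assumption \eqref{e:boundoninitialdatum} enters here to control the higher Sobolev norms of $\rho_{q+1}$ that the corrector expansion requires, propagated from $\rho_{in}$ via the parabolic bounds of Section \ref{s:energy}.

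Next I would apply Proposition \ref{p:t_avg} to replace the time-oscillating $K_q(x,t)$ by its time-average. By \eqref{e:propertyofetai} and \eqref{e:propertyofsigmaq}, the temporal mean of $\sigma_q\,\bar\eta^2(\cdot,\tau_q^{-1}t)$ recovers $c_0 c_1\sigma_q\sim \delta_{q+1}$ up to errors $\lambda_q^{-\gamma_E}+\lambda_q^{-\gamma_R}+\lambda_q^{-(b-1)\beta}$, and the spatial mean of $\sum_{\vec k}a_{\vec k}^2\vec k\otimes\vec k$ evaluated on $\Id$ (via the Mikado identity \eqref{e:MikadoProperty}) yields a constant multiple of $\Id$. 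Thus the time-averaged equation has constant isotropic diffusivity $\kappa_q^{\mathrm{eff}}$ agreeing with $\kappa_q$ up to $\lambda_q^{-\gamma}$. Crucially, \eqref{e:Dissipation} is the condition that ensures the gluing time-scale $\tau_q=\lambda_q^{-1+\beta-\gamma_T}$ is small relative both to the macroscopic time $T$ and to the diffusive time associated with $\kappa_q$, so that Proposition \ref{p:t_avg} gives the desired $\lambda_q^{-\gamma}D_q^{\sfrac12}$ gain in $L^\infty_t L^2_x$.

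Finally, the homogenized, time-averaged solution $\hat\rho$ satisfies an advection-diffusion equation with drift $\bar u_q$ and diffusivity $\kappa_q$, which differs from \eqref{e:equationq} at level $q$ only in the drift. I would conclude by a standard $L^2$-stability estimate for linear advection-diffusion (Section \ref{s:energy}), using that $\|u_q-\bar u_q\|_{L^1_t L^\infty_x}$ is controlled from \eqref{e:gluingz} together with a Biot--Savart / Bernstein argument; the gluing estimates of Proposition \ref{p:gluing} make this contribution of order $\tau_q\mathring\delta_{q+1}\ell_q^{-1-\alpha}\ll\lambda_q^{-\gamma}$. Chaining the three $L^2$-bounds yields \eqref{e:mainstability2}, and \eqref{e:mainstability} then follows from
\begin{equation*}
|D_{q+1}-D_q|=\tfrac12\bigl|\|\rho_{q+1}(T)\|_{L^2}^2-\|\rho_q(T)\|_{L^2}^2\bigr| \le \|\rho_{in}\|_{L^2}\,\|\rho_{q+1}(T)-\rho_q(T)\|_{L^2},
\end{equation*}
combined with the hypothesis on $\|\rho_{in}\|_{L^2}$ implicit in \eqref{e:boundoninitialdatum} at $n=1$ (via Poincar\'e). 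The main obstacle I anticipate is the first step: producing a homogenization remainder with a gain of the form $\lambda_q^{-\gamma}D_q^{\sfrac12}$ rather than the cruder $\lambda_q^{-\gamma}\|\rho_{in}\|_{L^2}$, which requires that the corrector expansion be performed against $\nabla\rho_{q+1}$ and that a Cauchy--Schwarz be used to pair $L^2_{t,x}$-norms of the residue with the already-integrated dissipation $D_{q+1}\sim D_q$. This is precisely the place where the inductive compatibility of parameters \eqref{e:Dissipation}--\eqref{e:Dissipation2} with the Onsager parameters \eqref{e:Onsager_Conditions} is indispensable.
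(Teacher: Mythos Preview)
Your overall architecture---spatial homogenization of the Mikado oscillation, followed by time-averaging of the gluing-induced oscillation in the effective diffusivity, followed by a comparison of the drifts $\bar u_q$ and $u_q$---matches the paper's approach (the paper in fact uses five steps, inserting a preparatory rewriting of the equation and a diagonal reduction of the homogenized matrix, but these are refinements of your steps). However, your final paragraph contains a genuine gap.

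You propose to deduce \eqref{e:mainstability} from \eqref{e:mainstability2} via
\[
|D_{q+1}-D_q|\le \|\rho_{in}\|_{L^2}\,\|\rho_{q+1}(T)-\rho_q(T)\|_{L^2}\le \|\rho_{in}\|_{L^2}\,\lambda_q^{-\gamma}D_q^{\sfrac12},
\]
and then to close by bounding $\|\rho_{in}\|_{L^2}\lesssim D_q^{\sfrac12}$ ``via Poincar\'e'' from \eqref{e:boundoninitialdatum} with $n=1$. But Poincar\'e and \eqref{e:boundoninitialdatum} give only $\|\rho_{in}\|_{L^2}\lesssim \lambda_q D_q^{\sfrac12}$, which is useless; and in fact $\|\rho_{in}\|_{L^2}\le D_q^{\sfrac12}$ is \emph{impossible}, since the energy identity always forces $D_q\le\tfrac12\|\rho_{in}\|_{L^2}^2$. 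So \eqref{e:mainstability} cannot be recovered from \eqref{e:mainstability2} this way.

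The paper avoids this by tracking the \emph{dissipation} (not just the $L^2$ difference) at every intermediate stage. Each of the stability tools used---Proposition~\ref{p:stability_in_ellipticity}, Corollary~\ref{r:hom_dissipation}, Proposition~\ref{p:t_avg}---outputs a bound of the form
\[
\bigl|\|\varrho_1(T)\|_{L^2}^2-\|\varrho_2(T)\|_{L^2}^2\bigr|\le (\textrm{small})\cdot\bigl|\|\rho_{in}\|_{L^2}^2-\|\varrho_2(T)\|_{L^2}^2\bigr|,
\]
i.e.\ a multiplicative comparison of dissipations. One then writes $D_{q+1}/D_q$ as a telescoping product of five ratios, each in $[1-\lambda_q^{-\frac32\gamma},1+\lambda_q^{-\frac32\gamma}]$, obtaining \eqref{e:mainstability} directly. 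The $L^2$-difference bound \eqref{e:mainstability2} is proved in parallel, not used as input.

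A secondary point: in your final drift-comparison step you quote $\tau_q\mathring\delta_{q+1}\ell_q^{-1-\alpha}$, which is the size of $u_q-\bar u_q$. That is too large to beat $\kappa_q$. The paper instead rewrites the drift difference in divergence form via the vector potential, $(u_q-\bar u_q)\cdot\nabla\rho=\div\bigl((z_q-\bar z_q)\times\nabla\rho\bigr)$, and then applies Proposition~\ref{p:stability_in_ellipticity} with the antisymmetric perturbation $z_q-\bar z_q$, whose size $\tau_q\mathring\delta_{q+1}\ell_q^{-\alpha}$ (from \eqref{e:gluingz}) is small relative to $\kappa_q$ precisely under the right inequality in \eqref{e:Dissipation}.
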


\begin{proof}
We start by fixing $\gamma,\alpha>0$. The exponent $\gamma>0$ is determined by the inequalities
\begin{equation}   \label{e:global_gamma}
\begin{split}
    \gamma = \tfrac{1}{4} \min \Bigl\{\gamma_L,\gamma_T,\gamma_R,\gamma_E,(b-1)\beta,(b-1)\theta,\gamma_T+\gamma_R+(2b-1)\beta+1-\theta, \\
    \frac{b-1}{b+1}(1-(2b+1)\beta) - \gamma_T \Bigr\}.
\end{split}
\end{equation}
We remark that the last condition in the first line above can be satisfied because of the right inequality in \eqref{e:Dissipation} and the definition \eqref{e:defkappaq} of $\theta$. In turn, $\alpha$ is then chosen sufficiently small so that 
\begin{equation}\label{e:conditiononalpha1}
	\alpha(1+\gamma_L)+2\gamma+\theta<\gamma_T+\gamma_R+(2b-1)\beta+1.
\end{equation}
We also fix 
\begin{equation}   \label{e:global_N}
\begin{split}
    \tilde N \geq \max \Bigl\{2 \left( \frac{b-1}{b+1} \big( 1 - (2b+1)\beta \big) - \gamma_T \right)^{-1}, \,
    1 + \frac{2b(b-1)(1+\beta)}{\gamma_T(b+1)} \Bigr\},
\end{split}
\end{equation}
where the first lower bound is given by $N_h$ in \eqref{e:hom_para_assump:Nh} and the second lower bound is given by $N$ to validate (A1) in Section \ref{s:time}. Now we will show the stability estimate 
\eqref{e:mainstability} in several steps, via consecutive stability estimates.

\noindent\emph{Step 1: Preparation ($\rho_{q+1} \leadsto \tilde \rho_{q+1}$)}

We start with the equation
\begin{equation}\label{e:equationq+1}
\begin{split}
\partial_t\rho_{q+1}+u_{q+1}\cdot\nabla\rho_{q+1}&=\kappa_{q+1}\Delta\rho_{q+1}\,,\\	
\rho_{q+1}|_{t=0}&=\rho_{in},
\end{split}
\end{equation}
and recall the form $u_{q+1}=\bar{u}_q+w_{q+1}$ of the advecting field $u_{q+1}$, where $w_{q+1}$ is given in \eqref{e:defwq+1}. Then we make use of the linear algebra identity $(Au)\times(Av)=\textrm{cof }A(u\times v)=(\det A) A^{-T}(u\times v)$ for any $3\times 3$ matrix $A$ and vectors $u,v\in\R^3$. From this we deduce that if $H$ is the antisymmetric matrix such that $Hv=-u\times v$ and $\det A=1$, then $\tilde H=A^{-1}HA^{-T}$ is the antisymmetric matrix such that $\tilde Hv=-(A^Tu)\times v$. In particular, using the antisymmetric matrix-valued functions $H_{\vec{k}}(\xi)$ introduced above in Section \ref{s:Mikado}, we deduce
\begin{equation*}
(\nabla\Phi^{-1}_i H_{\vec{k}}\nabla\Phi^{-T}_i)v=-(\nabla\Phi_i^TU_{\vec{k}})\times v.
\end{equation*}
Therefore, using the identity $\div(z\times\nabla\rho)=(\curl z)\cdot\nabla\rho$, we can write equation \eqref{e:equationq+1} equivalently as
\begin{equation}\label{e:equationAq+1}
\begin{split}
\partial_t\rho_{q+1}+\bar{u}_{q}\cdot\nabla\rho_{q+1}&=\div A_{q+1}\nabla\rho_{q+1}\,,\\	
\rho_{q+1}|_{t=0}&=\rho_{in}\,,
\end{split}	
\end{equation}
where the elliptic matrix $A_{q+1}=A_{q+1}(x,t)$ is defined as
\begin{equation}\label{e:defAq+1}
	A_{q+1} (x,t)=\kappa_{q+1}\Id + \sum_i \nabla\Phi_i^{-1} (x,t) \frac{\eta_i (x,t)}{\lambda_{q+1}}H^{(i)}(x,t,\lambda_{q+1}\Phi_i)\nabla\Phi_i^{-T} (x,t) \,,
\end{equation}
and
\begin{align*}
H^{(i)}(x,t,\xi)&=\sum_{\vec{k}\in\Lambda}\sigma_q^{\sfrac12}(t)a_{\vec{k}}(\tilde R_{q,i}(x,t))H_{\vec{k}}(\lambda_{q+1}\xi)\,.	
\end{align*}
Let $(\tilde\eta_i)_i$ be a partition of unity such that $\tilde\eta_i\eta_i=\eta_i$,  $\tilde\eta_i\eta_j=0$ for $j\neq i$ and satisfying estimates of the same type as \eqref{e:propertyofetai}: 
\begin{equation} \label{e:propertyoftildeetai}
    \|\partial_t^m\nabla^n_x\tilde\eta_i\|_{L^\infty}\leq \tilde C_{n,m}\tau_q^{-m}.
\end{equation}
Define the elliptic matrix
\begin{equation}\label{e:defAq+1tilde}
	\tilde A_{q+1}(x,t)=\sum_i\tilde\eta_i(x,t)\nabla\Phi_i^{-1}(x,t)\left[\kappa_{q+1}\Id+\frac{\eta_i(x,t)}{\lambda_{q+1}}H^{(i)}(x,t,\lambda_{q+1}\Phi_i(x,t))\right]\nabla\Phi_i^{-T}(x,t)\,.
\end{equation}
The estimate \eqref{e:propertyofPhii} implies, since $\tilde\eta_i$ is a partition of unity
\begin{equation}\label{e:AtildeA}
\|A_{q+1}-\tilde A_{q+1}\|_{L^\infty}\leq \kappa_{q+1}\sum_{i}\tilde\eta_i\|\nabla\Phi_i^{-1}\nabla\Phi_i^{-T}-\Id\|_{L^\infty}\leq C\kappa_{q+1}\lambda_{q}^{-\gamma_T}.	
\end{equation}
Therefore we can compare the cumulative dissipation in \eqref{e:equationAq+1} with that in
\begin{equation}\label{e:equationAq+1tilde}
\begin{split}
\partial_t\tilde\rho_{q+1}+\bar{u}_{q}\cdot\nabla\tilde\rho_{q+1}&=\div \tilde A_{q+1}\nabla\tilde \rho_{q+1}\,,\\	
\tilde \rho_{q+1}|_{t=0}&=\rho_{in}\,.
\end{split}	
\end{equation}
More precisely, assuming that $a_0$ is sufficiently large we can ensure $C\lambda_q^{-\gamma_T}\le \tfrac{1}{10}\lambda_q^{-\sfrac{\gamma_T}{2}}$, and then apply
Proposition \ref{p:stability_in_ellipticity} with $\varepsilon=\tfrac{1}{10}\lambda_q^{-\sfrac{\gamma_T}{4}}\le\tfrac12$ to conclude
\begin{align*}
 \sup_{t \le T} \|\rho_{q+1}(t) - \tilde\rho_{q+1}(t)\|_{L^2}^2 \le& \lambda_q^{-\gamma_T}\left|\|\rho_{in}\|^2_{L^2}-\|\tilde\rho_{q+1}(T)\|_{L^2}^2\right|, \\
   \left|\|\rho_{q+1}(T)\|^2_{L^2}-\|\tilde\rho_{q+1}(T)\|_{L^2}^2\right|\le& \lambda_q^{-\sfrac{\gamma_T}{2}}\left|\|\rho_{in}\|^2_{L^2}-\|\tilde\rho_{q+1}(T)\|_{L^2}^2\right|.
\end{align*}
We may also write this as
\begin{equation}\label{e:step1}
\left|D_{q+1}-\tilde D_{q+1}\right|\le \lambda_q^{-\sfrac{\gamma_T}{2}}\tilde D_{q+1} \le \lambda_q^{-{2\gamma}}\tilde D_{q+1},
\end{equation}
where $\tilde D_{q+1}=\tfrac12\left|\|\rho_{in}\|^2_{L^2}-\|\tilde\rho_{q+1}(T)\|_{L^2}^2\right|$.

\smallskip

\noindent\emph{Step 2: Spatial homogenization ($ \tilde \rho_{q+1}  \leadsto \bar\rho_{q}^{(1)}$)}

In the second step, we use estimates of quantitative homogenization and an explicit formula for the corrector, to replace \eqref{e:equationAq+1tilde} by the homogenized equation
\begin{equation}\label{e:equationAqbar}
\begin{split}
\partial_t\bar{\rho}_{q}^{(1)}+\bar{u}_{q}\cdot\nabla \bar{\rho}^{(1)}_{q}&=\div \bar{A}_{q}\nabla\bar{\rho}^{(1)}_{q}\,,\\	
\bar{\rho}_{q}^{(1)}|_{t=0}&=\rho_{in}\,.
\end{split}	
\end{equation}
The homogenized elliptic coefficient is given, like in classical homogenization theory, by
\begin{equation*} 
	\bar A_{q}(x,t) = \dashint \tilde A_{q+1}(x,t,\xi) \Big( \Id + \sum_i \eta_i(x,t) \nabla\Phi_i^T(x,t) \nabla_\xi\chi_i^T(x,t,\xi) \Big) \,d\xi\,
\end{equation*}
with corrector $\chi_i: \T^3 \times [0,T] \times \T^3 \rightarrow \R^3$ chosen in such a way, that in the homogenization Ansatz in the presence of a macroscopic drift (cf.~\cite{McLaughlin1985,HouYangWang2004})
\begin{equation*}
    \rho(x,t) = \bar{\rho}(x,t) + \frac{1}{\lambda} \sum_i \eta_i(x,t) \chi_i \big( x,t,\lambda \Phi_i(x,t) \big)\cdot\nabla\bar\rho(x,t)+\tilde\rho(x,t)
\end{equation*}
$\tilde\rho$ will be lower order. We will show below in Section \ref{s:homogenization} that, because of the special structure of the oscillating Mikado vector field $w_{q+1}$, $\chi$ can be defined \emph{explicitly} by
\begin{equation}\label{e:explicitchii}
	\chi_i(x,t,\xi) = \frac{\sigma_q^{1/2}(t)}{\kappa_{q+1} \lambda_{q+1}} \nabla \Phi_i^{-1}(x,t) \sum_{\vec{k}} a_{\vec{k}} \big( \tilde R_{q,i}(x,t) \big) \varphi_{\vec{k}}(\xi) \vec{k}\,. 
\end{equation}
Using the properties of Mikado flows in Section \ref{s:Mikado}, this formula allows us to obtain an explicit expression for $\bar{A}_{q+1}$. Indeed, since
$H^{(i)}$ 
satisfies $\dashint H^{(i)}\,d\xi=0$, we have
\begin{equation*}
    \bar{A}_q=\kappa_{q+1}\sum_i\tilde\eta_i\nabla\Phi_i^{-1}\nabla\Phi_i^{-T}+\frac{1}{\lambda_{q+1}}\sum_i\eta_i^2\nabla\Phi_i^{-1}\dashint H^{(i)}(\nabla_\xi\chi_i)^T\,d\xi.
\end{equation*}
On the other hand, using \eqref{e:explicitchii} and the definition of $H^{(i)}(x,t,\xi)$, 
\begin{align*}
	H^{(i)}(\nabla_\xi\chi_i)^T&=\frac{\sigma_q}{\lambda_{q+1}\kappa_{q+1}}\sum_{\vec{k} \in \Lambda}a_{\vec{k}}^2(\tilde R_{q,i})H_{\vec{k}}(\nabla\varphi_{\vec{k}}\otimes (\nabla\Phi_i^{-1}\vec{k}))\\
	&=\frac{\sigma_q}{\lambda_{q+1}\kappa_{q+1}}\sum_{\vec{k}  \in \Lambda}a_{\vec{k}}^2(\tilde R_{q,i})|\nabla\varphi_{\vec{k}}|^2(\vec{k}\otimes\vec{k})\nabla\Phi_i^{-T},
\end{align*}
where we used the definition of $H_{\vec{k}}(\xi)$ to deduce 
\begin{equation*}
	H_{\vec{k}}\nabla\varphi_{\vec{k}}=-(\vec{k}\times\nabla\varphi_{\vec{k}})\times\nabla\varphi_{\vec{k}}=|\nabla\varphi_{\vec{k}}|^2 \vec{k}.
\end{equation*}
Using the normalization in the definition of Mikado flows in Section \ref{s:Mikado} as well as Lemma \ref{l:Mikado}, we conclude
\begin{align*}
	\dashint H^{(i)}(\nabla_\xi\chi_i)^T\,d\xi=\frac{\sigma_q}{\lambda_{q+1}\kappa_{q+1}}\tilde R_{q,i}\nabla\Phi_i^{-T}\,.
\end{align*}
Hence
\begin{equation}\label{e:defbarA}
\begin{split}
\bar{A}_q(x,t) &=\kappa_{q+1}\sum_i\tilde\eta_i\nabla\Phi^{-1}_i\nabla\Phi_i^{-T}+\frac{\sigma_q}{\lambda_{q+1}^2\kappa_{q+1}}\sum_i \eta_i^2\nabla\Phi_i^{-1} \tilde R_{q,i}\nabla\Phi_i^{-T} \\
&=\kappa_{q+1}\sum_i\tilde\eta_i\nabla\Phi^{-1}_i\nabla\Phi_i^{-T}+\sigma_q \delta^{-1}_{q+1}\kappa_{q} \bar{\eta}^2(x,\tau_q^{-1}t) (\Id-\sigma_q^{-1}\mathring{\bar R}_q)\\
&=\kappa_{q+1}\sum_i\tilde\eta_i\nabla\Phi^{-1}_i\nabla\Phi_i^{-T}+\kappa_q \delta^{-1}_{q+1}\sigma_{q} \bar{\eta}^2(x,\tau_q^{-1}t)\Id-\kappa_{q} \delta^{-1}_{q+1} \bar{\eta}^2(x,\tau_q^{-1}t)\mathring{\bar R}_q,
\end{split}
\end{equation}
where for the second line we have used \eqref{e:propertyoftildeRqi} and \eqref{e:kappaqkappaq+1}, \eqref{e:propertyofetai}.
Let
\begin{equation}\label{e:defkappabarq}
\tilde{\kappa}_q(x,t):= \kappa_{q+1}+\kappa_{q} c_1^{-1}\bar{\eta}^2 (x,\tau_q^{-1}t),
\end{equation}
Therefore we have, using that $(\tilde\eta_i)_i$ is a partition of unity
\begin{equation}\label{e:bAparts}
\bar{A}_q	-\tilde\kappa_q\Id =\kappa_{q+1}\sum_i\tilde\eta_i(\nabla\Phi_i^{-1}\nabla\Phi_i^{-T}-\Id)+\kappa_q (\delta_{q+1}^{-1}\sigma_q-c_1^{-1}) \bar{\eta}^2 \Id-\kappa_q \delta_{q+1}^{-1} \bar{\eta}^2 \mathring{\bar{R}}_q,
\end{equation}
so, using \eqref{e:propertyofPhii}, \eqref{e:propertyofsigmaq}, \eqref{e:gluingR}
\begin{equation}\label{e:bAparts2}
|\bar{A}_q	-\tilde\kappa_q\Id| \le C \kappa_{q+1}\sum_i\tilde\eta_i \lambda_q^{-\gamma_T} + C\kappa_q \bar{\eta}^2 (\lambda_q^{-\gamma_E}+\lambda_q^{-\gamma_R} +\lambda_q^{-(b-1)\beta})+ C\kappa_q  \bar{\eta}^2 \delta_{q+1}^{-1} \mathring{\delta}_{q+1}\ell_q^{-2\alpha}. 
\end{equation}
Using now that by definition $\mathring{\delta}_{q+1}=\delta_{q+1}\lambda_q^{-\gamma_R}$, the fact that we can choose $\alpha$ arbitrarily small, and the definition \eqref{e:defkappabarq} of $\tilde{\kappa}_q$, we see that \eqref{e:bAparts2} yields for all $(x,t)$
\begin{equation}\label{e:barAbarkappa}
|\bar{A}_{q}(x,t)-\tilde{\kappa}_q(x,t)\Id|\leq C \tilde{\kappa}_{q}(x,t)\lambda_{q}^{-\min\{\gamma_T,\gamma_R,\gamma_E,(b-1)\beta\}}  \le C\tilde{\kappa}_{q}(x,t) \lambda_q^{-2\gamma}
\end{equation}
for some fixed constant $C$, where the latter inequality of \eqref{e:barAbarkappa} holds via the choice \eqref{e:global_gamma} of $\gamma$.
Then, as in Step 1 we choose $a_0$ sufficiently large to ensure that $C\lambda_q^{-2\gamma} \le \tfrac{1}{10}\lambda_q^{-\frac32\gamma}$. In particular, we may bound pointwise
\begin{equation}\label{e:comparebarAtildekappa}
\frac{1}{2}\tilde\kappa_q\Id\leq \bar{A}_q\leq 2\tilde\kappa_q\Id.
\end{equation}

We intend now to apply Corollary \ref{r:hom_dissipation} of Section \ref{s:homogenization}, with $\gamma$ given in \eqref{e:global_gamma} and the following choices
\begin{equation}\label{e:homo_connect}
\begin{split}
    \rho =& \tilde\rho_{q+1}, \\ 
    \bar \rho =& \bar{\rho}_q^{(1)}, \\
    u =& \bar u_q, \\
    \tilde R_{i} =& \tilde R_{q,i},
\end{split}
\quad \quad \quad 
\begin{split}
    \kappa =& \kappa_{q+1}, \\ 
    \lambda =& \lambda_{q+1}, \\ 
    \ell =& \ell_q, \\ 
    \tau =& \tau_q,
\end{split}
\quad \quad \quad 
\begin{split}
    \delta =& \delta_{q+1}, \\
    \sigma =& \sigma_q, \\
    \bar \kappa =& \tilde \kappa_q.
\end{split}
\end{equation}
In order to proceed, we need to verify the assumptions gathered in Section \ref{s:ass_homo} (for Proposition \ref{p:hom_trho}) and the assumption \eqref{r:hom_dissipation_eq0} (needed additionally for Corollary \ref{r:hom_dissipation}) for these choices of parameters. First of all, direct computations yield \eqref{e:hom_para_assump:deltaell}-\eqref{e:hom_para_assump:Nh} from \eqref{e:Dissipation}, \eqref{e:Dissipation2}, \eqref{e:global_gamma}, and \eqref{e:global_N}. The remaining assumptions of Section \ref{s:ass_homo} follow from the construction of the vector field in Section \ref{s:vectorfield}. More precisely, assumptions involving $\sigma$, i.e. \eqref{e:hom_assump:sigmadelta} and \eqref{e:hom_assump:sigma}, follow from \eqref{e:propertyofsigmaq}. Assumptions on $\Phi_i$ follow from \eqref{e:propertyofPhii} and \eqref{e:elltau}. The assumption \eqref{e:hom_assump:barkappa} is given by \eqref{e:comparebarAtildekappa} and the assumption \eqref{e:hom_assump:u} on $u$ is given by \eqref{e:gluingu}. The assumption \eqref{e:hom_assump:slow} involving $R$ follows from \eqref{e:propertyoftildeRqi2} and the fact that the functions $a_{\vec{k}}$ and their derivatives are bounded by absolute constants. The required properties of functions $\eta_i$ and  $\tilde \eta_i$ follow from \eqref{e:propertyofetai}, \eqref{e:propertyoftildeetai}. 
In turn, the explicit formula \eqref{e:hom_ed:chi} can then be used to verify assumption \eqref{e:hom_assump:chi}. Finally, \eqref{r:hom_dissipation_eq0} follows from \eqref{e:boundoninitialdatum}. 

Summarizing, Proposition \ref{p:hom_trho}  and Corollaries \ref{c:hom_trho}, \ref{c:hom_dissipation} and \ref{r:hom_dissipation} are applicable. We obtain:
\begin{align*}
   \sup_{t \le T}  \|\tilde\rho_{q+1}(t) - \bar{\rho}_{q}^{(1)}(t)\|_{L^2}^2 \lesssim& \lambda_q^{-4\gamma}\left|\|\rho_{in}\|^2_{L^2}-\|\bar{\rho}^{(1)}_{q}(T)\|_{L^2}^2\right|, \\ 
    \left|\|\tilde\rho_{q+1}(T)\|^2_{L^2}-\|\bar{\rho}_{q}^{(1)}(T)\|_{L^2}^2\right| \lesssim& \lambda_q^{-2\gamma}\left|\|\rho_{in}\|^2_{L^2}-\|\bar{\rho}^{(1)}_{q}(T)\|_{L^2}^2\right|\,.
\end{align*}
Thus, using $D_{q}^{(1)}=\tfrac12\left|\|\rho_{in}\|^2_{L^2}-\|\bar\rho^{(1)}_{q}(T)\|_{L^2}^2\right|$,
\begin{equation}\label{e:step2}
\begin{split}
   \sup_{t \le T}  \|\tilde\rho_{q+1}(t) - \bar{\rho}_{q}^{(1)}(t)\|_{L^2}^2 \le&   \lambda_q^{-3 \gamma}D_{q}^{(1)},\\
\left|\tilde D_{q+1}-D_{q}^{(1)}\right|\le C \lambda_q^{-2\gamma}D_{q}^{(1)} \le&   \lambda_q^{-\frac32 \gamma}D_{q}^{(1)}. 
\end{split}
\end{equation}

\smallskip

\noindent\emph{Step 3: Diagonal reduction ($ \bar\rho_{q}^{(1)} \leadsto \bar\rho_{q}^{(2)}$)}

Using once more \eqref{e:barAbarkappa}, we now apply Proposition \ref{p:stability_in_ellipticity} with 
$\varepsilon=\tfrac{1}{10}\lambda_q^{-\frac32 \gamma}$ to conclude
\begin{equation}
\label{e:step3}  
\begin{split}
\sup_{t \le T}  \|\bar{\rho}_{q}^{(2)}(t) - \bar{\rho}_{q}^{(1)}(t)\|_{L^2}^2 \le \lambda_q^{-3 \gamma}D_{q}^{(2)}, \\
\left|D_q^{(2)}-D_q^{(1)}\right|\le \lambda_q^{-\frac32\gamma}D_{q}^{(2)},
\end{split}
\end{equation}
where $D_{q}^{(i)}=\tfrac12\left|\|\rho_{in}\|^2_{L^2}-\|\bar\rho^{(i)}_{q}(T)\|_{L^2}^2\right|$ and $\bar{\rho}^{(2)}_{q}$ is the solution of 
\begin{equation}\label{e:equationkappaqbar}
\begin{split}
\partial_t\bar{\rho}^{(2)}_{q}+\bar{u}_{q}\cdot\nabla\bar{\rho}^{(2)}_{q}&=\div \tilde{\kappa}_q \nabla\bar{\rho}^{(2)}_{q}\,,\\	
\bar{\rho}_{q}^{(2)}|_{t=0}&=\rho_{in}\,.
\end{split}	
\end{equation}

\smallskip

\noindent\emph{Step 4: Time averaging ($ \bar\rho_{q}^{(2)} \leadsto \bar\rho_{q}^{(3)}$)}

The advection-diffusion equation \eqref{e:equationkappaqbar} has two different characteristic time-scales: the advective time-scale $\|\nabla \bar{u}_q\|_{L^\infty}^{-1}$ and the time-scale $\tau_q$ given by the time-oscillatory behaviour of the ellipticity coefficient $\tilde{\kappa}_q(x,t)$ given in \eqref{e:defkappabarq}, with the relationship given by \eqref{e:elltau}: $\|\nabla \bar{u}_q\|_{L^\infty}\tau_q\leq M \lambda_q^{-\gamma_T}$. Let us now invoke Proposition \ref{p:t_avg} with the following choices
(on the right-hand side there are ingredients of the current construction, which define the objects appearing in Proposition \ref{p:t_avg})
\[
    \eta := c_1^{-1}\bar{\eta}^2, \quad \kappa_0 := \kappa_{q+1}, \quad \kappa_1 := \kappa_{q},
    \quad \mu := \delta_q^{\sfrac{1}{2}} \lambda_q, \quad \tau := \tau_q, \quad u:=\bar u_q.
\]
Proposition \ref{p:t_avg} requires assumptions (A1)-(A4). The assumption (A1) follows from $\tau \mu = \lambda_q^{-\gamma_T}$ (valid in view of \eqref{e:elltau}), and the second lower bound for $\tilde N$ in \eqref{e:global_N}; observe that the resulting $N$ of (A1) does not depend on $q$. The assumptions (A2)-(A3) follow from assumptions \eqref{e:Dissipation}, \eqref{e:Dissipation2}, \eqref{e:boundoninitialdatum}, \eqref{e:gluingu}, $b>1$, as well as \eqref{e:elltau}. Control of the cutoff-functions in \eqref{e:propertyofetai} implies (A4).
Proposition \ref{p:t_avg}  then yields
\begin{align*}
	\sup_{t \le T} \|\bar{\rho}^{(3)}_{q}(t)-\bar{\rho}^{(2)}_{q}(t)\|_{L^2}^2 \leq C\lambda_q^{-2\gamma_T}(D^{(2)}_q+D^{(3)}_q),	
\end{align*}
\begin{equation}\label{e:step4-1}
	\left|\|\bar{\rho}^{(3)}_{q}(T)\|^2_{L^2}-\|\bar{\rho}^{(2)}_{q}(T)\|_{L^2}^2\right|\leq C\lambda_q^{-\gamma_T}(D^{(2)}_q+D^{(3)}_q),	
\end{equation}
where 
\begin{equation*}
D^{(i)}_q=\tfrac12 \left|\|\rho_{in}\|^2_{L^2}-\|\bar{\rho}^{(i)}_{q}(T)\|_{L^2}^2\right|
\end{equation*}
and $\bar{\rho}^{(3)}_q$ is the solution of  
\begin{equation}\label{e:equationkappaq}
\begin{split}
\partial_t\bar{\rho}^{(3)}_{q}+\bar{u}_{q}\cdot\nabla\bar{\rho}^{(3)}_{q}&=(\kappa_{q+1}+\kappa_{q})\Delta\bar{\rho}^{(3)}_{q}\,,\\	
\bar{\rho}_{q}^{(3)}|_{t=0}&=\rho_{in}\,.
\end{split}	
\end{equation}
Choosing $a_0$ sufficiently large, we may ensure that $C\lambda_q^{-\gamma_T}<\tfrac{1}{4}\lambda_q^{-\frac32 \gamma}$ in \eqref{e:step4-1}, from which we can then conclude  
\begin{equation*}
\left|\frac{D_q^{(2)}}{D_q^{(3)}}-1\right|\leq \frac{1}{4}\lambda_q^{-\frac32 \gamma}\left(1+\frac{D_q^{(2)}}{D_q^{(3)}}\right).
\end{equation*}
We deduce
\begin{equation}\label{e:step4}
\left|\frac{D_q^{(2)}}{D_q^{(3)}}-1\right|\leq \frac{1}{2}\lambda_q^{-\frac32 \gamma}.
\end{equation}

\smallskip

\noindent\emph{Step 5: Gluing estimate ($ \bar\rho_{q}^{(3)} \leadsto \rho_{q}$)}

Finally, we compare \eqref{e:equationkappaq} to \eqref{e:equationq} by using the estimate \eqref{e:gluingz}. Indeed, using potentials for velocities (cf. \eqref{e:Biot_Savart}) we can write \eqref{e:equationkappaq} as
\begin{equation*}
\begin{split}
\partial_t\bar{\rho}^{(3)}_{q}+u_{q}\cdot\nabla\bar{\rho}^{(3)}_{q}&=\div \left(\kappa_q \nabla\bar{\rho}^{(3)}_{q}+\kappa_{q+1}\nabla\bar{\rho}^{(3)}_{q}+(z_q-\bar{z}_q)\times\nabla\bar{\rho}^{(3)}_q\right)\,,\\	
\bar{\rho}_{q}^{(3)}|_{t=0}&=\rho_{in}\,.
\end{split}	
\end{equation*}
On the other hand \eqref{e:gluingz}, \eqref{e:elltau} and our choice of parameters \eqref{e:conditiononalpha1},  \eqref{e:global_gamma} imply
\begin{equation}\label{e:gluingestimate}
\|z_q-\bar{z}_q\|_{L^\infty}\le C \tau_q\delta_{q+1}\lambda_q^{-\gamma_R+\alpha(1+\gamma_L)}\leq  C \kappa_q\lambda_q^{-2\gamma}\,,\quad \kappa_{q+1}=\lambda_q^{-(b-1)\theta}\kappa_q\leq \kappa_q\lambda_q^{-4\gamma}.	
\end{equation}
A final application of Proposition \ref{p:stability_in_ellipticity}, assuming $a_0$ is sufficiently large to absorb the constant, leads to 
\begin{align*}  \sup_{t \le T} \|\bar{\rho}^{(3)}_{q}(t) - \rho_{q}(t)\|_{L^2}^2 &\le \lambda_q^{-3 \gamma}\left|\|\rho_{in}\|^2_{L^2}-\|\rho_{q}(T)\|_{L^2}^2\right|, \\
    \left|\|\bar{\rho}^{(3)}_{q}(T)\|^2_{L^2}-\|\rho_{q}(T)\|_{L^2}^2\right| &\le \lambda_q^{-\frac32 \gamma}\left|\|\rho_{in}\|^2_{L^2}-\|\rho_{q}(T)\|_{L^2}^2\right|,
\end{align*}
where $\rho_q$ is the solution of \eqref{e:equationq}, equivalently
\begin{equation}\label{e:step5}
\begin{split}
 \sup_{t \le T} \|\bar{\rho}^{(3)}_{q}(t) - \rho_{q}(t)\|_{L^2}^2 \le \lambda_q^{-3\gamma}D_{q}\,,\\
	\left|D_q^{(3)}-D_{q}\right|\le \lambda_q^{-\frac32\gamma}D_{q}\,.
     \end{split}
\end{equation}

\smallskip
\noindent\emph{Final estimate}

Overall, we see that after the five stability steps above, we achieve the estimate
\begin{equation}\label{e:stab_af}
\frac{D_{q+1}}{D_q}=\frac{D_{q+1}}{\tilde D_{q+1}}\frac{\tilde D_{q+1}}{D_q^{(1)}}\frac{D_{q}^{(1)}}{D_q^{(2)}}\frac{D_{q}^{(2)}}{D_q^{(3)}}\frac{D_{q}^{(3)}}{D_q}\geq \left(1-\lambda_q^{-\frac32\gamma} \right)^5D_q\geq \left(1-\frac12 \lambda_q^{-\gamma}\right)D_q,
\end{equation}
where we have again assumed $a_0$ is sufficiently large to absorb the exponent $5$ and gain the factor $\frac12$ at the expense of decreasing  $\frac32\gamma$ to $\gamma$.
The estimate \eqref{e:stab_af} together with the analogous estimate from above yields \eqref{e:mainstability}. Similarly we obtain \eqref{e:mainstability2}.
The proof of Proposition \ref{p:main} is concluded.
\end{proof}

\subsection{Construction of the vector field - h-principle}
\label{s:hprinciple}

In Section \ref{s:vectorfield} we detailed the main iteration scheme for producing H\"older-continuous weak solutions of the Euler equations. What remains is to produce an initial vector field and associated Reynolds tensor, which satisfies the inductive assumptions \eqref{e:R_q_inductive_est}-\eqref{e:energy_inductive_assumption} for \emph{some} $q\in\N$. 
To this end we recall that a smooth strict subsolution to the Euler equations is a smooth triple $(\bar{u},\bar{p},\bar{R})$ on $\T^3\times[0,T]$ solving the Euler-Reynolds system \eqref{e:EulerReynolds} with the normalizations $\dashint_{\T^3}\bar{u}\,dx=0$, $\dashint_{\T^3}\bar{p}\,dx=0$, such that $\bar{R}(x,t)$ is uniformly positive definite on $\T^3\times[0,T]$. The energy associated with a subsolution is (cf.~\cite{DaSz2017,DaRuSz2023,BDSV})
\begin{equation}\label{e:hprinciple_energy}
    e(t)=\dashint_{\T^3}(|\bar{u}|^2+\tr\bar{R})\,dx.
\end{equation}
We have the following statement, which is a variant of \cite[Proposition 3.1]{DaSz2017}, see also \cite[Theorem 7.1]{BDSV} and \cite[Proposition 4.1]{DaRuSz2023}. 

\begin{proposition}\label{p:initialu}
    Let $(\bar{u},\bar{p},\bar{R})$ be a smooth strict subsolution with energy $e$. Let the parameters $M,\bar{e}$,  $\beta,b,\gamma_T,\gamma_R,\gamma_E,\alpha>0$ and $a_0\gg 1$ be as in Proposition \ref{p:Onsager}, and $(\delta_q,\lambda_q)_{q\in\N}$ be as in \eqref{e:lambdadelta}. 
    
    Then, there exists $q_0\in\N$ such that, for any $q\geq q_0$ there exists $(u_q,\mathring{R}_q)$ solving \eqref{e:EulerReynolds} with \eqref{e:ERconstraints} such that \eqref{e:R_q_inductive_est}-\eqref{e:energy_inductive_assumption} hold. Moreover, for any $\alpha\in(0,1)$
    \begin{align}
        \|\bar{u}-u_q\|_{C^0}&\leq \bar{C}\|\bar{R}\|_{C^0}^{\sfrac12}\,,\label{e:hprinciple-C0}\\
        \|\bar{z}-z_q\|_{C^\alpha}&\leq \bar{C}\|\bar{R}\|_{C^\alpha}^{\sfrac12}\lambda_q^{-1+\alpha}\label{e:hpriciple-C-1}\,,
    \end{align}
    where the constant $\bar{C}$ only depends on $\alpha$, $\bar{u}$ and on $\delta:=\inf_{(x,t)}\frac{\min\{\bar{R}\zeta\cdot\zeta:|\zeta|=1\}}{\tr\bar{R}}$.
\end{proposition}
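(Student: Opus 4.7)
The plan follows the standard h-principle strategy for H\"older Euler flows, as in \cite[Proposition 3.1]{DaSz2017} and \cite[Theorem 7.1]{BDSV}. It consists of two phases: (1) a preparation step producing a pair $(u_{q_0}, \mathring{R}_{q_0})$ satisfying the inductive estimates at a single sufficiently large initial scale $q_0$, and (2) iteration of Proposition \ref{p:Onsager} to obtain $(u_q, \mathring{R}_q)$ for all $q \geq q_0$. The closeness bounds \eqref{e:hprinciple-C0}-\eqref{e:hpriciple-C-1} then follow from a direct estimate at the initial step combined with telescoping across the iteration.

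For the preparation, I would fix $q_0$ large and set $u_{q_0} := \bar u + w_{q_0}$, where $w_{q_0}$ is a Mikado flow as in Section \ref{s:Mikado} at frequency $\mu$, with Mikado coefficients determined via Lemma \ref{l:Mikado} applied to $\bar R + \rho(t)\Id$. Here $\rho(t)$ is a scalar tuned so that $\dashint|w_{q_0}|^2\,dx$ matches the prescribed energy gap $e(t) - \dashint|\bar u|^2\,dx - \bar e\delta_{q_0+1}$, which delivers \eqref{e:energy_inductive_assumption}; Lemma \ref{l:Mikado} applies thanks to the uniform positive definiteness of $\bar R$ quantified by $\delta$. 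The new Reynolds defect $\mathring R_{q_0}$ arises from the transport, Nash, and oscillation errors and satisfies $\|\mathring R_{q_0}\|_{C^0}\lesssim C(\bar u,\bar R)\mu^{-1}$ via the inverse divergence operator. The parameter $\mu$ must satisfy $\mu\gtrsim\lambda_{q_0}^{2b\beta+\gamma_R}$ for \eqref{e:R_q_inductive_est} and $\mu\lesssim\|\bar R\|^{-1/2}\lambda_{q_0}^{1-\beta}$ (with refinements for higher $n$ using $\bar N$) for the $C^n$ bound \eqref{e:u_q_inductive_est}; this window is non-empty for $q_0$ sufficiently large, thanks to $(2b+1)\beta+\gamma_R<1$ which follows from \eqref{e:Onsager_Conditions}.

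For the iteration and closeness, I would apply Proposition \ref{p:Onsager} inductively to produce $(u_q,\mathring R_q)$ for $q>q_0$. The estimate \eqref{e:hprinciple-C0} follows from the triangle inequality, using $\|w_{q_0}\|_{C^0}\lesssim \|\bar R\|^{1/2}$ (Mikado amplitude) and $\sum_{q'\geq q_0}\|u_{q'+1}-u_{q'}\|_{C^0}\lesssim M\delta_{q_0+1}^{1/2}$ by \eqref{e:v_diff_prop_est} and geometric summation, the latter absorbed into $\bar C\|\bar R\|^{1/2}$ by choosing $q_0$ so that $\delta_{q_0+1}^{1/2}\lesssim \|\bar R\|^{1/2}$. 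The estimate \eqref{e:hpriciple-C-1} exploits the curl structure of both $w_{q_0}$ and each iterative correction $u_{q'+1}-u_{q'}$: the Biot-Savart operator $\mathcal{B}$ gains one derivative, yielding $\|\mathcal{B} w_{q_0}\|_{C^\alpha}\lesssim \|\bar R\|_{C^\alpha}^{1/2}\mu^{-1+\alpha}$ and $\|z_{q'+1}-z_{q'}\|_{C^\alpha}\lesssim \delta_{q'+1}^{1/2}\lambda_{q'+1}^{-1+\alpha}$. Summation over $q'$ converges super-exponentially in view of $\lambda_q=\lceil a^{(b^q)}\rceil$, and the resulting total, together with the initial contribution, yields the required rate.

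The main technical obstacle is the tuning of the preparation step: the frequency $\mu$ must reconcile the lower bound imposed by the smallness condition \eqref{e:R_q_inductive_est} on the new Reynolds defect with the upper bound imposed by \eqref{e:u_q_inductive_est} universally in $1\le n\le\bar N$ with the \emph{universal} constant $M$. The compatibility of these is ensured precisely by \eqref{e:Onsager_Conditions} (equivalently, $(2b+1)\beta+\gamma_R<1$), which explains the lower bound on $q_0$ in the statement. Once the preparation succeeds, the rest is a routine combination of Proposition \ref{p:Onsager} with the Mikado curl structure inherited from Section \ref{s:Mikado}.
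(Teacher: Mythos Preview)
Your two-phase strategy --- prepare once at a single $q_0$ and then iterate Proposition~\ref{p:Onsager} to reach all $q>q_0$ --- differs from the paper's route. The paper does \emph{not} iterate here: for each $q\geq q_0$ it constructs $(u_q,\mathring R_q)$ \emph{directly} from the subsolution by a single Mikado perturbation $w$ at an intermediate frequency $\lambda=\lambda_q^{1-\gamma_o}$ (no gluing, no time cutoffs, since the inverse flow of the smooth $\bar u$ is globally defined). The exponent $\gamma_o$ is chosen so that simultaneously $C\lambda^{-1+\alpha}\le\delta_{q+1}\lambda_q^{-\max\{\gamma_R,\gamma_E\}}$ (giving \eqref{e:R_q_inductive_est} and \eqref{e:energy_inductive_assumption}) and $C\lambda\le M\delta_q^{1/2}\lambda_q$ (giving \eqref{e:u_q_inductive_est}); the window $\beta<\gamma_o<1-2b\beta-\max\{\gamma_R,\gamma_E\}-\alpha$ is non-empty by \eqref{e:b_beta_rel}--\eqref{e:Onsager_Conditions}.

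Your preparation step is essentially this construction at the single level $q_0$, so the inductive estimates and the non-decaying bound \eqref{e:hprinciple-C0} do go through along your iteration. The genuine gap is \eqref{e:hpriciple-C-1}: the required bound $\|\bar z-z_q\|_{C^\alpha}\le\bar C\|\bar R\|_{C^\alpha}^{1/2}\lambda_q^{-1+\alpha}$ \emph{decays in $q$}. In your telescoping
\[
\|\bar z-z_q\|_{C^\alpha}\le\|\bar z-z_{q_0}\|_{C^\alpha}+\sum_{q'=q_0}^{q-1}\|z_{q'+1}-z_{q'}\|_{C^\alpha},
\]
the first term is of order $\|\bar R\|^{1/2}\mu^{-1+\alpha}$, a fixed number depending only on the preparation scale $q_0$; no amount of super-exponential convergence in the tail sum can make the total $\lesssim\lambda_q^{-1+\alpha}$ for $q>q_0$. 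The paper sidesteps this by redoing the one-step perturbation at frequency $\sim\lambda_q$ for \emph{each} $q$, so that $\bar z-z_q$ consists of a single corrector at that scale and inherits the $\lambda_q^{-1+\alpha}$ rate directly from the curl form \eqref{e:hprinciple_defw}. To repair your argument you would have to rerun your preparation step afresh at every $q\geq q_0$ --- at which point the subsequent iteration becomes superfluous for the purposes of this proposition.
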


\begin{proof}
    We construct, following \cite{DaSz2017,DaRuSz2023}, $u_q:=\bar{u}+w$, where $w$ is from the perturbation step of Section \ref{s:Onsager}, but without the need for time-cutoffs. More precisely, we define
    \begin{equation*}
        \tilde R=\bar{R}-\tfrac13\delta_{q+1}\Id.
    \end{equation*}
    For $q$ sufficiently large, we ensure that
    \begin{equation*}
        \frac{1}{\tr \tilde R}\tilde R\geq \tfrac{1}{2}\delta\Id.
    \end{equation*}
    Further, define $\Phi=\Phi(x,t)$ to be the inverse flow associated to the vector field $\bar{u}$, i.e.~such that $(\partial_t+\bar{u}\cdot\nabla)\Phi=0$ and $\Phi(x,0)=x$. Since $\bar{u}\in C^\infty(\T^3\times[0,T])$ with $\div\bar{u}=0$, for every $t$ the map $\Phi(\cdot,t)$ is a volume-preserving diffeomorphism, and there exists a constant $C$ such that 
    \begin{equation*}
        C^{-1}\leq |\nabla\Phi|,|\nabla\Phi^{-1}|\leq C.
    \end{equation*}
    Now we apply Lemma \ref{l:Mikado} with 
    \begin{equation*}
        \mathcal{N}:=\left\{R\in \mathcal{S}^{3\times 3}_+:\,R=\tfrac{1}{\tr S}ASA^T\textrm{ where $C^{-1}\leq |A|,|A^{-1}|\leq C$ and $S\geq \tfrac12\delta\tr S \, \Id$}\right\}\,,
    \end{equation*}
    to obtain $\Lambda\subset \mathbb{Q}^3$ and $a_{\vec{k}}$ satisfying \eqref{e:MikadoProperty}.
    We set (cf.~\eqref{e:defRqi})
    \begin{align*}
        \tilde S=\frac{1}{\tr \tilde R}\nabla\Phi \tilde R\nabla\Phi^T,
    \end{align*}
    and observe that $\tilde S(x,t)\in \mathcal{N}$. The new perturbation $w$ is then defined, in analogy with \eqref{e:defwq+1}, as
\begin{equation}\label{e:hprinciple_defw}
	w=\frac{1}{\lambda}\curl\left[\sum_{\vec{k}\in\Lambda}(\tr\tilde R)^{\sfrac12}a_{\vec{k}}(\tilde S)\nabla\Phi^TU_{\vec{k}}(\lambda\Phi)\right]\,.
\end{equation}
The associated Reynolds stress is (cf.~\eqref{e:decompR})
\begin{equation*}
\mathring{R}_{q}=\mathcal{R}\left[\partial_tw+\bar{u}\cdot\nabla w+w\cdot\nabla\bar{u}+\div(w\otimes w-\tilde R)\right],
\end{equation*}
where $\mathcal R$ is the inverse divergence operator on symmetric $2$-tensors, introduced in \cite{DSz13} (cf.~\eqref{e:R:def} in Section \ref{s:Onsager}).
Since $\bar{R}-\tilde R$ is a constant multiple of the identity, it easily follows, as in \cite{DaSz2017,BDSV}, that $u_q,\mathring{R}_q$ is a solution of \eqref{e:EulerReynolds}. Moreover, the estimates in \cite{DaSz2017,BDSV} (see also Section \ref{s:perturbation}) imply
\begin{align*}
    \|\mathring{R}_q\|_{C^0}&\leq C\lambda^{-1+\alpha}\,,\\
    \|w\|_{C^n}&\leq C\|\bar{R}\|_{C^0}^{\sfrac12}\lambda^n\quad\textrm{ for all }n\leq \bar{N}\,,\\
    \left|\dashint_{\T^3}|u_q|^2-|\bar u|^2-\tr\tilde R\,dx\right|&\leq C\lambda^{-1+\alpha}\,. 
\end{align*}
Here the constant $C$ depends on $(\bar{u},\bar{R})$ and on $\mathcal{N}$.
It remains to choose $\lambda$ and $q$. We fix an exponent $0<\gamma_o$ such that
\begin{equation*}
\beta<\gamma_o\textrm{ and }\alpha+\gamma_o<1-2b\beta-\max\{\gamma_E,\gamma_R\}
\end{equation*}
and define $\lambda=\lambda_q^{1-\gamma_o}$ ($q$ still to be fixed).
Observe that \eqref{e:b_beta_rel}-\eqref{e:Onsager_Conditions} guarantee the existence of such $\gamma_o$. Now, validity of \eqref{e:R_q_inductive_est}-\eqref{e:energy_inductive_assumption} follows from
\begin{equation*}
C\lambda^{-1+\alpha}\leq \delta_{q+1}\lambda_q^{-\max\{\gamma_E,\gamma_R\}},\textrm{ as well as }C\lambda\leq M\delta_q^{\sfrac12}\lambda_q,\,\lambda\leq\lambda_q.
\end{equation*}
But by our choice of the exponent $\gamma_o$ these inequalities are satisfied for $q$ sufficiently large. 
\end{proof}

\section{Proofs of main results}\label{s:main_proof}

Now we are ready to state our main result in full generality: 
\begin{theorem}\label{t:main_general}
Let $T_0<\infty$. Let $(\bar{u},\bar{p},\bar{R})$ be a smooth strict Euler subsolution on $\T^3 \times [0,T_0]$ with smooth, strictly positive energy $e: [0,T_0] \to \R$. Let $0<\beta<1/3$ and $1<b<\min\{\sqrt{\tfrac32},\tfrac{1-\beta}{2\beta}\}$. There exists $\bar{C}$, depending only on $\bar{u}$ and on $\inf_{(x,t)}\frac{\min\{\bar{R}\zeta\cdot\zeta:|\zeta|=1\}}{\tr\bar{R}}$ such that, for any $\epsilon>0$, we have:

There exists a weak solution $u\in C^{\beta}(\T^3\times [0,T_0])$ to the Euler equations \eqref{e:Euler} with kinetic energy $\int_{\T^3}|u(x,t)|^2\,dx=e(t)$ for all $t$, such that 
    \begin{align}
        \|\bar{u}-u\|_{C^0}&\leq \bar{C}\|\bar{R}\|_{C^0}^{\sfrac12}\,,\label{e:hprincipleT1}\\
        \|\bar{z}-z\|_{C^0}&\leq \epsilon\label{e:hprincipleT2},
    \end{align}
where $\bar{z}, z$ are potentials for, respectively, $\bar{u}$, $u$ (cf.~\eqref{e:Biot_Savart}).

Moreover, there exists $\delta_0>0$ (depending on the previous choices) such that for any $T\le T_0$ and for every non-zero initial datum $\rho_{in}\in H^1(\T^3)$ with zero mean, the family of unique solutions $\{\rho_{\kappa}\}_{\kappa>0}$ to advection-diffusion equation \eqref{e:advectiondiffusion} with velocity field $u$ satisfies 
\begin{equation}\label{e:anomalousdissipation_gen}
\limsup_{\kappa\to 0}\quad \kappa\int_0^T\|\nabla\rho_\kappa\|_{L^2}^2\,dt\geq c_0 \|\rho_{in}\|_{L^2}^2,
\end{equation}
where $c_0>0$ satisfies
\begin{equation} \label{e:main_const}
c_0\geq  C(\beta,b)\ell_{in}^{-2}\min\left\{ \left(\frac{\ell_{in}}{\ell_0}\right)^{\frac{4b}{2-b\theta}}T^{\frac{2b}{2-b\theta}},\delta_0,\ell_{in}^{2b}\right\}.
    \end{equation}
where $\ell_0 := \sup_{f \in \dot H^1 (\T^3)} \frac{\|f\|_{L^2}} {\|\nabla f\|_{L^2}}$,  
$ \ell_{in}:=\frac{\|\rho_{in}\|_{L^2}} {\|\nabla \rho_{in}\|_{L^2}}$, and $\theta=\frac{2b}{b+1}(1+\beta)$ (c.f.~\eqref{e:defkappaq})
\end{theorem}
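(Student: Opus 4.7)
The plan is to construct the velocity field $u$ as the $C^\beta$ limit of a sequence $(u_q, \mathring R_q)$ produced by iterating Proposition \ref{p:Onsager} from the h-principle seed of Proposition \ref{p:initialu}, and then to deduce anomalous dissipation by iterating the stability estimate of Proposition \ref{p:main} beyond a critical index $q^\ast$ chosen in terms of $\rho_{in}$.

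With $\beta, b$ fixed, first choose small parameters $\gamma_L, \gamma_T, \gamma_R, \gamma_E, \alpha > 0$ and large $\bar N, \tilde N$ so that the hypotheses of both Proposition \ref{p:Onsager} and Proposition \ref{p:main} are simultaneously satisfied; the restriction $b<\sqrt{\sfrac32}$ keeps the exponent $2b/(2-b\theta)$ in \eqref{e:main_const} positive and finite over the whole admissible range of $\beta$. Apply Proposition \ref{p:initialu} to $(\bar u, \bar p, \bar R)$, with an energy profile chosen to match $e$ in the limit, to obtain $(u_{q_0}, \mathring R_{q_0})$ satisfying \eqref{e:R_q_inductive_est}--\eqref{e:energy_inductive_assumption} together with \eqref{e:hprinciple-C0}--\eqref{e:hpriciple-C-1} for some large $q_0$. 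Iterating Proposition \ref{p:Onsager}, the estimate \eqref{e:v_diff_prop_est} combined with $\delta_q = \lambda_q^{-2\beta}$ and interpolation yields $\sum_q \|u_{q+1}-u_q\|_{C^{\beta'}}<\infty$ for every $\beta'<\beta$, so $u_q\to u$ in $C^{\beta'}$ and $\mathring R_q\to 0$ uniformly. The limit $u\in C^\beta$ is then a weak Euler solution with prescribed energy $e$. Summing \eqref{e:v_diff_prop_est} and \eqref{e:hpriciple-C-1} over $q\ge q_0$ and enlarging $q_0$ if necessary gives \eqref{e:hprincipleT1}--\eqref{e:hprincipleT2}.

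For anomalous dissipation, given $\rho_{in}\in \dot H^1$ we identify the smallest $q^\ast\ge q_0$ for which the compatibility condition \eqref{e:boundoninitialdatum} holds for all $q\ge q^\ast$; since $\rho_{in}$ may only lie in $H^1$, we first replace it by $P_{\le \lambda_{q^\ast}}\rho_{in}$, the remainder being absorbed via \eqref{e:mainstability2}. The choice of $q^\ast$ amounts to balancing $\lambda_{q^\ast}\sim \ell_{in}^{-1}$ against a quantitative lower bound on $D_{q^\ast}$ obtained from a short-time energy analysis of \eqref{e:equationq}: testing by $\rho_{q^\ast}$ and using $\|\nabla \rho_{q^\ast}\|_{L^2}\sim\|\nabla \rho_{in}\|_{L^2}$ on a brief interval gives
\begin{equation*}
  D_{q^\ast} \gtrsim \min\bigl\{\kappa_{q^\ast} T \|\nabla \rho_{in}\|_{L^2}^2,\ \tfrac12 \|\rho_{in}\|_{L^2}^2\bigr\},
\end{equation*}
and after substituting $\kappa_{q^\ast}=\lambda_{q^\ast}^{-\theta}$ and the optimal choice of $\lambda_{q^\ast}$ in terms of $\ell_0, \ell_{in}, T$ this produces the three-term minimum appearing in \eqref{e:main_const}.

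With the base estimate in hand, iterate Proposition \ref{p:main} for $q\ge q^\ast$: the relative error $|D_{q+1}/D_q-1|\le \tfrac12\lambda_q^{-\gamma}$ together with super-exponential growth of $\lambda_q$ makes the product $\prod_{q\ge q^\ast}\bigl(1-\tfrac12\lambda_q^{-\gamma}\bigr)$ converge to a strictly positive limit (enlarging $q^\ast$ if necessary), so $D_q\to D_\infty\ge \tfrac12 D_{q^\ast}$. By \eqref{e:mainstability2} the sequence $\rho_q$ is also Cauchy in $L^\infty_tL^2_x$. Finally, comparing $\rho_q$ (drift $u_q$, diffusivity $\kappa_q$) with $\rho_{\kappa_q}$ (drift $u$, same diffusivity $\kappa_q$), a standard $L^2$ energy estimate gives $\|\rho_q-\rho_{\kappa_q}\|_{L^\infty_tL^2_x}\lesssim T^{1/2}\|u_q-u\|_{C^0}\|\nabla\rho_{in}\|_{L^2}\to 0$, so the corresponding dissipation integrals differ by $o(1)$. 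Taking the $\limsup$ along $\kappa=\kappa_q$ yields \eqref{e:anomalousdissipation_gen} with the claimed bound. The main technical obstacle I expect is the base estimate at $q^\ast$: the three regimes in \eqref{e:main_const} must be correctly identified and the coupling between $\lambda_{q^\ast}$, $\kappa_{q^\ast}$ and $D_{q^\ast}$ mediated by \eqref{e:boundoninitialdatum} precisely disentangled, while keeping the truncation error from $P_{\le \lambda_{q^\ast}}\rho_{in}$ under the lower bound we wish to preserve.
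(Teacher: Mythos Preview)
Your overall architecture is correct and matches the paper's, but the step ``comparing $\rho_q$ (drift $u_q$) with $\rho_{\kappa_q}$ (drift $u$)'' has a real gap. The ``standard $L^2$ energy estimate'' you invoke cannot produce the bound $\|\rho_q-\rho_{\kappa_q}\|_{L^\infty_tL^2_x}\lesssim T^{1/2}\|u_q-u\|_{C^0}\|\nabla\rho_{in}\|_{L^2}$. The difference equation has right-hand side $(u-u_q)\cdot\nabla\rho_{\kappa_q}$; testing against $\rho_q-\rho_{\kappa_q}$ and using either $\|\nabla\rho_{\kappa_q}\|_{L^2_tL^2_x}$ (controlled only by $\kappa_q^{-1/2}\|\rho_{in}\|_{L^2}$) or integrating by parts and absorbing with the diffusion (which costs $\kappa_q^{-1}$) inevitably introduces a negative power of $\kappa_q$. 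The resulting quantity $\|u-u_q\|_{C^0}\,\kappa_q^{-1/2}\sim \lambda_q^{-b\beta+\theta/2}$ does \emph{not} tend to zero, since $\theta/2=\tfrac{b}{b+1}(1+\beta)>b\beta$ whenever $b\beta<1$, which is always the case here. So this comparison fails as written, and with it the passage from $D_q$ to the actual dissipation of $\rho_{\kappa_q}$.

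The paper resolves this by working at the level of vector potentials: one rewrites $(u-u_q)\cdot\nabla\rho=\div\bigl((z-z_q)\times\nabla\rho\bigr)$, and the construction provides the much sharper bound $\|z-z_q\|_{C^0}\lesssim \kappa_q\lambda_q^{-\gamma}$ (from the explicit form \eqref{e:defwq+1} of $w_{q+1}$ and the gluing estimate \eqref{e:gluingz}). This recasts the difference of drifts as a perturbation of the elliptic matrix of relative size $\lambda_q^{-\gamma}$, and Proposition \ref{p:stability_in_ellipticity} then gives the needed closeness of dissipations directly, without losing a factor $\kappa_q^{-1}$. A second, smaller issue: your base lower bound on $D_{q^\ast}$ via ``$\|\nabla\rho_{q^\ast}\|_{L^2}\sim\|\nabla\rho_{in}\|_{L^2}$ on a brief interval'' is delicate because the relevant short time-scale would have to be quantified against $\kappa_{q^\ast}$ and the drift; the paper instead uses Poincar\'e plus Gr\"onwall to get $D_{q}\ge\tfrac12\bigl(1-e^{-2\ell_0^{-2}\kappa_{q}T}\bigr)\|\tilde\rho_{in}\|_{L^2}^2$, which is what drives both the choice of $q_I$ via \eqref{e:conditionq1theta} and the explicit three-regime formula for $c_0$.
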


Before proving Theorem \ref{t:main_general}, we show how to deduce Theorem \ref{t:main} as well as the density statements in Section \ref{ss:dissipation}. 

\begin{proof}[Proof of Theorem \ref{t:main}]
Take $\bar{u}=0$, $\bar{p}=0$ and $\bar{R}=\tfrac{1}{3}e(t)\Id$ for a strictly positive, smooth function $e$ (say, constant) as the strict subsolution in Theorem \ref{t:main_general}. This generates an Euler solution $u\in C^{\beta}(\T^3\times [0,T_0])$ such that the advection-diffusion equation with velocity $u$ exhibits anomalous dissipation \eqref{e:anomalousdissipation_gen}. For fixed $u$ and $b$ the constant \eqref{e:main_const} depends only on $\ell_{in}, \beta, T$ and $\ell_{0}$.
\end{proof}

Now we come to the density statements in Section \ref{ss:hprinciple}. 
\begin{itemize}
\item[{\bf Strong density}] Let $(\bar{u}, \bar{p})$ be a smooth solution of the Euler equations and consider the strict subsolution $(\bar{u}, \bar{p}, \bar{R}_n)$, where $\bar{R}_n = \frac{1}{n} \Id$. Then $\inf_{(x,t)}\frac{\min\{\bar{R}\zeta\cdot\zeta:|\zeta|=1\}}{\tr\bar{R}}=3$, so that the constant $\bar C$ of \eqref{e:hprincipleT1} is uniform in $n$. Therefore, in light of estimate \eqref{e:hprincipleT1}, Theorem \ref{t:main_general} yields a sequence $u^n$ of weak solutions to the Euler equations inducing anomalous dissipation, with $u^n\to \bar{u}$ uniformly. 
\item[{\bf Weak density}] Let $\bar{u}$ be an arbitrary smooth divergence-free vector field. We may write, for an arbitrary smooth positive function $\tilde e=\tilde e(t)$, 
\[
\partial_t \bar{u}+\bar{u}\cdot \nabla \bar{u} = \div(\tilde e(t) \Id +\RR(\partial_t \bar{u})+\bar{u}\otimes\bar{u}),
\]
where $\RR$ is the symmetric antidivergence operator as in the proof of Proposition \ref{p:initialu} (cf.~\eqref{e:R:def}). In particular, choosing $\tilde e(t)$ sufficiently large we may ensure that the symmetric tensor $\bar{R}:=\tilde e(t) \Id +\RR(\partial_t \bar{u})+\bar{u}\otimes\bar{u}$ is positive definite, so that $(\bar{u},0,\bar{R})$ is a strict subsolution. In light of the estimate \eqref{e:hprincipleT2}, Theorem \ref{t:main_general} then yields a sequence $u^n$ of weak solutions to the Euler equations inducing anomalous dissipation, with the corresponding potentials $z^n\to \bar{z}$ uniformly. 
\end{itemize}

\subsection{Proof of Theorem \ref{t:main_general}}

We start with a smooth strict Euler subsolution $(\bar{u},\bar{p},\bar{R})$ with strictly positive energy profile $e: [0,T_0] \to \R$ and $0<\beta<1/3$. 

\smallskip
\noindent\emph{Step 1. Choice of parameters.} 
We fix $\gamma_T,\gamma_R,\gamma_E>0$ as follows
\begin{equation}\label{e:choiceofgammaTgammaR}
\gamma_E=\gamma_T=\gamma_R=\frac{b-1}{b(b+1)}(1-(2b+1)\beta),
\end{equation}
which implies
\begin{align}
    \gamma_T+b\gamma_R&<(b-1)(1-(2b+1)\beta)\,,\label{e:conditiontransport}\\
    \gamma_T&<\frac{b-1}{b+1}(1-(2b+1)\beta)\,,\label{e:conditionhomogenization}\\
    \gamma_R+\gamma_T&>\frac{b-1}{b+1}(1-(2b+1)\beta)\,.\label{e:conditiongluing}
\end{align}

Note that \eqref{e:conditiontransport} gives the requirement \eqref{e:Onsager_Conditions} in Proposition \ref{p:Onsager}, whereas \eqref{e:conditionhomogenization} and \eqref{e:conditiongluing} are the left and right inequalities in \eqref{e:Dissipation} in Proposition \ref{p:main}. 

Having fixed the parameters $e, \beta, b,  \gamma_T, \gamma_R, \gamma_E$, we choose $\gamma_L>0$ sufficiently small in accordance with the requirements of Propositions \ref{p:Onsager}, \ref{p:main}. This in turn fixes $ \bar N$ via Proposition  \ref{p:Onsager} and $\gamma, \tilde N$ via Propositions \ref{p:main}. Next, let small $\alpha$ and then  $a_0$ be chosen according to requirements of Propositions \ref{p:Onsager}, \ref{p:main}. Then, for any $a\geq a_0$ the two propositions are applicable, with the sequence $\{(\lambda_q,\delta_q)\}_q$ determined according to \eqref{e:lambdadelta}, i.e. $\lambda_q=2\pi\lceil a^{b^q}\rceil$ and $\delta_q=\lambda_q^{-2\beta}$. In particular, choosing $a\gg a_0$ sufficiently large we may ensure
\begin{equation}\label{e:pwrtail}
\prod_{q'\geq {1}}(1-\tfrac12\lambda_{q'}^{-\gamma}) > \tfrac12.
\end{equation}

\smallskip

\noindent\emph{Step 2. Construction of the vector field $u$.} We apply Proposition \ref{p:initialu} to obtain $(u_{q_0},\mathring{R}_{q_0})$ satisfying the estimates \eqref{e:R_q_inductive_est}-\eqref{e:energy_inductive_assumption} and \eqref{e:hprinciple-C0}- \eqref{e:hpriciple-C-1}. Then Proposition \ref{p:Onsager} applied inductively for any $q\geq q_0$ yields the sequence $\{u_q\}_{q\geq q_0}$. Arguing as in \cite{BDSV}, we deduce that $u_q\to u$ in $C([0,T];C^{\beta'}(\T^3))$ for any $\beta'<\beta$. Moreover, using the Euler equations, that $u\in C^{\beta''}([0,T]\times\T^3)$ for any $\beta''<\beta'$. Since $\beta''<\beta'<\beta<1/3$ is arbitrary in this argument, by renaming $\beta''$ to $\beta$ we deduce the existence of $u\in C^\beta([0,T]\times\T^3)$ as in the statement of Theorem \ref{t:main_general}. In particular, using \eqref{e:energy_inductive_assumption} and taking the limit implies that $e(t) =\int_{\T^3}\abs{u}^2\,dx$. The inequality \eqref{e:hprincipleT1} follows from \eqref{e:v_diff_prop_est} and \eqref{e:hprinciple-C0}, whereas the inequality \eqref{e:hprincipleT2} follows from \eqref{e:hpriciple-C-1}, \eqref{e:gluingz} and \eqref{e:defwq+1}.

\smallskip

\noindent\emph{Step 3. Choice of inertial scale.} Having constructed the vector field $u$ with active length-scales $\lambda^{-1}_q$, $q \ge q_0$, our next goal is to choose the length scale (more precisely, the index $q_I$), below which small-scale advection becomes dominant. To this end let $\rho_{in}\in \dot H^1(\T^3)\setminus\{0\}$ and define its characteristic length-scale as
\begin{equation*}
    \ell_{in}:=\frac{\|\rho_{in}\|_{L^2}} {\|\nabla \rho_{in}\|_{L^2}}.
\end{equation*}
We fix $q_I\geq q_0$ so that
\begin{equation}\label{e:conditionq1theta}
10 C_{\tilde N} \ell_{in}^{-1} \leq   \lambda_{q}\left(1-e^{-2\kappa_{q+1}\ell_0^{-2}T}\right)^{\sfrac12}\quad\textrm{ for all }q\geq q_I,
\end{equation}
where $C_{\tilde N}\geq 1$ is a constant (depending only on $\tilde N$ from Step 1) related to mollification as follows: 
We fix a standard symmetric mollifier $\psi$ (one can also use e.g.~$\psi$ from Section \ref{s:mollify}) and set $\psi_r(x)=r^{-3}\psi(\frac{x}{r})$. Then we have (cf.~Lemma \ref{l:mollify}) for all $g\in \dot H^1 (\T^3)$
\begin{equation}\label{e:moll_gen}
\begin{aligned}
    \|g*\psi_{r}-g\|_{L^2}&\leq C_{\tilde N} r\|\nabla g\|_{L^2}, \\
    \|g*\psi_{r}\|_{H^{n+1}}&\leq C_{\tilde N} r^{-n}\|\nabla g\|_{L^2}\textrm{ for all }n\leq \tilde N.
\end{aligned}
\end{equation}
To see that the choice of $q_I$ is possible, we verify that the expression on the right hand side of \eqref{e:conditionq1theta} is monotone increasing in $q$. To see this, in view of \eqref{e:defkappaq} it suffices to show that the function 
\begin{equation}\label{e:functionf}
f(\lambda):=\lambda^2(1-e^{-2\lambda^{-b\theta}})	
\end{equation}
is strictly monotone increasing for $\lambda\in(0,\infty)$ with $f(\lambda)\to\infty$ as $\lambda\to\infty$. First of all, the asymptotic behaviour as $\lambda\to\infty$ is easily deduced from the Taylor expansion of the exponential, which gives $f(\lambda)=O(\lambda^{2-b\theta})$. Recalling the choice of $\theta$ from \eqref{e:defkappaq} as well as $b<\sqrt{\tfrac32}$ and $\beta < \frac{1}{3}$, we obtain $b\theta<2$, showing that $\lim_{\lambda\to\infty}f(\lambda)=\infty$.  Then, computing the derivative we obtain
\begin{equation*}
f'(\lambda)=2\lambda\left(1-(1+\tfrac12b\theta \lambda^{-b\theta})e^{-\lambda^{-b\theta}}\right)=2\lambda\left(1-g(\lambda^{-b\theta})\right),	
\end{equation*}
where $g(s)=(1+\tfrac12b\theta s)e^{-s}$. Observe that $g(0)=1$ and $g(s)\to 0$ as $s\to\infty$. On the one hand direct computation shows that $g'(s)=0$ only if $s=1-\frac{2}{b\theta}$. Using again that $b\theta<2$ we deduce that $g$ has no local maxima/minima in $(0,\infty)$. Consequently $g(s)<1$ for all $s>0$, implying that $f$ is strictly monotone increasing on $(0,\infty)$.

Our goal in the remaining steps is to show that the solution $\rho_\kappa$ of \eqref{e:advectiondiffusion} can be approximated by the solution $\tilde\rho_{q_I}$ of the equation $\partial_t\tilde\rho_{q_I}+u_{q_I}\cdot\nabla \tilde\rho_{q_I}=\kappa_{q_I}\Delta\tilde\rho_{q_I}$.

\smallskip

\noindent\emph{Step 4. Mollification of initial data.} Define
\begin{equation}\label{e:choice_moll}
    \tilde\rho_{in}:=\rho_{in}*\psi_{r} \quad \text{ with } \quad r=\lambda^{-1}_{q_I}
\end{equation}
where $\psi$ is the mollifier of Step 3. Inequalities \eqref{e:moll_gen} and the definition of $\ell_{in}$ imply
\begin{align}
    \|\tilde\rho_{in}-\rho_{in}\|_{L^2}&\leq  C_{\tilde N} \lambda_{q_I}^{-1}  \ell_{in}^{-1}\|\rho_{in}\|_{L^2} \label{e:comparerhotilde}\\
    \|\tilde\rho_{in}\|_{H^{n+1}}&\leq  C_{\tilde N} \lambda_{q_I}^{n}\ell_{in}^{-1}\|\rho_{in}\|_{L^2}\quad\textrm{ for all }0\leq n\leq \tilde N.\label{e:Hnrhotilde}
\end{align}
From \eqref{e:comparerhotilde} together \eqref{e:conditionq1theta} we deduce
\begin{equation}\label{e:comparerhotilderho}
     \|\tilde\rho_{in}-\rho_{in}\|_{L^2}\leq \tfrac{1}{10}\|\rho_{in}\|_{L^2}\,.
\end{equation}

\noindent\emph{Step 5. Enhanced dissipation in the inertial regime.} For $q \ge q_I$, consider the solution $\tilde\rho_q$ of the equation
\begin{equation}  \label{e:main_proof:e5.5}
\begin{split}
	\partial_t \tilde\rho_{q} + u_{q} \cdot \nabla \tilde\rho_{q} =& \kappa_{q} \Delta \tilde\rho_{q}\,, \\ 
	\tilde\rho_{q}|_{t=0} =& \tilde\rho_{in}\,.
\end{split}
\end{equation}
Using the Poincar\'e inequality in conjunction with the classical energy estimate on \eqref{e:main_proof:e5.5} and Gronwall's inequality, we obtain the following lower bound on the dissipation rate:
\begin{equation} \label{e:mainproof_diffusionbound1a}
\begin{split}
	\tilde D_q&:=\kappa_{q} \int_0^T \| \nabla \tilde\rho_{q} \|^2_{L^2} dt \\
		&=  \frac{1}{2} \left( \|\tilde\rho_{in}\|_{L^2}^2 - \|\tilde\rho_{q}(T) \|_{L^2}^2 \right)\\
		&\geq \frac12\left( 1-\exp(- 2\ell^{-2}_0\kappa_{q}T) \right)  \|\tilde\rho_{in}\|_{L^2}^2.
\end{split}
\end{equation}
Using \eqref{e:conditionq1theta} and the fact that $\kappa_{q+1}\leq\kappa_q$ we then estimate from below as
\begin{equation*}
\begin{split}
	\tilde D_q&\geq 50C_{\tilde N}^2\ell_{in}^{-2}\lambda_q^{-2}\|\tilde\rho_{in}\|_{L^2}^2\\
	&\overset{\textrm{\eqref{e:comparerhotilderho}}}{\geq} 40C_{\tilde N}^2\ell_{in}^{-2}\lambda_q^{-2}\|\rho_{in}\|_{L^2}^2,
\end{split}
\end{equation*}
and similarly we obtain the same lower bound for $\tilde D_{q+1}$. In turn, applying \eqref{e:Hnrhotilde}, we deduce
\begin{equation*}
\|\tilde\rho_{in}\|_{H^{n+1}}\leq \tfrac{1}{6}\lambda_{q_I}^{n+1}\min\{\tilde D_{q}^{\sfrac12},\tilde D_{q+1}^{\sfrac12}\}\quad \textrm{ for any }0\leq n\leq\tilde N.	
\end{equation*}
Therefore Proposition \ref{p:main} is applicable to the equation \eqref{e:main_proof:e5.5} for any $q\geq q_I$. We obtain
\begin{align}
	(1-\tfrac12\lambda_q^{-\gamma}) \tilde D_q
		\leq \tilde D_{q+1},
\end{align}
and in particular deduce, using \eqref{e:pwrtail}, \eqref{e:comparerhotilderho} and once more \eqref{e:mainproof_diffusionbound1a}, for any $q\geq q_I$
\begin{equation} \label{e:mainproof:e10}
\begin{split}
	\kappa_{q} \int_0^T \| \nabla \tilde\rho_{q} \|^2_{L^2} dt &=\tilde D_q
	\geq	\prod_{q'=q_I}^{q-1} (1-\tfrac12\lambda_{q'}^{-\gamma})\tilde D_{q_I} 
	\overset{\textrm{\eqref{e:pwrtail}}}{\geq}  \frac12\tilde D_{q_I}\\
	&\geq \frac{1}{5}\left( 1-\exp(- 2\ell^{-2}_0\kappa_{q_I}T) \right)  \|\rho_{in}\|_{L^2}^2=:2c_0\|\rho_{in}\|_{L^2}^2\,,
\end{split}
\end{equation}
where we have introduced the constant of the dissipation rate
\begin{equation}\label{e:mainproof-c0}
c_0:=	\frac{1}{10}\left( 1-\exp(- 2\ell^{-2}_0\kappa_{q_I}T) \right)\,.
\end{equation}

\smallskip

\noindent\emph{Step 6. Dissipation in the viscous regime.} Next we
compare the solution $\tilde\rho_{q}$ of \eqref{e:main_proof:e5.5} with the solution $\tilde\rho_{u,q}$ of 
\begin{equation}  \label{e:mainproof-uqu}
\begin{split}
    \partial_t \tilde \rho_{u,q} + u \cdot \nabla \tilde \rho_{u,q} =& \kappa_q\Delta \tilde \rho_{u,q}, \\ 
    {\tilde \rho_{u,q}}|_{t=0} =& \tilde\rho_{in},
\end{split}
\end{equation}
i.e. with the same diffusion coefficient $\kappa_{q}$ and initial datum $\tilde\rho_{in}$, but advected by the full Euler flow $u$. Let us rewrite the equation \eqref{e:mainproof-uqu} as
\begin{equation}\label{e:moleq}
    \partial_t \tilde\rho_{u,q} + u_{q} \cdot \nabla \tilde \rho_{u,q} = \div(\kappa_q\nabla\tilde\rho_{u,q}+(z-z_{q})\times\nabla\tilde\rho_{u,q}),
\end{equation} 
where  $z_{q}, z$ are the vector potentials of $u_{q}, u$. We want to bound $z-z_{q}$ from above. To this end write
\begin{equation*}
    \|z_{q+1}-z_q\|_{C^0}\leq \|z_{q+1}-\bar{z}_q\|_{C^0}+\|\bar{z}_{q}-z_q\|_{C^0},
\end{equation*}
where $\bar{z}_q$ is the vector potential of $\bar{u}_q$ obtained in Proposition \ref{p:gluing}. Using the inverse divergence operator $\mathcal{R}$ introduced in \cite{DSz13} ((cf.~\eqref{e:R:def} in Section \ref{s:Onsager}), the formula \eqref{e:defwq+1} and standard Schauder estimates, we obtain
\begin{equation*}
    \|z_{q+1}-\bar{z}_q\|_{C^0}\le \|\mathcal{R}w_{q+1}\|_{C^\alpha}\lesssim \delta_{q+1}^{\sfrac12}\lambda_{q+1}^{-1+\alpha}= \lambda_q^{-b(1+\beta)+ b\alpha}
\end{equation*}
for some implicit constant depending only on the choice of $\alpha>0$. Next, observe that 
\begin{equation*}
-b(1+\beta)+b\alpha=-\theta-b\frac{b-1}{b+1}(1+\beta)+b\alpha,	
\end{equation*}
so that, by choosing $\alpha>0$ sufficiently small we deduce
\begin{equation}\label{e:zreg}
    \|z_{q+1}-z_q\|_{C^0}\lesssim \kappa_q\lambda_q^{-2\gamma},
\end{equation}
where we also used \eqref{e:gluingestimate} for estimating $\bar{z}_{q}-z_q$. 
In particular for any $q\ge q_0$
\begin{equation*}
\|z-z_{q}\|_{C^0}\leq \sum_{q'\ge q}\|z_{q+1}-z_{q}\|_{C^0}\lesssim \kappa_{q}\lambda_{q}^{-2\gamma}.
\end{equation*}
In particular, by choosing $a\gg a_0$ (the base of the double-exponential defining $(\lambda_q,\delta_q)$ in \eqref{e:lambdadelta}) in step 1 sufficiently large, we may conclude the estimate 
\begin{equation}\label{e:mol_clos1}
\|z-z_{q}\|_{C^0}\leq  \kappa_{q}\lambda_{q}^{-\tfrac32 \gamma}\quad\textrm{ for any }q\ge q_0.
\end{equation} 
Consequently, for any $q\geq q_0$ we may apply Proposition \ref{p:stability_in_ellipticity} to \eqref{e:mol_clos1} to obtain
\begin{equation*}
	| D_{u,q} - D_q | \le \lambda_q^{-\gamma} D_q\,,
\end{equation*}
where $ D_{u,q} :=  \kappa_q \int_0^T \| \nabla \tilde \rho_{u,q} \|^2_{L^2} dt$ and $D_q  :=  \kappa_q \int_0^T \| \nabla \tilde \rho_q \|^2_{L^2} dt$. In combination with the bound \eqref{e:mainproof:e10} we then have, for any $q\geq q_I$:
 \begin{equation}\label{e:mainproof-step6}
 \kappa_{q} \int_0^T \| \nabla \tilde\rho_{u,q} \|^2_{L^2} dt \geq  \frac{3}{2}c_0  \|\rho_{in}\|_{L^2}^2\,.
 \end{equation}

 \smallskip

\noindent\emph{Step 7. Anomalous dissipation for original initial datum.} Finally, we compare $\tilde\rho_{u,q}$ to the solution of \eqref{e:advectiondiffusion} with the original initial datum $\rho_{in}$, along the sequence $\kappa_q$. In other words, we consider
\begin{equation}  \label{e:mainproof:finaleq}
\begin{split}
    \partial_t \rho_{u,q} + u \cdot \nabla \rho_{u,q} =& \kappa_q \Delta \rho_{u,q}, \\ 
    {\rho_{u,q}}_{|{t=0}} =& \rho_{in}.
\end{split}
\end{equation}
The basic energy estimate together with \eqref{e:comparerhotilde} and \eqref{e:conditionq1theta} gives
\begin{equation*}
\begin{split}
\kappa_q \int_0^T\|\nabla\rho_{u,q}-\nabla\tilde\rho_{u,q}\|_{L^2}^2\,dt&\leq \frac{1}{2}\|\rho_{in}-\tilde\rho_{in}\|^2_{L^2}\\
&\leq  \frac{1}{2}C^2_{\tilde N}  \ell_{in}^{-2}  \lambda^{-2}_{q_I} \|\rho_{in}\|_{L^2}^2\\
&\leq  \frac{1}{200}(1-\exp(-2\ell_0^{-2}\kappa_{q_I}T))\|\rho_{in}\|_{L^2}^2=\frac{1}{20}c_0\|\rho_{in}\|_{L^2}^2\,.
\end{split}
\end{equation*}
Using the Minkowski inequality we deduce, using \eqref{e:mainproof-step6}, for any $q\geq q_I$,
\begin{equation}\label{e:mppen}
\begin{split}
\left(\kappa_q \int_0^T\|\nabla\rho_{u,q}\|_{L^2}^2\,dt\right)^{\sfrac12}&\geq\left(  \kappa_q \int_0^T\|\nabla\tilde\rho_{u,q}\|_{L^2}^2\,dt\right)^{\sfrac12}-  \left(\kappa_q \int_0^T\|\nabla\rho_{u,q}-\nabla\tilde\rho_{u,q}\|_{L^2}^2\,dt \right)^{\sfrac{1}{2}}\\
    &\geq \left(\left(\tfrac{3}{2}\right)^{\sfrac12}-  \left(\tfrac{1}{20} \right)^{\sfrac{1}{2}} \right) c_0^{\sfrac12}\|\rho_{in}\|_{L^2}\geq c_0^{\sfrac12}\|\rho_{in}\|_{L^2}.
\end{split}
\end{equation}

We conclude
\begin{equation*}
\lim_{q\to\infty} \kappa_q \int_0^T\|\nabla\rho_{u,q}\|_{L^2}^2\,dt\geq c_0\|\rho_{in}\|_{L^2}^2	
\end{equation*}
and in particular \eqref{e:anomalousdissipation_gen}, with constant $c_0$ given in \eqref{e:mainproof-c0}.

 \smallskip
 
\noindent\emph{Step 8. The constant $c_0$ of the dissipation rate.} It remains to verify the expressions for the lower bound of $c_0$ in terms of $\ell_0$, $\ell_{in}$ and $T$ given in \eqref{e:main_const}. First of all recall that $q_I\geq q_0$ is chosen according to \eqref{e:conditionq1theta}. Then, observe that, as a function of $T>0$, the left-hand side of \eqref{e:anomalousdissipation_gen} is monotone increasing, so we may assume the same for the lower bound of $c_0$. 

Next, let us fix $c_1=c_1(\tilde N)\geq 1$ so that $f(c_1)\geq (10C_{\tilde N})^2$, where $C_{\tilde N}$ is the constant from \eqref{e:moll_gen} and $f$ defined in \eqref{e:functionf}. Set 
\begin{equation*}
T_*=\ell_0^2\ell_{in}^{-b\theta}\min\left\{1,(c_1\ell_{in}^{-1}\lambda_{q_0}^{-1})^{2-b\theta}\right\}\,.    
\end{equation*}
\noindent\emph{Small time $T$.} If $T<T_*$, we choose $q_I\geq q_0$ to be the smallest integer such that
\begin{equation}\label{e:choiceofqI}
\lambda_{q_I}^{2-b\theta}\geq c_2 \ell_0^2\ell_{in}^{-2}T^{-1},
\end{equation}
where $c_2:=c_1^{2-b\theta}$. Then 
\begin{equation*}
    \kappa_{q_I+1}\ell_0^{-2}T\leq \kappa_{q_I+1}\ell_0^{-2}T_*=\lambda_{q_I}^{-b\theta}\ell_0^{-2}T_*\leq (\lambda_{q_I}\ell_{in})^{-b\theta}\leq c_1^{-b\theta}\,.
\end{equation*}
Since the function $s\mapsto 1-e^{-2s}$ is concave, we obtain
\begin{equation*}
\begin{split}
1-\exp(-2\kappa_{q_I+1}\ell_0^{-2}T)&\geq  \ell_0^{-2}\kappa_{q_I+1}Tc_1^{b\theta}(1-\exp(-2c_1^{-b\theta}))\\
&=\ell_0^{-2}\lambda_{q_I}^{-b\theta}c_2^{-1}f(c_1)T\\
&\geq (10C_{\tilde N})^2\ell_0^{-2}\lambda_{q_I}^{-b\theta}c_2^{-1}T
\end{split}
\end{equation*}
Therefore, using our choice of $q_I$ we deduce
\begin{equation}\label{e:smalldatasmalltime}
(10C_{\tilde N})	^2\ell_{in}^{-2}\leq  (10C_{\tilde N})	^2 c_2^{-1}\ell_0^{-2}T\lambda_{q_I}^{2-b\theta}\leq \lambda_{q_I}^2(1-\exp(-2\kappa_{q_I+1}\ell_0^{-2}T)),
\end{equation}
so that \eqref{e:conditionq1theta} is satisfied. In turn, we obtain the lower bound
\begin{equation}\label{e:lowerboundc0-1}
    c_0\geq 10C_{\tilde N}^2\ell_{in}^{-2}\lambda_{q_I}^{-2}.
\end{equation}
It remains to estimate $\lambda_{q_I}$. To this end observe that the choice of $T_*$ implies for $T< T_*$
\begin{equation*}
\lambda_{q_0}^{2-b\theta}<c_2\ell_0^2\ell_{in}^{-2}T^{-1},
\end{equation*}
so that, on the one hand $q_I>q_0$, and on the other hand, since $\lambda_{q+1}=\lambda_q^b$ and $q_I$ is chosen to be the smallest with property \eqref{e:choiceofqI},
\begin{equation*}
\lambda_{q_I}^{2-b\theta}<c_2^b\ell_0^{2b}\ell_{in}^{-2b}T^{-b}.
\end{equation*}
Applying this to \eqref{e:lowerboundc0-1} we obtain
\begin{equation}\label{e:lowerboundc0-2}
c_0\geq C(\tilde N)\ell_{in}^{-2}\left(\frac{\ell_{in}}{\ell_0}\right)^{\frac{4b}{2-b\theta}}T^{\frac{2b}{2-b\theta}}\,.
\end{equation}
\noindent\emph{Large time $T$.} If $T\geq T_*$, we may simply take as lower bound the value of the right hand side in \eqref{e:lowerboundc0-2} at $T=T_*$. This yields
\begin{equation}\label{e:lowerboundc0-3}
c_0\geq C(\tilde N)\ell_{in}^{-2}\min\{ \lambda_{q_0}^{-2b},(c_1\ell_{in}^{-1})^{-2b}\}.
\end{equation}
Combining \eqref{e:lowerboundc0-2} and \eqref{e:lowerboundc0-3} then yields \eqref{e:main_const} with $\delta_0:=\lambda_{q_0}^{-2b}$. This completes the proof of Theorem \ref{t:main_general}.

\section{Energy estimates}\label{s:energy} 
This section presents energy estimates, used recurrently in the paper. For short notation, in this section we denote sometimes the $L^2$ (space-time norm) on $ \T^3 \times [0,T]$ by $L^2_{xt}$. We use $\boldsymbol{\alpha}$ to denote the multiindex in a partial derivative and we use $|\boldsymbol{\alpha}|$ to denote its total order of differentiation. Notation $a \lesssim_n b$ means that there exists constant $C_n$ (depending only on $n$) such that $a \le C_n b$. By $D_t$ we denote the advective derivative, i.e. $D_t = \partial_t+u\cdot\nabla$.

\subsection{Estimates for advection-diffusion with Laplacian}
We consider the advection-diffusion equation
\begin{equation}\label{e:eqE1}
\begin{aligned}
    \partial_t\rho+u\cdot\nabla \rho&=\kappa\Delta\rho \qquad \textrm{ on }\T^3 \times [0, T],\\
    \rho|_{t=0}&=\rho_{in},
\end{aligned}
\end{equation}
with smooth initial datum. 

We assume the scales relationship
\begin{equation}\label{e:scales_u1}
  \left(\frac{\|\nabla u\|_{L^\infty}}{\kappa}\right)^n \ge \left( \frac{\|\nabla^{n} u\|_{L^\infty}}{ \kappa}  \right)^\frac{2n}{n+1}. 
\end{equation}

\begin{lemma}\label{l:energy_lapl}
Let $\kappa>0$ and $u\in C^{\infty}(\T^d;\R^d)$ be divergence free. Assume \eqref{e:scales_u1}. Then the advection-diffusion equation \eqref{e:eqE1} satisfies for any ${t \le T}$ and for any $n\geq 1$
\begin{equation}\label{e:ener_un}
\|\nabla^n \rho (t)\|^2_{L^2}+ \kappa\int_0^t \|\nabla^{n+1} \rho (s)\|_{L^2}^2 ds \le  \|\nabla^n \rho_{in}\|^2_{L^2} + C_n    \left(\frac{\|\nabla u\|_{L^\infty}}{\kappa}\right)^n  \kappa \int_0^T \|\nabla \rho (s)\|_{L^2}^2 ds.
\end{equation}
\end{lemma}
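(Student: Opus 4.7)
I would apply $\partial^{\boldsymbol{\alpha}}$ with $|\boldsymbol{\alpha}|=n$ to \eqref{e:eqE1}, test against $\partial^{\boldsymbol{\alpha}}\rho$, and integrate by parts on $\T^3$. The top-order advection contribution $\int u\cdot\nabla(\partial^{\boldsymbol{\alpha}}\rho)\,\partial^{\boldsymbol{\alpha}}\rho\,dx$ vanishes by $\div u=0$, while the Laplacian produces $\kappa\|\nabla\partial^{\boldsymbol{\alpha}}\rho\|_{L^2}^2$. Expanding $\partial^{\boldsymbol{\alpha}}(u\cdot\nabla\rho)$ by Leibniz and summing over $|\boldsymbol{\alpha}|=n$, the remaining commutator terms are controlled by Hölder term-by-term as
\begin{equation*}
\sum_{j=1}^{n} C_{n,j}\,\|\nabla^j u\|_{L^\infty}\,\|\nabla^{n+1-j}\rho\|_{L^2}\,\|\nabla^n\rho\|_{L^2}.
\end{equation*}

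Next I would eliminate all intermediate orders by Gagliardo--Nirenberg interpolation: on the side of $\rho$ (using zero-mean on $\T^3$) write $\|\nabla^{n+1-j}\rho\|_{L^2}\lesssim \|\nabla\rho\|_{L^2}^{j/n}\|\nabla^{n+1}\rho\|_{L^2}^{(n-j)/n}$ and $\|\nabla^n\rho\|_{L^2}\lesssim \|\nabla\rho\|_{L^2}^{1/n}\|\nabla^{n+1}\rho\|_{L^2}^{(n-1)/n}$; on the side of $u$ use $\|\nabla^j u\|_{L^\infty}\lesssim \|\nabla u\|_{L^\infty}^{(n-j)/(n-1)}\|\nabla^n u\|_{L^\infty}^{(j-1)/(n-1)}$ for $1<j<n$. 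Substituting collapses the $j$-th term to
\begin{equation*}
\|\nabla u\|_{L^\infty}^{(n-j)/(n-1)}\|\nabla^n u\|_{L^\infty}^{(j-1)/(n-1)}\,\|\nabla\rho\|_{L^2}^{(j+1)/n}\,\|\nabla^{n+1}\rho\|_{L^2}^{(2n-1-j)/n}.
\end{equation*}
I now apply Young's inequality with the conjugate pair $\bigl(\tfrac{2n}{2n-1-j},\tfrac{2n}{j+1}\bigr)$, precisely tuned so that the $\nabla^{n+1}\rho$-factor becomes a genuine square that absorbs into the good term $\kappa\|\nabla^{n+1}\rho\|_{L^2}^2$ on the LHS. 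What remains is $C_n \kappa^{-(2n-1-j)/(j+1)}\|\nabla u\|_{L^\infty}^{a_j}\|\nabla^n u\|_{L^\infty}^{b_j}\|\nabla\rho\|_{L^2}^{2}$ for explicit exponents $a_j,b_j$ summing to $\tfrac{2n}{j+1}$. Finally I invoke the scales assumption \eqref{e:scales_u1} in the equivalent form $\|\nabla^n u\|_{L^\infty}\le \|\nabla u\|_{L^\infty}^{(n+1)/2}\kappa^{-(n-1)/2}$ to eliminate $\|\nabla^n u\|_{L^\infty}$; integrating in $t\in[0,T]$ and using that the initial-data term produces $\|\nabla^n\rho_{in}\|_{L^2}^2$ yields \eqref{e:ener_un}.

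\textbf{Main obstacle.} The delicate bookkeeping is that the accumulated power of $\kappa$ must collapse to exactly $-(n-1)$ and the power of $\|\nabla u\|_{L^\infty}$ to exactly $n$, \emph{uniformly in $j$}, so that every $j$-term fits under the single right-hand side $C_n(\|\nabla u\|_{L^\infty}/\kappa)^n\kappa\|\nabla\rho\|_{L^2}^2$. A direct calculation shows that both collapses do occur, and this is precisely what the specific exponent $\tfrac{2n}{n+1}$ in \eqref{e:scales_u1} is engineered to deliver — any slightly different pairing leaves a $j$-dependent residual power that cannot be absorbed. The Moser-type commutator bound needed in the first step is classical (indeed it follows from the same Leibniz + Gagliardo--Nirenberg scheme above), so I would either cite it or verify it in a short preliminary lemma rather than reprove it in detail.
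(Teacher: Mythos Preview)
Your proposal is correct and follows essentially the same strategy as the paper: differentiate, test, use $\div u=0$ to kill the top-order advection term, control the commutator by interpolation, and absorb $\|\nabla^{n+1}\rho\|_{L^2}^2$ via Young.

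The only organizational difference is that the paper first invokes the two-term Moser/Kato--Ponce commutator bound
\[
\|[u\cdot\nabla,\partial^{\boldsymbol{\alpha}}]\rho\|_{L^2}\lesssim_n \|\nabla^n u\|_{L^\infty}\|\nabla\rho\|_{L^2}+\|\nabla u\|_{L^\infty}\|\nabla^n\rho\|_{L^2},
\]
works in space-time $L^2$, and then interpolates only $\|\nabla^n\rho\|_{L^2_{xt}}$ between $\|\nabla\rho\|_{L^2_{xt}}$ and $\|\nabla^{n+1}\rho\|_{L^2_{xt}}$. After Young this produces exactly the two contributions $(\|\nabla u\|_{L^\infty}/\kappa)^n$ and $(\|\nabla^n u\|_{L^\infty}/\kappa)^{2n/(n+1)}$, and assumption \eqref{e:scales_u1} is invoked once to compare them. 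You instead keep all Leibniz terms, interpolate $u$ in $L^\infty$ as well, and verify term-by-term that the exponents collapse --- which they do, as your computation (and the check $a_j+b_j(n+1)/2=n$, $-(2n-1-j)/(j+1)-b_j(n-1)/2=-(n-1)$) confirms. The paper's packaging avoids this bookkeeping, but the two routes are equivalent; indeed the commutator estimate itself is proved by precisely your Leibniz-plus-interpolation scheme, as you note in your final paragraph.
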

\begin{proof} 
Apply $\pal$, where $|\boldsymbol{\alpha}|=n$, to the equation and and test the result with $\partial^{\boldsymbol{\alpha}}$ to get after integration in time that for any ${t \le T}$
\[
\frac{1}{2}  \|\pal \rho (t) \|^2_{L^2}+ \kappa \int_0^t \|\nabla \pal \rho  (s)\|_{L^2}^2 ds  \le  \frac{1}{2} \|\nabla^n \rho_{in}\|^2_{L^2} + \| [u \cdot \nabla, \pal]  \rho \|_{L_{xt}^2} \|\pal \rho\|_{L_{xt}^2}.
  \]
For the last term use the commutator estimate
$$
\| [u \cdot \nabla, \pal] f  \|_{L^2} \lesssim_n \|  \nabla^n u  \|_{L^\infty}   \|  \nabla f \|_{L^2}  +  \|  \nabla u  \|_{L^\infty}   \|  \nabla^n f \|_{L^2}
$$
followed by H\"older's inequality in time to write
\[
\begin{split}
 \| [u \cdot \nabla, \pal]  \rho \|_{L_{xt}^2} \|\pal \rho\|_{L_{xt}^2}
 &\lesssim_n \|  \nabla^n u  \|_{L^\infty}   \|  \nabla \rho \|_{L_{xt}^2}  \|  \nabla^n \rho \|_{L_{xt}^2}  +  \|  \nabla u  \|_{L^\infty}   \|  \nabla^n \rho \|^2_{L_{xt}^2}\\ &\lesssim_n \|  \nabla^n u  \|_{L^\infty}   \| \nabla \rho  \|^\frac{n+1}{n}_{L_{xt}^2} \| \nabla^{n+1} \rho  \|^\frac{n-1}{n}_{L_{xt}^2}  +  \|  \nabla u  \|_{L^\infty} 
 \| \nabla \rho  \|^\frac{2}{n}_{L_{xt}^2} \| \nabla^{n+1} \rho  \|^\frac{2n-2}{n}_{L_{xt}^2},
\end{split}
\]
where for the latter $\lesssim$ we use interpolation. Thus, summing over all partial derivatives of order $n$ we have
\[
\begin{split}
 \frac{1}{2}  \|\nabla^n \rho(t) \|^2_{L^2}+ \kappa  \int_0^t &\|\nabla^{n+1} \rho  (s)\|_{L^2}^2 ds  \le C_n \kappa^{-1} \|  \nabla^n u  \|_{L^\infty}   (\kappa^\frac{1}{2}  \| \nabla \rho  \|_{L_{xt}^2})^\frac{n+1}{n} (\kappa^\frac{1}{2} \| \nabla^{n+1} \rho  \|_{L_{xt}^2})^\frac{n-1}{n}  \\
 + C_n  & \kappa^{-1}  \|  \nabla u  \|_{L^\infty}
 (\kappa^\frac{1}{2}  \| \nabla \rho  \|_{L_{xt}^2})^\frac{2}{n} (\kappa^\frac{1}{2}  \| \nabla^{n+1} \rho  \|_{L_{xt}^2})^\frac{2n-2}{n} + \frac{1}{2} \|\nabla^n \rho_{in}\|^2_{L^2}. 
\end{split}
\]
Hence, using Young's inequality we have
\[
 \|\nabla^n \rho(t) \|^2_{L^2}+ \kappa  \int_0^t \|\nabla^{n+1} \rho  (s)\|_{L^2}^2 ds  \le  \|\nabla^n \rho_{in}\|^2_{L^2} +
C_n    \left(  \left(\frac{\|\nabla u\|_{L^\infty}}{\kappa}\right)^n + \left( \frac{\|\nabla^{n} u\|_{L^\infty}}{ \kappa}  \right)^\frac{2n}{n+1}   \right)  \kappa \|\nabla \rho\|_{L_{xt}^2}^2,
 \]
which with the assumption \eqref{e:scales_u1} gives \eqref{e:ener_un}.
\end{proof}

We will need also the following immediate corollary for the forced advection-diffusion equation
\begin{equation}\label{e:eqE1f}
\begin{aligned}
    \partial_t\rho+u\cdot\nabla \rho&=\kappa\Delta\rho  +\div f \qquad \textrm{ on }\T^3 \times [0, T],\\
    \rho|_{t=0}&=\rho_{in},
\end{aligned}
\end{equation}
with smooth initial datum and forcing.

\begin{corollary}\label{c:energy_lapl}
Let $\kappa>0$ and $u\in C^{\infty}(\T^d;\R^d)$ be divergence free. Assume \eqref{e:scales_u1}. Then the forced advection-diffusion equation \eqref{e:eqE1f} satisfies for any ${t \le T}$ and for any $n\geq 1$
\begin{equation}\label{e:ener_un_f}
\begin{split}
 &\|\nabla^n \rho(t)\|^2_{L^2}+ \kappa\int_0^t \|\nabla^{n+1} \rho (s)\|_{L^2}^2 ds \le \\ &\|\nabla^n \rho_{in}\|^2_{L^2} + C_n    \left(\frac{\|\nabla u\|_{L^\infty}}{\kappa}\right)^n  \kappa \int_0^T \|\nabla \rho (s)\|_{L^2}^2 ds + \sum_{|\boldsymbol{\alpha}|=n}\left|\int_0^T \int \pal f (s) \cdot \nabla \pal \rho  (s) ds  \right|.
\end{split}
 \end{equation}
\end{corollary}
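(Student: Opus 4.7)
The plan is to mirror the proof of Lemma \ref{l:energy_lapl} verbatim, with the only new contribution coming from the forcing term on the right-hand side of \eqref{e:eqE1f}. Specifically, I would apply $\pal$ with $|\boldsymbol{\alpha}|=n$ to both sides of \eqref{e:eqE1f}, test the result against $\pal \rho$, and integrate over $\T^3\times[0,t]$. The left-hand side contributes $\tfrac12\|\pal\rho(t)\|_{L^2}^2+\kappa\int_0^t\|\nabla\pal\rho\|_{L^2}^2\,ds$ (up to the initial-data term), exactly as in Lemma \ref{l:energy_lapl}, using that $\div u=0$ and the fact that $\pal$ commutes with $\Delta$.

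The transport/commutator piece is treated identically to Lemma \ref{l:energy_lapl}: writing $u\cdot\nabla\pal\rho = \pal(u\cdot\nabla\rho)-[u\cdot\nabla,\pal]\rho$, the first term is absorbed after integration by parts, while the commutator is controlled by the same interpolated estimate together with the scale assumption \eqref{e:scales_u1}, producing the $C_n(\|\nabla u\|_{L^\infty}/\kappa)^n \kappa\int_0^T\|\nabla\rho\|_{L^2}^2\,ds$ contribution after a Young-type absorption into the $\kappa\|\nabla^{n+1}\rho\|_{L^2_{xt}}^2$ term.

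The only new contribution is the forcing term $\pal\div f = \div \pal f$, which when paired with $\pal\rho$ and integrated by parts in space yields
\begin{equation*}
\int_0^t \int_{\T^3} \div \pal f \cdot \pal \rho \,dx\,ds = -\int_0^t \int_{\T^3} \pal f \cdot \nabla \pal \rho \,dx\,ds.
\end{equation*}
Bounding this in absolute value and summing over $|\boldsymbol{\alpha}|=n$ produces precisely the extra term in \eqref{e:ener_un_f}. Since we are only asked to estimate this quantity in absolute value (rather than absorbing it into the left-hand side), no additional splitting or Young-inequality argument is needed for the forcing, and the resulting inequality is exactly \eqref{e:ener_un_f}.

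There is no real obstacle here: the proof is essentially a line-by-line repetition of the argument for Lemma \ref{l:energy_lapl} with one additional, unabsorbed boundary-type term from the forcing. The only mild care required is making sure that the interpolation/Young's inequality step used to handle the commutator is identical (so that the scale assumption \eqref{e:scales_u1} still closes the estimate as in the unforced case), and that the forcing term is left in its ``raw'' form rather than being estimated by Cauchy--Schwarz into the dissipation, since the latter would change the structure of the right-hand side of \eqref{e:ener_un_f}.
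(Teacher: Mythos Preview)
Your proposal is correct and matches the paper's approach exactly: the paper presents this as an ``immediate corollary'' of Lemma \ref{l:energy_lapl} with no separate proof, and your line-by-line repetition of that argument with the extra (unabsorbed) forcing term $-\int_0^t\int \pal f\cdot\nabla\pal\rho$ is precisely what is intended.
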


\subsection{Estimates for advection-diffusion with an elliptic matrix} Now we consider the following, more general advection-diffusion equation on $\T^3 \times [0, T]$, with smooth initial datum 
\[
 \partial_t\rho+u\cdot\nabla\rho=\div A\nabla\rho, 
\]
where $A$ is a smooth elliptic matrix.

\subsubsection{Comparison}
First, we prove an estimate that allows to compare two solutions whose ellipticity matrices differ slightly from each other.
\begin{proposition}[Stability estimates]    \label{p:stability_in_ellipticity}
Let $\varrho_1$ and $\varrho_2$ solve the following equations on $\T^3 \times [0, T]$
\begin{align*}
    \partial_t \varrho_1 + u \cdot \nabla \varrho_1 &= \div ( A_1 \nabla \varrho_1 ), \\ 
    \partial_t \varrho_2 + u \cdot \nabla \varrho_2 &= \div ( A_2 \nabla \varrho_2 ),
\end{align*}
with initial data $\varrho_1(0) = \varrho_2(0) = \rho_{\text{in}} \in L^2(\T^3)$ and uniformly elliptic symmetric matrices $A_1, A_2: \T^3 \times [0,T] \rightarrow \R^{3 \times 3}$ satisfying for $\varepsilon \le \frac{1}{10}$
\begin{align} \label{p:stability_in_ellipticity_c0}
    \big| (A_1 - A_2) \xi \cdot \zeta \big|
    \leq& \varepsilon (A_1 \xi \cdot \xi)^{\frac{1}{2}} (A_1 \zeta \cdot \zeta)^{\frac{1}{2}},
    \quad \text{for any } (x,t) \in \T^3 \times [0,T] \text{ and } \xi,\zeta \in \R^3.
\end{align}
Let $\tilde \varrho := \varrho_1 - \varrho_2$, then we have
\begin{align}
    \|\varrho_1(t) - \varrho_2(t)\|_{L^2}^2 &\le 2 \varepsilon^2      \left| \|\varrho_{in}\|_{L^2}^2 - \|\varrho_2(t)\|_{L^2}^2 \right|,
    \label{p:stability_in_ellipticity_e2d} \\
    \left| \|\varrho_1(t)\|_{L^2}^2 - \|\varrho_2(t)\|_{L^2}^2 \right|
    &\le 10 \varepsilon      \left| \|\varrho_{in}\|_{L^2}^2 - \|\varrho_2(t)\|_{L^2}^2 \right|\label{p:stability_in_ellipticity_e2} 
\end{align}
for any $t \le T$.
\end{proposition}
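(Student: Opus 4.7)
The plan is a clean energy-method proof on the difference $\tilde\varrho:=\varrho_1-\varrho_2$, which satisfies
\begin{equation*}
\partial_t\tilde\varrho+u\cdot\nabla\tilde\varrho=\div(A_1\nabla\tilde\varrho)+\div\bigl((A_1-A_2)\nabla\varrho_2\bigr),\qquad \tilde\varrho(0)=0.
\end{equation*}
Testing with $\tilde\varrho$ and using $\div u=0$ (which is the standing setting of the paper) gives
\begin{equation*}
\tfrac12\|\tilde\varrho(t)\|_{L^2}^2+\int_0^t\!\!\int_{\T^3}A_1\nabla\tilde\varrho\cdot\nabla\tilde\varrho=-\int_0^t\!\!\int_{\T^3}(A_1-A_2)\nabla\varrho_2\cdot\nabla\tilde\varrho.
\end{equation*}
The hypothesis \eqref{p:stability_in_ellipticity_c0} is exactly designed to be applied pointwise with $\xi=\nabla\varrho_2$, $\zeta=\nabla\tilde\varrho$; after Cauchy--Schwarz in $(x,t)$ and Young's inequality I absorb $\tfrac12\int A_1\nabla\tilde\varrho\cdot\nabla\tilde\varrho$ into the left-hand side and am left with $\tfrac{\varepsilon^2}{2}\int_0^t\!\int A_1\nabla\varrho_2\cdot\nabla\varrho_2$ on the right.

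To convert this to something intrinsic to $\varrho_2$, I apply \eqref{p:stability_in_ellipticity_c0} a second time, now with $\xi=\zeta=\nabla\varrho_2$, to obtain $A_1\nabla\varrho_2\cdot\nabla\varrho_2\le (1-\varepsilon)^{-1}A_2\nabla\varrho_2\cdot\nabla\varrho_2$. Combined with the standard energy identity $2\int_0^t\!\int A_2\nabla\varrho_2\cdot\nabla\varrho_2=\|\varrho_{in}\|_{L^2}^2-\|\varrho_2(t)\|_{L^2}^2$ this yields
\begin{equation*}
\|\tilde\varrho(t)\|_{L^2}^2+\int_0^t\!\!\int A_1\nabla\tilde\varrho\cdot\nabla\tilde\varrho\le\frac{\varepsilon^2}{1-\varepsilon}\bigl|\|\varrho_{in}\|_{L^2}^2-\|\varrho_2(t)\|_{L^2}^2\bigr|,
\end{equation*}
which already produces \eqref{p:stability_in_ellipticity_e2d} because $\varepsilon\le 1/10$ makes $\tfrac{1}{1-\varepsilon}\le 2$.

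For \eqref{p:stability_in_ellipticity_e2} I would subtract the energy identities for $\varrho_1$ and $\varrho_2$ to get
\begin{equation*}
\|\varrho_1(t)\|_{L^2}^2-\|\varrho_2(t)\|_{L^2}^2 = 2\int_0^t\!\!\int\bigl(A_2\nabla\varrho_2\cdot\nabla\varrho_2-A_1\nabla\varrho_1\cdot\nabla\varrho_1\bigr),
\end{equation*}
and then expand $\varrho_1=\varrho_2+\tilde\varrho$ in the right-hand side. This yields three kinds of terms: the matrix-perturbation term $(A_2-A_1)\nabla\varrho_2\cdot\nabla\varrho_2$, bounded by $\varepsilon$ times the base dissipation via \eqref{p:stability_in_ellipticity_c0}; the cross term $A_1\nabla\varrho_2\cdot\nabla\tilde\varrho$, bounded by Cauchy--Schwarz using the $\mathcal O(1)$ bound on $\int A_1\nabla\varrho_2\cdot\nabla\varrho_2$ and the $\mathcal O(\varepsilon^2)$ bound on $\int A_1\nabla\tilde\varrho\cdot\nabla\tilde\varrho$ obtained above, giving an $\mathcal O(\varepsilon)$ contribution; and the pure dissipation of $\tilde\varrho$, which is $\mathcal O(\varepsilon^2)$. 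Adding these gives a total coefficient $C\varepsilon$ with $C\le 10$ once $\varepsilon\le 1/10$.

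The argument is essentially bookkeeping: there is no deep obstacle, but the subtle structural point is that the scale-invariant form \eqref{p:stability_in_ellipticity_c0} must be used twice in two different ways — once with distinct arguments $\nabla\varrho_2$ and $\nabla\tilde\varrho$ to estimate the forcing, and once with equal arguments $\nabla\varrho_2$ to pass from the $A_1$-weighted to the $A_2$-weighted energy. This is what keeps everything dimensionless in $A$ and lets the bound be uniform in the (here unspecified) size of the ellipticity constants of $A_1,A_2$, which is crucial for the applications in Steps 1, 3, 5 of Proposition \ref{p:main} where $A$ can be as small as $\kappa_{q+1}$.
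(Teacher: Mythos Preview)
Your proof is correct and follows essentially the same approach as the paper. The derivation of \eqref{p:stability_in_ellipticity_e2d} is identical to the paper's. For \eqref{p:stability_in_ellipticity_e2} the paper takes the cosmetically different route of computing $\int\tilde\varrho(t)\varrho_2(t)$ from the equation for $\partial_t(\tilde\varrho\varrho_2)$ and then writing $\|\varrho_1\|_{L^2}^2-\|\varrho_2\|_{L^2}^2=\|\tilde\varrho\|_{L^2}^2+2\int\tilde\varrho\varrho_2$, but this produces exactly the same three types of terms (the $(A_1-A_2)\nabla\varrho_2\cdot\nabla\varrho_2$ term, the $A_1\nabla\varrho_2\cdot\nabla\tilde\varrho$ cross term, and the $A_1\nabla\tilde\varrho\cdot\nabla\tilde\varrho$ term) that your direct subtraction of the two energy identities yields, and the bookkeeping is the same.
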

\begin{proof} Observe that from \eqref{p:stability_in_ellipticity_c0}, ellipticity, and $\varepsilon \le \frac{1}{10}$ one has the pointwise inequality 
\begin{equation}
 | A_1 \nabla \varrho_2 \cdot \nabla \varrho_2 |  \leq  \tfrac{10}{9} A_2 \nabla \varrho_2 \cdot \nabla \varrho_2.
\label{p:stability_in_ellipticity_e0}
\end{equation}
Take the difference of the two equations
\begin{align}   \label{p:stability_in_ellipticity_e5}
    \partial_t \tilde \varrho + u \cdot \nabla \tilde \varrho &= \div \big( A_1 \nabla \tilde \varrho \big) + \div \big( (A_1-A_2) \nabla \varrho_2 \big)
\end{align}
test it with $\tilde \varrho$ and integrate by parts. For the resulting last term $\int (A_1-A_2) \nabla \varrho_2  \cdot \nabla \tilde \varrho$ use the assumption \eqref{p:stability_in_ellipticity_c0} and Young's inequality to absorb the resulting term containing $A_1 \nabla \tilde \varrho$. This gives for any $t \le T$
\begin{equation}
 \| \tilde \varrho(t) \|^2_{L^2}  + \int_0^t \int A_1 \nabla \tilde \varrho \cdot \nabla \tilde \varrho
   \leq \varepsilon^2  \int_0^t \int A_1 \nabla \varrho_2 \cdot \nabla \varrho_2.  \label{p:stability_in_ellipticity_e1}
\end{equation}
The inequality \eqref{p:stability_in_ellipticity_e2d} follows directly from \eqref{p:stability_in_ellipticity_e1} and the energy identity for $\varrho_2$.

From the equations for $\varrho_1$ and $\varrho_2$, we can also derive
\begin{equation}    \label{p:stability_in_ellipticity_e6}
    \partial_t \big( \tilde \varrho \varrho_2 \big)
    = -\tilde \varrho u \cdot \nabla \varrho_2 + \tilde \varrho \div ( A_2 \nabla \varrho_2 ) \\ 
     -\varrho_2 u \cdot \nabla \tilde \varrho + \varrho_2  \div \big( A_1 \nabla \tilde \varrho \big) + \varrho_2 \div \big( (A_1-A_2) \nabla \varrho_2 \big).
\end{equation}
Integrate \eqref{p:stability_in_ellipticity_e6} over $\T^3 \times [0,t]$ and integrate by parts in space. The first and the third terms on the right hand side of \eqref{p:stability_in_ellipticity_e6} cancel and we obtain
\begin{equation}\label{p:stability_in_ellipticity_e7}
\begin{split}
 -\int& \tilde \varrho(t) \varrho_2(t) dx
    = \int_0^t \int (A_1 + A_2) \nabla\tilde \varrho \cdot \nabla \varrho_2+  \int_0^t \int  (A_1-A_2) \nabla \varrho_2   \cdot \nabla\varrho_2 \\ 
 &=  2\int_0^t \int A_1 \nabla\tilde \varrho  \cdot \nabla \varrho_2 + \int_0^t \int  (A_2-A_1) \nabla \tilde\varrho  \cdot \nabla \varrho_2 + \int_0^t \int  (A_1-A_2) \nabla \varrho_2  \cdot \nabla \varrho_2.
\end{split}
\end{equation}
For the first right-hand side term of the second line of \eqref{p:stability_in_ellipticity_e7} we use $|A_1  \xi \cdot \zeta | \le (A_1 \xi \cdot \xi)^{\frac{1}{2}} (A_1 \zeta \cdot \zeta)^{\frac{1}{2}}$ (Cauchy-Schwarz) and Young's inequality that utilizes $\varepsilon^2$ of \eqref{p:stability_in_ellipticity_e1}, to get 
\[
 2 \left| \int_0^t \int A_1 \nabla\tilde \varrho  \cdot \nabla \varrho_2 \right| \le 2 \varepsilon \int_0^t \int A_1 \nabla \varrho_2  \cdot \nabla \varrho_2. 
\]
For the second right-hand side term of \eqref{p:stability_in_ellipticity_e7} we use the assumption \eqref{p:stability_in_ellipticity_c0}
to write
\[
 \left| \int_0^t \int  (A_2-A_1) \nabla \tilde\varrho  \cdot \nabla \varrho_2  \right| \le \frac12 \int_0^t \int A_1 \nabla \tilde \varrho  \cdot \nabla \tilde \varrho  + \frac12 \varepsilon^2 \int_0^t \int A_1 \nabla \varrho_2  \cdot \nabla \varrho_2 \le \varepsilon^2 \int_0^t \int A_1 \nabla \varrho_2  \cdot \nabla \varrho_2 ,
\]
where we have used \eqref{p:stability_in_ellipticity_e1}.
Together with an analogous estimate for the last term of \eqref{p:stability_in_ellipticity_e7} we obtain
\[
   \left| \int \tilde \varrho(t) \varrho_2(t) dx \right| \le 4\varepsilon \int_0^t \int  A_1 \nabla \varrho_2 \cdot \nabla \varrho_2.
\]
Finally we compute, using the above and \eqref{p:stability_in_ellipticity_e1}
\begin{align*}
    \left| \|\varrho_1(t)\|_{L^2}^2 - \|\varrho_2(t)\|_{L^2}^2 \right| = \left| \int \tilde \varrho(t) \big( \varrho_1(t) + \varrho_2(t) \big) dx \right| \leq \| \tilde \varrho(t) \|^2_{L^2} + 2\left|\int \tilde \varrho(t) \varrho_2(t) dx\right| \\
    \leq \varepsilon^2  \int_0^t \int A_1 \nabla \varrho_2 \cdot \nabla \varrho_2+ 8 \varepsilon \int_0^t \int  A_1 \nabla \varrho_2 \cdot \nabla \varrho_2.
\end{align*}
Now the proof of 
\eqref{p:stability_in_ellipticity_e2} follows from \eqref{p:stability_in_ellipticity_e0} and the energy identity for equation for $\varrho_2$.
\end{proof}

\subsubsection{General weighted estimate} Estimates in this section are needed mainly in the space homogenization proposition, therefore we use the notation $\bar\rho, \bar A$.
We consider
\begin{equation}\label{e:e_homogenized}
\begin{aligned}
    \partial_t\bar\rho+u\cdot\nabla\bar\rho&=\div \bar A\nabla\bar\rho  \qquad \textrm{ on }\T^3 \times [0, T],\\
   \bar\rho|_{t=0}&=\rho_{in}.
\end{aligned}
\end{equation}
Let us define
\begin{equation}\label{e:e_D}\bar D = \int_0^T\|\bar \kappa^\frac{1}{2} \nabla \bar\rho (s)\|_{L^2}^2\,ds.
\end{equation}
Assume the following inequalities
\begin{equation}\label{e:e_ell}
2 \bar\kappa(x,t) \Id \geq \bar{A} (x,t) \geq \frac{\bar\kappa (x,t)}{2}  \Id
\end{equation}
and
\begin{equation}\label{e:e_kappatransported}
\left\|\frac{D_t \bar\kappa }{\bar\kappa} \right\|_{L^\infty} \leq C \tau^{-1}.
\end{equation}
Assume further that for $m \ge 1$
\begin{equation}\label{e:e_modelAestimates}
    \|\bar \kappa^\frac{m-2}{2} \nabla^m\bar A\|_{L^\infty} \leq C  (\tau^{-1})^\frac{m}{2} \qquad  \|\bar \kappa^\frac{m-2}{2} \nabla^m\bar \kappa\|_{L^\infty} \le C (\tau^{-1})^\frac{m}{2}
\end{equation}
and 
\begin{equation}\label{e:e_model-estimates2}
    \| \bar\kappa^\frac{m-1}{2}  \nabla^m u\|_{L^\infty} \leq C \tau^{-1} (\tau^{-1})^\frac{m-1}{2}.
\end{equation}

\begin{lemma} \label{l:energy_weighted}
Let $u$ be divergence free. Assume the inequalities \eqref{e:e_ell}, \eqref{e:e_kappatransported}, \eqref{e:e_model-estimates2}, \eqref{e:e_modelAestimates}
 hold. Then the general advection-diffusion equation \eqref{e:e_homogenized} satisfies for any ${t \le T}$ and for any $n \ge 1$ 
\begin{equation}\label{e:rhobarenerg}
\sup_{t\le T}  \|(\bar\kappa^\frac{n}{2} \nabla^n\bar\rho)(t)\|^2_{L^2}+\int_0^T \|\bar\kappa^\frac{n+1}{2}\nabla^{n+1}\bar\rho\|_{L^2}^2 \lesssim (\tau^{-1})^n \bar D + \sum_{i=1}^n (\tau^{-1})^{n-i} \|(\bar\kappa^\frac{i}{2} \nabla^i\bar\rho)_{in}\|^2_{L^2}.
\end{equation}
Furthermore, for any $|\boldsymbol{\alpha}|=n$
\begin{equation}\label{e:rhobarflowB}
\int_0^T \|\bar\kappa^\frac{n}{2} D_t \pal \bar\rho\|^2_{L^2} \lesssim  (\tau^{-1})^{n+1} \bar D  + \sum_{i=1}^{n+1}  \left(\tau^{-1} \right)^{{n+1-i}} \|(\bar\kappa^\frac{i}{2} \nabla^i\bar\rho)_{in}\|^2_{L^2}.
\end{equation}
The constants in $\lesssim$ depend on the constants in assumptions and $n$.
\end{lemma}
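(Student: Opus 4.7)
The plan is to establish \eqref{e:rhobarenerg} by induction on $n$, and then deduce \eqref{e:rhobarflowB} directly from the equation. The main technical point is handling the spatial variability of the weight $\bar\kappa^{n/2}$ and the matrix $\bar A$; assumptions \eqref{e:e_kappatransported}--\eqref{e:e_model-estimates2} are designed so that each spatial derivative of $\bar A$ or $\bar\kappa$ pays one factor of $(\tau^{-1}\bar\kappa)^{1/2}$, and each material derivative of $\bar\kappa$ pays a factor of $\tau^{-1}\bar\kappa$. The base case $n=0$ is just the energy identity for \eqref{e:e_homogenized}, which combined with \eqref{e:e_ell} gives $\sup_t \|\bar\rho(t)\|_{L^2}^2+\bar D \lesssim \|\rho_{in}\|_{L^2}^2$ (with $\bar D$ as in \eqref{e:e_D}).

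For the inductive step, I would apply $\pal$ with $|\boldsymbol{\alpha}|=n$ to \eqref{e:e_homogenized}, test against $\bar\kappa^n \pal\bar\rho$, and integrate by parts. The key identity is
\[
\tfrac{d}{dt}\!\int \!\bar\kappa^n |\pal\bar\rho|^2 + 2\!\int \!\bar\kappa^n \bar A\nabla\pal\bar\rho\!\cdot\!\nabla\pal\bar\rho = \int \!D_t(\bar\kappa^n)|\pal\bar\rho|^2 - 2\!\int\! \pal\bar\rho\,\nabla\bar\kappa^n\!\cdot\! \bar A\nabla\pal\bar\rho + \mathcal{C}_{\boldsymbol{\alpha}},
\]
where $\mathcal{C}_{\boldsymbol{\alpha}}$ collects the commutator contributions from $[\pal,u\cdot\nabla]\bar\rho$ and $[\pal,\div(\bar A\nabla)]\bar\rho$. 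The first right-hand side term is controlled by \eqref{e:e_kappatransported}, yielding $\tau^{-1}\int \bar\kappa^n|\pal\bar\rho|^2$. For the second term, \eqref{e:e_modelAestimates} with $m=1$ gives $|\nabla\bar\kappa|\lesssim \tau^{-1/2}\bar\kappa^{1/2}$, so Young's inequality splits it between a piece absorbed into the dissipation and a piece of the form $\tau^{-1}\int \bar\kappa^n|\pal\bar\rho|^2$, using \eqref{e:e_ell}. Expanding the commutators via Leibniz and using \eqref{e:e_modelAestimates}, \eqref{e:e_model-estimates2} to bound each factor of $\nabla^k\bar A$ or $\nabla^k u$ in terms of $(\tau^{-1})^{k/2}\bar\kappa^{1-k/2}$ and $\tau^{-1}(\tau^{-1})^{(k-1)/2}\bar\kappa^{(1-k)/2}$ respectively, every term of $\mathcal{C}_{\boldsymbol{\alpha}}$ can be reorganised, after Young's inequality, either as an absorbable multiple of $\int\bar\kappa^{n+1}|\nabla\pal\bar\rho|^2$, or as $\tau^{-(n-i+1)}\int\bar\kappa^i|\nabla^{i}\bar\rho|^2$ for some $1\le i\le n$.

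Integrating in time, summing over $|\boldsymbol{\alpha}|=n$, and invoking the induction hypothesis at levels $i<n$ (whose right-hand side gives the required bound $\int_0^T \bar\kappa^i|\nabla^i\bar\rho|^2 \lesssim \tau\cdot [(\tau^{-1})^{i-1}\bar D + \sum_{j<i}\ldots]$ once supped in time) yields the desired estimate
\[
\sup_{t\le T}\|\bar\kappa^{n/2}\nabla^n\bar\rho(t)\|_{L^2}^2 + \int_0^T\|\bar\kappa^{(n+1)/2}\nabla^{n+1}\bar\rho\|_{L^2}^2 \lesssim (\tau^{-1})^n\bar D + \sum_{i=1}^n (\tau^{-1})^{n-i}\|(\bar\kappa^{i/2}\nabla^i\bar\rho)_{in}\|_{L^2}^2,
\]
with the remaining Gronwall-type term $\tau^{-1}\int_0^t \|\bar\kappa^{n/2}\nabla^n\bar\rho\|^2$ handled either by standard Gronwall (after noting that the constants are uniform in $t$) or, when it arises from the dissipation norm itself, by absorbing against $\int_0^T\|\bar\kappa^{(n+1)/2}\nabla^{n+1}\bar\rho\|^2$ via interpolation.

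For the flow-derivative bound \eqref{e:rhobarflowB}, I would use the equation to write $D_t\pal\bar\rho = \pal\div(\bar A\nabla\bar\rho) + [u\cdot\nabla,\pal]\bar\rho = \div(\bar A\nabla\pal\bar\rho) + \mathcal{E}_{\boldsymbol{\alpha}}$, where $\mathcal{E}_{\boldsymbol{\alpha}}$ is essentially the same commutator package already estimated. The principal term $\bar A:\nabla^2\pal\bar\rho$ gives $\|\bar\kappa^{n/2}\bar A:\nabla^2\pal\bar\rho\|_{L^2_{xt}}^2 \lesssim \int_0^T\|\bar\kappa^{(n+2)/2}\nabla^{n+2}\bar\rho\|_{L^2}^2$, which is exactly the dissipation term in \eqref{e:rhobarenerg} at level $n+1$; the subleading $(\div\bar A)\cdot\nabla\pal\bar\rho$ term costs only $\tau^{-1/2}$ by \eqref{e:e_modelAestimates} and reduces to the $n+1$ dissipation. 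Putting both ingredients together produces \eqref{e:rhobarflowB}. The main obstacle in this program is purely bookkeeping: tracking that every Leibniz distribution of $n$ derivatives across $u$, $\bar A$, $\bar\kappa$ and $\bar\rho$ lands on one of the allowed scales $(\tau^{-1})^{n-i}\bar\kappa^i|\nabla^i\bar\rho|^2$, which requires matching the weighted assumptions \eqref{e:e_modelAestimates}, \eqref{e:e_model-estimates2} to each commutator factor.
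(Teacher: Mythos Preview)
Your overall plan matches the paper's proof: weighted testing of the $\pal$-differentiated equation against $\bar\kappa^n\pal\bar\rho$, commutator bookkeeping using \eqref{e:e_modelAestimates}--\eqref{e:e_model-estimates2}, induction on $n$, and then reading off $D_t\pal\bar\rho$ directly from the equation. The paper organises the commutator estimates into a clean recursive inequality
\[
\sup_{t\le T}\|\bar\kappa^{n/2}\nabla^n\bar\rho\|_{L^2}^2 + P(n+1) \;\lesssim\; \|(\bar\kappa^{n/2}\nabla^n\bar\rho)_{in}\|_{L^2}^2 + \sum_{j=0}^{n-1} P(j+1)\,(\tau^{-1})^{n-j},
\qquad P(i):=\int_0^T\|\bar\kappa^{i/2}\nabla^i\bar\rho\|_{L^2}^2,
\]
and then closes the induction by observing that each $P(i)$ on the right, including the top-order one $P(n)$, is exactly the \emph{dissipation} term from the inductive hypothesis at level $i-1$ (with $P(1)=\bar D$ by definition).

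This is where your writeup has a small but real gap. You propose to handle the residual term $\tau^{-1}\int_0^t\|\bar\kappa^{n/2}\nabla^n\bar\rho\|^2$ either by Gronwall or by interpolating against the $(n{+}1)$-dissipation. Neither works: Gronwall produces a factor $e^{C\tau^{-1}T}$, which destroys the polynomial-in-$\tau^{-1}$ bound you are after, and there is no small parameter available to absorb an interpolated piece. Similarly, your parenthetical ``$\int_0^T\bar\kappa^i|\nabla^i\bar\rho|^2 \lesssim \tau\cdot[\dots]$ once supped in time'' is off---bounding a time integral by $T\cdot\sup_t$ costs $T$, not $\tau$. The correct (and painless) observation is that $P(n)$ is already the left-hand dissipation term of \eqref{e:rhobarenerg} at level $n-1$, so it is controlled directly by the inductive hypothesis with the right power of $\tau^{-1}$. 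Once you make this substitution your argument goes through as written, and the transport estimate \eqref{e:rhobarflowB} follows exactly as you outline, with the highest term governed by $P(n+2)$ from level $n+1$.
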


\begin{proof}
\noindent\emph{Step 1: Preliminary $n$-th order estimate}. Apply $\pal$, where $|\boldsymbol{\alpha}|=n$, to the equation \eqref{e:e_homogenized}  and test the result with $\bar\kappa^n  \partial^{\boldsymbol{\alpha}} \bar\rho$ to get
\begin{equation}\label{e:rhobarenergym2_m1}
\begin{split}
\frac{1}{2} \frac{d}{dt} &\int |\bar\kappa^\frac{n}{2} \pal\bar\rho|^2+ \int \bar\kappa^n \bar A \nabla \pal \bar\rho \cdot \nabla \pal \bar\rho  =\frac{1}{2} \int \frac{D_t (\bar\kappa^n) }{\bar\kappa^n}  \bar\kappa^n |\pal \bar\rho|^2  \\ 
&  -\sum_{{\boldsymbol{\beta}} + {\boldsymbol{\gamma}} ={\boldsymbol{\alpha}} , {\boldsymbol{\beta}} >0} c_{\boldsymbol{\beta, \gamma}} \int \partial^{\boldsymbol{\beta}} u \cdot \partial^{\boldsymbol{\gamma}}  \nabla \bar\rho (\bar\kappa^n \pal \bar\rho)  + \partial^{\boldsymbol{\beta}}  \bar A   \nabla \partial^{\boldsymbol{\gamma}}  \bar\rho \cdot \nabla (\bar\kappa^n \pal \bar\rho ) + \int \bar A   \nabla \pal \bar\rho  \cdot \nabla (\bar\kappa^n )  \pal \bar\rho,
\end{split}
\end{equation}
where $c_{\boldsymbol{\beta, \gamma}}$ are binomial coefficients.
Estimate four right-hand side terms of \eqref{e:rhobarenergym2_m1} in order of their appearance. 
For the first one, use $\frac{D_t (\bar\kappa^n) }{\bar\kappa^n} = n \frac{D_t \bar\kappa}{\bar\kappa}$ to have
\[
 \int \frac{D_t (\bar\kappa^n) }{\bar\kappa^n}  \bar\kappa^n |\pal \bar\rho|^2  
 \le n \|\bar\kappa^\frac{n}{2}\nabla^{n}\bar\rho\|_{L^2}^2 \left\|\frac{D^u_t \bar\kappa }{\bar\kappa} \right\|_{L^\infty}.
\]
For a single summand of the second one we have
\begin{equation}
\begin{split}
\int \partial^{\boldsymbol{\beta}} u \cdot \partial^{\boldsymbol{\gamma}}  \nabla \bar\rho (\bar\kappa^n \pal \bar\rho)  &= \int  \bar\kappa^{\frac{n-|\boldsymbol{\gamma}|-1}{2} } \partial^{\boldsymbol{\beta}} u \cdot (\bar\kappa^\frac{|\boldsymbol{\gamma}|+1}{2} \partial^{\boldsymbol{\gamma}} \nabla \bar\rho) ( \bar\kappa^\frac{n}{2} \pal \bar\rho)\\ & \le   \|\bar\kappa^\frac{n}{2}\nabla^{n}\bar\rho\|_{L^2} 
\|\bar\kappa^\frac{|\boldsymbol{\gamma}|+1}{2}\nabla^{|\boldsymbol{\gamma}|+1}\bar\rho\|_{L^2}   \left\| \bar\kappa^{\frac{n-|\boldsymbol{\gamma}|-1}{2} } \nabla^{n-|\boldsymbol{\gamma}|} u \right\|_{L^\infty}.    \end{split}
\end{equation}
For a single summand of the third one we write
\begin{equation}\label{e:rhobarenergym2_2short3}
\begin{split}
\int\partial^{\boldsymbol{\beta}} \bar A   \nabla \partial^{\boldsymbol{\gamma}} \bar\rho \cdot & \nabla (\bar\kappa^n \pal \bar\rho )
 =  n \int\partial^{\boldsymbol{\beta}} \bar A   (\nabla \partial^{\boldsymbol{\gamma}} \bar\rho)  \bar\kappa^{n-1} \cdot \nabla \bar\kappa \pal \bar\rho  + \int\partial^{\boldsymbol{\beta}} \bar A   (\nabla \partial^{\boldsymbol{\gamma}} \bar\rho) \bar\kappa^n \cdot \nabla  \pal \bar\rho \le 
 \\
    & n  \|\bar\kappa^{\frac{n-|\boldsymbol{\gamma}|-2}{2} } \nabla^{n-|\boldsymbol{\gamma}|} \bar A \|_{L^\infty}  \|\bar\kappa^{-\frac{1}{2}} \nabla \bar\kappa\|_{L^\infty}  \| \bar\kappa^\frac{|\boldsymbol{\gamma}|+1}{2}\nabla^{|\boldsymbol{\gamma}|+1} \bar\rho\|_{L^2}  \|\bar\kappa^{\frac{n}{2}} \nabla^n \bar\rho \|_{L^2}  \\
 & +  \|\bar\kappa^{\frac{n-|\boldsymbol{\gamma}|-2}{2} } \nabla^{n-|\boldsymbol{\gamma}|} \bar A \|_{L^\infty}  \|\bar\kappa^\frac{|\boldsymbol{\gamma}|+1}{2}\nabla^{|\boldsymbol{\gamma}|+1} \bar\rho\|_{L^2}  \|\bar\kappa^\frac{n+1}{2} \nabla^{n+1} \bar\rho\|_{L^2}.
 \end{split}
\end{equation}
For the fourth, last term of  \eqref{e:rhobarenergym2_m1} we use the upper bound \eqref{e:e_ell} and the estimate
\begin{equation*}
\int   \bar\kappa^{n-1} \bar A   \nabla \bar\kappa   \pal \bar\rho \cdot \nabla \pal  \bar\rho \le 2\int  |\bar\kappa|^{n} |\nabla \bar\kappa | | \pal \bar\rho |\nabla \pal  \bar\rho| \le 2 \|\bar\kappa^{-\frac{1}{2}} \nabla \bar \kappa \|_{L^\infty}  \|\bar\kappa^\frac{n}{2}\nabla^{n} \bar\rho\|_{L^2}  \|\bar\kappa^\frac{n+1}{2} \nabla^{n+1} \bar\rho\|_{L^2}. 
\end{equation*}
Together, these estimates for four right-hand side terms of  \eqref{e:rhobarenergym2_m1}, absorbing  their terms $ \|\bar\kappa^\frac{n+1}{2} \nabla^{n+1} \bar\rho\|_{L^2} $  by the dissipative part using Young's inequality, after summing over all multiindices of order $n$ yield
\begin{equation}\label{e:rhobarenergym2_m2}
\begin{split}
\frac{1}{2} \frac{d}{dt} \|(\bar\kappa^\frac{n}{2} \nabla^n\bar\rho)(t)\|^2_{L^2}+ &\frac{1}{2} \|\bar\kappa^\frac{n+1}{2}\nabla^{n+1}\bar\rho\|_{L^2}^2 \lesssim_n   \|\bar\kappa^\frac{n}{2}\nabla^{n}\bar\rho\|_{L^2}^2 \left\|\frac{D_t \bar\kappa }{\bar\kappa} \right\|_{L^\infty} \\ 
& + \|\bar\kappa^\frac{n}{2}\nabla^{n}\bar\rho\|_{L^2} \sum_{j=0}^{n-1}  \|\bar\kappa^\frac{j+1}{2}\nabla^{j+1}\bar\rho\|_{L^2}   \left\| \bar\kappa^{\frac{n-j-1}{2} } \nabla^{n-j} u \right\|_{L^\infty}  \\
 & + \|\bar\kappa^{-\frac{1}{2}} \nabla \bar\kappa\|_{L^\infty}  \|\bar\kappa^\frac{n}{2} \nabla^n \bar\rho \|_{L^2}  \sum_{j=0}^{n-1}  \|\bar\kappa^{\frac{n-j-2}{2} } \nabla^{n-j} \bar A\|_{L^\infty}  \| \bar\kappa^\frac{j+1}{2}\nabla^{j+1} \bar\rho\|_{L^2}   \\
 &+   \sum_{j=0}^{n-1} \|\bar\kappa^{\frac{n-j-2}{2} } \nabla^{n-j} \bar A\|^2_{L^\infty}   \|\bar\kappa^\frac{j+1}{2}\nabla^{j+1} \bar\rho \|^2_{L^2} + \|\bar\kappa^{-\frac{1}{2}} \nabla \bar \kappa \|^2_{L^\infty}  \|\bar\kappa^\frac{n}{2}\nabla^{n} \bar\rho\|^2_{L^2}. 
\end{split}
\end{equation}

\noindent\emph{Step 2: Plugging in scales assumptions.} Use the assumptions \eqref{e:e_kappatransported}, \eqref{e:e_modelAestimates}, \eqref{e:e_model-estimates2} for right-hand side of  \eqref{e:rhobarenergym2_m2} to obtain
\begin{equation}\label{e:rhobarenergym2_m3}
\frac{d}{dt} \|(\bar\kappa^\frac{n}{2} \nabla^n\bar\rho)(t)\|^2_{L^2}+ \|\bar\kappa^\frac{n+1}{2}\nabla^{n+1}\bar\rho\|_{L^2}^2 \lesssim_n   \sum_{j=0}^{n-1}  \|\bar\kappa^\frac{j+1}{2}\nabla^{j+1}\bar\rho\|^2_{L^2}   (\tau^{-1})^{n-j},
\end{equation}
which after integrating in time and writing $P(i) =\int_0^T \|\bar\kappa^\frac{i}{2}\nabla^{i}\bar\rho\|_{L^2}^2$ yields for any $n \ge 1$
\begin{equation}\label{e:rhobarenergym2_m4}
\sup_{t\le T}  \|(\bar\kappa^\frac{n}{2} \nabla^n\bar\rho)(t)\|^2_{L^2} + P(n+1) \lesssim_n  \|(\bar\kappa^\frac{n}{2} \nabla^n\bar\rho)_{in}\|^2_{L^2} +  
 \sum_{j=0}^{n-1} P(j+1) (\tau^{-1})^{n-j}.
\end{equation}

\noindent\emph{Step 3: Inductive proof of \eqref{e:rhobarenerg}.} Take \eqref{e:rhobarenergym2_m4} with $n=1$. Observing that $P(1) = \bar D$ by the definition \eqref{e:e_D}, we have
\begin{equation}\label{e:rhobarenergym2_1}
\sup_{t\le T} \|(\bar\kappa^\frac{1}{2} \nabla\bar\rho)(t)\|^2_{L^2}+\int_0^T\|\bar\kappa\nabla^2\bar\rho\|_{L^2}^2 \lesssim \tau^{-1}\bar D + \|(\bar\kappa^\frac{1}{2} \nabla\bar\rho)_{in}\|^2_{L^2}
\end{equation}
i.e. \eqref{e:rhobarenerg} with $n=1$, which allows to start induction. For the inductive step, assume that \eqref{e:rhobarenerg} holds for any $j \le n$, in particular 
\[
P(j+1) \lesssim_n (\tau^{-1})^j \bar D + \sum_{i=1}^j (\tau^{-1})^{j-i} \|(\bar\kappa^\frac{i}{2} \nabla^i\bar\rho)_{in}\|^2_{L^2}.
\]
For $n+1$ we have via \eqref{e:rhobarenergym2_m4}
\[
\begin{split}
P(n+2) & \lesssim_n  \|(\bar\kappa^\frac{n+1}{2} \nabla^{n+1}\bar\rho)_{in}\|^2_{L^2} + \sum_{j=0}^{n} P(j+1) (\tau^{-1})^{n+1-j} \\
&\lesssim_n  \|(\bar\kappa^\frac{n+1}{2} \nabla^{n+1}\bar\rho)_{in}\|^2_{L^2} + \sum_{j=0}^{n} \left((\tau^{-1})^j \bar D + \sum_{i=1}^j (\tau^{-1})^{j-i} \|(\bar\kappa^\frac{i}{2} \nabla^i\bar\rho)_{in}\|^2_{L^2} \right)(\tau^{-1})^{n+1-j},
\end{split}
\]
which gives \eqref{e:rhobarenerg} for $n+1$.

\noindent\emph{Step 4: Transport estimate.} Apply $\pal$, where $|\boldsymbol{\alpha}|=n$, to the equation \eqref{e:e_homogenized}
\[
\begin{split}
D_t \pal \bar\rho =  -\sum_{{\boldsymbol{\beta}} + {\boldsymbol{\gamma}} ={\boldsymbol{\alpha}} , {\boldsymbol{\beta}} >0} c_{\boldsymbol{\beta}}  \partial^{\boldsymbol{\beta}}  u \cdot \partial^{\boldsymbol{\gamma}}  \nabla \bar\rho + \div  \sum_{{\boldsymbol{\beta}} + {\boldsymbol{\gamma}} ={\boldsymbol{\alpha}}} c_{\boldsymbol{\beta}}   \partial^{\boldsymbol{\beta}}  \bar A \partial^{\boldsymbol{\gamma}}  \nabla \bar\rho. 
\end{split}
\]
Multiply both sides with $\bar\kappa^\frac{n}{2}$, distribute the powers of $\bar\kappa$ appropriately, and take space-time $L^2$ norms (denoted by $L^2_{xt}$) to obtain the following estimate
\[
\begin{split}
\|\bar\kappa^\frac{n}{2} D_t \pal \bar\rho\|_{L^2_{xt}} &\lesssim_n \sum_{i+j=n, i>0} \|\bar\kappa^\frac{i-1}{2}  \nabla^i u\|_{L^\infty}  \| \bar\kappa^\frac{j+1}{2}  \nabla^{j+1} \bar\rho\|_{L^2_{xt}} \\
&+  \sum_{i+j=n}  \|\bar\kappa^\frac{i-1}{2} \nabla^{i+1} \bar A \|_{L^\infty}  \| \bar\kappa^\frac{j+1}{2}  \nabla^{j+1} \bar\rho\|_{L^2_{xt}} +   \sum_{i+j=n} \| \bar\kappa^\frac{i-2}{2} \nabla^i \bar A \|_{L^\infty}  \|  \bar\kappa^\frac{j+2}{2} \nabla^{j+2} \bar\rho\|_{L^2_{xt}}. 
\end{split}
\]
Use the assumptions \eqref{e:e_modelAestimates}, \eqref{e:e_model-estimates2}
\[
\|\bar\kappa^\frac{n}{2} D_t \pal \bar\rho\|_{L^2_{xt}}  \lesssim_n 
\tau^{-1}  \sum_{j=1}^{n+2} (\tau^{-1})^\frac{n-j}{2}   \| \bar\kappa^\frac{j}{2}  \nabla^{j} \bar\rho\|_{L^2_{xt}}.
\]
Squaring both sides above yields
\[
\int_0^T \|\bar\kappa^\frac{n}{2} D_t \pal \bar\rho\|^2_{L^2} \lesssim_n (\tau^{-1})^2  \sum_{j=1}^{n+2} (\tau^{-1})^{n-j} P(j).
\]
The estimate \eqref{e:rhobarflowB} now follows from using \eqref{e:rhobarenerg} to control $P(j)$.\end{proof}

\section{Spatial homogenization}
\label{s:homogenization}

The main result of this section are Proposition \ref{p:hom_trho} and its corollaries, which allow to prove the space homogenization step of the key Proposition \ref{p:main}.

In this section, we use $(e_1,e_2,e_3)$ to denote the unit Euclidean coordinate  vectors. We use $\xi \in \T^3$ to denote the variable in the cell for homogenization. For a function $f: \T^3 \times [0,T] \times \T^3 \rightarrow \R$, i.e. taking the variable $(x,t,\xi) \in \T^3 \times [0,T] \times \T^3$ as its argument, we use $\langle f \rangle$ to denote its average in the cell variable $\xi \in \T^3$.
We use $\|f(x,t,\cdot)\|_{L^\infty_\xi}$ to denote the supremum norm in $\xi$ variable. Note that $\|f\|_{L^\infty_\xi}$ is still a function in $(x,t)$. With $u$ defined below in \eqref{e:hom_base:eq}, we define the transport derivative $D_t$ on $f$ to be $$D_t f := \partial_t f + (u \cdot \nabla_x) f.$$
Without further specification, the $L^2$ and $H^{-1}$ norms are in $x$ variable, and the supremum norm $L^{\infty}$ is the supremum norm in $(x,t)$.
The constant in $\lesssim$ in this section depends on $N_h$ defined in \eqref{e:hom_para_assump:Nh}.

\subsection{Setup}
Consider the following advection-diffusion equation for $\rho: \T^3 \times [0,T] \rightarrow \R$
\begin{equation} \label{e:hom_base:eq}
\begin{split}
    \partial_t\rho+u\cdot\nabla \rho =& \div \tilde A\nabla\rho, \\ 
    \rho|_{t=0} =& \rho_{\text{in}}, 
\end{split}
\end{equation}
with divergence-free $u$ and the elliptic tensor $\tilde A: \T^3 \times [0,T] \rightarrow \R^{3 \times 3}$ given by
\begin{equation}  
    \tilde A(x,t) := \sum_i \tilde\eta_i (x,t) \nabla \Phi_i^{-1} (x,t) \Bigg( \kappa \Id + \frac{\eta_i (x,t) \sigma^{1/2} (t)}{\lambda} \sum_{\vec{k} \in \Lambda} a_{\vec{k}} \left( \tilde R_{i} (x,t) \right) H_{\vec{k}}(\lambda \Phi_i(x,t) ) \Bigg) \nabla \Phi_i^{-T} (x,t)\label{e:hom_base:tA}.
\end{equation}
The tensor defined in \eqref{e:hom_base:tA} is the counterpart of $\tilde A_{q+1}(x,t)$ of \eqref{e:defAq+1tilde} with the $q+1$ index dropped. The precise connection between the parameters is given in \eqref{e:homo_connect}.

We also define the elliptic tensor in cell variable $\xi$ by 
\begin{equation}
    A(x,t,\xi) := \sum_i \tilde\eta_i(x,t) \nabla \Phi_i^{-1}(x,t) \Bigg( \kappa \Id + \frac{\eta_i(x,t) \sigma^{1/2}(t)}{\lambda} \sum_{\vec{k} \in \Lambda} a_{\vec{k}} \left( \tilde R_{i}(x,t) \right) H_{\vec{k}}(\xi) \Bigg) \nabla \Phi_i^{-T}(x,t). \label{e:hom_base:A}
\end{equation}

Our goal in this section is to show the solution $\rho$ to \eqref{e:hom_base:eq} homogenizes to the solution $\bar \rho: \T^3 \times [0,T] \rightarrow \R$ of the following equation
\begin{equation} \label{e:hom_ed:eq}
\begin{split}
    \partial_t \bar \rho + u \cdot \nabla \bar \rho =& \div \big( \bar A \nabla \bar \rho \big), \\ 
    \bar \rho|_{t=0} =& \rho_{\text{in}}, 
\end{split}
\end{equation}
with the elliptic tensor $\bar A: \T^3 \times [0,T] \rightarrow \R^{3 \times 3}$ given by
\begin{align} \label{e:hom_ed:bA}
    \bar A(x,t) = \dashint A(x,t,\xi) \Big( \Id + \sum_i \eta_i(x,t) \nabla\Phi_i^T(x,t) \nabla_\xi\chi_i^T(x,t,\xi) \Big) \,d\xi, 
\end{align}
where $\chi_i, \chi: \T^3 \times [0,T] \times \T^3 \rightarrow \R^3$ are given by
\begin{align} \label{e:hom_ed:chi}
    \chi_i(x,t,\xi) =&\frac{\sigma^{1/2}(t)}{\kappa \lambda} \nabla \Phi_i^{-1}(x,t) \sum_{\vec{k} \in \Lambda} a_{\vec{k}} \left( \tilde R_i(x,t) \right) \varphi_{\vec{k}}(\xi) \vec{k}, \\ 
    \chi(x,t,\xi) =&  \sum_i \chi_i(x,t,\xi) \eta_i(x,t).
\end{align}

\subsubsection{Assumptions for homogenization}\label{s:ass_homo}
In order to make the homogenization section possibly independent from other sections, in what follows we extract from Section \ref{s:blocks} assumptions needed for our quantitative homogenization results.

We introduce the parameters $\kappa, \lambda, \delta, \tau, \ell \in \R^+$ and $\bar \kappa: \T^3 \times [0,T] \rightarrow \R^+$ for which we require
\begin{align}
    \delta^{1/2} \ell \kappa^{-1} \lambda^{-1} \leq & 1, \label{e:hom_para_assump:deltaell} \\
    \tau \|\bar \kappa\|_{L^\infty} \ell^{-2} \leq & 1, \label{e:hom_para_assump:taukappaell} \\ 
    %\tau \|\bar\kappa\|_{L^\infty} \lambda^2 \geq & \lambda^{\frac{b-1}{b+1}}, \label{e:hom_para_assump:excep} \\
    \tau \|\bar \kappa\|_{L^\infty} \lambda^{\frac{2}{b}} \leq & \lambda^{-2\gamma}, \label{e:hom_para_assump:for_rem} \\ 
    \|\bar \kappa\|_{L^\infty}^{-1} \tau^{-1} \lambda^{-2}
    \leq \kappa^{-1} \tau^{-1} \lambda^{-2} \leq & \lambda^{-4\gamma}.  \label{e:hom_para_assump:kappataulambda}
\end{align}

We choose $N_h \geq 3$ sufficiently large such that
\begin{align}
    N_h \left( \frac{b-1}{b+1} \big( 1 - (2b+1)\beta \big) - \gamma_T \right) \geq 2.  \label{e:hom_para_assump:Nh}
\end{align}
We require for the functions $\eta_i, \tilde\eta_i : \T^3 \times [0,T] \rightarrow \R,\, \sigma : [0,T] \rightarrow \R,\, \Phi_i:\T^3 \times [0,T] \rightarrow \T^3,\, a_{\vec{k}} \circ \tilde R_{i}: \T^3 \times [0,T] \rightarrow  \R$ the following conditions
\begin{align}
    \sigma &\sim \delta,   \label{e:hom_assump:sigmadelta}  \\
    \| \nabla \Phi_i - \Id \|_{L^\infty} &\leq \lambda^{-2\gamma},
\label{e:hom_assump:Phi} \\ 
    \frac12\bar \kappa &\leq \bar A \leq 2 \bar \kappa,
    \quad\textrm{ where }\bar \kappa= \kappa \left( 1 + \sum_i \frac{\sigma \eta_i^2}{\kappa^2\lambda^2} \right), \label{e:hom_assump:barkappa}
\end{align}
and for $n=0,1$ and $m \leq N_h$
\begin{align}
        \|\nabla u\|_{C^m}&\lesssim \delta^{\sfrac{1}{2b}} \lambda^{\sfrac{1}{b}} \ell^{-m}  \label{e:hom_assump:u} \\
    \| \nabla_x^m D_t^n \nabla \Phi_i \|_{L^\infty} + \| \nabla_x^m D_t^n(a_{\vec{k}} \circ \tilde R_{i}) \|_{L^\infty} &\lesssim \tau^{-n}\ell^{-m}, \label{e:hom_assump:slow} \\
        \| \nabla_x^m D_t^n  \tilde \eta_i \|_{L^\infty} +
    \|\nabla_x^m  D_t^n \eta_i \|_{L^\infty}
        &\lesssim \tau^{-n}, \label{e:hom_assump:slow:eta} \\
    \| \nabla_x^m D_t^n\chi_i(x,t,\cdot) \|_{C^1_\xi}
        &\lesssim \frac{\sigma^{1/2}}{\kappa \lambda} \tau^{-n}\ell^{-m}, \label{e:hom_assump:chi}\\ 
        \| \pat \sigma \|_{L^\infty}
        &\lesssim \sigma \tau^{-1}. \label{e:hom_assump:sigma} 
\end{align}
Furthermore, we require that $\det \nabla\Phi_i^T = 1$ and that $\eta_i=\eta_i(x,t)$, $i\in\N$ are smooth nonnegative cutoff functions with pairwise disjoint supports, whereas $(\tilde\eta_i)_{i\in\N}$ is a partition of unity such that $\tilde\eta_i\eta_i=\eta_i$,  $\tilde\eta_i\eta_j=0$ for $j\neq i$. Finally, $H_{\vec{k}}=H_{\vec{k}}(\xi)$ is the antisymmetric zero-mean matrix with the property that for any $v \in \R^3$ we have $H_{\vec{k}} v = - U_{\vec{k}} \times v$, where $U_{\vec{k}}= U_{\vec{k}} (\xi)$ is given in \eqref{e:defUW} (and the functions defining $U_{\vec{k}}$ are provided at the beginning of Section \ref{s:Mikado}).

Observe that \eqref{e:hom_para_assump:Nh} gives the following relation
\begin{align}
    \lambda^2 & \lesssim \big( \tau \kappa \lambda^2 \big)^{N_h}.  \label{e:hom_para_assump:Nh_result}
\end{align}

\subsection{Quantitative homogenization estimates}
\begin{proposition} \label{p:hom_trho}
Given the assumptions gathered in Section \ref{s:ass_homo}, let $\rho$ be the solution to \eqref{e:hom_base:eq}-\eqref{e:hom_base:tA} and $\bar \rho$ be the solution to \eqref{e:hom_ed:eq}-\eqref{e:hom_ed:chi}. Define $\tilde \rho: \T^3 \times [0,T] \rightarrow \R$ such that
\begin{equation} \label{e:hom_trho_ansatz}
    \rho(x,t) = \bar{\rho}(x,t) + \frac{1}{\lambda} \sum_i \eta_i(x,t) \chi_i \big( x,t,\lambda \Phi_i(x,t) \big)\cdot\nabla\bar\rho(x,t)+\tilde\rho(x,t).
\end{equation}
Then
\begin{equation}  
\begin{split}
\label{e:hom_trho_est}
    \sup_{t \le T}\| \tilde \rho(t) \|^2_{L^2} + \kappa \int_0^T \| \nabla \tilde \rho \|^2_{L^2} dt
        \lesssim \frac{1}{\lambda^2 \kappa \tau} \mathcal{D}_{N_h},\\
        \text{ where } \quad
    \mathcal{D}_l := \int_0^T \| \bar \kappa^{\frac{1}{2}} \nabla \bar \rho \|^2_{L^2} dt 
        + \sum_{j=1}^{l} \tau^{j} \big\| \big(\bar \kappa^{\frac{j}{2}} \nabla^j \rho\big)_{\text{in}} \big\|_{L^2}^2 \quad \text{ for } l \in \N.
    \end{split}
\end{equation}
\end{proposition}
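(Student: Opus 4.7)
The plan is the standard two-scale expansion of quantitative homogenization, adapted to the present (non-elliptic, Mikado-structured) corrector. Denote $\theta(x,t):=\lambda^{-1}\sum_i\eta_i\,\chi_i(x,t,\lambda\Phi_i)\cdot\nabla\bar\rho$ and substitute $\rho=\bar\rho+\theta+\tilde\rho$ into \eqref{e:hom_base:eq}. Using \eqref{e:hom_ed:eq} to eliminate the $\bar\rho$ terms, the remainder satisfies
\begin{equation*}
\partial_t\tilde\rho+u\cdot\nabla\tilde\rho-\div(\tilde A\nabla\tilde\rho)
\;=\;-D_t\theta+\div\bigl[\tilde A\nabla(\bar\rho+\theta)-\bar A\nabla\bar\rho\bigr]
\;=:\;\div F+G,
\end{equation*}
where $F$ collects flux-type terms and $G$ the pointwise residuals (coming from $D_t\theta$ and from $\div$ hitting the slow coefficients). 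The goal in the remaining steps is to bound $\|F\|_{L^2_{t,x}}^2$ by $\mathcal{D}_{N_h}/(\lambda^2\tau)$ and $G$ analogously, so that the energy method closes.

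The key algebraic cancellation is built into \eqref{e:hom_ed:bA}--\eqref{e:hom_ed:chi}: for fixed $(x,t)$, the $\xi$-dependent tensor
$A(x,t,\xi)\bigl(\Id+\sum_i\eta_i\nabla\Phi_i^T\nabla_\xi\chi_i^T\bigr)-\bar A$
has zero $\xi$-mean by definition. Because of the Mikado structure -- the profiles $\varphi_{\vec{k}}$ have pairwise disjoint supports and straight field lines, and the identity $H_{\vec{k}}\nabla\varphi_{\vec{k}}=|\nabla\varphi_{\vec{k}}|^2\vec{k}$ holds -- this mean-zero tensor can be written \emph{explicitly} as $\div_\xi S$ for an antisymmetric flux corrector $S=S(x,t,\xi)$ of size $\|S\|_{L^\infty_\xi}\lesssim\sigma^{1/2}(\kappa\lambda)^{-1}\bar\kappa$, with slow-derivative bounds inherited from \eqref{e:hom_assump:slow}--\eqref{e:hom_assump:chi}. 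Exploiting antisymmetry of $S$ and the chain rule $\xi=\lambda\Phi_i$, the leading oscillating piece of $F$ is organised as $\lambda^{-1}\nabla_x(\cdots)$ plus commutators involving slow derivatives, delivering the crucial $\lambda^{-1}$ factor.

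For the energy estimate, test the $\tilde\rho$-equation with $\tilde\rho$. Because the Mikado part of $\tilde A$ is antisymmetric it contributes nothing to the dissipation, while \eqref{e:hom_assump:Phi}--\eqref{e:hom_assump:barkappa} give $(\tilde A)_{\rm sym}\geq \tfrac{1}{2}\kappa\Id$. Thus
\begin{equation*}
\sup_{t\leq T}\|\tilde\rho(t)\|_{L^2}^2+\kappa\int_0^T\|\nabla\tilde\rho\|_{L^2}^2\,dt
\;\lesssim\;\kappa^{-1}\|F\|_{L^2_{t,x}}^2+\Bigl|\int_0^T\!\!\int_{\T^3}G\,\tilde\rho\,dx\,dt\Bigr|,
\end{equation*}
the last term being treated either by writing $G$ as a divergence modulo controllable errors, or by integrating by parts in $t$ and absorbing. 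To bound the resulting integrals, each summand of $F,G$ has the schematic shape (fast, mean-zero in $\xi$) $\times$ (slow factor) $\times\nabla^k\bar\rho$ with $k\leq 2$. Iterated integration by parts in $x$ trades each oscillation for $\lambda^{-1}$ at the cost of one more slow derivative (bounded by $\ell^{-1}$) or one more derivative on $\bar\rho$; after $N_h$ such iterations the higher derivatives of $\bar\rho$ are controlled by Lemma \ref{l:energy_weighted} in terms of $\mathcal{D}_{N_h}$, the accumulated slow factors by \eqref{e:hom_assump:slow}--\eqref{e:hom_assump:chi}, and the remaining parameter bookkeeping is closed off by \eqref{e:hom_para_assump:deltaell}--\eqref{e:hom_para_assump:kappataulambda} together with \eqref{e:hom_para_assump:Nh_result}, producing the desired prefactor $(\lambda^2\kappa\tau)^{-1}$.

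The main obstacle will be the bookkeeping in the last step: numerous residual terms arise from differentiating $\eta_i$, $\tilde\eta_i$, $\nabla\Phi_i$, $\sigma$ and $a_{\vec{k}}\circ\tilde R_i$, each carrying its own slow time-scale $\tau$ and length-scale $\ell$, and one must verify that every such term either gains a power of $\lambda^{-1}$ via the flux corrector of Step 2 or is absorbed into the weighted dissipation hierarchy of Lemma \ref{l:energy_weighted}. The fact that $\chi_i$ is given by the explicit formula \eqref{e:hom_ed:chi} rather than as a solution of an elliptic cell problem is decisive here: it supplies sharp pointwise control on $\chi_i$ and on the flux corrector $S$, sidestepping the obstruction flagged in Appendix A of \cite{ArmstrongVicol}.
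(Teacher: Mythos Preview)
Your proposal is essentially the paper's own argument: derive the equation for $\tilde\rho$ (Lemma~\ref{l:hom_trho_comp}), exploit the explicit Mikado-based potential for the mean-zero flux $\tilde B$ to gain a factor $\lambda^{-1}$ (the paper uses a curl potential $c_j^{(i)}$, equivalent to your antisymmetric $S$), then for the non-divergence-form transport residuals iterate the inverse-$\div_\xi$/chain-rule mechanism $N_h$ times (the paper's $F_l,G_l$ hierarchy in Lemma~\ref{l:hom_b_comp}), and close with the weighted energy estimates of Lemma~\ref{l:energy_weighted} together with \eqref{e:hom_para_assump:Nh_result}.

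Two points where your sketch is imprecise and would need care in execution. First, $\tilde\rho(0)=-\lambda^{-1}\chi\cdot\nabla\rho_{\rm in}\neq 0$, so the energy inequality carries an initial-data term $\lambda^{-2}\|\chi\cdot\nabla\bar\rho(0)\|_{L^2}^2$; this is precisely why $\mathcal D_{N_h}$ includes the initial-data sum. Second, your description of $G$ as ``(fast)$\times$(slow)$\times\nabla^k\bar\rho$ with $k\le 2$'' misses that $D_t\theta$ produces terms containing $D_t\nabla\bar\rho$, not just spatial derivatives; these are exactly the terms that force the $N_h$-fold iteration (one step does not suffice, since they are not in divergence form), and their final control uses the transport-derivative estimate \eqref{e:rhobarflowB} in Lemma~\ref{l:energy_weighted}, not only \eqref{e:rhobarenerg}. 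The alternative you float, ``integrating by parts in $t$'', is not what the paper does and would reintroduce $\partial_t\tilde\rho$ on the right-hand side.
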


We also have the following corollaries.
\begin{corollary} \label{c:hom_trho}
Let $\rho, \bar \rho$ and $\mathcal{D}_l$ be as in Proposition \ref{p:hom_trho}, then
\begin{equation}  \label{c:hom_trho_est}
      \sup_{t \le T}  \| \rho(t) - \bar\rho(t) \|^2_{L^2}
        \lesssim \frac{1}{\lambda^2 \kappa \tau} \mathcal{D}_{N_h}.
\end{equation}
\end{corollary}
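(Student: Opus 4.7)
The plan is to combine the ansatz \eqref{e:hom_trho_ansatz} with the bound of Proposition \ref{p:hom_trho} and reduce the remaining task to a single $L^\infty_t L^2_x$ estimate on the corrector term. From \eqref{e:hom_trho_ansatz},
\[
\rho - \bar\rho \;=\; \tfrac{1}{\lambda}\sum_i \eta_i\,\chi_i(\cdot,\cdot,\lambda\Phi_i)\cdot\nabla\bar\rho \;+\; \tilde\rho,
\]
so by the triangle inequality the contribution of $\tilde\rho$ is directly controlled by \eqref{e:hom_trho_est}, and it suffices to show
\[
\sup_{t\leq T}\Big\|\tfrac{1}{\lambda}\sum_i \eta_i\,\chi_i\cdot\nabla\bar\rho\Big\|_{L^2}^2 \;\lesssim\; \frac{1}{\kappa\lambda^2\tau}\mathcal{D}_{N_h}.
\]

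The central pointwise observation is that the amplitude $|\chi_i|\lesssim \sigma^{1/2}/(\kappa\lambda)$ coming from \eqref{e:hom_assump:chi} is precisely absorbed by the amplification of $\bar\kappa$ on the support of $\eta_i$. Indeed, from the definition in \eqref{e:hom_assump:barkappa} one has $\bar\kappa/\kappa \geq \sigma\eta_i^2/(\kappa^2\lambda^2)$ for every $i$, hence
\[
\eta_i^2|\chi_i|^2 \;\lesssim\; \frac{\sigma\eta_i^2}{\kappa^2\lambda^2} \;\leq\; \frac{\bar\kappa}{\kappa}.
\]
Since the supports of $\{\eta_i\}$ are pairwise disjoint, at each point at most one summand of $\sum_i \eta_i \chi_i$ is nonzero, so $\big|\sum_i \eta_i\chi_i\big|^2 \lesssim \bar\kappa/\kappa$ pointwise; integrating yields
\[
\Big\|\tfrac{1}{\lambda}\sum_i \eta_i\,\chi_i\cdot\nabla\bar\rho\Big\|_{L^2}^2 \;\lesssim\; \frac{1}{\kappa\lambda^2}\,\|\bar\kappa^{1/2}\nabla\bar\rho\|_{L^2}^2.
\]

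To close, I would apply Lemma \ref{l:energy_weighted} with $n=1$ to the homogenized equation \eqref{e:hom_ed:eq}; its scaling hypotheses follow from the assumptions gathered in Section \ref{s:ass_homo}. This gives
\[
\sup_{t\leq T}\|\bar\kappa^{1/2}\nabla\bar\rho(t)\|_{L^2}^2 \;\lesssim\; \tau^{-1}\bar D \,+\, \|(\bar\kappa^{1/2}\nabla\rho)_{\text{in}}\|_{L^2}^2 \;\lesssim\; \tau^{-1}\mathcal{D}_{N_h},
\]
the last inequality being built into the definition of $\mathcal{D}_{N_h}$. Combining the three displayed estimates yields the claim. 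The only subtle point is the pointwise amplification observation: the naive route of combining $\|\chi_i\|_{L^\infty}\lesssim \sigma^{1/2}/(\kappa\lambda)$ with $\|\nabla\bar\rho\|_{L^2}^2 \leq \kappa^{-1}\|\bar\kappa^{1/2}\nabla\bar\rho\|_{L^2}^2$ produces only a bound of order $\sigma/(\kappa^3\lambda^4\tau)\mathcal{D}_{N_h}$, which exceeds the target precisely when $\sigma/(\kappa^2\lambda^2)\gg 1$, i.e.\ in the regime where the homogenization has a non-trivial enhancing effect. Exploiting the support structure of the $\eta_i$ together with the explicit formula for $\bar\kappa$ is therefore essential to recovering the sharp bound.
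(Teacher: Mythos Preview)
Your argument is correct and essentially identical to the paper's. The paper's proof of Corollary~\ref{c:hom_trho} is the one-line remark that, beyond the bound on $\tilde\rho$ from Proposition~\ref{p:hom_trho}, one only needs to control the corrector term, and that this is done ``analogously to \eqref{p:hom_trho:p:e7} via \eqref{p:hom_trho:p:e3}''; unpacking those two references gives precisely your pointwise bound $\|\bar\kappa^{-1/2}\chi\|_{L^\infty}\lesssim\kappa^{-1/2}$ followed by the $n=1$ case of Lemma~\ref{l:energy_weighted}.
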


\begin{corollary}   \label{c:hom_dissipation}
Let $\rho, \bar \rho$ and $\mathcal{D}_l$ be as in Proposition \ref{p:hom_trho}, then
\begin{equation}  \label{c:hom_dissipation_est}
\begin{split}
       \sup_{t \le T} \Big| \| \rho(t) \|^2_{L^2} - \| \bar\rho(t) \|^2_{L^2} \Big|
        \lesssim \Big( \frac{1}{\lambda \kappa^{\sfrac{1}{2}} \tau^{\sfrac{1}{2}} } + \lambda^{-2\gamma} \Big) \mathcal{D}_{N_h}. \\
\end{split}
\end{equation}
\end{corollary}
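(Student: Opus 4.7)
I propose to proceed from the elementary identity
\begin{equation*}
\|\rho(t)\|_{L^2}^{2} - \|\bar\rho(t)\|_{L^2}^{2} \;=\; \|\rho(t)-\bar\rho(t)\|_{L^2}^{2} + 2\langle \rho(t)-\bar\rho(t),\bar\rho(t)\rangle.
\end{equation*}
Corollary \ref{c:hom_trho} bounds the squared term by $(\lambda^{2}\kappa\tau)^{-1}\mathcal{D}_{N_{h}}$, and the scaling hypothesis \eqref{e:hom_para_assump:kappataulambda} gives $(\lambda^{2}\kappa\tau)^{-1}\leq(\lambda\kappa^{1/2}\tau^{1/2})^{-1}$, so this summand fits within the target.

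For the cross term I substitute the two-scale ansatz \eqref{e:hom_trho_ansatz} and set $\zeta:=\frac{1}{\lambda}\sum_{i}\eta_{i}\chi_{i}(\cdot,\lambda\Phi_{i})\cdot\nabla\bar\rho$, so that $\langle\rho-\bar\rho,\bar\rho\rangle=\langle\zeta,\bar\rho\rangle+\langle\tilde\rho,\bar\rho\rangle$. The corrector piece $\langle\zeta,\bar\rho\rangle$ I treat via $\bar\rho\nabla\bar\rho=\tfrac{1}{2}\nabla\bar\rho^{2}$ and integration by parts, transferring the derivative onto $\eta_{i}\chi_{i}(\cdot,\lambda\Phi_{i})$. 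A direct computation shows that the naively dangerous $O(\lambda)$ contribution to $\div_{x}[\chi_{i}(\cdot,\lambda\Phi_{i})]$ equals $\tr(\nabla\Phi_{i}\cdot\nabla_{\xi}\chi_{i})$; using the explicit formula \eqref{e:hom_ed:chi} together with the identity $\nabla\Phi_{i}\cdot(\nabla\Phi_{i}^{-1}\vec{k})=\vec{k}$, this reduces to $\frac{\sigma^{1/2}}{\kappa\lambda}\sum_{\vec{k}}a_{\vec{k}}\,\vec{k}\cdot\nabla_{\xi}\varphi_{\vec{k}}$, which vanishes identically by the Mikado orthogonality. Only slow-scale derivatives of $\eta_{i}$, $\nabla\Phi_{i}^{-1}$ and $a_{\vec{k}}(\tilde R_{i})$ survive; by \eqref{e:hom_assump:slow}, \eqref{e:hom_assump:sigmadelta} and \eqref{e:hom_para_assump:deltaell} they contribute a prefactor absorbable into $\lambda^{-2\gamma}$ times $\sup_{t}\|\bar\rho\|_{L^{2}}^{2}$. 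The latter I bound via the energy identity $\sup_{t}\|\bar\rho(t)\|_{L^{2}}\leq\|\rho_{\text{in}}\|_{L^{2}}$ and a torus Poincaré inequality (valid since $\rho$ has zero mean) combined with $\bar\kappa\geq\kappa$, which yield $\|\rho_{\text{in}}\|_{L^{2}}^{2}\lesssim(\tau\kappa)^{-1}\mathcal{D}_{N_{h}}$ from the $j=1$ summand.

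The term $\langle\tilde\rho,\bar\rho\rangle$ is the main obstacle. A brute-force Cauchy--Schwarz combining Proposition \ref{p:hom_trho} with the Poincaré bound above yields only $(\lambda\kappa\tau)^{-1}\mathcal{D}_{N_{h}}$, which by \eqref{e:hom_para_assump:kappataulambda} is \emph{larger} than $\lambda^{-2\gamma}\mathcal{D}_{N_{h}}$ and therefore does not close. Instead I would differentiate $t\mapsto\langle\tilde\rho(t),\bar\rho(t)\rangle$, use the forced advection--diffusion equation for $\tilde\rho$ obtained by subtracting \eqref{e:hom_ed:eq} and the time derivative of the ansatz from \eqref{e:hom_base:eq}, and pair the resulting source terms against $\bar\rho$. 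Each source is either a flux divergence that, after integration by parts, pairs with $\nabla\bar\rho$ and is absorbed into $\mathcal{D}_{N_{h}}$ via Lemma \ref{l:energy_weighted}, or a time derivative of the corrector which carries an explicit $1/\lambda$ from the structure of $\chi_{i}$. The antisymmetry of the Mikado part of $\tilde A$ is essential for eliminating a remainder that would otherwise be controlled only by $\kappa^{1/2}\nabla$ rather than $\bar\kappa^{1/2}\nabla$; the $t=0$ boundary contribution equals $-\langle\zeta(0),\rho_{\text{in}}\rangle$ and is treated exactly as $\langle\zeta,\bar\rho\rangle$ above.
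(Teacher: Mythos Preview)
Your approach has a genuine gap in the treatment of $\langle\tilde\rho,\bar\rho\rangle$. Differentiating this pairing in time and using the equations for $\tilde\rho$ and $\bar\rho$ produces, after integration by parts, the cross term
\[
\int_0^t\!\!\int \bar A\nabla\bar\rho\cdot\nabla\tilde\rho\,.
\]
Since $\bar A$ is symmetric with $\bar A\sim\bar\kappa\,\Id$, this is of size $\int\bar\kappa|\nabla\bar\rho||\nabla\tilde\rho|$. Cauchy--Schwarz yields at best
\[
\Bigl(\int_0^t\!\!\int\bar\kappa|\nabla\bar\rho|^2\Bigr)^{1/2}\Bigl(\int_0^t\!\!\int\bar\kappa|\nabla\tilde\rho|^2\Bigr)^{1/2}
\;\lesssim\;\Bigl(\tfrac{\|\bar\kappa\|_\infty}{\kappa}\Bigr)^{1/2}\frac{1}{\lambda\kappa^{1/2}\tau^{1/2}}\,\mathcal{D}_{N_h},
\]
because Proposition~\ref{p:hom_trho} controls only $\kappa\int|\nabla\tilde\rho|^2$, not $\bar\kappa\int|\nabla\tilde\rho|^2$. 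The ratio $\|\bar\kappa\|_\infty/\kappa$ is genuinely large (in the application it equals $\lambda_q^{(b-1)\theta}$), so this does not close. The antisymmetry you invoke helps only with the $\tilde A$-term coming from the $\tilde\rho$-equation; it is of no use for $\bar A$, which is fully symmetric of size $\bar\kappa$. Your $\langle\zeta,\bar\rho\rangle$ bound has a related problem: even granting the Mikado cancellation of the fast divergence, you are left estimating $\|\bar\rho\|_{L^2}^2$ by $(\kappa\tau)^{-1}\mathcal{D}_{N_h}$ via Poincar\'e, and $(\kappa\tau)^{-1}$ can be as large as $\lambda^{2-4\gamma}$ by \eqref{e:hom_para_assump:kappataulambda}, so the product is not small.

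The paper avoids both issues by never isolating $\langle\tilde\rho,\bar\rho\rangle$. It compares the two dissipations directly: the energy identities give
\[
\|\rho(t)\|_{L^2}^2-\|\bar\rho(t)\|_{L^2}^2
=-2\int_0^t\Bigl(\kappa\|\nabla\rho\|_{L^2}^2-\int\bar A\nabla\bar\rho\cdot\nabla\bar\rho\Bigr),
\]
and then one expands $\nabla\rho=\tilde M\nabla\bar\rho+\nabla\tilde\rho+O(\lambda^{-1})$ from the ansatz, with $\tilde M=\Id+\sum_i\eta_i\nabla\Phi_i^T\nabla_\xi\chi_i^T$. The decisive structural identity is $\kappa\langle M^TM\rangle=\bar A$ up to an error of size $\lambda^{-2\gamma}|\bar A|$ (this is \eqref{c:hom_dissipation_eq3} in the paper), so $\kappa|\tilde M\nabla\bar\rho|^2$ matches $\bar A\nabla\bar\rho\cdot\nabla\bar\rho$ to leading order. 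The remaining cross term now carries the favourable weight $|\tilde M|\sim(\bar\kappa/\kappa)^{1/2}$, so that $\kappa\,\tilde M\nabla\bar\rho\cdot\nabla\tilde\rho\sim(\bar\kappa^{1/2}|\nabla\bar\rho|)(\kappa^{1/2}|\nabla\tilde\rho|)$ --- precisely the product that Proposition~\ref{p:hom_trho} and the definition of $\mathcal D_{N_h}$ control. This is the missing idea in your argument.
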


\begin{corollary}   \label{r:hom_dissipation}
Let $\rho, \bar \rho$ be as in Proposition \ref{p:hom_trho}. Assume for any $j \in [0,N_h]$
\begin{align}   \label{r:hom_dissipation_eq0}
    \|\nabla^j \rho_{in}\|_{L^2}^2 \leq \lambda^{\frac{2j}{b}} \kappa \int_0^T \| \nabla \rho \|_{L^2}^2 dt,
\end{align}

then
\begin{align}
     \sup_{t \le T}   \Big| \| \rho(t) \|^2_{L^2} - \| \bar\rho(t) \|^2_{L^2} \Big|
    \lesssim \lambda^{ -2\gamma }
    \min \left\{ \int_0^T \| \bar \kappa^{\frac{1}{2}} \nabla \bar \rho \|^2_{L^2} dt, \kappa \int_0^T \| \nabla \rho \|_{L^2}^2 dt \right\}.
\end{align}
\end{corollary}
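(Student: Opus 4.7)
The plan is to start from Corollary \ref{c:hom_dissipation} and use the assumption \eqref{r:hom_dissipation_eq0} to control the initial-data term in $\mathcal{D}_{N_h}$, after which a short bootstrap argument gives the $\min$ on the right-hand side.

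First, observe that the parameter assumption \eqref{e:hom_para_assump:kappataulambda} gives $\frac{1}{\lambda\kappa^{1/2}\tau^{1/2}}\le\lambda^{-2\gamma}$, so the prefactor in Corollary \ref{c:hom_dissipation} is $\lesssim\lambda^{-2\gamma}$ and we have
\begin{equation*}
    \sup_{t\le T}\bigl|\|\rho(t)\|_{L^2}^2-\|\bar\rho(t)\|_{L^2}^2\bigr|\lesssim \lambda^{-2\gamma}\mathcal{D}_{N_h}.
\end{equation*}
Next, for each $1\le j\le N_h$, I bound pointwise $\bar\kappa^{j}\le\|\bar\kappa\|_{L^\infty}^{j}$ and apply \eqref{r:hom_dissipation_eq0}:
\begin{equation*}
\tau^j\bigl\|\bigl(\bar\kappa^{j/2}\nabla^j\rho\bigr)_{\text{in}}\bigr\|_{L^2}^2
\le \bigl(\tau\|\bar\kappa\|_{L^\infty}\lambda^{2/b}\bigr)^{j}\,\kappa\int_0^T\|\nabla\rho\|_{L^2}^2\,dt
\le \lambda^{-2\gamma j}\,\kappa\int_0^T\|\nabla\rho\|_{L^2}^2\,dt,
\end{equation*}
where the last inequality uses \eqref{e:hom_para_assump:for_rem}. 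Summing over $j$ and writing $A:=\int_0^T\|\bar\kappa^{1/2}\nabla\bar\rho\|_{L^2}^2\,dt$, $B:=\kappa\int_0^T\|\nabla\rho\|_{L^2}^2\,dt$, this gives $\mathcal{D}_{N_h}\lesssim A+\lambda^{-2\gamma}B$ and hence
\begin{equation}\label{e:rhoddhint}
    \sup_{t\le T}\bigl|\|\rho(t)\|_{L^2}^2-\|\bar\rho(t)\|_{L^2}^2\bigr|\lesssim \lambda^{-2\gamma}A+\lambda^{-4\gamma}B.
\end{equation}

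It remains to show $A$ and $B$ are comparable, so that the right-hand side of \eqref{e:rhoddhint} is controlled by $\lambda^{-2\gamma}\min\{A,B\}$. Using the energy identities
\begin{equation*}
    B=\tfrac12\bigl(\|\rho_{in}\|_{L^2}^2-\|\rho(T)\|_{L^2}^2\bigr),\qquad
    \int_0^T\!\!\int\bar A\nabla\bar\rho\cdot\nabla\bar\rho\,dx\,dt=\tfrac12\bigl(\|\rho_{in}\|_{L^2}^2-\|\bar\rho(T)\|_{L^2}^2\bigr),
\end{equation*}
together with the two-sided ellipticity $\tfrac12\bar\kappa\Id\le\bar A\le 2\bar\kappa\Id$ from \eqref{e:hom_assump:barkappa}, the quantity $\tfrac12(\|\rho_{in}\|^2-\|\bar\rho(T)\|^2)$ is comparable to $A$; thus taking $t=T$ in \eqref{e:rhoddhint} yields
\begin{equation*}
    |B-\tfrac12 A|\lesssim |B-\tfrac12(\|\rho_{in}\|^2-\|\bar\rho(T)\|^2)|+A \lesssim A+\lambda^{-4\gamma}B.
\end{equation*}
For $a$ sufficiently large (so that $\lambda^{-4\gamma}$ is absorbed by the implicit constant), rearranging gives $B\lesssim A$ and symmetrically $A\lesssim B$. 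Plugging this comparability back into \eqref{e:rhoddhint} yields the desired bound by $\lambda^{-2\gamma}\min\{A,B\}$.

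The only point requiring any care is the bootstrap in the last paragraph: one must make sure that the $\lambda^{-4\gamma}B$ term in \eqref{e:rhoddhint} can be absorbed to deduce $B\lesssim A$ unconditionally, which is legitimate because $\lambda\ge\lambda_{q_0}$ can be taken large by choosing $a_0$ large. Everything else is bookkeeping on the existing estimates.
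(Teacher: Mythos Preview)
Your argument is correct and follows the same route as the paper: bound the initial-data contribution to $\mathcal{D}_{N_h}$ using \eqref{r:hom_dissipation_eq0} and \eqref{e:hom_para_assump:for_rem}, then feed this into Corollary~\ref{c:hom_dissipation} together with \eqref{e:hom_para_assump:kappataulambda}. The paper's own proof stops at $\mathcal{D}_{N_h}\lesssim A+B$ and simply asserts the conclusion, whereas you go further and explicitly justify the passage from $\lambda^{-2\gamma}(A+B)$ to $\lambda^{-2\gamma}\min\{A,B\}$ via the energy identities and a bootstrap---this extra step is genuinely needed and your inclusion of it is an improvement. One small remark: your ``symmetrically $A\lesssim B$'' is not literally symmetric to the preceding display; to get it cleanly you should instead use $A'\ge\tfrac12 A$ in $A'\le B+|B-A'|$, which gives $(\tfrac12-C\lambda^{-2\gamma})A\le(1+C\lambda^{-4\gamma})B$. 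With that adjustment the bootstrap closes exactly as you say, provided $\lambda$ is large enough to absorb the constant---a hypothesis that is implicit throughout this section and made explicit when the corollary is applied in Step~2 of Proposition~\ref{p:main}.
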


\begin{proof}[Proof of Corollary \ref{r:hom_dissipation}]
With \eqref{r:hom_dissipation_eq0}, we have
\begin{equation} \label{r:hom_dissipation_e1}
\begin{split}
    \mathcal{D}_{N_h} \leq& \int_0^T \| \bar \kappa^{\frac{1}{2}} \nabla \bar \rho \|^2_{L^2} dt
        + \sum_{j=1}^{N_h} \tau^j \|\bar \kappa\|_{L^\infty}^j \lambda^{\frac{2j}{b}} \kappa 
        \int_0^T \| \nabla \rho \|_{L^2}^2 dt, \\
    \overset{\textrm{\eqref{e:hom_para_assump:for_rem}}}{\leq} &
     \int_0^T \| \bar \kappa^{\frac{1}{2}} \nabla \bar \rho \|^2_{L^2} dt + \kappa \int_0^T \| \nabla \rho \|_{L^2}^2 dt.
\end{split}
\end{equation}
Therefore, Corollary \ref{r:hom_dissipation} follows from \eqref{c:hom_dissipation_est}, \eqref{r:hom_dissipation_e1} and \eqref{e:hom_para_assump:kappataulambda}.
\end{proof}

The proofs of Proposition \ref{p:hom_trho} and Corollaries \ref{c:hom_trho}, \ref{c:hom_dissipation} are given in the end of this section.

The term $\tilde \rho$ in Proposition \ref{p:hom_trho} is the error term in homogenization. In the following  two lemmas, in order to estimate  $\tilde \rho$, we derive and analyse the equation it satisfies.

\begin{lemma} \label{l:hom_trho_comp} Let the assumptions of Proposition \ref{p:hom_trho} hold and let $\tilde \rho$ be as in \eqref{e:hom_trho_ansatz}. Then $\tilde \rho$ satisfies
\begin{equation}\label{e:hom_trho_eq}
\begin{split}
    \partial_t\tilde\rho &+ u\cdot\nabla \tilde\rho - \div \big( \tilde A\nabla\tilde\rho \big)
    = \div \big( \tilde B \nabla\bar{\rho} \big) \\ 
    &+ \frac{1}{\lambda} \sum_i \left( \div \left( \tilde A \nabla^2\bar\rho \eta_i \chi_i(x,t,\lambda \Phi_i) \right) + \div \left( \tilde A \nabla_x ( \eta_i\chi_i )^T (x,t,\lambda \Phi_i) \nabla\bar{\rho} \right) \right) \\
    &- \frac{1}{\lambda} \sum_i \Big( \eta_i \chi_i(x,t,\lambda \Phi_i) \cdot D_t\nabla\bar{\rho} + \nabla\bar\rho \cdot D_t (\eta_i\chi_i) (x,t,\lambda \Phi_i) \Big),
\end{split}
\end{equation}
with the matrices $B: \T^3 \times [0,T] \times \T^3 \rightarrow \R^{3 \times 3}$ and $\tilde B: \T^3 \times [0,T] \rightarrow \R^{3 \times 3}$ given by
\begin{align}
    B(x,t,\xi) =& A(x,t,\xi) \Big( \Id+ \sum_i \eta_i \nabla\Phi_i^T \nabla_\xi\chi_i^T(x,t,\xi) \Big) - \Big\langle A \Big( \Id + \sum_i \eta_i \nabla\Phi_i^T \nabla_\xi\chi_i^T \Big) \Big\rangle,     \label{e:hom_trho:B} \\ 
    \tilde B(x,t) =& \tilde A \Big( \Id+ \sum_i \eta_i \nabla\Phi_i^T \nabla_\xi\chi_i^T(x,t,\lambda \Phi_i) \Big) - \Big\langle A \Big( \Id + \sum_i \eta_i \nabla\Phi_i^T \nabla_\xi\chi_i^T \Big) \Big\rangle.   \label{e:hom_trho:tB}
\end{align}
\end{lemma}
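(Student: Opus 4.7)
The strategy is direct: substitute the ansatz $\rho = \bar\rho + C + \tilde\rho$ with corrector $C := \frac{1}{\lambda}\sum_i \eta_i(x,t)\chi_i(x,t,\lambda\Phi_i)\cdot\nabla\bar\rho$ into \eqref{e:hom_base:eq}, use the equation \eqref{e:hom_ed:eq} for $\bar\rho$ to cancel the leading terms, and carefully separate macroscopic ($x$) from microscopic ($\xi$) derivatives.

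First I would compute $\nabla C$ by the chain rule, keeping in mind that when $\chi_i(x,t,\lambda\Phi_i(x,t))$ is differentiated in $x_j$, the derivative splits into an explicit part $(\partial_{x_j}\chi_i)(x,t,\lambda\Phi_i)$ and a fast part $\lambda\,\partial_j\Phi_{i,l}(\partial_{\xi_l}\chi_i)(x,t,\lambda\Phi_i)$. The factor $\lambda$ in the fast part exactly compensates the $\lambda^{-1}$ prefactor in $C$, producing the $O(1)$ contribution
\begin{equation*}
\sum_i \eta_i\,\nabla\Phi_i^T(\nabla_\xi\chi_i^T)(x,t,\lambda\Phi_i)\nabla\bar\rho,
\end{equation*}
while all remaining terms keep the $\lambda^{-1}$ prefactor. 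Adding $\nabla\bar\rho$ to this $O(1)$ piece gives the factor $\Id + \sum_i\eta_i\nabla\Phi_i^T\nabla_\xi\chi_i^T(x,t,\lambda\Phi_i)$ appearing in \eqref{e:hom_trho:tB}, so that multiplying by $\tilde A$ produces $(\tilde B + \bar A)\nabla\bar\rho$ by the very definition of $\tilde B$. The $\bar A\nabla\bar\rho$ piece is then cancelled using \eqref{e:hom_ed:eq}, while the $\tilde B\nabla\bar\rho$ part yields the first forcing term $\div(\tilde B\nabla\bar\rho)$ in \eqref{e:hom_trho_eq}. The two lower-order contributions from $\nabla C$, namely $\lambda^{-1}\sum_i[\nabla_x(\eta_i\chi_i)^T](x,t,\lambda\Phi_i)\nabla\bar\rho$ and $\lambda^{-1}\sum_i\eta_i(\nabla^2\bar\rho)\chi_i(x,t,\lambda\Phi_i)$, when hit with $\tilde A$ and $\div$, give precisely the two divergence terms on the second line of \eqref{e:hom_trho_eq}.

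Next I would compute the transport contribution $D_t C = \partial_t C + u\cdot\nabla C$. Here the key observation, which is the part requiring a moment of thought, is that $\Phi_i$ is the inverse flow of $u$ (the advecting field in \eqref{e:hom_base:eq} after the reduction \eqref{e:homo_connect}), so $D_t\Phi_i = 0$. Consequently the potentially dangerous $O(1)$ fast contribution $\lambda(D_t\Phi_{i,l})(\partial_{\xi_l}\chi_i)(x,t,\lambda\Phi_i)$ vanishes identically, and $D_t$ commutes with evaluation at $\lambda\Phi_i$ in the sense that $D_t[(\eta_i\chi_i)(x,t,\lambda\Phi_i)] = [D_t(\eta_i\chi_i)](x,t,\lambda\Phi_i)$. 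Using the product rule $D_t C = \lambda^{-1}\sum_i [D_t(\eta_i\chi_i)](x,t,\lambda\Phi_i)\cdot\nabla\bar\rho + \lambda^{-1}\sum_i \eta_i\chi_i(x,t,\lambda\Phi_i)\cdot D_t\nabla\bar\rho$, and moving this to the right-hand side of the equation, gives exactly the third line of \eqref{e:hom_trho_eq}.

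The remaining work is purely bookkeeping: collecting everything and checking that every term appears in the claimed form. I do not foresee any analytical difficulty in this lemma — the cancellation $D_t\Phi_i = 0$ is what makes the transport part clean, and the definition of $\tilde B$ is tailored to absorb the $O(1)$ piece of $\tilde A\nabla C$. The main obstacle, if any, is keeping index conventions (transposes of $\nabla\Phi_i$, $\nabla_\xi\chi_i$, and the action of $\nabla^2\bar\rho$) consistent throughout the computation.
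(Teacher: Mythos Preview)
Your proposal is correct and follows essentially the same approach as the paper: compute $\nabla$ and $D_t$ of the corrector via the chain rule, use $D_t\Phi_i=0$ to kill the fast transport contribution, and recognize the leading $O(1)$ piece as $(\tilde B+\bar A)\nabla\bar\rho$ so that the $\bar A$ part cancels against \eqref{e:hom_ed:eq}. The paper organizes the computation by writing $\nabla\tilde\rho$ and $\partial_t\tilde\rho$ separately and then combining, while you group things as $D_t C$ from the start, but this is purely cosmetic.
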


\begin{proof}[Proof of Lemma \ref{l:hom_trho_comp}]
From the ansatz \eqref{e:hom_trho_ansatz}, omitting the argument $\big( x, t, \lambda \Phi_i(x,t) \big)$, direct computations give
\begin{align}
    \nabla\tilde\rho &= \nabla\rho - \Big( \Id + \sum_i \eta_i \nabla\Phi_i^T \nabla_{\xi} \chi_i^T \Big) \nabla\bar{\rho} - \frac{1}{\lambda} \sum_i \Big( \nabla^2\bar\rho\chi_i\eta_i + \nabla_x (\chi_i\eta_i)^T\nabla\bar{\rho} \Big), \label{l:hom_trho_comp:e3} \\
    \partial_t\tilde\rho &= \partial_t\rho - \partial_t\bar\rho - \sum_i \eta_i \partial_t\Phi_i^T \nabla_\xi\chi_i^T \nabla\bar\rho - \frac{1}{\lambda} \sum_i \Big( \partial_t\nabla\bar{\rho}\cdot\chi_i\eta_i + \nabla\bar\rho \cdot \partial_t(\chi_i\eta_i) \Big). \label{l:hom_trho_comp:e4}
\end{align}
Using $\partial_t\Phi_i + (u\cdot\nabla)\Phi_i=0$ and omitting the argument $\big( x, t, \lambda \Phi_i(x,t) \big)$, we obtain
\begin{equation*}
\begin{split}
    \partial_t\tilde\rho + u\cdot\nabla \tilde\rho - \div \tilde A\nabla\tilde\rho 
    =& \div\left(\left[ \tilde A \Big( \Id+ \sum_i \eta_i \nabla\Phi_i^T \nabla_\xi\chi_i^T \Big) - \Big\langle A \Big( \Id + \sum_i \eta_i \nabla\Phi_i^T \nabla_\xi\chi_i^T \Big) \Big\rangle \right]\nabla\bar{\rho} \right)\\
    &+ \frac{1}{\lambda} \sum_i \Big( \div \left( \tilde A \nabla^2\bar\rho \chi_i \eta_i \right) + \div \left( \tilde A \nabla_x (\chi_i\eta_i) ^T\nabla\bar{\rho} \right) \Big) \\ 
    &- \frac{1}{\lambda} \sum_i \Big( \partial_t\nabla\bar{\rho} \cdot \chi_i \eta_i + \partial_t(\chi_i\eta_i) \cdot \nabla \bar\rho \Big) - \frac{1}{\lambda} \sum_i \Big( u \cdot \big( \nabla^2 \bar\rho\chi_i\eta_i \big) + u \cdot \big( \nabla_x \chi_i^T \eta_i \nabla\bar{\rho} \big) \Big) \\ 
    =& \div \big( \tilde B \nabla\bar{\rho} \big) + \frac{1}{\lambda} \sum_i \Big( \div \left( \tilde A \nabla^2\bar\rho \chi_i \eta_i \right) + \div \left( \tilde A \nabla_x (\chi_i\eta_i)^T \nabla\bar{\rho} \right) \Big) \\
    &- \frac{1}{\lambda} \sum_i \Big( \eta_i \chi_i \cdot D_t\nabla\bar{\rho} + \nabla\bar\rho \cdot D_t(\chi_i\eta_i) \Big).
\end{split}
\end{equation*}
\end{proof}
Next, we reformulate the right-hand side of \eqref{e:hom_trho_eq} and provide appropriate estimates. In particular, we need to use potentials of high order (cf. $l_0$ in what follows), which allows to close estimates in Theorem \ref{t:main}.
\begin{lemma} \label{l:hom_b_comp}
Let the assumptions of Proposition \ref{p:hom_trho} hold and let $\tilde \rho$ be as in \eqref{e:hom_trho_ansatz}. Then for any $\,l_0 \leq N_h$,  $\tilde \rho$ satisfies
\begin{equation} \label{e:hom_trho_eq_detail}
\begin{split}
    \partial_t\tilde\rho &+ u\cdot\nabla \tilde\rho - \div \big( \tilde A\nabla\tilde\rho \big)
    = \frac{1}{\lambda} \Big( E_1 + E_2 + E_3 + E_4 + \sum_{l=1}^{l_0} F_l + G_{l_0} \Big),
\end{split}
\end{equation}
with
\begin{align*}
    E_1 =& - \sum_{i,j} \div \left( \nabla\partial_j\bar\rho \times \left( \nabla\Phi_i^T c^{(i)}_j(x,t,\lambda \Phi_i) \right) \right),  \\ 
    E_2 =& - \sum_{i,j,m} \div \left( \left( \nabla_x c^{(i)}_{jm} (x,t,\lambda \Phi_i) \times \nabla\Phi_{i,m} \right) \partial_j\bar\rho \right),  \\
    E_3 =&  \sum_i \div \left( \tilde A \nabla^2\bar\rho \eta_i\chi_i(x,t,\lambda \Phi_i) \right),  \\ 
    E_4 =&  \sum_i \div \left( \tilde A \nabla_x \big( \eta_i\chi_i(x,t,\lambda \Phi_i) \big)^T \nabla\bar{\rho} \right)
\end{align*}
and
\begin{align}
    F_1 =& 0, \\ 
    F_l =& \frac{1}{\lambda^{l-1}} \sum_{i,1 \leq |\boldsymbol{\alpha}| \leq l-1} \div \Big( f^{(l-1)}_{0,\boldsymbol{\alpha},i}( x, t, \lambda \Phi_i) D_t \partial^{\boldsymbol{\alpha}} \bar \rho + f^{(l-1)}_{1,\boldsymbol{\alpha},i}( x, t, \lambda \Phi_i) \partial^{\boldsymbol{\alpha}} \bar \rho \Big), \text{ for } l \geq 2,     \label{e:hom_trho_FN} \\ 
    G_l =& \frac{1}{\lambda^{l-1}} \sum_{i,1 \leq |\boldsymbol{\alpha}| \leq l} \Big( g^{(l)}_{0,\boldsymbol{\alpha},i} ( x, t, \lambda \Phi_i) D_t \partial^{\boldsymbol{\alpha}} \bar \rho + g^{(l)}_{1,\boldsymbol{\alpha},i} ( x, t, \lambda \Phi_i) \partial^{\boldsymbol{\alpha}} \bar \rho \Big),     \label{e:hom_trho_GN}
\end{align}
for certain functions $c_j^{(i)} : \T^3 \times [0,T] \times \T^3 \rightarrow \R^3$, $f^{(l)}_{0,\boldsymbol{\alpha},i}$, $f^{(l)}_{1,\boldsymbol{\alpha},i}$, $g^{(l)}_{0,\boldsymbol{\alpha},i}$, $g^{(l)}_{1,\boldsymbol{\alpha},i}: \T^3 \times [0,T] \times \T^3 \rightarrow \R$, satisfy the following estimates
\begin{align}
    \langle f^{(l)}_{0,\boldsymbol{\alpha},i} \rangle = \langle f^{(l)}_{1,\boldsymbol{\alpha},i} \rangle &=
        \langle g^{(l)}_{0,\boldsymbol{\alpha},i} \rangle = \langle g^{(l)}_{1,\boldsymbol{\alpha},i} \rangle = 0,  \label{e:hom_trho_div_free} \\ 
    \| c^{(i)}_j \|_{L^\infty_\xi}
        &\lesssim \kappa \Big( 1 + \frac{\sigma^{1/2}\eta_i}{\kappa \lambda} \Big) \frac{\sigma^{1/2}\eta_i}{\kappa \lambda},    \label{e:hom_trho_c_est} \\ 
    \| \nabla_x c^{(i)}_j \|_{L^\infty_\xi}
        &\lesssim \kappa \Big( 1 + \frac{\sigma^{1/2}\eta_i}{\kappa \lambda} \Big)
        \Big( \frac{\sigma^{1/2}\eta_i}{\kappa \lambda} \ell^{-1} + \frac{\sigma^{1/2}}{\kappa \lambda} \Big),    \label{e:hom_trho_c_est1} \\ 
    \big\| \nabla_x^m f^{(l)}_{0,\boldsymbol{\alpha},i} \big\|_{L^\infty_\xi}
    + \big\| \nabla_x^m g^{(l)}_{0,\boldsymbol{\alpha},i} \big\|_{L^\infty_\xi}
        &\lesssim \frac{\sigma^{1/2}}{\kappa \lambda} \ell^{-(l-|\boldsymbol{\alpha}|+m)} ( \eta_i + l ), 
        \quad \text{for } l-|\boldsymbol{\alpha}|+m \leq N_h,  \label{e:hom_trho_f0g0_est} \\ 
    \big\| \nabla_x^m f^{(l)}_{1,\boldsymbol{\alpha},i} \big\|_{L^\infty_\xi}
    + \big\| \nabla_x^m g^{(l)}_{1,\boldsymbol{\alpha},i} \big\|_{L^\infty_\xi}
        &\lesssim \frac{\sigma^{1/2}}{\kappa \lambda} \tau^{-1} \ell^{-(l-|\boldsymbol{\alpha}|+m)} ( \eta_i + l ),
        \quad \text{for } l-|\boldsymbol{\alpha}|+m \leq N_h \label{e:hom_trho_f1g1_est},
\end{align}
and
\begin{align}   \label{e:hom_trho_fg_supp}
    \supp f^{(l)}_{0,\boldsymbol{\alpha},i}, \supp f^{(l)}_{1,\boldsymbol{\alpha},i}
    \supp g^{(l)}_{0,\boldsymbol{\alpha},i}, \supp g^{(l)}_{1,\boldsymbol{\alpha},i}
        \subset \supp \eta_i,
\end{align}
where $c_{jm}^{(i)} := c_j^{(i)} \cdot e_m$ and $\nabla\Phi_{i,m} := \nabla\Phi_{i} \cdot e_m$.
\end{lemma}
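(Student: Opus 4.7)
The plan is to reformulate \eqref{e:hom_trho_eq} by a classical potential/iteration scheme. First, the terms $E_3$ and $E_4$ in \eqref{e:hom_trho_eq_detail} are inherited verbatim from the second line of \eqref{e:hom_trho_eq} by factoring out $1/\lambda$. The term $\div(\tilde B\nabla\bar\rho)$ is handled column-by-column: for each $j$ the vector $B(x,t,\xi)e_j$ is zero-mean in $\xi$ by the definition \eqref{e:hom_trho:B}, and it is also $\xi$-divergence-free. This latter property is inherited from the antisymmetry of the Mikado matrices $H_{\vec k}(\xi)$ and the fact that $U_{\vec k}=\vec k\times\nabla\varphi_{\vec k}$ is itself a curl, together with the explicit form \eqref{e:hom_ed:chi} of the corrector $\chi_i$, which is tailored precisely so that the $\xi$-divergence of $A(\Id+\sum_i\eta_i\nabla\Phi_i^T\nabla_\xi\chi_i^T)e_j$ vanishes on $\supp\tilde\eta_i$. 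A vector potential $c^{(i)}_j(x,t,\xi)$ with $Be_j=\curl_\xi c^{(i)}_j$ is then obtained by elliptic inversion on $\T^3$; the bounds \eqref{e:hom_trho_c_est}--\eqref{e:hom_trho_c_est1} follow from elliptic estimates combined with the pointwise control of $A$, $\nabla_\xi\chi_i$, and $\nabla\Phi_i^{\pm 1}$ provided by \eqref{e:hom_assump:sigmadelta}--\eqref{e:hom_assump:slow}.

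Next, evaluating $c^{(i)}_j$ at $\xi=\lambda\Phi_i$ and invoking the chain rule yields
\[
\lambda^{-1}\curl_x\bigl(\nabla\Phi_i^T c^{(i)}_j(x,t,\lambda\Phi_i)\bigr) = (\curl_\xi c^{(i)}_j)(x,t,\lambda\Phi_i) + \lambda^{-1}\mathcal C^{(i)}_j(x,t),
\]
with a commutator $\mathcal C^{(i)}_j$ collecting slow-variable cross terms (involving $\nabla_x c^{(i)}_{jm}$ paired with $\nabla\Phi_{i,m}$). Multiplying by $\partial_j\bar\rho$, summing in $j$, applying $\div_x$, and using the identity $\div(\nabla f\times v) = -\nabla f\cdot\curl v$ (so that pulling out $\nabla\partial_j\bar\rho$ does not cost an additional derivative), one rewrites $\div(\tilde B\nabla\bar\rho) = \lambda^{-1}(E_1+E_2)$, where $E_1$ is the contribution of the leading cross product with $\nabla\partial_j\bar\rho$ and $E_2$ records the slow-variable commutator.

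Finally, the third line of \eqref{e:hom_trho_eq} has the form $\lambda^{-1}\sum_i\mathcal O_i(x,t,\lambda\Phi_i)\mathcal D_i\bar\rho$, with $\mathcal O_i$ zero-mean in $\xi$ (since $\dashint\varphi_{\vec k}\,d\xi=0$ makes $\chi_i$ zero-mean in $\xi$) and $\mathcal D_i$ a first-order derivative of $\bar\rho$; this is exactly the shape of the initial ``$G_1$'' datum, and the convention $F_1=0$ holds trivially since the sum in \eqref{e:hom_trho_FN} is empty. The induction upgrades $G_{l-1}$ into $F_l+G_l$: given a zero-mean-in-$\xi$ oscillation $\mathcal O$ of prefactor $\lambda^{-(l-1)}$, one solves $-\Delta_\xi\mathcal P=\mathcal O$ on $\T^3$ for a zero-mean primitive $\mathcal P$ and writes $\mathcal O(x,t,\lambda\Phi_i) = \lambda^{-1}\div_x\bigl(\nabla\Phi_i^{-T}\nabla_\xi\mathcal P(x,t,\lambda\Phi_i)\bigr)+(\text{slow commutator})$; the divergence piece contributes to $F_l$ (yielding the $f^{(l-1)}_{0,\cdot},f^{(l-1)}_{1,\cdot}$ terms), while the commutator together with derivatives that transfer onto $\mathcal D\bar\rho$ produces the next-stage $g^{(l)}_{0,\cdot},g^{(l)}_{1,\cdot}$, ready for the next round. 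The process is stopped at $l=l_0$, leaving $G_{l_0}$ as the remainder. The support condition \eqref{e:hom_trho_fg_supp} is preserved at each step because every factor carries $\eta_i$ whose supports are disjoint across $i$; the zero-mean conditions \eqref{e:hom_trho_div_free} hold by construction; and the scaling bounds \eqref{e:hom_trho_f0g0_est}--\eqref{e:hom_trho_f1g1_est} follow by induction from \eqref{e:hom_assump:sigmadelta}--\eqref{e:hom_assump:sigma}, with each new $x$-derivative costing $\ell^{-1}$, each $D_t$ costing $\tau^{-1}$, and the prefactor $\sigma^{1/2}/(\kappa\lambda)$ propagating from $\chi_i$. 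The main obstacle is the verification, in the first paragraph, of the $\xi$-divergence-free structure of $B$: this is the algebraic miracle that makes the vector potential $c^{(i)}_j$ available and ultimately supplies the extra factor $1/\lambda$ driving the homogenization estimate; everything after that is routine but requires careful bookkeeping to control the polynomial-in-$l$ growth of constants as $l$ runs up to $l_0\leq N_h$.
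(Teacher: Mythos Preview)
Your proposal follows essentially the same route as the paper: a vector-potential argument for the columns of $B$ yielding $E_1,E_2$, direct extraction of $E_3,E_4$ from the second line of \eqref{e:hom_trho_eq}, and an inverse-$\div_\xi$ iteration producing the $F_l,G_l$ hierarchy with the stated bounds. One technical correction: the quantity that is $\xi$-divergence-free is not $Be_j$ itself but $\nabla\Phi_i\, b_j^{(i)}$ (where $b_j=\sum_i\eta_i b_j^{(i)}$), so the potential satisfies $\curl_\xi c_j^{(i)}=\eta_i\nabla\Phi_i\, b_j^{(i)}$; the pullback identity $\lambda^{-1}\curl_x\bigl(\nabla\Phi_i^T c_j^{(i)}\bigr)=\nabla\Phi_i^{-1}(\curl_\xi c_j^{(i)})\big|_{\xi=\lambda\Phi_i}+\lambda^{-1}(\text{slow commutator})$ (which uses $\det\nabla\Phi_i=1$) then returns $\eta_i b_j^{(i)}$ on the nose, which is what is needed to reconstruct $\tilde B\nabla\bar\rho$.
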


\begin{proof}[Proof of Lemma \ref{l:hom_b_comp}]
Define $\chi_{ij} = \chi_i \cdot e_j$. Let $b_j = B e_j$ and $\tilde b_j = \tilde B e_j$, with $B$ and $\tilde B$ as in Lemma \ref{l:hom_trho_comp}. Direct computations show that one can define $b^{(i)}_j$ such that
\begin{equation}    \label{e:hom_b_comp:e2}
\begin{split}
    \sum_i& \eta_i b^{(i)}_j := b_j(x,t,\xi) \\
     =& \sum_i \eta_i \nabla\Phi_i^{-1} \Bigg[ \kappa \nabla_\xi\chi_{ij}
    + \frac{\sigma^{1/2}}{\lambda} \sum_{\vec{k} \in \Lambda}  a_{\vec{k}}(\tilde R_i) \Big( H_{\vec{k}} \nabla\Phi_i^{-T}e_j + \eta_i H_{\vec{k}} \nabla_\xi \chi_{ij}
    - \eta_i \langle H_{\vec{k}} \nabla_\xi \chi_{ij} \rangle \Big) \Bigg].
\end{split}
\end{equation}
We claim $\div_\xi (\nabla \Phi_i b^{(i)}_j)=0$. Indeed, notice that
\begin{equation}    \label{e:hom_b_comp:e3}
\begin{split}
    \div_\xi \big( H_{\vec{k}} \nabla_\xi \chi_{ij} \big) 
        =& -\big( W_{\vec{k}} \cdot \nabla_\xi \big) \chi_{ij}
        = -\frac{\sigma^{1/2}}{\kappa \lambda} \nabla \Phi_i^{-1} \sum_{\vec{k}} a_{\vec{k}} \big( \tilde R_i \big) \big( \psi_{\vec{k}} \vec{k} \cdot \nabla_\xi \big) \varphi_{\vec{k}} \vec{k} \cdot e_j = 0, \\ 
    \div_\xi \big( H_{\vec{k}} \nabla \Phi_i^{-T} e_j \big) 
        =& -W_{\vec{k}} \cdot \big( \nabla \Phi_i^{-T}e_j \big) = -\psi_{\vec{k}} \big( \nabla \Phi_i^{-1} \vec{k} \big) \cdot e_j.
\end{split}
\end{equation}
Then plugging \eqref{e:hom_b_comp:e3} and \eqref{e:hom_ed:chi} into \eqref{e:hom_b_comp:e2} gives $\div_\xi (\nabla \Phi_i b^{(i)}_j)=0$. Since also $\langle \nabla \Phi_i b^{(i)}_j \rangle = 0$, we can find a vector potential $c^{(i)}_j=c^{(i)}_j(x,t,\xi)$ so that $\eta_i \nabla\Phi_i \, b^{(i)}_j=\nabla_\xi\times c^{(i)}_j$. Using the fact $\det \nabla\Phi_i^T = 1$ and omitting the argument $\big( x, t, \lambda \Phi_i(x,t) \big)$, we have
\begin{equation*}
\begin{split}
    \frac{1}{\lambda}\nabla\times \big( \nabla \Phi_i^T c^{(i)}_j \big) 
    &= \frac{1}{\lambda} \nabla\times \big( c^{(i)}_{jm}\nabla\Phi_{i,m} \big) = \frac{1}{\lambda} \nabla c^{(i)}_{jm} \times\nabla\Phi_{i,m}\\
    &= \frac{1}{\lambda}\nabla_x c^{(i)}_{jm}\times\nabla\Phi_{i,m} + \big( \nabla\Phi_i^T \nabla_\xi c^{(i)}_{jm} \big) \times \big( \nabla\Phi_i^Te_m \big)\\
    &= \frac{1}{\lambda} \nabla_x c^{(i)}_{jm}\times\nabla\Phi_{i,m} + \nabla\Phi_i^{-1} \big( \nabla_\xi\times c^{(i)}_j \big),
\end{split}
\end{equation*}
where for the last term we have used the formula $(Av_1) \times (Av_2) = A^{-T} (v_1 \times v_2) \det A$.
Therefore, omitting the argument $\big( x, t, \lambda \Phi_i(x,t) \big)$, we have
\begin{align*}
    b_j
    =& \sum_i \nabla \Phi_i^{-1}\nabla_\xi\times c^{(i)}_j
    = \sum_i \Big( \frac{1}{\lambda}\nabla\times (\nabla\Phi_i^T c^{(i)}_j) - \frac{1}{\lambda}\nabla_x c^{(i)}_{jm}\times \nabla\Phi_{i,m} \Big), \\ 
    b_j\partial_j\bar\rho =& \frac{1}{\lambda}\nabla \times \big( \partial_j\bar\rho \nabla\Phi_i^T c^{(i)}_j \big)
    - \frac{1}{\lambda} \nabla\partial_j\bar\rho \times \big( \nabla\Phi_i^T c^{(i)}_j \big)
    - \frac{1}{\lambda} \big( \nabla_x c^{(i)}_{jm} \times\nabla\Phi_{i,m} \big) \partial_j\bar\rho.
\end{align*}
This gives $E_1$ and $E_2$ from the first line of \eqref{e:hom_trho_eq}. The quantities $E_3$ and $E_4$ are given directly by the second line of \eqref{e:hom_trho_eq}. The estimate \eqref{e:hom_trho_c_est} follows from \eqref{e:hom_assump:chi} by Schauder estimate, and from boundedness of respective terms given by \eqref{e:hom_assump:slow}. For the estimate \eqref{e:hom_trho_c_est1}, one gets a factor $\ell^{-1}$ when $\nabla_x$ hits $\chi_{ij}$. Together with the fact $|\nabla \eta_i| \lesssim 1$ from \eqref{e:hom_assump:slow:eta}, this gives \eqref{e:hom_trho_c_est1}. 

\par

Next, by induction, we show the last line of \eqref{e:hom_trho_eq} equals $\sum_{l=1}^{l_0} F_l+G_{l_0}$ for any $1 \leq l_0 \leq N_h$, which gives \eqref{e:hom_trho_eq_detail}. We also show the estimates \eqref{e:hom_trho_f0g0_est} and \eqref{e:hom_trho_f1g1_est} in this step. When $l_0=1$, $|\boldsymbol{\alpha}| = 1$, let $g^{(1)}_{0,\boldsymbol{\alpha},i}$ be given by $-\eta_i\chi_i$, and $g^{(1)}_{1,\boldsymbol{\alpha},i}$ be given by $-D_t(\eta_i\chi_i)$. Let $F_1=0$, then $F_1 + G_1$ equals the the last line of \eqref{e:hom_trho_eq}. Using \eqref{e:hom_assump:chi} and Schauder estimate, we have \eqref{e:hom_trho_div_free} and the estimates \eqref{e:hom_trho_f0g0_est}, \eqref{e:hom_trho_f1g1_est} for $l=1$. 
\par 

Assume \eqref{e:hom_trho_eq_detail} with \eqref{e:hom_trho_FN}, \eqref{e:hom_trho_GN} is true for $l_0$. To close the induction argument, it suffices to show
\begin{align} \label{e:hom_b_comp:e5}
    G_{l_0} = F_{l_0+1} + G_{l_0+1}
\end{align}
for suitable $F_{l_0+1}$ and $G_{l_0+1}$ satisfying the estimate \eqref{e:hom_trho_f0g0_est} and \eqref{e:hom_trho_f1g1_est}.
Using chain rule, for any smooth function $h(x,t,\xi): \T^3 \times [0,1] \times \T^3 \rightarrow \R^3$,
we have
\begin{equation*}
\begin{split}
    \frac{1}{\lambda} \div \bigl( \nabla\Phi_i^{-1} h (x,t,\lambda\Phi_i) \bigr)
    &= (\div_\xi h)(x,t,\lambda\Phi_i)
    + \frac{1}{\lambda} \div_x( \nabla\Phi_i^{-1} h) (x,t,\lambda\Phi_i).
\end{split}
\end{equation*}

Let\footnote{Here it is enough to use the simple inverse divergence $\div_\xi^{-1} = \nabla_\xi \Delta_\xi^{-1}$.} $h_0 := \div^{-1}_\xi g^{(l_0)}_{0,\boldsymbol{\alpha},i}$ and $h_1 := \div^{-1}_\xi g^{(l_0)}_{1,\boldsymbol{\alpha},i}$, then $\langle h_0 \rangle = \langle h_1 \rangle = 0$. We can deduce (dropping the arguments $(x,t,\lambda\Phi_i)$)
\begin{equation}\label{e:hom_b_comp:e7}
\begin{split}
    \frac{1}{\lambda} \div \big( \nabla\Phi_i^{-1} h_0 D_t \partial^{\boldsymbol{\alpha}} \bar\rho 
     +& \nabla\Phi_i^{-1} h_1 \partial^{\boldsymbol{\alpha}} \bar\rho \big) 
     = g^{(l_0)}_{0,\boldsymbol{\alpha},i} D_t \partial^{\boldsymbol{\alpha}} \bar\rho + g^{(l_0)}_{1,\boldsymbol{\alpha},i} \partial^{\boldsymbol{\alpha}} \bar\rho\\
     &+ \frac{1}{\lambda} \div_x \big( \nabla\Phi_i^{-1} h_0 \big) D_t \partial^{\boldsymbol{\alpha}} \bar\rho
     + \frac{1}{\lambda} \div_x \big( \nabla\Phi_i^{-1} h_1 \big) \partial^{\boldsymbol{\alpha}} \bar\rho\\
     &+ \frac{1}{\lambda}\nabla\Phi_i^{-1} h_0 \cdot D_t \partial^{\boldsymbol{\alpha}} \nabla\bar\rho
     + \frac{1}{\lambda}\nabla\Phi_i^{-1} h_1 \cdot \partial^{\boldsymbol{\alpha}} \nabla \bar\rho.
\end{split}
\end{equation}
Now let $f^{(l_0)}_{0,\boldsymbol{\alpha},i} = \nabla\Phi_i^{-1} h_0$ and $f^{(l_0)}_{1,\boldsymbol{\alpha},i} = \nabla\Phi_i^{-1} h_1$, corresponding to the left hand side of the first line in \eqref{e:hom_b_comp:e7}. For \eqref{e:hom_b_comp:e5} to hold, the second and the third lines in \eqref{e:hom_b_comp:e7} give the formulas for $g^{(l_0+1)}_{0,\boldsymbol{\alpha},i}$ and $g^{(l_0+1)}_{1,\boldsymbol{\alpha},i}$. Because $h_0$ and $h_1$ have zero mean in $\xi$, $g^{(l_0+1)}_{0,\boldsymbol{\alpha},i}$ and $g^{(l_0+1)}_{1,\boldsymbol{\alpha},i}$ also have zero mean in $\xi$. 
\par 

The estimates \eqref{e:hom_trho_f0g0_est} and \eqref{e:hom_trho_f1g1_est} for $l_0+1$ on $g^{(l_0+1)}_{0,\boldsymbol{\alpha},i}$ and $g^{(l_0+1)}_{1,\boldsymbol{\alpha},i}$ are similar to \eqref{e:hom_trho_c_est} and \eqref{e:hom_trho_c_est1}. They follow from \eqref{e:hom_assump:chi}, \eqref{e:hom_assump:sigma} and \eqref{e:hom_assump:slow}. Indeed, if no derivative $\nabla_x$ hits $\eta_i$, we get the first term in the right hand side of \eqref{e:hom_trho_f0g0_est} and \eqref{e:hom_trho_f1g1_est}. If at least one derivative $\nabla_x$ hits $\eta_i$, we get the second term in \eqref{e:hom_trho_f0g0_est} and \eqref{e:hom_trho_f1g1_est} due to the fact $|\nabla^m \eta_i| \lesssim 1$ for any $m \leq N_h$ with the constants in $\lesssim$ depending on $N_h$. Also, for these estimates, we get contributions from terms like $\nabla\Phi_i^{-1}$ and $\nabla\Phi_i$ as in \eqref{e:hom_b_comp:e7}. These contributions are also controlled by the constants in $\lesssim$ depending on $N_h$.
This concludes our proof.
\end{proof}
We are ready to prove  Proposition \ref{p:hom_trho} and  Corollaries \ref{c:hom_trho}, \ref{c:hom_dissipation}.

\begin{proof}[Proof of Proposition \ref{p:hom_trho} and Corollary \ref{c:hom_trho}
]
In this proof, for $c_j^{(i)}$, $\chi_i$, $f^{(l)}_{0,\boldsymbol{\alpha},i}$ and $g^{(l)}_{0,\boldsymbol{\alpha},i}$, we omit the argument $( x, t, \lambda \Phi_i )$ as usual.
Test \eqref{e:hom_trho_eq_detail} with $\tilde \rho$. By H\"older's inequality and Young's inequality, we have 
\begin{equation} \label{p:hom_trho:p:e1}
\begin{split}
   \sup_{t \le T} \| \tilde \rho(t) \|^2_{L^2} + \kappa \int_0^T \| \nabla \tilde \rho \|^2_{L^2} dt
        \lesssim& \frac{1}{\lambda^2 \kappa} \bigg( \sum_{l=1}^4 \int_0^T \|E_l\|^2_{H^{-1}} dt + \sum_{l=1}^N \int_0^T \|F_l\|^2_{H^{-1}} dt + \int_0^T \|G_N\|^2_{L^2} dt \bigg) \\ 
        &+ \frac{1}{\lambda^2} \| (\chi \cdot \nabla \bar \rho) (0)\|^2_{L^2}.
\end{split}
\end{equation}
In order to estimate the right-hand side terms involving $E_l$, we invoke estimates \eqref{e:rhobarenerg} and \eqref{e:rhobarflowB} for $\bar \rho$ from Lemma \ref{l:energy_weighted}. To verify the assumptions of Lemma \ref{l:energy_weighted}, note that \eqref{e:e_modelAestimates} and \eqref{e:e_model-estimates2} follow from  \eqref{e:hom_para_assump:taukappaell}, \eqref{e:hom_assump:barkappa}, \eqref{e:hom_assump:u} and direct computations, whereas \eqref{e:e_ell} and \eqref{e:e_kappatransported} follow from \eqref{e:hom_assump:barkappa}, \eqref{e:hom_assump:sigma} and \eqref{e:hom_assump:slow}. Therefore, with $\mathcal{D}_l$ defined in \eqref{e:hom_trho_est}, we have
\begin{equation} \label{p:hom_trho:p:e3}
\begin{split}
   \sup_{t\le T}  \|(\bar\kappa^\frac{m-1}{2} \nabla^{m-1}\bar\rho)(t)\|^2_{L^2} + \int_0^T \| \bar \kappa^{\frac{m}{2}} \nabla^{m} \bar \rho \|_{L^2}^2 dt
        \lesssim& \tau^{-(m-1)} \mathcal{D}_{m-1}, \\ 
    \int_0^T \| \bar \kappa^{\frac{m}{2}} D_t \nabla^{m} \bar \rho \|_{L^2}^2 dt
        \lesssim& \tau^{-(m+1)} \mathcal{D}_{m+1}.
\end{split}
\end{equation}
Observe that \eqref{e:hom_trho_c_est} and \eqref{e:hom_trho_c_est1}, together with \eqref{e:hom_para_assump:deltaell} and \eqref{e:hom_assump:sigmadelta}, give for $m \in \{0,1\}$
\begin{align} \label{p:hom_trho:p:e4}
    \| \bar \kappa^{-1} \nabla_x^m c^{(i)}_j \|_{L^\infty}
        &\lesssim \ell^{-m},
\end{align}
\par 
using \eqref{e:hom_assump:chi} and \eqref{e:hom_base:tA} we obtain for $n \in \{0,1\}$, $m \in \{0,1\}$,\begin{align} \label{p:hom_trho:p:e5}
    \| \bar \kappa^{-1} \tilde A D_t^n\nabla_x^m \chi \|_{L^\infty}
        &\lesssim \tau^{-n}\ell^{-m}.
\end{align}
Thanks to \eqref{p:hom_trho:p:e4} and \eqref{p:hom_trho:p:e5}, the estimates for $E_1$ and $E_3$ are the same. By the same token, the estimates for $E_2$ and $E_4$ are also the same. Here, we estimate $E_1$ and $E_2$ as examples. Omitting the argument $(x,t,\lambda \Phi_i)$, we have, using \eqref{p:hom_trho:p:e3} and $|\nabla\Phi_i|<2$
\begin{equation} \label{p:hom_trho:p:e6}
\begin{split}
    \int_0^T \|E_1\|^2_{H^{-1}} dt
    \lesssim&  \| \bar \kappa^{-1} c^{(i)}_j \|^2_{L^\infty} \int_0^T \| \bar \kappa \nabla^2 \bar \rho \|^2_{L^2} dt \lesssim \tau^{-1} \mathcal{D}_1, \\ 
    \int_0^T \|E_2\|^2_{H^{-1}} dt
    \lesssim&  \|\bar \kappa^{\frac{1}{2}}\|^2_{L^\infty} \| \bar \kappa^{-1} \nabla_x c^{(i)}_j \|^2_{L^\infty}  \int_0^T \| \bar \kappa^{\frac{1}{2}} \nabla \bar \rho \|^2_{L^2} dt \\ 
    \lesssim& \|\bar \kappa\|_{L^\infty} \ell^{-2} \mathcal{D}_0
    \lesssim_{\eqref{e:hom_para_assump:taukappaell}} \tau^{-1} \mathcal{D}_0.
\end{split}
\end{equation}
Next, we estimate $F_l$ and $G_{N_h}$. In the rest of this proof, notice the fact $|\nabla\Phi_i|<2$, contributing to a factor finally depending on $N_h$. Note that from \eqref{e:hom_trho_f0g0_est}, \eqref{e:hom_trho_f1g1_est}, \eqref{e:hom_assump:barkappa}, \eqref{e:hom_para_assump:deltaell} and \eqref{e:hom_assump:sigmadelta}, we have
\begin{equation} \label{p:hom_trho:p:e8}
\begin{split}
    \big\| \bar \kappa^{-\frac{1}{2}} f^{(l)}_{0,\boldsymbol{\alpha},i} \big\|_{L^\infty}
    + \big\| \bar \kappa^{-\frac{1}{2}} g^{(l)}_{0,\boldsymbol{\alpha},i} \big\|_{L^\infty}
        &\lesssim  \kappa^{-\frac{1}{2}} \ell^{-(l-|\boldsymbol{\alpha}|)}, \\ 
    \big\| \bar \kappa^{-\frac{1}{2}} f^{(l)}_{1,\boldsymbol{\alpha},i} \big\|_{L^\infty}
    + \big\| \bar \kappa^{-\frac{1}{2}} g^{(l)}_{1,\boldsymbol{\alpha},i} \big\|_{L^\infty}
        &\lesssim  \kappa^{-\frac{1}{2}} \tau^{-1} \ell^{-(l-|\boldsymbol{\alpha}|)}.
\end{split}
\end{equation}
\par 

Note that $F_1 = 0$. For $F_l$ with $l \geq 2$, using \eqref{p:hom_trho:p:e3}, \eqref{p:hom_trho:p:e8} and the support condition \eqref{e:hom_trho_fg_supp}, we have
\begin{equation} \label{p:hom_trho:p:e9}
\begin{split}
    \int_0^T \|F_l\|_{H^{-1}}^2 dt
    \lesssim& \frac{1}{\lambda^{2(l-1)}} \sum_{m=1,|\boldsymbol{\alpha}|=m}^{l-1} \sum_i \bigg( \| \bar \kappa^{-(m-1)} \|_{L^\infty} \| \bar \kappa^{-\frac{1}{2}} f^{(l-1)}_{0,\boldsymbol{\alpha},i} \|^2_{L^\infty} \int_0^T \| \bar \kappa^{\frac{m}{2}} D_t \nabla^m \bar \rho \|^2_{L^2} dt \\ 
        &+ \| \bar \kappa^{-(m-1)} \|_{L^\infty} \| \bar \kappa^{-\frac{1}{2}} f^{(l-1)}_{1,\boldsymbol{\alpha},i} \|^2_{L^\infty} \int_0^T \| \bar \kappa^{\frac{m}{2}} \nabla^m \bar \rho \|^2_{L^2} dt \bigg) \\ 
    &\lesssim \frac{1}{\lambda^{2(l-1)}} \sum_{m=1}^{l-1} \| \bar \kappa^{-1} \|_{L^\infty}^{m-1} \kappa^{-1} \ell^{-2(l-1-m)} \tau^{-(m+1)} \mathcal{D}_{m+1} \\ 
    &\lesssim (\lambda \ell)^{-2(l-1)} \kappa \kappa^{-1} \sum_{m=1}^{l-1} \big( \tau \kappa \ell^{-2} \big)^{-m} \tau^{-1} \mathcal{D}_{m+1} \\ 
    &\overset{\eqref{e:hom_para_assump:taukappaell}, m=l-1}{\lesssim} (\lambda \ell)^{-2(l-1)} \big( \tau \kappa \ell^{-2} \big)^{-(l-1)} \tau^{-1} \mathcal{D}_{l} \\ 
    &\lesssim \big( \tau \kappa \lambda^2 \big)^{-(l-1)} \tau^{-1} \mathcal{D}_{l} \\
    &\overset{\eqref{e:hom_para_assump:Nh_result}, l \geq 2}{\lesssim}
    \tau^{-1} \mathcal{D}_{l}.
\end{split}
\end{equation}

For $G_{N_h}$, using \eqref{p:hom_trho:p:e3} and \eqref{p:hom_trho:p:e8} and the support condition \eqref{e:hom_trho_fg_supp}, we have
\begin{equation} \label{p:hom_trho:p:e10}
\begin{split}
    \int_0^T \|G_{N_h}\|_{L^2}^2 dt
    &\lesssim \frac{1}{\lambda^{2(N_h-1)}} \sum_{m=1,|\boldsymbol{\alpha}|=m}^{N_h} \sum_i \bigg( \| \bar \kappa^{-(m-1)} \|_{L^\infty} \| \bar \kappa^{-\frac{1}{2}} g^{(N_h)}_{0,\boldsymbol{\alpha},i} \|^2_{L^\infty} \int_0^T \| \bar \kappa^{\frac{m}{2}} D_t \nabla^m \bar \rho \|^2_{L^2} dt \\ 
        &+ \| \bar \kappa^{-(m-1)} \|_{L^\infty} \| \bar \kappa^{-\frac{1}{2}} g^{(N_h)}_{1,\boldsymbol{\alpha},i} \|^2_{L^\infty} \int_0^T \| \bar \kappa^{\frac{m}{2}} \nabla^m \bar \rho \|^2_{L^2} dt \bigg) \\ 
    &\lesssim \lambda^{-2(N_h-1)} \sum_{m=1}^{N_h} \| \bar \kappa^{-1} \|_{L^\infty}^{m-1}
        \kappa^{-1} \ell^{-2(N_h-m)} \tau^{-(m+1)} \mathcal{D}_{m+1} \\ 
    &\lesssim (\lambda \ell)^{-2(N_h-1)} \kappa \kappa^{-1} \ell^{-2}
    \sum_{m=1}^{N_h} \big( \tau \kappa \ell^{-2} \big)^{-m} \tau^{-1} \mathcal{D}_{m+1} \\ 
    &\overset{\eqref{e:hom_para_assump:taukappaell}, m=N_h}{\lesssim} (\lambda \ell)^{-2(N_h-1)} \ell^{-2} \big( \tau \kappa \ell^{-2} \big)^{-N_h} \tau^{-1} \mathcal{D}_{N_h+1} \\
    &\lesssim \lambda^2 \big( \tau \kappa \lambda^2 \big)^{-N_h} \tau^{-1} \mathcal{D}_{N_h+1} \\
    &\overset{\eqref{e:hom_para_assump:Nh_result}}{\lesssim}
 \tau^{-1} \mathcal{D}_{N_h+1}.
\end{split}
\end{equation}
 For the last term in \eqref{p:hom_trho:p:e1} involving initial data, we have
\begin{align} \label{p:hom_trho:p:e7}
    \| (\chi \nabla \bar \rho) (0)\|^2_{L^2}
        \lesssim& \| \bar \kappa^{-\frac{1}{2}} \chi \|_{L^\infty}^2 \| \bar \kappa^{\frac{1}{2}} \nabla \rho_{\text{in}} \|_{L^2}^2
        \lesssim \kappa^{-1} \| \bar \kappa^{\frac{1}{2}} \nabla \rho_{\text{in}} \|_{L^2}^2
        \lesssim \kappa^{-1}\tau^{-1} \mathcal{D}_1.
\end{align}
This finishes the proof of Proposition \ref{p:hom_trho}.

For the proof of Corollary \ref{c:hom_trho}, observe that in addition to the already obtained estimates, we have to control only the corrector term, i.e. the term of \eqref{e:hom_trho_ansatz} involving $\chi$. Its estimate is analogous to \eqref{p:hom_trho:p:e7} via \eqref{p:hom_trho:p:e3}. This concludes our proofs.
\end{proof}

\begin{proof}[Proof of Corollary \ref{c:hom_dissipation}]
To simplify the formulas, we introduce the following shorthand notation
\begin{align}    \label{c:hom_dissipation_eq2}
    M(x,t,\xi) :=& \Id + \sum_{i}\eta_i(x,t)\nabla\Phi_i^T( x, t) \nabla_{\xi} \chi_i^T\big( x, t, \xi\big)
\end{align}
and denote by $\tilde M (x,t)$ the matrix $M (x,t,\lambda \Phi_i)$ (analogously to $A$ and $\tilde A$, cf. \eqref{e:hom_base:A} and \eqref{e:hom_base:tA}).
We first claim the following estimates
\begin{align} 
    \left| \kappa \int_0^T \int \big( \tilde M^T \tilde M - \langle \tilde M^T\tilde M \rangle \big) \nabla \bar \rho \cdot \nabla \bar \rho \right|
        \lesssim&  \Big( \frac{1}{\lambda \ell} 
        + \frac{1}{ \lambda \kappa^{\sfrac{1}{2}} \tau^{\sfrac{1}{2}} } \Big) \mathcal{D}_1 ,   \label{c:hom_dissipation:claim1} \\ 
    \frac{\kappa}{\lambda^2} \int_0^T \| \nabla^2\bar\rho \chi \|^2_{L^2} + \| \nabla_x \chi^T \nabla\bar{\rho} \|^2_{L^2}
        \lesssim& \frac{1}{\lambda^2 \kappa \tau} \mathcal{D}_1. \label{c:hom_dissipation:claim2}
\end{align}
Before proving these claims, let us conclude the proof of the Corollary \ref{c:hom_dissipation}.
First, direct computations give
\begin{align*} 
    \tilde M^T \tilde M &= \Id + \sum_i\eta_i(\nabla\Phi_i^T \nabla_{\xi} \chi_i^T + \nabla_{\xi} \chi_i \nabla\Phi_i) + \sum_i\eta_i^2\nabla_{\xi} \chi_i \nabla\Phi_i \nabla\Phi_i^T \nabla_{\xi} \chi_i^T,  \\
    \tilde M^T\tilde M - \langle \tilde M^T\tilde M \rangle
        &= \sum_i\eta_i(\nabla\Phi_i^T \nabla_{\xi} \chi_i^T + \nabla_{\xi} \chi_i \nabla\Phi_i) +\\
  &+\sum_i\eta_i^2(\nabla_{\xi} \chi_i \nabla\Phi_i \nabla\Phi_i^T \nabla_{\xi} \chi_i^T - \langle \nabla_{\xi} \chi_i \nabla\Phi_i \nabla\Phi_i^T \nabla_{\xi} \chi_i^T \rangle).
\end{align*}
Furthermore, by using \eqref{e:hom_ed:chi} and the properties of Mikado flows in Section \ref{s:Mikado}, we compute
\begin{align*}
    \kappa\nabla_\xi\chi_i\nabla\Phi_i\nabla\Phi_i^T\nabla_\xi\chi_i^T &= \frac{\sigma}{\kappa\lambda^2} \sum_{\vec{k} \in \Lambda} a_{\vec{k}}^2\nabla\Phi_i^{-1}(\vec{k}\otimes\nabla_\xi\varphi_{\vec{k}})\nabla\Phi_i\nabla\Phi_i^T(\nabla_\xi\varphi_{\vec{k}}\otimes\vec{k})\nabla\Phi_i^{-T}\\
    &=\frac{\sigma}{\kappa\lambda^2} \sum_{\vec{k} \in \Lambda} a_{\vec{k}}^2\nabla\Phi_i^{-1}|\nabla\Phi_i^T\nabla_\xi\varphi_{\vec{k}}|^2 (\vec{k} \otimes \vec{k}) \nabla\Phi_i^{-T}.
\end{align*}
In particular, comparing with the derivation of $\bar{A}$, we deduce that
\begin{align} \label{c:hom_dissipation_eq3}
    \big| \kappa \langle \tilde M^T\tilde M \rangle - \bar A \big| 
        & \lesssim \sum_i \tilde \eta_i \|\nabla\Phi_i-\Id\|_{L^\infty}|\bar{A}|\lesssim \lambda^{-2\gamma} |\bar A|.
\end{align}
Here we use \eqref{e:hom_ed:chi}, \eqref{e:hom_assump:Phi}, \eqref{e:hom_assump:barkappa} and the formula for the homogenized matrix $\bar A$ (compare the first line of the formula \eqref{e:defbarA}).

 Next, using \eqref{l:hom_trho_comp:e3} and omitting $(x,t,\lambda \Phi_i)$, we have for an absolute constant $C>1$
\begin{align*}
    \kappa &\int_0^T \| \nabla \rho\|_{L^2}^2 dt 
        = \kappa \int_0^T \Big\| \tilde M \nabla \bar \rho + \frac{1}{\lambda} (\nabla^2\bar\rho \chi + \nabla_x \chi^T \nabla\bar{\rho}) + \nabla \tilde\rho \Big\|_{L^2}^2 dt \\
    \leq& \Big( 1+\frac{1}{\lambda \kappa^{\sfrac{1}{2}} \tau^{\sfrac{1}{2}} } \Big) \kappa \int_0^T \| \tilde M \nabla \bar \rho \|_{L^2}^2 dt + \big( 1 + C\lambda \kappa^{\sfrac{1}{2}} \tau^{\sfrac{1}{2}} \big) \kappa \int_0^T \| \nabla \tilde\rho \|_{L^2}^2 dt \\
        & + \big( 1 + C\lambda \kappa^{\sfrac{1}{2}} \tau^{\sfrac{1}{2}} \big) \frac{\kappa}{\lambda^2} \int_0^T \| \nabla^2\bar\rho \chi \|^2_{L^2} + \| \nabla_x \chi^T \nabla\bar{\rho} \|^2_{L^2} dt \\ 
    = & \Big( 1+\frac{1}{\lambda \kappa^{\sfrac{1}{2}} \tau^{\sfrac{1}{2}} } \Big) \kappa \int_0^T \int \big( \tilde M^T\tilde M - \langle \tilde M^T\tilde M \rangle \big) \nabla \bar \rho \cdot \nabla \bar \rho dxdt 
        + \Big( 1+\frac{1}{\lambda \kappa^{\sfrac{1}{2}} \tau^{\sfrac{1}{2}} } \Big) \int_0^T \int \bar A \nabla \bar \rho \cdot \nabla \bar \rho dxdt \\ 
        &+ \Big( 1+\frac{1}{\lambda \kappa^{\sfrac{1}{2}} \tau^{\sfrac{1}{2}} } \Big) \int_0^T \int \big( \kappa \langle \tilde M^T\tilde M \rangle - \bar A \big) \nabla \bar \rho \cdot \nabla \bar \rho dxdt 
        + \big( 1 + C\lambda \kappa^{\sfrac{1}{2}} \tau^{\sfrac{1}{2}} \big) \kappa \int_0^T \int \| \nabla \tilde\rho \|_{L^2}^2 dt  \\
        &+ 
        \big( 1 + C\lambda \kappa^{\sfrac{1}{2}} \tau^{\sfrac{1}{2}} \big) \frac{\kappa}{\lambda^2} \int_0^T \| \nabla^2\bar\rho \chi \|^2_{L^2} + \| \nabla_x \chi^T \nabla\bar{\rho} \|^2_{L^2} dt \\ 
    \leq& \Big( \frac{C}{\lambda \kappa^{\sfrac{1}{2}} \tau^{\sfrac{1}{2}} } + \frac{1}{\lambda \ell} + \lambda^{-2\gamma} \Big) \mathcal{D}_{N_h}
        +\Big( 1+\frac{C}{\lambda \kappa^{\sfrac{1}{2}} \tau^{\sfrac{1}{2}} } \Big) \int_0^T \int \bar A \nabla \bar \rho \cdot \nabla \bar \rho dxdt,
\end{align*}
where for the last line we used, respectively, \eqref{c:hom_dissipation:claim1}, \eqref{c:hom_dissipation_eq3}, Proposition \ref{p:hom_trho} and \eqref{c:hom_dissipation:claim2}. Now we can conclude the proof of Corollary \ref{c:hom_dissipation} with
\begin{align*}
    \left| \kappa \int_0^T \| \nabla \rho\|_{L^2}^2 dt- \int_0^T \int \bar A \nabla \bar \rho \cdot \nabla \bar \rho dxdt \right|
    \lesssim \Big( \frac{1}{\lambda \kappa^{\sfrac{1}{2}} \tau^{\sfrac{1}{2}} } + \frac{1}{\lambda \ell} + \lambda^{-2\gamma} \Big)\mathcal{D}_{N_h}.
\end{align*}
\par

It remains to prove the claims \eqref{c:hom_dissipation:claim1}, \eqref{c:hom_dissipation:claim2}. The proof of \eqref{c:hom_dissipation:claim2} is the same as estimates for the terms $E_3$ and $E_4$ in the proof of Proposition \ref{p:hom_trho}, so we omit the details. For the claim \eqref{c:hom_dissipation:claim1}, we find a matrix potential $\omega := \sum_i \eta_i \omega^{(i)}$ taking arguments $(x,t,\xi)$ such that
\begin{align}
    \big( M^TM - \langle M^TM \rangle \big)_{jl} =&  \Delta_\xi \omega_{jl} 
        = \sum_i \eta_i \Delta_\xi \omega_{jl}^{(i)}, \label{c:hom_dissipation_eq6a}  \\ 
    \big\| \nabla_\xi \omega_j \big\|_{L^\infty_\xi} 
    \lesssim& \sum_i\Big( \frac{\sigma \eta_i^2}{\kappa^2 \lambda^2} + \frac{\sigma^{1/2} \eta_i}{\kappa \lambda} \Big)
    \lesssim \sum_i \Big( 1 + \frac{\sigma \eta_i^2}{\kappa^2 \lambda^2} \Big)
    \lesssim \frac{ \bar \kappa } {\kappa}, \label{c:hom_dissipation_eq7} \\ 
    \big\| \nabla_x \nabla_\xi \omega_j \big\|_{L^\infty_\xi} 
    \lesssim& \sum_i \Big( \frac{\sigma \eta_i}{\kappa^2 \lambda^2} + \frac{\sigma^{1/2} }{\kappa \lambda} \Big) ( \eta_i\ell^{-1} + 1 )
    \lesssim \sum_i \Big( 1 + \frac{\sigma \eta_i^2}{\kappa^2 \lambda^2} \Big) \ell^{-1}
    \lesssim \frac{ \bar \kappa } {\kappa \ell}. \label{c:hom_dissipation_eq8}
\end{align}
The estimates \eqref{c:hom_dissipation_eq7} and \eqref{c:hom_dissipation_eq8} can be proved in the same way as the estimates \eqref{e:hom_trho_c_est} and \eqref{e:hom_trho_c_est1} in Lemma \ref{l:hom_b_comp}. Now by the chain rule, for any $i,j,l$, we have
\begin{equation}
\label{c:hom_dissipation_eq6b}
    (\Delta_\xi \omega_{jl}^{(i)})(x,t,\lambda\Phi_i)
    = \frac{1}{\lambda} \div \bigl( \nabla\Phi_i^{-1} \nabla_\xi \omega_{jl}^{(i)} (x,t,\lambda\Phi_i) \bigr)
    - \frac{1}{\lambda} \div_x( \nabla\Phi_i^{-1} \nabla_\xi \omega_{jl}^{(i)}) (x,t,\lambda\Phi_i).
\end{equation}
Consider the left-hand side of the wanted \eqref{c:hom_dissipation:claim1} and use the formulas \eqref{c:hom_dissipation_eq6a}, \eqref{c:hom_dissipation_eq6b}, omitting the argument $(x,t,\lambda \Phi_i)$,  to write 
\begin{equation} \label{c:hom_dissipation_eq9}
\begin{split}
    \kappa \int_0^T & \int \big( (\tilde M^T\tilde M) - \langle \tilde M^T\tilde M \rangle \big) \nabla \bar \rho \cdot \nabla \bar \rho dxdt \\
    =& \frac{\kappa}{\lambda} \sum_{i,j,l} \Big( \int_0^T \int \eta_i \div \bigl( \nabla\Phi_i^{-1} \nabla_\xi \omega_{jl}^{(i)} \bigr) \partial_j \bar \rho \partial_l \bar \rho dxdt - \int_0^T \int \eta_i \div_x \bigl( \nabla\Phi_i^{-1} \nabla_\xi \omega_{jl}^{(i)} \bigr) \partial_j \bar \rho \partial_l \bar \rho dxdt \Big) \\ 
    =& \frac{\kappa}{\lambda} \sum_{i,j,l} \bigg( - \int_0^T \int \bigl( \nabla\Phi_i^{-1} \nabla_\xi \omega_{jl}^{(i)} \bigr) \cdot \nabla_{x} \big( \eta_i \partial_j \bar \rho \partial_l \bar \rho \big) dxdt - \int_0^T \int \eta_i \div_x \bigl( \nabla\Phi_i^{-1} \nabla_\xi \omega_{jl}^{(i)} \bigr) \partial_j \bar \rho \partial_l \bar \rho dxdt \bigg).
\end{split}
\end{equation}
Now, we estimate the terms on the right-hand side of \eqref{c:hom_dissipation_eq9} using \eqref{c:hom_dissipation_eq7}, \eqref{e:hom_assump:slow}, and Young's inequality with the splitting $\bar \kappa | \nabla \bar \rho|| \nabla^2 \bar \rho| = (\tau^{-\sfrac{1}{4}} \bar \kappa^{\sfrac{1}{4}} | \nabla \bar \rho|) ( \tau^{\sfrac{1}{4}} \bar \kappa^{\sfrac{3}{4}} | \nabla^2 \bar \rho|)$. Consequently we obtain
\begin{align*}
  &\left|  \kappa \int_0^T \int \big( (\tilde M^T\tilde M) - \langle \tilde M^T\tilde M \rangle \big) \nabla \bar \rho \cdot \nabla \bar \rho dxdt \right| \\
&\overset{{\eqref{c:hom_dissipation_eq7}, \eqref{e:hom_assump:slow}} }{\lesssim} \frac{1}{\lambda \ell} \int_0^T \int \bar \kappa | \nabla \bar \rho|^2 dxdt 
        + \frac{1}{\lambda} \int_0^T \int \bar \kappa | \nabla \bar \rho|| \nabla^2 \bar \rho| dxdt \\ 
    &\lesssim \frac{1}{\lambda \ell} \int_0^T \int \bar \kappa | \nabla \bar \rho|^2 dxdt + \frac{1}{\lambda \kappa^{\sfrac{1}{2}} \tau^{\sfrac{1}{2}} } \int_0^T \int \bar \kappa | \nabla \bar \rho|^2 dxdt + \frac{\tau^{\sfrac{1}{2}}}{\lambda\kappa^{\sfrac{1}{2}}} \int_0^T \int | \bar \kappa \nabla^2 \bar \rho|^2 dxdt \\ 
    &\overset{\eqref{p:hom_trho:p:e3}}{\lesssim} \Big( \frac{1}{\lambda \ell} 
        + \frac{1}{ \lambda \kappa^{\sfrac{1}{2}} \tau^{\sfrac{1}{2}} } \Big) \mathcal{D}_1, 
\end{align*}
which is the estimate \eqref{c:hom_dissipation:claim1}.
\end{proof}

\section{Time averaging}\label{s:time} 
The main result of this section is Proposition \ref{p:t_avg}, which allows to prove the time averaging step of the key Proposition \ref{p:main}.

\subsection{Setup}
Consider constants $\kappa_1>\kappa_0>0$, and the oscillatory $\tilde\kappa=\tilde\kappa(x,t,\tau^{-1}t)$ of the form
\begin{equation}\label{e:t-osc-tildekappa}
	\tilde\kappa(x,t,\tau^{-1}t)=\kappa_0+\kappa_1\,\eta(x,t,\tau^{-1}t),
\end{equation}
where $s\mapsto\eta(x,t,s)$ is a $1$-periodic, nonnegative and smooth function with 
\begin{equation*}
 \langle\eta\rangle:=\int_0^1\eta(x,t,s)\,ds=1\,.
\end{equation*}
We write
\begin{equation}\label{e:t_defg}
\kappa:=\kappa_0+\kappa_1,\quad g(x,t,s):=\frac{\kappa_1}{\kappa}(\eta(x,t,s)-1),\quad g_\tau(x,t):=g(x,t,\tau^{-1}t),	
\end{equation}
so that $\langle g\rangle=0$, $g$ is bounded, and we have the identity
\begin{equation}\label{e:t-osc-g}
\tilde\kappa(x,t,\tau^{-1}t)=\kappa_0+\kappa_1 \eta(x,t,\tau^{-1}t)=\kappa+\kappa g_\tau(x,t).
\end{equation}

In this section we consider the solutions $\rho$ and $\bar{\rho}$ of the following equations
\begin{equation}\label{e:equ-t-oscillating}
\begin{split}
	\partial_t\rho+u\cdot\nabla \rho&=\div\left(\tilde\kappa\nabla\rho\right),\\
	\rho|_{t=0}&=\rho_{in},
	\end{split}
\end{equation}
and 
\begin{equation}\label{e:equ-t-average}
\begin{split}
	\partial_t\bar{\rho}+u\cdot\nabla \bar{\rho}&=\div\left(\kappa\nabla\bar{\rho}\right),\\
	\bar\rho|_{t=0}&=\rho_{in}.
	\end{split}
\end{equation}

In view of definitions of $\kappa$, $\tilde \kappa$, the equation \eqref{e:equ-t-average} can be seen as the time-averaged version of \eqref{e:equ-t-oscillating}. 
Further, we define the respective total dissipations 
\begin{equation}\label{e:t-osc-D}
\tilde D:=\int_0^T\|\tilde\kappa^{\sfrac12}\nabla\rho\|_{L^2}^2\,dt\,,\quad D:=\kappa\int_0^T\|\nabla\bar\rho\|_{L^2}^2\,dt.	
\end{equation}
We make the following assumptions: 

There exists $\mu \ge \|\nabla u\|_{\infty}$ (we should think of $\mu^{-1} \sim \|\nabla u\|_{\infty}^{-1}$ being the advection time-scale) such that
\begin{enumerate}
\item[(A1)] The fast time-scale is shorter than the advection time-scale, i.e. 
\begin{equation}\label{Atime_timeLadv}
\tau \mu <1/2.
\end{equation}
In relation to \eqref{Atime_timeLadv}, we fix $N\in\N$ such that
\begin{equation}
(\tau\mu )^{N-1}\kappa<\kappa_0.	
\end{equation}
\item[(A2)] (Control of higher-order spatial derivatives) For any $n\leq N$ 
\begin{equation}\label{Atime_scales}
\|\nabla u\|_{C^n}^2\leq C_N\left(\frac{\mu}{\kappa}\right)^n\mu^2.	
\end{equation}
\item[(A3)] (Bound on the initial data) For any $1\leq n\leq N$
\begin{equation}\label{Atime_Ini}
\|\rho_{in}\|_{H^n}^2\leq C_N \left(\frac{\mu}{\kappa}\right)^nD. 	
\end{equation}
\item[(A4)] (Control of cutoff and its slow derivatives) For any $1\leq n\leq N$
\begin{equation}\label{e:Ggcon}
\| \nabla^n \eta \|_{L^\infty} \leq C_N (\tau\mu )^2  \left(\frac{\mu}{\kappa}\right)^\frac{n}{2}, \qquad \| D^{slow}_t \eta \|_{L^\infty} \leq C_N \mu,
\end{equation}
\end{enumerate}
where for a function $\eta(x,t,s)$ we define its slow advective derivative as follows
\begin{equation}
    D^{slow}_t \eta(x,t,s) := \partial_t \eta(x,t,s) + u \cdot \nabla \eta(x,t,s).
\end{equation}
\subsection{Time averaging proposition}
\begin{proposition}\label{p:t_avg}
	Under the assumptions (A1), (A2), (A3), (A4) we have
	\begin{equation}\label{e:osc-t-proximity}
		\sup_{t \le T}\|\rho(t)-\bar{\rho}(t)\|_{L^2}^2+\kappa\int_0^T\|\nabla\rho-\nabla\bar{\rho}\|^2_{L^2}\,dt\lesssim (\tau\mu )^2D.
	\end{equation}
Moreover, the total dissipation satisfies
\begin{equation}\label{e:osc-t-dissipation}
|D-\tilde D|\lesssim (\tau\mu )(D+\tilde D).	
\end{equation}
The implicit constants depend only on $N$, $C_N$ in assumptions, and on $\|\eta \|_{L^\infty}$.
\end{proposition}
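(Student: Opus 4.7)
The plan is to analyze $w := \rho - \bar\rho$, which vanishes at $t=0$ and satisfies
\begin{equation*}
\partial_t w + u\cdot\nabla w - \kappa\Delta w = \div(\kappa g_\tau\nabla\rho).
\end{equation*}
Testing with $w$ and using $\div u=0$ reduces \eqref{e:osc-t-proximity} to bounding
\begin{equation*}
\mathcal{I} := \int_0^T\!\!\int \kappa g_\tau\,\nabla\rho\cdot\nabla w\,dx\,dt
\end{equation*}
by $C(\tau\mu)^2 D$ plus an absorbable multiple of $\kappa\int_0^T\|\nabla w\|_{L^2}^2\,dt$. To exploit the vanishing mean of $g$ in the fast variable $s$, I would introduce the primitive $G(x,t,s) := \int_0^s g(x,t,s')\,ds'$, which is $1$-periodic in $s$ and uniformly bounded by $\|\eta\|_\infty+1$. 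Writing $G_\tau(x,t) := G(x,t,\tau^{-1}t)$, the chain rule gives the key identity
\begin{equation*}
g_\tau = \tau D_t G_\tau - \tau (D_t^{\mathrm{slow}} G)_\tau,
\end{equation*}
where $D_t^{\mathrm{slow}}G := \partial_t G + u\cdot\nabla G$ denotes the advective derivative at fixed $s$.

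Substituting splits $\mathcal{I}$ into a slow part and a fast part. The slow part, via $\|D_t^{\mathrm{slow}} G\|_{L^\infty}\lesssim\mu$ from (A4) and Cauchy--Schwarz combined with $\kappa\int\|\nabla\rho\|^2 \lesssim D + \kappa\int\|\nabla w\|^2$ (a consequence of $\nabla\rho = \nabla\bar\rho+\nabla w$), contributes $\lesssim(\tau\mu)^2 D + \frac12\kappa\int\|\nabla w\|^2$, absorbable into the energy identity. The fast part $-\tau\int\!\int\kappa\,D_t G_\tau\,\nabla\rho\cdot\nabla w$ is integrated by parts in time (using $\div u=0$): the $t=0$ boundary vanishes because $w(0)=0$, and the $t=T$ boundary is controlled by $\tau\kappa\|G\|_\infty\|\nabla\rho(T)\|_{L^2}\|\nabla w(T)\|_{L^2}$ via the parabolic regularity estimates of Section~\ref{s:energy} (Lemma~\ref{l:energy_lapl}, Corollary~\ref{c:energy_lapl}) combined with the high-order control (A2), (A3). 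The resulting interior term $\tau\kappa\int\!\int G_\tau\,D_t(\nabla\rho\cdot\nabla w)$ is then expanded via $D_t\rho=\div(\tilde\kappa\nabla\rho)$ and $D_t w=\kappa\Delta w+\div(\kappa g_\tau\nabla\rho)$, producing new factors of either $\|\nabla u\|_{C^m}\lesssim\mu(\mu/\kappa)^{m/2}$ (from (A2)) or one extra spatial derivative of $\rho$ or $w$ (whose cost is $(\mu/\kappa)^{1/2}$ per derivative, from (A3) and parabolic estimates). Iterating this integration-by-parts scheme with successive $s$-primitives of $g$ gains a factor $\tau$ per step and loses at most $\mu$ per step; after $N$ iterations, assumption (A1), namely $(\tau\mu)^{N-1}\kappa<\kappa_0$, absorbs the residual tail into $\kappa_0\int\|\nabla w\|^2$, closing \eqref{e:osc-t-proximity}.

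For the dissipation comparison \eqref{e:osc-t-dissipation}, use the algebraic identity
\begin{equation*}
\tilde D - D = \int_0^T\!\!\int \kappa g_\tau|\nabla\rho|^2\,dx\,dt + \kappa\!\int_0^T\!\!\int\nabla w\cdot\nabla(\rho+\bar\rho)\,dx\,dt.
\end{equation*}
The second term is controlled by Cauchy--Schwarz together with \eqref{e:osc-t-proximity} and $\kappa\int\|\nabla(\rho+\bar\rho)\|^2\lesssim D+(\tau\mu)^2 D\lesssim D$, yielding $\lesssim\tau\mu\,D$. The first term is handled by the same time integration-by-parts iteration as for $\mathcal I$, with $|\nabla\rho|^2$ in place of $\nabla\rho\cdot\nabla w$; the boundary contributions at $t=0,T$ are bounded by $\tau\mu(D+\tilde D)$ using (A3) (for $t=0$) and parabolic regularity (for $t=T$), giving only the weaker linear-in-$\tau\mu$ rate, but this is exactly what \eqref{e:osc-t-dissipation} asks for.

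The main obstacle is the bookkeeping of the iterative integration by parts: each step trades $\tau$ for either $\mu$ (coming from $\nabla u$ or $D_t^{\mathrm{slow}}G$) or a spatial derivative paying $(\mu/\kappa)^{1/2}\kappa^{1/2}$, and the scalings in (A1)--(A4) are tuned precisely so that $N$ iterations close the estimate. Controlling the pointwise-in-$t$ boundary terms at $t=T$ via parabolic regularity of the forced $w$-equation, which requires exactly the high-order smoothness asserted in (A2)--(A3), is the delicate technical point that dictates the choice of $N$ in (A1).
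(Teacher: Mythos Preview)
Your proposal shares the key analytical device with the paper---the fast primitive $G$ and the identity $g_\tau = \tau D_t G_\tau - \tau (D_t^{\mathrm{slow}} G)_\tau$---but the organization is genuinely different, and as written it does not close.

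The paper does \emph{not} work directly with $w = \rho - \bar\rho$. It sets up a Picard iteration: $\rho^{(0)} := \bar\rho$, and for $i\geq 1$ let $\rho^{(i)}$ solve $D_t\rho^{(i)} - \kappa\Delta\rho^{(i)} = \div(\kappa g_\tau \nabla\rho^{(i-1)})$ with zero initial data. Each $\rho^{(i)}$ has \emph{constant} diffusion $\kappa$, so testing gives the full $\kappa\|\nabla\rho^{(i)}\|_{L^2_{xt}}^2$ on the left and a bilinear term involving only the \emph{previous} iterate on the right. Lemma~\ref{l:improved-energy} then runs the primitive trick inside this bilinear form, each application lowering the iteration index by one until it reaches $\rho^{(0)}=\bar\rho$ or $\rho^{(-1)}=0$; this descent is what produces the geometric decay $(\tau\mu)^{2i}D$ for the successive differences. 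The crude coercivity $\tilde\kappa\geq\kappa_0$ is invoked only once, for the tail $\rho - \rho^{(N)}$, and the resulting factor $(\kappa/\kappa_0)^2$ is absorbed by~(A1).

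In your direct scheme the $\kappa/\kappa_0$ loss appears immediately rather than at the end. Because the forcing in the $w$-equation contains $\div(\kappa g_\tau\nabla w)$, testing yields only $\int\tilde\kappa|\nabla w|^2\geq\kappa_0\int|\nabla w|^2$ on the left. Meanwhile the error terms generated by the primitive trick---already the slow piece, bounded by $\tau\mu\sqrt{D}\,(\kappa\int\|\nabla w\|^2)^{1/2}$---carry the full $\kappa$, so absorbing via Young costs $\kappa/\kappa_0$ at the very first step, not after $N$ iterations. The same self-reference recurs at every derivative level (Corollary~\ref{c:energy_lapl} for $\nabla^n w$ leaves an unabsorbable $\int\kappa g_\tau|\nabla^{n+1}w|^2$), and your appeal to Lemma~\ref{l:energy_lapl} for $\|\nabla\rho(T)\|_{L^2}$ is invalid since $\rho$ has oscillatory diffusion; splitting $\rho=\bar\rho+w$ transfers the problem to $\|\nabla w(T)\|_{L^2}$, which is circular. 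Without the Picard tower there is no descending index along which the iteration terminates, and the mechanism by which~(A1) enters is not the one you describe.
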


\begin{proof}
Starting with $\rho^{(0)}:=\bar{\rho}$ being the solution of \eqref{e:equ-t-average}, we construct the solution $\rho$ of \eqref{e:equ-t-oscillating} by successive approximations $\rho^{(i)}$, $i=1,2,\dots,N$, defined to be solutions of 
\begin{equation}\label{e:time_setup}
\begin{split}
	\partial_t\rho^{(i)}+u\cdot\nabla \rho^{(i)}&=\kappa\Delta\rho^{(i)}+\kappa\div\left(g_\tau\nabla\rho^{(i-1)}\right), \\
	\rho|_{t=0}&=\rho_{in}.
	\end{split}
\end{equation}
We will show in Lemma \ref{l:improved-energy} the improved energy bound 
\begin{equation}\label{eq:tavg_ieb}
	\sup_{t \le T} \|\rho^{(i)}(t)-\rho^{(i-1)}(t)\|_{L^2}^2+\kappa\int_0^T\|\nabla\rho^{(i)}-\nabla\rho^{(i-1)}\|^2_{L^2}\,dt\leq 
C_N(\tau \mu)^{2i}D,	
\end{equation}
see \eqref{e:time_main_impr} for $n=0$.
Setting
\begin{equation*}
\rho^{(error)}:=\rho-\rho^{(N)}	
\end{equation*}	
and using \eqref{e:equ-t-oscillating}, \eqref{e:time_setup} and \eqref{e:t-osc-g}, we may write the equation for $\rho^{(error)}$ as
\begin{equation*}
\begin{split}
	\partial_t\rho^{(error)}+u\cdot\nabla\rho^{(error)}&=\kappa\Delta\rho^{(error)}+\kappa\div (g_\tau\nabla\rho^{(error)})+\kappa\div (g_\tau (\nabla\rho^{(N)}-\nabla\rho^{(N-1)}))\,\\
	&= \kappa_0\Delta\rho^{(error)}+{\kappa_1}\div(\eta\nabla\rho^{(error)})+\kappa \div (g_\tau (\nabla\rho^{(N)}-\nabla\rho^{(N-1)})),
\end{split}
\end{equation*}
where for the second line we used $\kappa+\kappa g_\tau = \kappa_0+\kappa_1 \eta$.
Since $\eta\geq 0$ and $|g|\leq 2$, the standard energy estimate and an application of Young's inequality yields
\begin{equation*}
		\sup_{t \le T} \|\rho^{(error)}(t)\|_{L^2}^2+\kappa_0\int_0^T\|\nabla\rho^{(error)}\|_{L^2}^2\,dt\leq C\frac{\kappa}{\kappa_0}\kappa\int_0^T\|\nabla\rho^{(N)}-\nabla\rho^{(N-1)}\|_{L^2}^2\,dt.
\end{equation*}
Combining this with the improved energy bound \eqref{eq:tavg_ieb} and (A1) we deduce
\begin{equation}
\begin{split}
		\sup_{t \le T} \|\rho^{(error)}(t)\|_{L^2}^2+\kappa\int_0^T\|\nabla\rho^{(error)}\|_{L^2}^2\,dt&\leq C_N\left(\frac{\kappa}{\kappa_0}\right)^2(\tau\mu)^{2N}D \leq C_N(\tau\mu)^2D\,.
	\end{split}
\end{equation}
Consequently, since  $\rho^{(0)}:=\bar{\rho}$,
\begin{align*}
	\sup_{t \le T} \|\rho(t)-\bar{\rho}(t)\|_{L^2}^2+\kappa\int_0^T\|\nabla\rho-\nabla\bar{\rho}\|^2_{L^2}&\,dt\leq 2\|\rho(T)-\rho^{(N)}(T)\|_{L^2}^2+2\kappa\int_0^T\|\nabla\rho-\nabla\rho^{(N)}\|^2_{L^2}\,dt \\
+2^N\sum_{i=0}^{N-1}&\|\rho^{(i+1)}(T)-\rho^{(i)}(T)\|_{L^2}^2+\kappa\int_0^T\|\nabla\rho^{(i+1)}-\nabla\rho^{(i)}\|^2_{L^2}\,dt\\
&\lesssim  (\tau\mu)^2D\,,
\end{align*}
verifying \eqref{e:osc-t-proximity}.

\smallskip
Next, we consider the difference in total dissipation
\begin{equation*}
	D-\tilde D= \int_0^T\int_{\T^3}\kappa|\nabla\bar\rho|^2-\tilde\kappa|\nabla\rho|^2\,dx\,dt=\underbrace{-\kappa\int_0^T\int_{\T^3}g_\tau|\nabla\bar{\rho}|^2\,dx\,dt}_{(I)}+\underbrace{\int_0^T\int_{\T^3}\tilde\kappa(|\nabla\bar\rho|^2-|\nabla\rho|^2)\,dx\,dt}_{(II)}.	
\end{equation*} 
We will show in Lemma \ref{l:improved-energy}  (see \eqref{l:time_indR}) the bound
\begin{equation*}
|(I)|\lesssim (\tau\mu)D.
\end{equation*}
Indeed, in the notation introduced for Lemma \ref{l:improved-energy} (cf. \eqref{n:bili}), it holds $(I)=B^{(0)}(\nabla\rho^{(0)},\nabla\rho^{(0)})$. 
Next, we use Young's inequality and $\tilde \kappa \lesssim \kappa$ to estimate
\begin{align*}
|(II)|&=\left|\int_0^T\int_{\T^3}\tilde\kappa(\nabla\rho+\nabla\bar\rho)\cdot (\nabla\rho-\nabla\bar\rho)\,dx\,dt\right|\,\\
&\lesssim \varepsilon^{-1}\kappa\int_0^T\|\nabla\rho-\nabla\bar\rho\|_{L^2}^2\,dt+\varepsilon\left(\kappa\int_0^T\|\nabla\bar\rho\|_{L^2}^2\,dt+\int_0^T\|\tilde\kappa^{\sfrac12}\nabla\rho\|_{L^2}^2\,dt\right).
\end{align*}
Choosing $\varepsilon=\tau\mu$ and using \eqref{e:osc-t-proximity} we deduce
\begin{equation*}
|(II)|	\lesssim (\tau\mu)(D+\tilde D), 
\end{equation*}
verifying \eqref{e:osc-t-dissipation}.
\end{proof}

\subsection{Estimates for successive approximations}
This section is concerned with estimates for the successive approximations \eqref{e:time_setup}, resulting in the improved energy bound  \eqref{eq:tavg_ieb} and in \eqref{l:time_indR}, used in the proof of Proposition \ref{p:t_avg}. Since the improved energy bound may be of separate interest, we present it independently from the specific choices made in the previous section. To this end let us observe that the definition \eqref{e:t_defg} of $g$ and of $\eta$
imply that there exists the fast time potential i.e. $G (x,t,s)= \int_0^s g(x,t,a) da$ such that
\[
\pas G (x,t,s) = g(x,t,s).
\]
\subsubsection{Notation}
Let us denote $g_\tau (x,t) := g(x,t, \tau^{-1}t)$. We introduce \[g^{0}:=g, \; G^{0}:=G, \; \text{ and inductively } \; g^{l+1}:=G^{l} g^{0}, \; \pas G^{l} = g^{l}.\] 
Hence $ G^{l} = \frac{1}{(l+1)!} (G^{0})^{(l+1)}$.
Furthermore, let us write
\begin{equation}\label{n:D}
D := \kappa\int_0^T\|\nabla\rho_{0} (s)\|_{L^2}^2,
\end{equation}
\begin{equation}\label{n:pnorm}
\VE f \VE^2_n :=  \sup_{t \le T} \|(\nabla^n f)(t)\|^2_{L^2}+ \kappa\int_0^T \|\nabla^{n+1} f\|_{L^2}^2 dt + \left(\frac{\|\nabla u\|_{L^\infty}}{\kappa}\right)^n   \left(\kappa\int_0^T \|\nabla f\|_{L^2}^2 dt\right),
\end{equation}
\begin{equation}\label{n:bili}
B^l (f,h) := \kappa \int_0^T \int g^{l}_\tau f \cdot h.
\end{equation}

\subsubsection{Commutators}
We will need several estimates for commutators. Let $|\boldsymbol{\alpha}|=n$, then it holds
\begin{equation}\label{Atime_comm1}
\| [g \cdot \nabla, \pal] f  \|_{L^2} \lesssim_n \|  \nabla^n g  \|_{L^\infty}   \|  \nabla f \|_{L^2}  +  \|  \nabla g  \|_{L^\infty}   \|  \nabla^n f \|_{L^2},
\end{equation}
\begin{equation}\label{Atime_comm2}
          \|  [\pal , g ] \nabla f\|_{L^2} \lesssim_n  \|\nabla^{n} g \|_{L^\infty} \| \nabla f \|_{L^2}   +   \|\nabla g \|_{L^\infty} \| \nabla^{n} f \|_{L^2}. 
\end{equation}

\subsubsection{Improved energy bound}
Observe that
\begin{equation}\label{e:fastslow}
g_\tau^{l} = \tau D_t (G^{l}_\tau) -  \tau (D^{slow}_t G^{l})_\tau.
\end{equation}

Since $\frac{\kappa_1}{\kappa} \le 1$ and $\eta$ is bounded, Assumption (A4) and definitions yield via a direct computation that for  $f= g^l$ or $f= G^l$ we have\footnote{In fact, the estimates \eqref{Atime_Gspace} can be made $l$-independent, which results in estimate \eqref{l:time_indR} uniform in $l$. Since we use in Proposition \ref{p:t_avg} merely the case $l=0$ of \eqref{l:time_indR}, its dependence on $l$ plays no role.}
\begin{equation}\label{Atime_Gspace}
\begin{split}
    &\|  f  \|_{L^\infty} \lesssim_l 1, \qquad \| (D^{slow}_t f) \|_{L^\infty} \lesssim_l \mu,\\  
    &\text{and  for } n>0   \qquad  \| \nabla^n f\|_{L^\infty} \lesssim_{n,l}  (\tau \mu)^2  \left(\frac{\mu}{\kappa}\right)^\frac{n}{2}. 
\end{split}
\end{equation}

Consider 
\begin{equation}\label{e:time_setup2}
  \begin{split}
    \partial_t \rho^{(i)}+u \cdot\nabla \rho^{(i)}&- \kappa \Delta \rho^{(i)} = \kappa \div \left(g_\tau \nabla \rho^{(i-1)} \right), \\
\rho^{(i)}|_{t=0}&=0 \; \text{ for } i>0, \qquad \rho^{(0)}|_{t=0}=\rho_{in} 
   \end{split}
\end{equation}
and $\rho^{(-1)} \equiv 0$.

\begin{lemma}\label{l:improved-energy}
Assume that there is $\pas G (x,t,s) = g(x,t,s)$ such that \eqref{Atime_Gspace} holds for both $f=g$, $f=G$.
Assume \eqref{Atime_scales}, \eqref{Atime_Ini}, \eqref{e:Ggcon}.
Then the energy solutions of \eqref{e:time_setup2} satisfy: \\
\textbf{(1) for $\rho^{(0)}$}
\begin{equation}\label{e:time_main_0}
\|\rho^{(0)} (t)\|_{L^2}^2+2\kappa\int_0^t\|\nabla\rho^{(0)} (s)\|_{L^2}^2\,ds = \|\rho_{in}\|_{L^2}^2
\end{equation}
for any $t \in [0,T]$. Moreover, for any $n\geq 1$
\begin{equation}\label{e:time_main_0n}
\VE  \rho^{(0)} \VE^2_n  \lesssim_{n} \left(\frac{ \mu}{\kappa}\right)^n D.
 \end{equation}
 \textbf{(2) for $\rho^{(i)}$, $i>0$, $n\geq 0$}
 \begin{equation}\label{e:time_main_impr}
\begin{split}
\VE  \rho^{(i)} \VE^2_n  \lesssim_{n,i} (\tau \mu)^{2i}   \left(\frac{\mu}{\kappa}\right)^n  D.
 \end{split}
 \end{equation}
 Furthermore
 \begin{equation}\label{l:time_indR}
\begin{split}
 &\sup_{|\boldsymbol{\alpha}|=n, |\boldsymbol{\beta}|=m}  \left| B^{l} (\nabla \pal \rho^{(i)}, \nabla \pab \rho^{(i)})    \right| \lesssim_{n,m,i,l}  (\tau \mu)^{2i+1}   \left(\frac{\mu}{\kappa}\right)^{\frac{n+m}{2}}  D.
       \end{split}
\end{equation}
\end{lemma}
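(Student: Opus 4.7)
The energy identity \eqref{e:time_main_0} is the standard $L^2$ estimate for \eqref{e:time_setup2} at $i=0$: test with $\rho^{(0)}$ and use $\div u=0$. For the higher-order bound \eqref{e:time_main_0n}, the plan is to invoke Lemma \ref{l:energy_lapl} directly. Its scaling hypothesis \eqref{e:scales_u1} is implied by \eqref{Atime_scales} together with $\mu\geq \|\nabla u\|_{L^\infty}$ (since $\|\nabla^{n+1}u\|_{L^\infty}\leq \|\nabla u\|_{C^n}$), and the assumption \eqref{Atime_Ini} converts $\|\nabla^n\rho_{in}\|_{L^2}^2$ into the same scale $(\mu/\kappa)^nD$ as the dissipative correction term produced by the lemma, yielding all three ingredients in the definition \eqref{n:pnorm} of $\VE \rho^{(0)}\VE_n^2$.

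\textbf{Part 2, strategy.} I will induct on $i$, with base case $i=0$ being Part 1. For $i\geq 1$, view \eqref{e:time_setup2} as a forced advection-diffusion with forcing $\kappa g_\tau\nabla\rho^{(i-1)}$ and vanishing initial data, apply Corollary \ref{c:energy_lapl}, and reduce the task to estimating
\[
R_n^{(i)} := \sum_{|\boldsymbol{\alpha}|=n}\left|\int_0^T\int \pal\!\bigl(\kappa g_\tau\nabla\rho^{(i-1)}\bigr)\cdot\nabla\pal\rho^{(i)}\,dx\,dt\right|.
\]
Expanding by Leibniz, every term carrying at least one spatial derivative on $g_\tau$ inherits a factor $\|\nabla^m g\|_{L^\infty}\lesssim(\tau\mu)^2(\mu/\kappa)^{m/2}$ from \eqref{Atime_Gspace}; combined with the inductive bound on $\rho^{(i-1)}$ and Young's inequality to absorb $\kappa\!\int\|\nabla^{n+1}\rho^{(i)}\|_{L^2}^2$ into the dissipative side, these already contribute the target $(\tau\mu)^{2i}(\mu/\kappa)^nD$. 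The \emph{main term} $\kappa\!\int g_\tau\,\pal\nabla\rho^{(i-1)}\cdot\nabla\pal\rho^{(i)}$ is where the oscillation gain must be extracted; I substitute \eqref{e:fastslow}, $g_\tau=\tau D_tG_\tau-\tau(D_t^{\text{slow}}G)_\tau$. The slow part is pointwise $O(\tau\mu)$ by \eqref{Atime_Gspace} and closes immediately via Cauchy-Schwarz, Young, and the inductive hypothesis for $\rho^{(i-1)}$. For the fast part, I integrate by parts the transport derivative $D_t$ (antisymmetric since $\div u=0$); the $t=0$ boundary vanishes because $\rho^{(i)}|_{t=0}=0$ for $i>0$, the $t=T$ boundary is absorbed into the $\sup_t\|\nabla^n\cdot\|_{L^2}^2$ ingredient of $\VE\cdot\VE_n$, and the interior term contains $G_\tau\,D_t\pal\nabla\rho^{(i-1)}\cdot\nabla\pal\rho^{(i)}$ and its symmetric counterpart. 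Substituting $D_t\rho^{(j)}=\kappa\Delta\rho^{(j)}+\kappa\div(g_\tau\nabla\rho^{(j-1)})$ from \eqref{e:time_setup2} (and commuting $\nabla$ with $D_t$ modulo a $\nabla u\!\cdot\!\nabla\rho$ term controlled by \eqref{Atime_scales}) exchanges the transport derivative for pure spatial operators, reducing everything to higher-order spatial norms of $\rho^{(i-1)}$ and $\rho^{(i-2)}$ (with the convention $\rho^{(-1)}=0$), all of which are controlled by the inductive hypothesis.

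\textbf{Part 3 and main obstacle.} For the bilinear estimate \eqref{l:time_indR}, the same fast-slow decomposition $g_\tau^l=\tau D_tG_\tau^l-\tau(D_t^{\text{slow}}G^l)_\tau$ is applied inside $B^l(\nabla\pal\rho^{(i)},\nabla\pab\rho^{(i)})$. The slow piece is controlled directly by $\|D_t^{\text{slow}}G^l\|_{L^\infty}\lesssim\mu$ from \eqref{Atime_Gspace} together with \eqref{e:time_main_impr}. The fast piece is integrated by parts in $D_t$ and then reduced, via \eqref{e:time_setup2}, to bilinear spatial expressions bounded again by \eqref{e:time_main_impr}; the prefactor $\tau$ combined with the $\kappa$ absorbed against $(\mu/\kappa)$ from the spatial-order budget produces precisely one extra factor of $\tau\mu$, upgrading $(\tau\mu)^{2i}$ to $(\tau\mu)^{2i+1}$. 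The principal technical obstacle throughout Parts 2 and 3 is combinatorial bookkeeping: each substitution of $\partial_t\rho^{(j)}$ via its equation raises the spatial derivative count by two while lowering the iteration index by one, so one must verify (i) that this cascade terminates within the admissible range fixed by $n+2i\lesssim N$ from (A1)-(A2), (ii) that the successive applications of Young's inequality for absorption, combined with the constants in \eqref{Atime_Gspace}, remain uniformly bounded thanks to $\tau\mu<1/2$, and (iii) that the endpoint contributions from the temporal integration by parts never exceed the $L^\infty_tL^2_x$ budget provided by $\VE\cdot\VE_n$.
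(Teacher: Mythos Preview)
Your high-level strategy (fast/slow decomposition \eqref{e:fastslow}, integrate $D_t$ by parts, substitute the equation) matches the paper's, and Part~1 is fine. But the key claim in Part~2, that after one substitution everything ``reduces to higher-order spatial norms of $\rho^{(i-1)}$ and $\rho^{(i-2)}$'', does not close. Concretely: when $D_t$ acts on $\nabla\pal\rho^{(i)}$ and you insert the forcing $\kappa\div(g_\tau\nabla\rho^{(i-1)})$, the leading contribution is (after one spatial integration by parts and using $G_\tau g_\tau=g^{1}_\tau$)
\[
\kappa\tau\,B^{1}\bigl(\nabla\partial_\iota\pal\rho^{(i-1)},\,\nabla\partial_\iota\pal\rho^{(i-1)}\bigr).
\]
This is not a spatial norm but another oscillatory bilinear of the same family, now with $l$ raised. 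If you estimate it crudely by $\kappa\tau\cdot\kappa\int|\nabla^{n+2}\rho^{(i-1)}|^2\lesssim\kappa\tau(\tau\mu)^{2(i-1)}(\mu/\kappa)^{n+1}D=(\tau\mu)^{2i-1}(\mu/\kappa)^nD$, you are exactly one factor of $\tau\mu$ short of the target $(\tau\mu)^{2i}(\mu/\kappa)^nD$. The same deficit recurs in Part~3: when $D_t$ hits $\rho^{(i)}$ inside $B^l(\nabla\pal\rho^{(i)},\nabla\pab\rho^{(i)})$, the forcing again produces a $\kappa\tau B^{l+1}$ term with one argument at level $i-1$, and norm bounds alone give $(\tau\mu)^{2i}$ instead of $(\tau\mu)^{2i+1}$.

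The paper closes this by running a \emph{joint} induction on the norm bounds and on the full family of bilinear bounds
\[
\bigl|B^{l}(\nabla\pal\rho^{(j)},\nabla\pab\rho^{(k)})\bigr|\lesssim(\tau\mu)^{j+k+1}(\mu/\kappa)^{(n+m)/2}D,\qquad j,k\leq i,\ \text{all }l,
\]
and by recognizing that the substitution step yields the recurrence $\mathcal{B}(l,m,j)\lesssim\kappa\tau\,\mathcal{B}(l+1,m+2,j-1)+Q(j,m)$, which is iterated all the way down to $j=0$ and terminated via $\rho^{(-1)}=0$. Each step gains a factor $\kappa\tau$, which exactly compensates the two extra derivatives through $(\kappa\tau)(\mu/\kappa)=\tau\mu$. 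Your cascade intuition in Part~3 is correct, but it must be carried out on the bilinear forms $B^l$ with the bilinear bound as part of the inductive hypothesis, not on spatial norms alone.
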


Note that the equation \eqref{e:time_setup} with Assumptions (A2)-(A4) falls within the above lemma.

\begin{proof}
\noindent\emph{Step 1: Preliminary bound for the bilinear term.} Let $|\boldsymbol{\alpha}|=n$ and $|\boldsymbol{\beta}|=m$. It holds by definition and \eqref{e:fastslow} 
\begin{equation}\label{e:time_Bpre}
\begin{split}
B^{l} (\nabla \pal \rho^{(i)}, &\nabla \pab \rho^{(j)}) =  \kappa \tau \int_0^T \int  (D_t (G^{l}_\tau) -  (D^{slow}_t G^{l})_\tau) \nabla  \pal \rho^{(i)} \cdot \nabla  \pab \rho^{(j)} \\
=& \kappa \tau  \int  G^{l}_\tau  \nabla  \pal  \rho^{(i)} \cdot \nabla  \pab \rho^{(j)}  \bigg|_{t=0}^{t=T}  -\kappa \tau \int_0^T \int  G^{l}_\tau  D_t (\nabla  \pal  \rho^{(i)} \cdot \nabla  \pab \rho^{(j)}) \\
&-    \kappa \tau \int_0^T (D^{slow}_t G^{l})_\tau \nabla  \pal  \rho^{(i)} \cdot \nabla  \pab \rho^{(j)}. \\
\end{split}
\end{equation}
For the time boundary term of \eqref{e:time_Bpre}, integrating once by parts in space and using \eqref{Atime_Gspace} we write 
\[
\left|\int  G^{l}_\tau  \nabla  \pal  \rho^{(i)} \cdot \nabla  \pab \rho^{(j)}  \bigg|_{t=0}^{t=T} \right|\lesssim
\VE \rho^{(i)} \VE_n \VE \rho^{(j)} \VE_{m+2} + (\tau \mu)  \left(\frac{\mu}{\kappa}\right)^\frac{1}{2} \VE \rho^{(i)} \VE_n \VE \rho^{(j)} \VE_{m+1}.
\]
For the last term of \eqref{e:time_Bpre} we simply write the bound
 \[  \kappa \tau \left| \int_0^T (D^{slow}_t G^{l})_\tau \nabla  \pal  \rho^{(i)} \cdot \nabla  \pab \rho^{(j)} \right|\le
  \tau \| D^{slow}_t G^{l} \|_{L^\infty}  \VE \rho^{(i)} \VE_n \VE \rho^{(j)} \VE_{m} \lesssim   \tau \mu \VE \rho^{(i)} \VE_n \VE \rho^{(j)} \VE_{m}.
\]
Together, for any $|\boldsymbol{\alpha}|=n$ and $|\boldsymbol{\beta}|=m$
  \begin{equation}\label{e:time_Bpre2}
  \begin{split}
  \left| B^{l} (\nabla \pal \rho^{(i)}, \nabla \pab \rho^{(j)}) \right| \lesssim \kappa \tau   \VE \rho^{(i)} \VE_n \VE \rho^{(j)} \VE_{m+2} + \kappa \tau   (\tau \mu)  \left(\frac{\mu}{\kappa}\right)^\frac{1}{2} \VE \rho^{(i)} \VE_n \VE \rho^{(j)} \VE_{m+1} \\
   +  \tau \mu \VE \rho^{(i)} \VE_n \VE \rho^{(j)} \VE_{m} +
\kappa \tau   \left| \int_0^T \int  G^{l}_\tau  D_t (\nabla  \pal  \rho^{(i)} \cdot \nabla  \pab \rho^{(j)}) \right|.
\end{split}
\end{equation}

\noindent\emph{Step 2: Reformulating energy estimates.} \label{ss:ree} The case $i=0$ (nonzero initial datum, zero forcing since $\rho^{(-1)} \equiv 0$) is the energy estimate of Lemma \ref{l:energy_lapl}, together with the assumption \eqref{Atime_Ini}. 

For $i>0$,
the energy estimate with forcing (Corollary \ref{c:energy_lapl}) gives (zero initial datum for $i>0$)
\begin{equation}\label{e:ener_n1}
\begin{split}
 \sup_{t \le T}& \|\nabla^n \rho^{(i)}(t)\|^2_{L^2}+ \kappa \|\nabla^{n+1} \rho^{(i)}\|_{L_{xt}^2}^2 \lesssim_n  \\ &\kappa  \sum_{|\boldsymbol{\alpha}|=n} \left| \int_0^T \int \pal  (g_\tau\nabla \rho^{(i-1)})  \cdot\nabla \pal \rho^{(i)}    \right| +  \left(\frac{\mu}{\kappa}\right)^n  \kappa \|\nabla \rho^{(i)}\|_{L_{xt}^2}^2,
 \end{split}
\end{equation}
where in case $n=0$ the second term vanishes. We rewrite it as
\[
\begin{split}
\VE \rho^{(i)} \VE^2_n \lesssim_n  &  \sum_{|\boldsymbol{\alpha}|=n}  \left| B^{0} (\nabla \pal \rho^{(i-1)}, \nabla \pal \rho^{(i)})    \right| + \kappa \sum_{|\boldsymbol{\alpha}|=n} \left| \int_0^T \int([g_\tau  \cdot \nabla, \pal]  \rho^{(i-1)})\nabla \pal \rho^{(i)}     \right| \\
+   &\left(\frac{\mu}{\kappa}\right)^n  \kappa \|\nabla \rho^{(i)}\|_{L_{xt}^2}^2,
 \end{split}
\]
where in case $n=0$ only the first term is present. Using for the last right-hand side term of the preceding inequality its case with $n=0$ we obtain
\begin{equation}\label{e:ener_n2a}
\begin{split}
\VE \rho^{(i)} \VE^2_n \lesssim &    \sum_{|\boldsymbol{\alpha}|=n}   \left| B^{0} (\nabla \pal \rho^{(i-1)}, \nabla \pal \rho^{(i)})    \right| + \kappa    \sum_{|\boldsymbol{\alpha}|=n}  \left| \int_0^T \int ([g_\tau  \cdot \nabla, \pal]  \rho^{(i-1)})\nabla \pal \rho^{(i)}     \right| +\\ 
&\left(\frac{\mu}{\kappa}\right)^n \left| B^{0} (\nabla \rho^{(i-1)}, \nabla\rho^{(i)}) \right|. 
\end{split}
 \end{equation}
The commutator estimate \eqref{Atime_comm1} and Young's inequality allow to bound the middle right-hand side term of \eqref{e:ener_n2a} as follows 
\[
\begin{split}
 &\kappa    \sum_{|\boldsymbol{\alpha}|=n}  \left| \int_0^T \int ([g_\tau  \cdot \nabla, \pal]  \rho^{(i-1)})\nabla \pal \rho^{(i)}     \right| \\
 &\le   \frac{1}{C} \kappa \| \nabla^{n+1}\rho^{(i)} \|^2_{L_{xt}^2} + C  \kappa \|  \nabla^n g  \|^2_{L^\infty}   \|  \nabla \rho^{(i-1)} \|^2_{L_{xt}^2}  +  C  \kappa \|  \nabla g  \|^2_{L^\infty}   \|  \nabla^n \rho^{(i-1)} \|^2_{L_{xt}^2},
 \end{split}
\]
  so absorbing the $\nabla^{n+1}$ term into left-hand side of  \eqref{e:ener_n2a} and using \eqref{Atime_Gspace} we arrive at
\begin{equation}\label{e:ener_n2}
\begin{split}
\VE \rho^{(i)} \VE^2_n &\lesssim_n   \left(\frac{\mu}{\kappa}\right)^n \left| B^{0} (\nabla \rho^{(i-1)}, \nabla\rho^{(i)})    \right| +    \sum_{|\boldsymbol{\alpha}|=n}  \left| B^{0} (\nabla \pal \rho^{(i-1)}, \nabla \pal \rho^{(i)})    \right| \\
+& (\tau \mu)^2 \left(\frac{\mu}{\kappa}\right)^n   \kappa \|  \nabla \rho^{(i-1)} \|^2_{L_{xt}^2}  + (\tau \mu)^2 \left(\frac{\mu}{\kappa}\right)   \kappa    \|  \nabla^n \rho^{(i-1)} \|^2_{L_{xt}^2},
\end{split}
 \end{equation}
 where in case $n=0$ only the first term is present.

\noindent\emph{Step 3: Setting up induction.} Consider the second line of \eqref{e:ener_n2}: (i) it agrees with the desired estimate \eqref{e:time_main_impr} for $i=1$, if we use there definition \eqref{n:D} and \eqref{e:time_main_0n}; (ii) moreover, if we knew the estimate \eqref{e:time_main_impr} for $\rho^{(i-1)}$, it agrees with it for  $\rho^{(i)}$.
Therefore, if we knew that for $\boldsymbol{\alpha}=0$ and $|\boldsymbol{\alpha}|=n$ it holds
\begin{equation}\label{e:Bneeded}
\left| B^{0} (\nabla \pal \rho^{(i-1)}, \nabla \pal \rho^{(i)})    \right| \lesssim (\tau \mu)^{2i}   \left(\frac{\mu}{\kappa}\right)^k  D,
\end{equation}
then \eqref{e:time_main_impr} can be shown by induction.
It is clear then that we should prove our \eqref{e:time_main_impr} inductively, keeping track of $B$.

We induce over $i=0,1, \dots$. The inductive assumption is that for any $n,m,l$
\begin{equation}\label{l:time_ind}
\begin{split}
   (a)\qquad &    \VE \rho^{(j)} \VE^2_n \lesssim (\tau \mu)^{2j}   \left(\frac{\mu}{\kappa}\right)^n  D \qquad  \forall_{j \le i},\\
      (b)\qquad &\sup_{|\boldsymbol{\alpha}|=n, |\boldsymbol{\beta}|=m}  \left| B^{l} (\nabla \pal \rho^{(j)}, \nabla \pab \rho^{(k)})    \right| \lesssim (\tau \mu)^{j+k+1}   \left(\frac{\mu}{\kappa}\right)^{\frac{n+m}{2}}  D  \qquad  \forall_{j,k \le i}.
       \end{split}
\end{equation}
(where constants in $\lesssim$ may depend on $n,m,l,i$),
and we want to show it for $i+1$.

\noindent\emph{Step 4: Initializing the induction, $i=0$.} At the beginning of Section \ref{ss:ree} we have already shown \eqref{e:time_main_0}, \eqref{e:time_main_0n} for  $i=0$, so the part (a) of the inductive assuption for  $i=0$ holds. We need to show now   $i=0$ for the part (b). From \eqref{e:time_Bpre2} with  $i=0$ it holds, thanks to the already known part (a)
\begin{equation}\label{e:time_Bpre2ini}
\begin{split}
    \left| B^{l} (\nabla \pal \rho^{(0)}, \nabla \pab \rho^{(0)})   \right|
    \lesssim  \tau \mu  \left(\frac{ \mu}{\kappa}\right)^{\frac{n+m}{2}} D +
    \kappa \tau   \left| \int_0^T \int  G^{l}_\tau  D_t (\nabla \pal \rho^{(0)} \cdot \nabla \pab \rho^{(0)})\right|
\end{split}
\end{equation}  
for any $|\boldsymbol{\alpha}|=n$, $|\boldsymbol{\beta}|=m$. 
We focus on the last term of \eqref{e:time_Bpre2ini}. Since via \eqref{e:time_setup} for $\rho^{(0)}$ (no forcing) we have
 \[
  D_t (\pal \rho^{(0)}) = \kappa \Delta \pal \rho^{(0)} + [u \cdot \nabla, \pal] \rho^{(0)},
  \]
it holds
\begin{equation}\label{e:time_B0a}
    \kappa \tau  \int_0^T \int  G^{l}_\tau  D_t (\nabla \pal \rho^{(0)}) \cdot \nabla \pab \rho^{(0)} = \kappa \tau \int_0^T \int  G^{l}_\tau  ( \kappa \Delta \nabla \pal  \rho^{(0)} + [u \cdot \nabla, \nabla \pal] \rho^{(0)}) \cdot \nabla \pab\rho^{(0)}. 
\end{equation}
For the first right-hand side term of \eqref{e:time_B0a} we use \eqref{Atime_Gspace} and  \eqref{e:time_main_0n} to write
\[ 
\begin{split}
\kappa \tau  \left|  \int_0^T  \int G^{l}_\tau  ( \kappa \Delta \nabla \pal  \rho^{(0)} \cdot \nabla \pab \rho^{(0)} \right| &\lesssim
    \kappa \tau  \left(\kappa^\frac{1}{2} \|  \nabla^{n+3} \rho^{(0)} \|_{L_{xt}^2}\right)   \left(\kappa^\frac{1}{2} \|\nabla^{m+1}\rho^{(0)} \|_{L_{xt}^2} \right) \\&\lesssim \kappa \tau \VE \rho^{(0)} \VE_{n+2} \VE \rho^{(0)} \VE_{m}  
    \lesssim  (\tau \mu)  \left(\frac{ \mu}{\kappa}\right)^\frac{n+m}{2} D.
      \end{split}
\]
For the second right-hand side term of \eqref{e:time_B0a}, using the assumption \eqref{Atime_Gspace}, the commutator estimate \eqref{Atime_comm1} (for $n+1$), and then \eqref{e:time_main_0n} we write
   \[
   \begin{split}
& \kappa \tau  \left|  \int_0^T  \int  G^{l}_\tau [u \cdot \nabla, \nabla \pal] \rho^{(0)} \cdot \nabla \pab \rho^{(0)} \right| \\
 &\lesssim  \tau \left(\|  \nabla^{n+1} u  \|_{L^\infty}  \kappa^\frac{1}{2}  \|  \nabla \rho^{(0)} \|_{L_{xt}^2}  +  \|  \nabla u  \|_{L^\infty}  \kappa^\frac{1}{2} \|  \nabla^{n+1} \rho^{(0)} \|_{L_{xt}^2} \right) \kappa^\frac{1}{2} \|\nabla^{m+1}\rho^{(0)} \|_{L_{xt}^2} \\
  &\lesssim  \tau  \left( \|  \nabla^{n+1} u  \|_{L^\infty}  +  \|  \nabla u  \|_{L^\infty} \left(\frac{ \mu}{\kappa}\right)^\frac{n}{2} \right) \left(\frac{ \mu}{\kappa}\right)^\frac{m}{2} D \lesssim (\tau \mu)  \left(\frac{ \mu}{\kappa}\right)^\frac{n+m}{2} D,
 \end{split}
       \]
where the last inequality follows from the scales assumption \eqref{Atime_scales}. Hence the entire right-hand side of \eqref{e:time_B0a} is estimated by $(\tau \mu)  \left(\frac{ \mu}{\kappa}\right)^\frac{n+m}{2} D$. This and the other estimate with $D_t$ acting on $\nabla \pab \rho_{0}$ (performed analogously) gives via \eqref{e:time_Bpre2ini} the bound
          \begin{equation}\label{e:indB0}
   \sup_{|\boldsymbol{\alpha}|=n, |\boldsymbol{\beta}|=m}      \left| B^{l} (\nabla \pal \rho^{(0)}, \nabla \pab \rho^{(0)})   \right|  \lesssim (\tau \mu)  \left(\frac{ \mu}{\kappa}\right)^\frac{n+m}{2} D,
                \end{equation}
                which is the inductive hypothesis for $i=0$.

\noindent\emph{Step 5: The inductive step $i \to i+1$.} Now we assume \eqref{l:time_ind} and want to show (for any $n$, $m$, $l$)
\begin{equation}\label{l:time_indH}
\begin{split}
   (a')\qquad &    \VE \rho^{(i+1)} \VE^2_n \lesssim (\tau \mu)^{2i+2}   \left(\frac{\mu}{\kappa}\right)^n  D, \\
      (b')\qquad &  \sup_{|\boldsymbol{\alpha}|=n, |\boldsymbol{\beta}|=m}  \left| B^{l} (\nabla \pal \rho^{(i+1)}, \nabla \pab \rho^{(i)})    \right|  \lesssim (\tau \mu)^{2i+2}   \left(\frac{\mu}{\kappa}\right)^{\frac{n+m}{2}}  D.
       \end{split}
\end{equation}
\noindent\emph{Substep 5.1: Estimate of $B^l$ in terms of $\rho^{(i+1)}$}.
For any $|\boldsymbol{\alpha}|=n$, $|\boldsymbol{\beta}|=m$ the preliminary inequality \eqref{e:time_Bpre2} for $i+1$, $j\le i$ gives via part (a) of the inductive assumption \eqref{l:time_ind}
   \begin{equation}\label{e:time_Bpre2H}
  \begin{split}
  &\left| B^{l} (\nabla \pal \rho^{(i+1)}, \nabla \pab \rho^{(j)})   \right| \lesssim   \VE \rho^{(i+1)} \VE_n  (\tau \mu)^{j+1}  \left(\frac{\mu}{\kappa}\right)^\frac{m}{2} D^\frac{1}{2} +
\kappa \tau   \left| \int_0^T \int  G^{l}_\tau  D_t (\nabla \pal \rho^{(i+1)} \cdot \nabla \pab\rho^{(j)})\right|.
\end{split}
\end{equation}
We will focus on the last term of \eqref{e:time_Bpre2H}. By \eqref{e:time_setup} for $\rho^{(k)}$ (arbitrary natural $k$), we have
\begin{equation}\label{e:time_Dt}
  D_t (\partial^{\boldsymbol{\gamma}}  \rho^{(k)}) = \kappa \Delta \partial^{\boldsymbol{\gamma}}    \rho^{(k)} + [u \cdot \nabla, \partial^{\boldsymbol{\gamma}}  ] \rho^{(k)} +  \kappa \div \partial^{\boldsymbol{\gamma}}    \left(g \nabla \rho^{(k-1)} \right).
\end{equation}
Now we consider two cases, depending on what $D_t$ in the last term of
\eqref{e:time_Bpre2H} acts on.

\noindent\emph{The case when $D_t$ in the last term of \eqref{e:time_Bpre2H} acts on $\rho^{(j)}$}. Use \eqref{e:time_Dt} with $k=j$ to write
\begin{equation}\label{e:time_low1}
  \begin{split}
&  \kappa \tau   \left| \int_0^T \int  G^{l}_\tau  \nabla \pal \rho^{(i+1)} \cdot  D_t \nabla \pab\rho^{(j)}\right| = \\
&\kappa \tau   \left| \int_0^T \int  G^{l}_\tau \nabla \pal \rho^{(i+1)} \cdot \left(  \kappa \Delta \nabla \pab  \rho^{(j)} + [u \cdot \nabla, \nabla \pab] \rho^{(j)} +  \kappa \nabla \div  \pab  \left(g_\tau \nabla \rho^{(j-1)} \right) \right)  \right| =\\
&\kappa \tau   \left| \int_0^T \int  G^{l}_\tau \nabla \pal \rho^{(i+1)} \cdot (  \kappa \Delta \nabla \pab  \rho^{(j)} + [u \cdot \nabla, \nabla \pab] \rho^{(j)} +  \kappa  g_\tau \nabla \div  \pab  \nabla \rho^{(j-1)}+ \kappa  [\nabla \div  \pab , g_\tau ] \nabla \rho^{(j-1)}   \right| \\
 &\lesssim \kappa \tau \VE \rho^{(i+1)} \VE_n  \left( \VE \rho^{(j)} \VE_{m+2} + \frac{\|  \nabla^{m+1} u  \|_{L^\infty}}{\kappa}   \VE \rho^{(j)} \VE_0  +  \frac{\|  \nabla u  \|_{L^\infty}}{\kappa}    \VE   \rho^{(j)}  \VE_m  \right) \qquad =: I\\
& +\kappa \tau \VE \rho^{(i+1)} \VE_n \left( \|\nabla^{m+2} g \|_{L^\infty} \VE \rho^{(j-1)}\VE_0   +   \|\nabla g \|_{L^\infty} \VE \rho^{(j-1)}\VE_{m+1} \right) \qquad =: II\\
&+\kappa^2 \tau  \left| \int_0^T \int  G^{l}_\tau g_\tau \nabla \pal \rho^{(i+1)} \cdot     \nabla  \Delta \pab \rho^{(j-1)} \right| \qquad =: III.
 \end{split}
\end{equation}
 The inequality in \eqref{e:time_low1} holds via commutator estimates \eqref{Atime_comm1} and \eqref{Atime_comm2}. 
 
 For the three terms on the right-hand side of \eqref{e:time_low1} we use the inductive assumption (a) and assumption \eqref{Atime_scales} to bound them as follows
 \[
 I \lesssim \VE \rho^{(i+1)} \VE_n  (\tau \mu)^{j+1}  \left(\frac{\mu}{\kappa}\right)^\frac{m}{2} D^\frac{1}{2},  
 \]
 for the second one additionally using \eqref{Atime_Gspace}
\[ 
  II \lesssim \VE \rho^{(i+1)} \VE_n (\tau \mu)^{j+1}  \left(\frac{\mu}{\kappa}\right)^\frac{m}{2} D^\frac{1}{2},
\]
    and we keep the term $III$ of  \eqref{e:time_low1}, written as $\kappa \tau \left| B^{l+1} (\nabla \pal \rho^{(i+1)},    \nabla  \Delta \pab \rho^{(j-1)})   \right|$, in accordance with the definition of the bilinear form $B^l$.
 These two estimates used in \eqref{e:time_low1} show that
\begin{equation}\label{e:time_B1} 
 \begin{split}
&\kappa \tau   \left| \int_0^T \int  G^{l}_\tau  \nabla \pal \rho^{(i+1)} \cdot  D_t \nabla \pab\rho^{(j)}\right| \\
&\lesssim \VE \rho^{(i+1)} \VE_n  (\tau \mu)^{j+1}  \left(\frac{\mu}{\kappa}\right)^\frac{m}{2} D^\frac{1}{2} +\kappa \tau    \left| B^{l+1} (\nabla \pal \rho^{(i+1)},    \nabla  \Delta \pab \rho^{(j-1)})   \right| .
 \end{split}
\end{equation} 
\noindent\emph{The case when $D_t$ in the last term of \eqref{e:time_Bpre2H} acts on $\rho^{({i+1})}$.} Use \eqref{e:time_Dt} with $k=i+1$
 \begin{equation}\label{e:time_Dtplus}
    \begin{split}
    &  \kappa \tau   \left| \int_0^T \int  G^{l}_\tau \nabla \pab\rho^{(j)} \cdot   D_t \nabla \pal \rho^{(i+1)}   \right| = \\
&\kappa \tau   \left| \int_0^T \int  G^{l}_\tau \nabla \pab \rho^{(j)} \cdot \left(  \kappa \Delta \nabla \pal  \rho^{(i+1)} + [u \cdot \nabla, \nabla \pal] \rho^{(i+1)} +  \kappa \nabla \div  \pal  \left(g_\tau \nabla \rho^{(i)} \right) \right)  \right| = \\
&\kappa \tau   \left| \int_0^T \int  G^{l}_\tau \nabla \pab \rho^{(j)} \cdot \left(  \kappa \Delta \nabla \pal  \rho^{(i+1)} + [u \cdot \nabla, \nabla \pal] \rho^{(i+1)} +  \kappa  g_\tau  \Delta \pal  \nabla \rho^{(i)}+ \kappa  [ \nabla \div \pal , g_\tau ] \nabla \rho^{(i)} \right)  \right|.
 \end{split}
\end{equation}
For the first right-hand side term of \eqref{e:time_Dtplus} we shift the laplacian:
 \[
   \begin{split}
\kappa^2  \tau \int  G^{l}_\tau  \Delta \nabla \pal  \rho^{(i+1)}  \cdot \nabla \pab\rho^{(j)} =& \kappa^2  \tau   \int   G^{l}_\tau \nabla \pal  \rho^{(i+1)} \cdot  \Delta  \nabla \pab\rho^{(j)} +  [G^{l}_\tau,   \Delta ] \nabla \pal  \rho^{(i+1)}  \cdot \nabla \pab \rho^{(j)} \\
=& \kappa^2  \tau   \int   G^{l}_\tau \nabla \pal  \rho^{(i+1)}  \cdot \nabla \Delta   \pab\rho^{(j)} +  \nabla \pal  \rho^{(i+1)}  \cdot [   \Delta ,G^{l}_\tau]  \nabla \pab \rho^{(j)}, 
 \end{split}
  \]
so that via the commutator estimate \eqref{Atime_comm2} we have the bound
  \[
  \begin{split}
 &\kappa \tau   \left|  \int_0^T \int  G^{l}_\tau   \kappa \Delta \nabla \pal  \rho^{(i+1)}   \cdot \nabla \pab\rho^{(j)} \right| \lesssim \\
 & \kappa^\frac{1}{2} \|\nabla^{n+1}\rho^{(i+1)} \|_{L_{xt}^2} \kappa \tau \left( \kappa^\frac{1}{2} \|  \nabla^{m+3} \rho^{(j)} \|_{L_{xt}^2} +  \|\nabla G^{l}_\tau   \|_{L^\infty}  \kappa^\frac{1}{2} \|  \nabla^{m+2} \rho^{(j)} \|_{L_{xt}^2} +  \|\nabla^2 G^{l}_\tau   \|_{L^\infty}  \kappa^\frac{1}{2} \|  \nabla \pab \rho^{(j)} \|_{L_{xt}^2} \right)  \\
 &\lesssim \VE \rho^{(i+1)} \VE_n  (\tau \mu)^{j+1}  \left(\frac{\mu}{\kappa}\right)^\frac{m}{2} D^\frac{1}{2},
 \end{split}
    \]
using for the last inequality the inductive assumption and \eqref{Atime_Gspace}.

The second right-hand side term of \eqref{e:time_Dtplus} is bounded by the commutator estimate \eqref{Atime_comm1} as follows
   \[
     \begin{split}
\kappa \tau   \left| \int_0^T \int  G^{l}_\tau \nabla \pab \rho^{(j)} \cdot  [u \cdot \nabla, \nabla \pal] \rho^{(i+1)}   \right| &\lesssim  \tau  \VE \rho^{(j)} \VE_{m} (\|  \nabla^{n+1} u  \|_{L^\infty}   \VE \rho^{(i+1)} \VE_0  +  \|  \nabla u  \|_{L^\infty}  \VE \rho^{(i+1)} \VE_n ) \\
&\lesssim (\tau\|  \nabla u  \|_{L^\infty} )  \VE \rho^{(j)} \VE_{m} \left( \left( \frac{\mu}{\kappa} \right)^\frac{n}{2}   \VE \rho^{(i+1)} \VE_0  +   \VE \rho^{(i+1)} \VE_n \right) \\
&\lesssim (\tau \mu)^{j+1}  \left(\frac{\mu}{\kappa}\right)^\frac{m}{2} D^\frac{1}{2} \left( \left( \frac{\mu}{\kappa} \right)^\frac{n}{2}   \VE \rho^{(i+1)} \VE_0  +   \VE \rho^{(i+1)} \VE_n \right),
 \end{split}
  \]
where the second inequality holds by \eqref{Atime_scales}, and the last one by the inductive assumption.
  
  The third right-hand side term of \eqref{e:time_Dtplus} is, integrating by parts once ($\partial_\iota$ below is a standard partial derivative, $\iota=1,2,3$, and we sum over repeated $\iota$'s)
     \[
          \begin{split}
& \kappa^2 \tau   \left| \int_0^T \int  G^{l}_\tau  g_\tau \nabla \pab \rho^{(j)} \cdot    \nabla \pal  \Delta \rho^{(i)}   \right| \\
 &=  \kappa \tau   \left| B^{l+1} (\nabla \pab \partial_\iota \rho^{(j)},  \nabla \pal  \partial_\iota  \rho^{(i)})    \right| +   \kappa^2 \tau   \left| \int_0^T \int  \partial_\iota G^{l+1}_\tau \nabla \pab \rho^{(j)}  \cdot  \nabla \pal  \partial_\iota \rho^{(i)}   \right|\\
& \lesssim \kappa \tau  \left| B^{l+1} (\nabla \pab \partial_\iota \rho^{(j)},  \nabla \pal  \partial_\iota  \rho^{(i)})    \right|+  (\tau \mu)^{i+j+2}   \left(\frac{\mu}{\kappa}\right)^{\frac{n+m}{2}}  D,
\end{split}
 \]
 using the inductive assumption and \eqref{Atime_Gspace}.
  
 The fourth right-hand side term of \eqref{e:time_Dtplus}, via the commutator estimate \eqref{Atime_comm2}, is estimated as follows
        \[
          \begin{split}
    \kappa^2 \tau   \left| \int_0^T \int  G^{l}_\tau \nabla \pab \rho^{(j)}   [  \nabla \div \pal , g_\tau ] \nabla \rho^{(i)}   \right| &\lesssim     \kappa \tau   \VE \rho^{(j)} \VE_{m}  (\|\nabla^{n+2} g_\tau  \|_{L^\infty} \VE \rho^{(i)}\VE_0   +   \|\nabla g_\tau  \|_{L^\infty} \VE \rho^{(i)} \VE_{n+1})   \\
  & \lesssim   (\tau \mu)^{i+j+2}   \left(\frac{\mu}{\kappa}\right)^{\frac{n+m}{2}}  D,
 \end{split}
     \]
using again inductive assumptions and  \eqref{Atime_Gspace}.

Altogether these estimates in \eqref{e:time_Dtplus} give
\begin{equation}\label{e:time_B2}    
\begin{split}   
&\kappa \tau   \left| \int_0^T \int  G^{l}_\tau \nabla \pab\rho^{(j)} \cdot   D_t \nabla \pal \rho^{(i+1)}   \right| \lesssim \\
       &(\tau \mu)^{j+1}  \left(\frac{\mu}{\kappa}\right)^\frac{m}{2} D^\frac{1}{2} \left(   \VE \rho^{(i+1)} \VE_n
             +  \left(\frac{\mu}{\kappa}\right)^\frac{n}{2}  \VE \rho^{(i+1)} \VE_0  +  (\tau \mu)^{i+1}   \left(\frac{\mu}{\kappa}\right)^{\frac{n}{2}}  D^\frac{1}{2} \right) \\
            &+  \kappa \tau  \left| B^{l+1} (\nabla \pab \partial_\iota \rho^{(j)}, \nabla \pal  \partial_\iota  \rho^{(i)}) \right|.
\end{split}
\end{equation}

\noindent\emph{Estimate of \eqref{e:time_Bpre2H}.} Together, \eqref{e:time_B1},  \eqref{e:time_B2}  used for the right-hand side of \eqref{e:time_Bpre2H} give
\begin{equation}\label{e:time_B3tog}
  \begin{split}
 & \left| B^{l} (\nabla \pal \rho^{(i+1)}, \nabla \pab\rho^{(j)})   \right|
\lesssim_{i,m,n,l} \\
& \kappa \tau    \left| B^{l+1} (\nabla \pal \rho^{(i+1)},    \nabla  \Delta \pab \rho^{(j-1)})   \right| +  \kappa \tau  \left| B^{l+1} (\nabla \pal  \partial_\iota  \rho^{(i)}, \nabla \pab \partial_\iota \rho^{(j)})    \right| + Q (j,m),
\end{split}
    \end{equation}
where we have denoted the parts containing norms of right-hand side by $Q (j,m)$, i.e.
\[
  \begin{split} 
  Q (j,m) := (\tau \mu)^{j+1}  \left(\frac{\mu}{\kappa}\right)^\frac{m}{2} D^\frac{1}{2} \left(   \VE \rho^{(i+1)} \VE_n
             +  
          \left(\frac{\mu}{\kappa}\right)^\frac{n}{2}  \VE \rho^{(i+1)} \VE_0  +  (\tau \mu)^{i+1}   \left(\frac{\mu}{\kappa}\right)^{\frac{n}{2}}  D^\frac{1}{2} \right).
\end{split}
\]
Using the inductive assumption \eqref{l:time_ind} part (b) for the first right-hand side term of  \eqref{e:time_B3tog} we get
\begin{equation} \label{e:time_B3}
  \left| B^{l} (\nabla \pal \rho^{(i+1)}, \nabla \pab\rho^{(j)})   \right|
\lesssim \kappa \tau    \left| B^{l+1} (\nabla \pal \rho^{(i+1)},    \nabla  \Delta \pab \rho^{(j-1)})   \right|   +   Q (j,m),
\end{equation}
which holds for any $j \le i$ and any multiindex  $|\boldsymbol{\alpha}|=n$, $|\boldsymbol{\beta}|=m$. Thus
denoting by 
$$ \mathcal{B} (l,m,j) :=   \sup_{|\boldsymbol{\alpha}|=n, |\boldsymbol{\beta}|=m}  \left| B^{l} (\nabla \pal \rho^{(i+1)}, \nabla \pab \rho^{(j)})    \right| $$ (with $(i+1)$ and $n$ fixed) we rewrite \eqref{e:time_B3} and iterate it as follows
\begin{equation}\label{e:iter_back}
  \begin{split}
 \mathcal{B} (l,m,j) &\lesssim  \kappa \tau  \mathcal{B} (l+1,m+2,j-1)+ Q (j,m)\\
 &\lesssim (\kappa \tau)^2  \mathcal{B} (l+2,m+4,j-2)+ \kappa \tau Q (j-1,m+2) + Q (j,m)\\
 &\dots \lesssim  (\kappa \tau)^j  \mathcal{B} (l+j,m+2j,0)  + \sum_{i=0}^{j-1} (\kappa \tau)^i  Q (j-i,m+2i),
\end{split}
\end{equation}
which holds for any $j \le i$. Observe that $\sum_{i=0}^{j} (\kappa \tau)^i  Q (j-i,m+2i) \lesssim Q (j,m)$. This in \eqref{e:iter_back} yields
\begin{equation}\label{e:iter_back_nw}
 \mathcal{B} (l,m,j) \lesssim (\kappa \tau)^j  \mathcal{B} (l+j,m+2j,0)  + Q (j,m).
\end{equation}
Importantly, we can perform one last step\footnote{
This iteration ``all the way back'' allows us to obtain ``optimal improvement'' mentioned in the introduction.}: recall that we have $ \rho^{(-1)}\equiv 0$ and observe that \eqref{e:time_B3tog} is valid for the choice $j=0$, where it yields 
\[
  \begin{split}
 (\kappa \tau)^j \mathcal{B} (l+j,m+2j,0) &\lesssim  (\kappa \tau)^{j+1}  \sup_{|\boldsymbol{\alpha}|=n, |\boldsymbol{\beta}|=m+2j}   \left| B^{l+j+1} (\nabla \pal  \partial_\iota  \rho^{(i)}, \nabla \pab \partial_\iota \rho^{(0)})    \right| +  (\kappa \tau)^j Q (0,m+2j) \\
&\lesssim  (\kappa \tau)^{j+1}  (\tau \mu)^{i+1}   \left(\frac{\mu}{\kappa}\right)^{\frac{n+m+2j+2}{2}}  D   + Q (j,m) \lesssim  Q (j,m),
\end{split}
\]
where the second inequality is given by the inductive assumption \eqref{l:time_ind} (b) and via the already established $(\kappa \tau)^j  Q (0,m+2j)\lesssim Q (j,m)$, an the last inequality follows from the definition of $Q (j,m)$. This estimate used in \eqref{e:iter_back_nw} with $j=i$ yields via 
definition of $B$ 
\begin{equation}\label{e:time_B4}
\sup_{|\boldsymbol{\alpha}|=n, |\boldsymbol{\beta}|=m}   \left| B^{l} (\nabla \pal \rho^{(i+1)}, \nabla \pab\rho^{(i)})   \right| 
\lesssim_{i,m,n,l} Q (i,m).
\end{equation}

\noindent\emph{Substep 5.2: Closing the induction argument.} To close the argument, we need to return to estimates on $\rho^{(i+1)}$, since it appears in $Q (i,m)$. Note that \eqref{e:ener_n2} for $i+1$ gives for $n=0$
\begin{equation}\label{e:ener_n2H0}
\begin{split}
\VE \rho^{(i+1)} \VE^2_0 \lesssim \left| B^{0} (\nabla \rho^{(i)}, \nabla\rho^{(i+1)})\right|    &\lesssim   (\tau \mu)^{2i+2}  D + Q (i,0) \lesssim (\tau \mu)^{2i+2}  D + (\tau \mu)^{i+1}  \VE \rho^{(i+1)} \VE_0,
\end{split}
 \end{equation}
with the second inequality following from \eqref{e:time_B4} with $m=n=0$ and the last one from the definition of the quantity $Q$. Splitting by Young's inequality gives the inductive hypothesis \eqref{l:time_indH} (a') for $\VE \rho^{(i+1)}\VE_0$.

Similarly for $n \ge 1$ via  \eqref{e:ener_n2}
\begin{equation}\label{e:ener_almost}
\begin{split}
\VE \rho^{(i+1)} \VE^2_n \lesssim& \left(\frac{\mu}{\kappa}\right)^n \left| B^{0} ( \nabla\rho^{(i+1)}, \nabla \rho^{(i)})    \right| +   \sum_{|\boldsymbol{\alpha}|=n}  \left| B^{0} (\nabla \pal \rho^{(i+1)}, \nabla \pal \rho^{(i)})    \right| +  (\tau \mu)^{2i+2}   \left(\frac{\mu}{\kappa}\right)^n  D,
\end{split}
 \end{equation}
where we have used the inductive hypothesis \eqref{l:time_ind} part (a). Observe that at the right-hand sides of \eqref{e:ener_n2H0} and \eqref{e:ener_almost} the bilinear form $B^l$ appears only with $l=0$, hence the constants in the \eqref{l:time_ind} part (a) are $l$-independent. (Since we lower the index of $\rho$ at the cost of raising $l$, compare \eqref{e:time_B3},\eqref{e:iter_back_nw}, closing the argument requires using  $B^l$, $l>0$.)

Using now for the right-hand side of \eqref{e:ener_almost} the estimate \eqref{e:time_B4} with $m=n$, the known estimate for $\VE \rho^{(i+1)}\VE_0$, and the definition of $Q(i,m)$ yields the entire inductive hypothesis \eqref{l:time_indH} (a'), i.e. the one for $\rho^{(i+1)}$. Now, we use \eqref{l:time_indH} (a') for $\rho^{(i+1)}$ in \eqref{e:time_B4} to close the induction. Lemma \ref{l:improved-energy} is proven.
\end{proof}

\section{Construction of the vector field - Proof of Proposition \ref{p:Onsager}}
\label{s:Onsager}

This section is devoted to the proof of Proposition \ref{p:Onsager}. We point out that this proposition is a mild variation of \cite[Proposition 2.1]{BDSV}, and we follow the proof very closely. On the other hand, for our purposes in this paper, we need to adjust and refine certain parameters used in the construction \cite{BDSV}, and for this reason we will need to repeat all the steps.

First, we choose the parameters $\gamma_T, \gamma_R, \gamma_E, \gamma_L, \bar{N}$ and $\alpha_0$. The universal constants $M,\bar{e}$ will be defined below in Definitions \ref{d:defM} and \ref{d:defebar}. Actually, for the proof below we will require the following inequalities relating these parameters:
\begin{align}
\gamma_L&<(b-1)\beta\,,\label{e:comparison}\\
4\alpha(1+\gamma_L)+2\gamma_L&<2(b-1)\beta+\gamma_T+\gamma_R\,,\label{e:gluing2}\\
\alpha\gamma_L&<\gamma_T\,,\label{e:taucondition1}\\
4\alpha(1+\gamma_L)&<\gamma_R\,,\label{e:gammaRalphacondition}\\
2\beta(b-1)+1+\gamma_R&<\bar{N}\gamma_L\,,\label{e:gluing1}\\
b\alpha+\gamma_T+b\gamma_R&<(b-1)(1-(2b+1)\beta)\,,\label{e:transportcondition}\\
b\gamma_E&<(b-1)(1-(2b+1)\beta)\,.\label{e:energycondition}
\end{align}
We first claim that \eqref{e:Onsager_Conditions} allow us to choose $\gamma_L, \bar{N}$ and $\alpha>0$ so that \eqref{e:comparison}-\eqref{e:energycondition} are valid. To see this, we can first choose $\gamma_L>0$ sufficiently small so that \eqref{e:comparison} holds and 
\begin{equation*}
2\gamma_L<2(b-1)\beta+\gamma_T+\gamma_R
\end{equation*}
and $\gamma_T$, $\gamma_R$ such that 
\[
\gamma_T+b\gamma_R <(b-1)(1-(2b+1)\beta).
\]
Then we choose $\bar{N}$ sufficiently large (depending on $\gamma_L$) so that \eqref{e:gluing1} is valid. Finally, we observe that then \eqref{e:gluing2}, \eqref{e:taucondition1}, \eqref{e:gammaRalphacondition} and \eqref{e:transportcondition} hold for any sufficiently small $\alpha>0$. 

Let us make the following important remark:
\begin{remark}\label{r:remarkona}
The basic principle in the subsequent estimates in this section will be, as it has been used in \cite{BDSV}, that implicit constants may depend on these parameters but do not depend on $\lambda_q$ and in particular do not depend on the large constant $a\gg 1$ in the definition of $\lambda_q$ (cf.~\eqref{e:lambdadelta}). Consequently, for sufficiently large $a\gg 1$ the implicit constants can be absorbed, so that ultimately we obtain the estimates \eqref{e:R_q_inductive_est}-\eqref{e:energy_inductive_assumption} for $q+1$.	
\end{remark}

\subsection{Mollification step}
\label{s:mollify}

Following \cite[Section 2.4]{BDSV} we define
\begin{equation*}
u_{\ell}:= u_q*\psi_{\ell_q},\quad
\mathring{R}_{\ell}:= \mathring R_q*\psi_{\ell_q}  -(u_q\mathring\otimes u_q)*\psi_{\ell_q}  + u_{\ell_q}\mathring\otimes u_{\ell_q} 	
\end{equation*}
so that $(u_\ell,\mathring{R}_\ell)$ is another solution to \eqref{e:EulerReynolds}. However, at variance with \cite[Section 2.4]{BDSV}, we fix a mollifying kernel $\psi\in C^\infty_c(\R^3)$ which, in addition to the usual requirement $\int_{\R^3}\psi\,dx=1$, also satisfies 
\begin{equation}\label{e:deepmollifier}
\int_{\R^3}\psi(x)x^\theta\,dx=0\quad\textrm{ for any multiindex $\theta$ with $1\leq|\theta|\leq \bar{N}$}.
\end{equation}
The construction and use of such mollifiers (called ``deep smoothing operators of depth $\bar{N}$'') is standard, see e.g.~\cite[Section 2.3.4]{GromovBook}; the case of infinite depth was introduced by Nash \cite{Nash56}. We point out that if $\bar{N}\geq 2$, then $\psi$ cannot be nonnegative. 

The key point for introducing these ``deep smoothing operators'' is the following lemma, a variant of the usual smoothing estimates.

\begin{lemma}\label{l:mollify}
Let  $\psi\in C^\infty_c(\R^n)$ be a smoothing operator of depth $\bar{N}\geq 1$ and such that $\int_{\R^n}\psi=1$. Let $\psi_\ell (x) := \ell^{-n}\psi (\ell^{-1}x)$. Then for any real $r,s\geq 0$ 
\begin{equation}
	\|f*\psi_\ell\|_{C^{r+s}}\lesssim \ell^{-s}\|f\|_{C^r}\label{e:mollify1}
\end{equation}
and for any $r\geq 0$, $0\leq s\leq \bar{N}$
\begin{equation}
\|f-f*\psi_\ell\|_{C^r} \lesssim \ell^{s}\|f\|_{C^{r+s}}\label{e:mollify2}.
\end{equation}
The implicit constants depend on the choice of $\psi$ as well as on $r,s$.
\end{lemma}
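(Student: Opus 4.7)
The plan is to establish the two estimates separately, with \eqref{e:mollify1} being classical and \eqref{e:mollify2} exploiting the vanishing moments that define the depth-$\bar N$ smoothing operator.

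For \eqref{e:mollify1}, I would first dispose of the integer case $r, s \in \N$: since $\nabla^{r+s}(f*\psi_\ell) = (\nabla^r f) * (\nabla^s \psi_\ell)$, Young's inequality together with the scaling $\|\nabla^s \psi_\ell\|_{L^1} = \ell^{-s}\|\nabla^s\psi\|_{L^1}$ gives $\|f*\psi_\ell\|_{C^{r+s}} \lesssim \ell^{-s}\|f\|_{C^r}$. For fractional exponents one can either reduce to the Hölder seminorm definition directly (writing $\nabla^{\lfloor r+s\rfloor}(f*\psi_\ell)(x) - \nabla^{\lfloor r+s\rfloor}(f*\psi_\ell)(y)$ as a convolution and estimating via translation) or invoke standard interpolation between integer-order estimates. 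No signed-mollifier subtlety enters here, as only the absolute mass $\int|\psi_\ell|\lesssim 1$ and $\int|\nabla^s\psi_\ell|\lesssim\ell^{-s}$ are used.

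For \eqref{e:mollify2}, the heart of the lemma, the strategy is Taylor expansion to order $k := \lfloor s\rfloor$ combined with the vanishing moments \eqref{e:deepmollifier}. Writing
\begin{equation*}
   f(x-y) = \sum_{|\theta|\le k}\frac{(-y)^\theta}{\theta!}\partial^\theta f(x) + R_k(x,y),
\end{equation*}
convolving with $\psi_\ell$, and using that $\int y^\theta \psi_\ell(y)\,dy = \ell^{|\theta|}\int z^\theta \psi(z)\,dz = 0$ for $1 \le |\theta|\le\bar N$ (and hence in particular for $1\le |\theta|\le k$, since $k\le s\le \bar N$), the polynomial part collapses to $f(x)$ and one is left with
\begin{equation*}
   (f * \psi_\ell)(x) - f(x) = \int R_k(x,y)\,\psi_\ell(y)\,dy.
\end{equation*}
For $k\le s\le k+1$ the Taylor remainder admits the Hölder-type bound $|R_k(x,y)|\lesssim |y|^s\|f\|_{C^s}$ (using $|\partial^\theta f(x-\eta y) - \partial^\theta f(x)|\lesssim |y|^{s-k}\|f\|_{C^s}$ on the top-order term). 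Because $\int |y|^s|\psi_\ell(y)|\,dy \lesssim \ell^s$, this yields the $r=0$ version $\|f*\psi_\ell - f\|_{C^0}\lesssim \ell^s\|f\|_{C^s}$. The case of general $r\ge 0$ reduces to this one: for integer $r$, commute $\nabla^r$ past the convolution and apply the $r=0$ estimate to $\nabla^r f$; for non-integer $r$, differentiate $\lfloor r\rfloor$ times and then apply the $r=0$ Hölder estimate to the difference quotient, or equivalently interpolate between neighboring integer cases.

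The only delicate point is bookkeeping on the admissible range of $s$: the Taylor expansion requires $k\le \bar N$, and the vanishing-moment identity \eqref{e:deepmollifier} is exactly what buys this cancellation. If $\psi$ were merely a nonnegative mollifier (depth $0$), the argument would stop at $s=1$, which is why the depth-$\bar N$ hypothesis is essential. The implicit constants then depend on $\|\psi\|_{L^1}$, $\|\nabla^s\psi\|_{L^1}$, and the exponents $r,s$, but crucially not on $\ell$, which is all that Remark \ref{r:remarkona} requires for the iteration.
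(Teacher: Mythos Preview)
Your proposal is correct and follows essentially the same approach as the paper: Young's inequality plus interpolation for \eqref{e:mollify1}, and Taylor expansion to order $\lfloor s\rfloor$ combined with the vanishing-moment condition \eqref{e:deepmollifier} for \eqref{e:mollify2}, followed by reduction of general $r$ to $r=0$ via differentiation and interpolation. If anything, your bookkeeping on the order of the Taylor expansion (stopping at $k=\lfloor s\rfloor$ rather than at $\bar N$) is slightly cleaner than the paper's presentation.
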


\begin{proof}
Concerning \eqref{e:mollify1} assume first that $r=k$ and $s=l$ are integers and let $\boldsymbol{\alpha},\boldsymbol{\beta}$ be multi-indices with $|\boldsymbol{\alpha}|=k, |\boldsymbol{\beta}|=l$. Then $\partial^{\boldsymbol{\alpha}+\boldsymbol{\beta}}(f*\psi_\ell)=\partial^{\boldsymbol{\alpha}} f*\partial^{\boldsymbol{\beta}} \psi_\ell$, hence
$$
|\partial^{\boldsymbol{\alpha}+\boldsymbol{\beta}}(f*\psi_\ell)|\leq C_l\ell^{-l}\|f\|_k.
$$
If $s=l+\alpha$, $\alpha \in (0,1)$, we write
\begin{align*}
\partial^{\boldsymbol{\alpha}} f*\partial^{\boldsymbol{\beta}}\psi_\ell(x+z)-\partial^{\boldsymbol{\alpha}}f*\partial^{\boldsymbol{\beta}}\psi_\ell(x)&=\int_{\R^n}\partial^{\boldsymbol{\alpha}}f(x-y)\left(\partial^{\boldsymbol{\beta}}\psi_\ell(y+z)-\partial^{\boldsymbol{\beta}}\psi_\ell(y)\right)\,dy\\
	&=\ell^{-k}\int_{\R^n}\partial^{\boldsymbol{\alpha}}f(x-y)\left((\partial^{\boldsymbol{\beta}}\psi)(y+\ell^{-1} z)-(\partial^{\boldsymbol{\beta}}\psi)(y)\right)\,dy,
\end{align*}
from which we obtain
\begin{equation*}
\|\partial^{\boldsymbol{\alpha}+\boldsymbol{\beta}}(f*\psi_\ell)\|_{\alpha}\leq C_{l,\alpha}\ell^{-l-\alpha}\|f\|_k.
\end{equation*}
Finally, if also $r=k+\beta$ for some $\beta\in(0,1)$, we obtain the required estimate from interpolation between $r=k$ and $r=k+1$.
This concludes the proof of \eqref{e:mollify1} for $r,s\geq 0$. 

Next, by considering the Taylor expansion of $f$ at $x$ we can write $f(x-y) = Q_x(y) + R_x(y)$,
where $Q_x(y)$ is a sum of monomials in $y$ of degree $d$ with $1\leq d\leq \bar{N}$ and $|R_x(y)|\lesssim |y|^{s}\|f\|_{C^s}$. Moreover, from \eqref{e:deepmollifier} we deduce that $\int_{\R^n}Q_x(y)\psi_\ell(y)\,dy=0$. Thus,
\begin{equation*}
|f-f*\psi_\ell|= \left|\int \psi_\ell(y)(f(x-y)-f(x))dy\right|\\
\lesssim \|f\|_{C^s}\int \ell^{-n}\left|\psi(\ell^{-1}y)\right||y|^{s}dy\lesssim \ell^{s}\|f\|_s\, .
\end{equation*}
This proves \eqref{e:mollify2} for the case $r=0$. To obtain the estimate for integer $r=k$, repeat the same argument for the partial derivatives
$\partial^a f$ with $|a|=k$. For general real $r\geq 0$ we again proceed by interpolation. 
\end{proof}

With the help of Lemma \ref{l:mollify} we obtain the following bounds.
\begin{proposition}\label{p:est_mollification}
For any $N\geq 0$ we have 
\begin{align}
\norm{u_{\ell}}_{C^{N+1}} &\lesssim \begin{cases} \delta_q^{\sfrac 12}\lambda_q^{N+1}&\textrm{ if }N+1\leq\bar{N}\\ \delta_q^{\sfrac 12}\lambda_q^{\bar{N}}\ell_q^{\bar{N}-N-1}& \textrm{ if }N+1\geq \bar{N}\end{cases}\,,  \label{e:u:ell:k}\\
\norm{\mathring{R}_{\ell}}_{C^{N}}&\lesssim  \mathring{\delta}_{q+1}\ell_q^{-N}+\delta_q\lambda_q^{1+\bar{N}}\ell_q^{\bar{N}-N} \,. \label{e:R:ell}\\
\abs{\int_{\T^3}\abs{u_q}^2-\abs{u_{\ell}}^2\,dx} &\lesssim \mathring{\delta}_{q+1}+\delta_q^{\sfrac12}\lambda_q^{\bar{N}}\ell_q^{\bar{N}}\,.
\label{e:uq_vell_energy_diff}
\end{align}
Moreover, if $z_q=\mathcal{B}u_q$ and $z_\ell=\mathcal{B}u_{\ell}=z_q*\psi_{\ell_q}$ are the vector potentials, we have in addition
\begin{align}
\norm{z_\ell-z_q}_{C^{N+\alpha}}&\lesssim \delta_q^{\sfrac12}\lambda_q^{\bar{N}}\ell_q^{\bar{N}+1-N-\alpha}\,,\label{e:z:ell}
\end{align}
\end{proposition}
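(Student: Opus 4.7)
All four estimates will follow from direct applications of Lemma \ref{l:mollify}, the inductive bounds \eqref{e:u_q_inductive_est}--\eqref{e:R_q_inductive_est}, and Schauder theory for the Biot--Savart operator $\mathcal{B}$. A small subtlety, flagged in the footnote after \eqref{e:u_q_inductive_est}, is that $\|u_q\|_{C^0}$ is only controlled through the crude Poincar\'e-type bound $\|u_q\|_{C^0}\lesssim \|u_q\|_{C^1}\lesssim \delta_q^{1/2}\lambda_q$; this is what forces the factor $\lambda_q^{\bar N+1}$ (rather than $\lambda_q^{\bar N}$) in the commutator bound appearing in \eqref{e:R:ell}.

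For \eqref{e:u:ell:k}, the two cases are handled directly: if $N+1\le\bar N$, combine \eqref{e:u_q_inductive_est} with the elementary bound $\|f*\psi_{\ell_q}\|_{C^{N+1}}\le \|f\|_{C^{N+1}}$; if $N+1>\bar N$, apply \eqref{e:mollify1} with $r=\bar N$ and $s=N+1-\bar N$ and then \eqref{e:u_q_inductive_est} at $n=\bar N$. For \eqref{e:z:ell}, I would combine the Schauder bound $\|z_\ell-z_q\|_{C^{N+1+\alpha}}\lesssim\|u_\ell-u_q\|_{C^{N+\alpha}}$ with \eqref{e:mollify2} applied to $u_q$; the case $N=0$ is recovered by a H\"older interpolation between the $C^0$ and $C^1$ estimates of $z_\ell-z_q$.

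The core of the proof is the bilinear commutator estimate behind \eqref{e:R:ell}. Writing $\mathring R_\ell = \mathring R_q*\psi_{\ell_q} + T$, with
\begin{equation*}
    T:=u_\ell \mathring\otimes u_\ell - (u_q\mathring\otimes u_q)*\psi_{\ell_q},
\end{equation*}
the mollified stress is immediately controlled by $\mathring\delta_{q+1}\ell_q^{-N}$ via \eqref{e:mollify1} and \eqref{e:R_q_inductive_est}. For $T$, the plan is to absorb the trace discrepancy between $\mathring\otimes$ and $\otimes$ into an $I$-valued term (bounded by the same type of commutator) and then split
\begin{equation*}
    u_\ell\otimes u_\ell - (u_q\otimes u_q)*\psi_{\ell_q} = -\bigl[u_q\otimes u_q - (u_q\otimes u_q)*\psi_{\ell_q}\bigr] + \bigl[(u_q-u_\ell)\otimes u_q + u_\ell\otimes(u_q-u_\ell)\bigr].
\end{equation*}
The first bracket is bounded by applying \eqref{e:mollify2} with $s=\bar N-N$ and using Leibniz plus \eqref{e:u_q_inductive_est} to control $\|u_q\otimes u_q\|_{C^{\bar N}}\lesssim \delta_q\lambda_q^{\bar N+1}$; the second bracket is estimated analogously by applying \eqref{e:mollify2} to $u_q-u_\ell=u_q-u_q*\psi_{\ell_q}$. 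Estimate \eqref{e:uq_vell_energy_diff} then drops out of the $C^0$ version of the same commutator estimate: since $\int_{\T^3}(|u_q|^2)*\psi_{\ell_q}\,dx = \int_{\T^3}|u_q|^2\,dx$ on the torus, one has $\int_{\T^3}(|u_q|^2-|u_\ell|^2)\,dx=\int_{\T^3}\mathrm{tr}\,T\,dx$, which is bounded by $\|T\|_{C^0}$ times the volume of $\T^3$.

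The hard part will be extracting the correct deep-smoothing gain in the commutator: without exploiting \eqref{e:deepmollifier}, the standard Constantin--E--Titi identity only yields an $\ell^2$-gain, which falls short of the required $\ell^{\bar N-N}$ factor in view of the constraint $\bar N\gamma_L>1$ imposed by \eqref{e:gluing1}. Matching the sharp exponent demands applying \eqref{e:mollify2} carefully to each factor of $u_q\otimes u_q$, keeping track of the interplay between the depth $\bar N$ of $\psi$, the order $N$ of the target H\"older norm, and the derivative loss coming from Leibniz.
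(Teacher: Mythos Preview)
Your proposal is correct and follows essentially the same route as the paper: the same three-term decomposition of $\mathring R_\ell$, the same appeal to Lemma~\ref{l:mollify} together with $\|u_q\otimes u_q\|_{C^{\bar N}}\lesssim\|u_q\|_{C^1}\|u_q\|_{C^{\bar N}}\lesssim\delta_q\lambda_q^{\bar N+1}$, and Schauder for $\nabla\mathcal B$ to handle \eqref{e:z:ell}. One small slip to fix: in the energy step you invoke $\mathrm{tr}\,T$, but your $T$ was defined with the traceless product $\mathring\otimes$, so $\mathrm{tr}\,T\equiv 0$; you of course mean the full-tensor commutator $u_\ell\otimes u_\ell-(u_q\otimes u_q)*\psi_{\ell_q}$, whose $C^0$ bound then yields \eqref{e:uq_vell_energy_diff} (possibly with $\delta_q\lambda_q^{\bar N+1}\ell_q^{\bar N}$ in place of $\delta_q^{1/2}\lambda_q^{\bar N}\ell_q^{\bar N}$, which is harmless since both are $\le\mathring\delta_{q+1}$ by \eqref{e:gluing1}).
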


\begin{proof}
The bounds \eqref{e:u:ell:k} and \eqref{e:z:ell} follow directly from \eqref{e:mollify1} and \eqref{e:mollify2} together with the classical Schauder estimates on the Calder\'on-Zygmund operator $\nabla\mathcal{B}$. For \eqref{e:R:ell} we use the bound \eqref{e:u_q_inductive_est} and interpolation to obtain 
$$
\|u_q\otimes u_q\|_{C^{\bar{N}}}\lesssim \|u_q\|_{C^0}\|u_q\|_{C^{\bar{N}}}\leq \|u_q\|_{C^1}\|u_q\|_{C^{\bar{N}}}\lesssim\delta_q\lambda_q^{\bar{N}+1}\,.
$$
Then we apply \eqref{e:mollify2} to the decomposition
\begin{equation*}
\|\mathring{R}_\ell\|_{C^N}\leq \|\mathring{R}_q*\psi_{\ell_q}\|_{C^N}+\|(u_q\mathring\otimes u_q)*\psi_{\ell_q}-u_q\mathring\otimes u_q\|_{C^N}+2\|(u_q-u_{q}*\psi_{\ell_q})\otimes u_q\|_{C^N}\,.
\end{equation*}
\end{proof}

Note that at variance with \cite[Proposition 2.2]{BDSV} we have not used a commutator estimate here.

From \eqref{e:gluing1} we obtain $\delta_q^{\sfrac12}\lambda_q^{\bar{N}}\ell_q^{\bar{N}}\leq \delta_q\lambda_q^{1+\bar{N}}\ell_q^{\bar{N}}\leq \mathring{\delta}_{q+1}$. Consequently we have
\begin{corollary}\label{c:mollification}
For any $N\geq 0$ we have the estimates
\begin{align*}
\norm{u_{\ell}}_{C^{N+1}} &\lesssim \delta_q^{\sfrac 12}\lambda_q\ell_q^{-N}\,, \\
\norm{\mathring{R}_{\ell}}_{C^{N}}&\lesssim  \mathring{\delta}_{q+1}\ell_q^{-N}\,, \\
\abs{\int_{\T^3}\abs{u_q}^2-\abs{u_{\ell}}^2\,dx} &\lesssim \mathring{\delta}_{q+1}\,,\\
\norm{z_\ell-z_q}_{C^{N+\alpha}}&\lesssim \mathring{\delta}_{q+1}\ell_q^{1-\alpha-N}\,.
\end{align*}

\end{corollary}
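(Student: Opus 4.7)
The plan is to deduce the four estimates directly from Proposition \ref{p:est_mollification} by absorbing all subleading terms using the scale relation \eqref{e:gluing1}. The crucial auxiliary inequality, announced just before the corollary, is
\[
\delta_q^{\sfrac12}\lambda_q^{\bar N}\ell_q^{\bar N} \leq \delta_q\lambda_q^{1+\bar N}\ell_q^{\bar N} \leq \mathring{\delta}_{q+1}.
\]
I would first verify both inequalities. The left one reduces to $1 \leq \delta_q^{\sfrac12}\lambda_q = \lambda_q^{1-\beta}$, which holds since $\beta < \sfrac13$ and $\lambda_q \geq 1$. The right one, after substituting $\delta_q = \lambda_q^{-2\beta}$, $\ell_q = \lambda_q^{-1-\gamma_L}$ and $\mathring{\delta}_{q+1} = \lambda_q^{-2b\beta-\gamma_R}$, amounts to $1 + \gamma_R + 2\beta(b-1) \leq \bar N\gamma_L$, which is exactly assumption \eqref{e:gluing1}.

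Each of the four estimates of the corollary then follows by substitution. For $\|u_\ell\|_{C^{N+1}}$, the case $N+1 \leq \bar N$ of Proposition \ref{p:est_mollification} gives $\delta_q^{\sfrac12}\lambda_q^{N+1}$, and since $\ell_q^{-N} = \lambda_q^{N(1+\gamma_L)} \geq \lambda_q^N$ this is bounded by $\delta_q^{\sfrac12}\lambda_q\ell_q^{-N}$; the case $N+1 > \bar N$ gives $\delta_q^{\sfrac12}\lambda_q^{\bar N}\ell_q^{\bar N-N-1}$, which factors as
\[
\bigl(\delta_q^{\sfrac12}\lambda_q\ell_q^{-N}\bigr)\cdot\bigl(\lambda_q^{\bar N-1}\ell_q^{\bar N-1}\bigr) = \bigl(\delta_q^{\sfrac12}\lambda_q\ell_q^{-N}\bigr)\cdot \lambda_q^{-(\bar N-1)\gamma_L} \lesssim \delta_q^{\sfrac12}\lambda_q\ell_q^{-N}.
\]
For $\|\mathring{R}_\ell\|_{C^N}$, the tail term $\delta_q\lambda_q^{1+\bar N}\ell_q^{\bar N-N}$ factors as $(\delta_q\lambda_q^{1+\bar N}\ell_q^{\bar N})\,\ell_q^{-N} \leq \mathring{\delta}_{q+1}\ell_q^{-N}$. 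The energy estimate follows in the same way from $\delta_q^{\sfrac12}\lambda_q^{\bar N}\ell_q^{\bar N}\leq \mathring{\delta}_{q+1}$. Finally, the potential bound $\delta_q^{\sfrac12}\lambda_q^{\bar N}\ell_q^{\bar N+1-N-\alpha}$ factors as $(\delta_q^{\sfrac12}\lambda_q^{\bar N}\ell_q^{\bar N})\,\ell_q^{1-N-\alpha} \leq \mathring{\delta}_{q+1}\ell_q^{1-\alpha-N}$.

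There is no genuine obstacle here: the corollary is essentially an arithmetic consequence of Proposition \ref{p:est_mollification} together with the tuned parameter inequality \eqref{e:gluing1}. The only minor care needed is to handle the two regimes $N+1 \leq \bar N$ and $N+1 > \bar N$ separately in the first estimate, since Proposition \ref{p:est_mollification} provides different bounds in each case; the remaining three estimates are immediate one-line reductions.
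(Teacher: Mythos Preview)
Your proposal is correct and follows exactly the paper's approach: the paper derives the corollary as an immediate consequence of Proposition~\ref{p:est_mollification} together with the auxiliary inequality $\delta_q^{\sfrac12}\lambda_q^{\bar N}\ell_q^{\bar N}\leq \delta_q\lambda_q^{1+\bar N}\ell_q^{\bar N}\leq \mathring{\delta}_{q+1}$ stated just before the corollary. You have simply spelled out the algebraic reductions that the paper leaves implicit.
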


\subsection{Gluing step}

The gluing step, introduced in \cite{Isett2018}, proceeds as follows. For each $i\in\N$ we set $t_i=i\tau_q$ and let $u_i$ be the (classical) solution of the Euler equations
\begin{equation}\label{e:gluingEuler}
	\begin{split}
	\partial_tu_i+\div(u_i\otimes u_i)+\nabla p_i&=0\,,\\
	\div u_i&=0\,,\\
	u_i(\cdot,t_i)&=u_\ell(\cdot,t_i)\,.
	\end{split}
\end{equation}  
It is well-known (see for instance \cite[Proposition 3.1]{BDSV}) that there exists a constant $c(\alpha)>0$ such that, for each $i\in\N$ the solution $u_i$ is smooth, uniquely defined, and satisfies for any $N\geq 1$ the estimates
\begin{equation*}
\|u_i(t)\|_{C^{N+\alpha}}\lesssim \|u_\ell(t_i)\|_{C^{N+\alpha}}\quad \textrm{ for all }t\in(t_i-T,t_i+T)
\end{equation*}
for $T\leq c\|u_\ell(t_i)\|_{C^{1,\alpha}}^{-1}$, where the implicit constant depends on $N$ and $\alpha\in(0,1)$. In particular, from our choice of $\tau_q$ in \eqref{e:elltau} we obtain for any $N\geq 1$ (cf. \cite[Corollary 3.2]{BDSV})
\begin{equation*}
	\|u_i(t)\|_{C^{N+\alpha}}\lesssim \delta_q^{\sfrac12}\lambda_q\ell_q^{1-N-\alpha}\,,
\end{equation*}
provided 
\begin{equation}\label{e:taucondition}
	\tau_q\|u_\ell\|_{C^{1,\alpha}}\leq c.
\end{equation} 
Taking into account Remark \ref{r:remarkona} and \eqref{e:u:ell:k}, this is ensured by \eqref{e:taucondition1} and by choosing $a\gg 1$ sufficiently large. 
Following the derivation of the stability estimates in \cite[Proposition 3.3]{BDSV} and \cite[Proposition 3.4]{BDSV}, we deduce

\begin{proposition}\label{p:stability}
For $|t-t_i|\leq \tau_q$ and $N\geq 0$ we have
\begin{align}
\|u_i-u_\ell\|_{C^{N+\alpha}}&\lesssim \tau_q\mathring{\delta}_{q+1}\ell_q^{-N-1-2\alpha}	\,,\label{e:stabilityu}\\
\|z_i-z_\ell\|_{C^{N+\alpha}}&\lesssim \tau_q\mathring{\delta}_{q+1}\ell_q^{-N-2\alpha}	\,,\label{e:stabilityz}\\
\|(\partial_t+u_\ell\cdot\nabla)(z_i-z_\ell)\|_{C^{N+\alpha}}&\lesssim \mathring{\delta}_{q+1}\ell_q^{-N-2\alpha}.\label{e:stabilityDz}
\end{align}
\end{proposition}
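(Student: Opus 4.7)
The proof closely follows \cite[Propositions 3.3, 3.4]{BDSV}, so I would build it around three transport-type estimates for the difference $v := u_i - u_\ell$ and its vector potential $w := z_i - z_\ell = \mathcal{B}v$. The starting point is the PDE satisfied by $v$. Subtracting \eqref{e:gluingEuler} from the Euler--Reynolds system for $u_\ell$, using $\div u_\ell = \div v = 0$, and writing the advecting field as $u_\ell$ (absorbing all quadratic $v$-terms into a remainder) gives
\begin{equation*}
    (\partial_t + u_\ell \cdot \nabla)v = -\div\mathring{R}_\ell - v\cdot\nabla u_\ell - v\cdot\nabla v + \nabla(p_\ell - p_i),
\end{equation*}
with the crucial initial condition $v(\cdot,t_i)=0$. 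Standard transport/Schauder estimates (applied on the interval $|t-t_i|\le \tau_q$, with the a priori smallness $\tau_q \|u_\ell\|_{C^{1,\alpha}} \le c$ already used to obtain the bounds on $u_i$) then yield, via a bootstrap that absorbs the linear $v\cdot\nabla u_\ell$ term and treats $v\cdot\nabla v$ as a quadratic remainder,
\begin{equation*}
    \|v\|_{C^{N+\alpha}} \lesssim \tau_q \|\div\mathring{R}_\ell\|_{C^{N+\alpha}} \lesssim \tau_q \mathring{\delta}_{q+1}\ell_q^{-N-1-2\alpha},
\end{equation*}
where the last bound comes from Corollary \ref{c:mollification}. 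This is \eqref{e:stabilityu}.

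For \eqref{e:stabilityz} I would apply the Biot--Savart operator $\mathcal{B}$ to the above equation. The key identity is that, modulo commutators of $\mathcal{B}$ with $u_\ell\cdot\nabla$ and modulo a curl-free correction (which disappears because $\div w = 0$), $w = \mathcal{B}v$ satisfies
\begin{equation*}
    (\partial_t + u_\ell \cdot\nabla) w = \mathcal{R}(\mathring{R}_\ell,\nabla u_\ell, v) + \mathcal{B}\text{-corrections},
\end{equation*}
where $\mathcal{R}$ collects the terms obtained by inverting $\curl$ on the right-hand side. Since $\mathcal{B}$ is a Calder\'on--Zygmund operator that gains one derivative, the estimate \eqref{e:stabilityu} upgrades to $\|w\|_{C^{N+\alpha}} \lesssim \tau_q\mathring{\delta}_{q+1}\ell_q^{-N-2\alpha}$. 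The commutators $[\mathcal{B}, u_\ell\cdot\nabla]$ are lower-order and controlled using \eqref{e:u:ell:k}; this is exactly the structure used in \cite[Prop.~3.4]{BDSV}.

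The estimate \eqref{e:stabilityDz} is then the most delicate, and is the point I would think through most carefully. Here one does \emph{not} integrate in time: instead one observes that $D_{t,\ell}w := (\partial_t + u_\ell\cdot\nabla)w$ already equals (up to lower-order commutators and $\mathcal{B}$-type corrections) the forcing itself, and hence loses the prefactor $\tau_q$:
\begin{equation*}
    \|D_{t,\ell}w\|_{C^{N+\alpha}} \lesssim \|\mathring{R}_\ell\|_{C^{N+\alpha}} + (\text{terms involving } \|w\|, \|v\|, \|u_\ell\|_{C^1}) \lesssim \mathring{\delta}_{q+1}\ell_q^{-N-2\alpha}.
\end{equation*}
The main obstacle is verifying that all the remainder terms (products like $v\cdot\nabla u_\ell$, $v\cdot\nabla v$, and the commutators from $\mathcal{B}$) really are of the claimed order, using \eqref{e:stabilityu}, \eqref{e:stabilityz}, and Corollary \ref{c:mollification}. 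Specifically, $\|v\cdot\nabla u_\ell\|_{C^{N+\alpha}}$ is controlled by $\|v\|_{C^{N+\alpha}}\|u_\ell\|_{C^{1+\alpha}}$ and similar products; with $\tau_q\|u_\ell\|_{C^{1+\alpha}} \lesssim \lambda_q^{-\gamma_T+\alpha\gamma_L} \ll 1$ (by \eqref{e:taucondition1}), these are indeed subdominant and absorb into the $\mathring{\delta}_{q+1}\ell_q^{-N-2\alpha}$ bound. Once this bookkeeping is set up cleanly — most transparently as a closed system of inequalities for $\|v\|_{C^{N+\alpha}}$, $\|w\|_{C^{N+\alpha}}$, $\|D_{t,\ell}w\|_{C^{N+\alpha}}$ on $|t-t_i|\le \tau_q$, solved iteratively starting from $v(t_i)=0=w(t_i)$ — the three inequalities fall out simultaneously.
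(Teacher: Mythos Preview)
Your proposal is correct and follows essentially the same route as the paper's proof: both rely on \cite[Propositions 3.3, 3.4]{BDSV}, deriving a transport equation for $v=u_i-u_\ell$, closing it via Gr\"onwall after controlling the pressure difference through its elliptic equation, then gaining a derivative for $z_i-z_\ell$ via the Biot--Savart operator, and finally reading off \eqref{e:stabilityDz} directly from the equation without time integration. The only point the paper makes more explicit than you do is the Schauder estimate for $\Delta(p_\ell-p_i)$, which is what actually lets the $\nabla(p_\ell-p_i)$ term be absorbed into the $\|u_\ell\|_{C^{1+\alpha}}\|v\|_{C^\alpha}+\|\mathring{R}_\ell\|_{C^{1+\alpha}}$ bound; you should not leave that implicit.
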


\begin{proof}
Using the equations satisfied by $u_q$ and $u_i$, we obtain the equation for the pressure difference
\begin{equation*}
\Delta (p_{\ell} - p_i) = \div\bigl(\nabla u_\ell(u_i-u_\ell)\bigr)+\div\bigl(\nabla u_i{(u_i-u_\ell)}\bigr)+\div\div\mathring{R}_{\ell},
\end{equation*}
and deduce 
\begin{equation*}
\norm{p_{\ell} - p_i }_{C^{1+\alpha}} \lesssim  \norm{u_\ell}_{C^{1+\alpha}}\norm{u_{i} - u_\ell}_{C^\alpha}+\|\mathring{R}_\ell\|_{C^{1+\alpha}}\,.
\end{equation*}
Using the equation for $u_q$ and $u_i$ we then obtain
\begin{equation*}
\|(\partial_t+u_\ell\cdot\nabla)(u_\ell-u_i)\|_{C^\alpha}\lesssim \norm{u_\ell}_{C^{1+\alpha}}\norm{u_{i} - u_\ell}_{C^\alpha}+\|\mathring{R}_\ell\|_{C^{1+\alpha}}\,.
\end{equation*}
Applying Corollary \ref{c:mollification}, \eqref{e:taucondition} and Grönwall's inequality (for details, compare \cite[Appendix B]{BDSV}) we then conclude
\begin{equation*}
	\|u_i-u_\ell\|_{C^\alpha}\lesssim \tau_q\mathring{\delta}_{q+1}\ell_q^{-1-2\alpha}\,,
\end{equation*}
which is \eqref{e:stabilityu} with $N=0$. The case $N\geq 1$ follows analogously, following the computations in the proof of \cite[Proposition 3.3]{BDSV}. Furthermore, the estimates \eqref{e:stabilityz}-\eqref{e:stabilityDz} can be deduced in the same manner, following the computations in the proof of \cite[Proposition 3.4]{BDSV}.
\end{proof}

Next, as in \cite[Section 4]{BDSV}, we partition time using a partition of unity $\{\chi_i\}_i$, with $\chi_i\in C^{\infty}_c(\R)$ and $0\leq \chi_i\leq 1$, such that 
\begin{itemize}
\item $\sum_i\chi_i\equiv 1$ in $[0,T]$,
\item $\supp\chi_i\subset (t_i-\tfrac{2}{3}\tau_q,t_i+\tfrac{2}{3}\tau_q)$, in particular $\supp\chi_i\cap\supp\chi_{i+2}=\emptyset$,
\item $\chi_i=1$ on $(t_i-\tfrac{1}{3}\tau_q,t_i+\tfrac{1}{3}\tau_q)$ and $\chi_i+\chi_{i+1}=1$ on $(t_i+\tfrac{1}{3}\tau_q,t_i+\tfrac{2}{3}\tau_q)$,
\item $\|\partial_t^N\chi_i\|_{C^0}\lesssim \tau_q^{-N}$,	
\end{itemize}
and define
\begin{equation*}
\bar u_q=\sum_i \chi_i u_i\,,\quad 
\bar p_q^{(1)}=\sum_i \chi_i p_i.
\end{equation*}
Further, we define 
\begin{align*}
\mathring{\bar{R}}_q&=\partial_t\chi_i\mathcal{R}(u_i-u_{i+1})-\chi_i(1-\chi_i)(u_i-u_{i+1})\mathring{\otimes} (u_i-u_{i+1}),\\
\bar{p}_q^{(2)}&=-\chi_i(1-\chi_i)\left(|u_i-u_{i+1}|^2-\int_{\T^3}|u_i-u_{i+1}|^2\,dx\right),
\end{align*}
for $t\in (t_i+\tfrac{1}{3}\tau_q,t_i+\tfrac{2}{3}\tau_q)$ and $\mathring{\bar{R}}_q=0$, $\bar{p}_q^{(2)}=0$ for $t\notin\bigcup_{i}(t_i+\tfrac{1}{3}\tau_q,t_i+\tfrac{2}{3}\tau_q)$, where $\mathcal{R}$ is the ``inverse divergence'' operator for symmetric tracefree 2-tensors, defined as
\begin{equation}
\label{e:R:def}
\begin{split}
({\mathcal R} f)^{ij} &= {\mathcal R}^{ijk} f^k \\
{\mathcal R}^{ijk} &= - \frac 12 \Delta^{-2} \partial_i \partial_j \partial_k - \frac 12 \Delta^{-1} \partial_k \delta_{ij} +  \Delta^{-1} \partial_i \delta_{jk} +  \Delta^{-1} \partial_j \delta_{ik}.
\end{split}
\end{equation}
when acting on vectors $f$ with zero mean on $\T^3$. See \cite{DSz13}, 
\cite[Definition 4.2 and Lemma 4.3]{DaSz2017} and \cite[Proposition 4.1]{BDSV}. 
 
Finally, we set 
\begin{equation*}
	\bar{p}_q=\bar{p}_q^{(1)}+\bar{p}_q^{(2)}.
\end{equation*}
As in \cite[Section 4.2]{BDSV}, one can easily verify that 
\begin{itemize}
\item $\mathring{\bar{R}}_q$ is a smooth symmetric and traceless 2-tensor;
\item For all $(x,t)\in \T^3\times [0,T]$
\begin{equation*}
\left\{\begin{array}{l}
\partial_t\bar{u}_q+\div(\bar{u}_q\otimes\bar{u}_q)+\nabla \bar{p}_q =\div \mathring{\bar{R}}_q\\ \\
\div \bar{u}_q =0,
\end{array}\right.
\end{equation*}
\item The support of $\mathring{\bar{R}}_q$ satisfies 
\begin{equation}\label{e:gluedsupport}
\supp\mathring{\bar{R}}_q\subset \T^3\times \bigcup_i(t_i+\tfrac{1}{3}\tau_q,t_i+\tfrac{2}{3}\tau_q).
\end{equation}
\end{itemize}

With our choice of parameters $\tau_q,\ell_q$ the estimates in \cite[Section 4.3 and Section 4.4]{BDSV} are modified as follows. 
\begin{proposition}\label{p:p_gluing}
The velocity field $\bar{u}_q$ and its vector potential $\bar{z}_q=\mathcal{B}\bar{u}_q$ satisfy the following estimates
\begin{align}
\norm{\bar{u}_q-u_\ell}_{C^{N+\alpha}} &\lesssim \tau_q\mathring{\delta}_{q+1}\ell_q^{-1-N-2\alpha} \label{e:uq:vell:additional}\,,\\
\norm{\bar{z}_q-z_\ell}_{C^{\alpha}}&\lesssim \tau_q\mathring{\delta}_{q+1}\ell_q^{-\alpha}\,.	\label{e:zq:vell:additional}
\end{align}
for all $N \geq 0$. The new Reynolds stress $\mathring{\bar{R}}_q$ satisfies the estimates:
\begin{align}
\norm{\mathring{\bar R}_q}_{N+\alpha} &\lesssim \mathring{\delta}_{q+1}\ell_q^{-N-2\alpha}+\tau_q^2\mathring{\delta}_{q+1}^2\ell_q^{-N-2-4\alpha}\,, \label{e:Rq:1}\\
\norm{(\partial_t + \bar v_q\cdot \nabla) \mathring{\bar R}_q}_{N+\alpha} &\lesssim \tau_q^{-1}\mathring{\delta}_{q+1}\ell_q^{-N-3\alpha}+\tau_q\mathring{\delta}_{q+1}^2\ell_q^{-N-2-4\alpha}\,. \label{e:Rq:Dt}
\end{align}
Furthermore, we have the estimate
\begin{equation}\label{e:gluingenergy}
	\left|\int_{\T^3}|\bar{u}_q|^2-|u_\ell|^2\,dx\right|\lesssim \tau_q\mathring{\delta}_{q+1}\delta_{q}^{\sfrac12}\lambda_q+\tau_q^2\mathring{\delta}_{q+1}^2\ell_q^{-2-4\alpha}\,.
\end{equation}
\end{proposition}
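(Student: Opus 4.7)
\textbf{Proof proposal for Proposition \ref{p:p_gluing}.} The strategy is exactly the one developed in \cite[Sections 4.3--4.4]{BDSV}, but with the parameters $\tau_q, \ell_q, \mathring\delta_{q+1}$ as chosen in Section \ref{s:Onsager}. The single input is Proposition \ref{p:stability}, giving quantitative control of $u_i - u_\ell$ and $z_i - z_\ell$ on each interval $|t - t_i| \le \tau_q$; everything else is a mechanical application of the partition of unity $\{\chi_i\}$ together with Schauder estimates for $\mathcal R$ and $\mathcal B$.

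First, I would prove \eqref{e:uq:vell:additional} and \eqref{e:zq:vell:additional}. Using $\sum_i \chi_i \equiv 1$ we write
\[
\bar u_q - u_\ell = \sum_i \chi_i (u_i - u_\ell), \qquad
\bar z_q - z_\ell = \sum_i \chi_i (z_i - z_\ell),
\]
where the second identity uses linearity of the Biot--Savart operator $\mathcal B$. Since at any fixed $(x,t)$ at most two indices $i$ contribute and $0\le \chi_i\le 1$, \eqref{e:uq:vell:additional} follows from \eqref{e:stabilityu} and \eqref{e:zq:vell:additional} follows from \eqref{e:stabilityz}.

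Next, for the Reynolds stress I would estimate the two contributions
\[
\partial_t\chi_i\, \mathcal R(u_i - u_{i+1}), \qquad \chi_i(1-\chi_i)(u_i-u_{i+1})\mathring\otimes(u_i-u_{i+1})
\]
separately. The first uses $\|\partial_t\chi_i\|_{C^0}\lesssim \tau_q^{-1}$, the Schauder bound $\|\mathcal R f\|_{C^{N+\alpha}}\lesssim \|f\|_{C^{N-1+\alpha}}$ for the inverse-divergence $\mathcal R$ of \eqref{e:R:def}, and the triangle inequality $\|u_i - u_{i+1}\|_{C^{N-1+\alpha}} \le \|u_i-u_\ell\|_{C^{N-1+\alpha}}+\|u_{i+1}-u_\ell\|_{C^{N-1+\alpha}}$ together with \eqref{e:stabilityu}; this yields the first term $\mathring\delta_{q+1}\ell_q^{-N-2\alpha}$ in \eqref{e:Rq:1}. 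The second is bounded by Leibniz and \eqref{e:stabilityu}, producing the term $\tau_q^2\mathring\delta_{q+1}^2\ell_q^{-N-2-4\alpha}$. For the advective-derivative estimate \eqref{e:Rq:Dt}, I would decompose
\[
\partial_t + \bar u_q\cdot\nabla = (\partial_t + u_\ell\cdot\nabla) + (\bar u_q - u_\ell)\cdot\nabla,
\]
commute $\mathcal R$ with $(\partial_t + u_\ell\cdot\nabla)$ in the standard way (introducing a commutator with lower-order contribution) and use \eqref{e:stabilityDz} for the main piece and \eqref{e:uq:vell:additional} to control the transport correction. The main bookkeeping issue---and the one step that needs to be done with some care---is that differentiating $\partial_t\chi_i$ once more gives $\tau_q^{-2}$, but this is compensated by the $\tau_q$ gained through $(\partial_t+u_\ell\cdot\nabla)(u_i - u_\ell)$ being of order $\mathring\delta_{q+1}\ell_q^{-2\alpha}$ rather than $\tau_q^{-1}\mathring\delta_{q+1}\ell_q^{-1-2\alpha}$.

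Finally, for \eqref{e:gluingenergy} I would write
\[
\int_{\T^3}(|\bar u_q|^2 - |u_\ell|^2)\,dx = \int_{\T^3}(\bar u_q - u_\ell)\cdot(\bar u_q + u_\ell)\,dx = 2\int_{\T^3}(\bar u_q - u_\ell)\cdot u_\ell\,dx + \int_{\T^3}|\bar u_q - u_\ell|^2\,dx.
\]
The second term is directly bounded by $\|\bar u_q - u_\ell\|_{C^0}^2\lesssim \tau_q^2\mathring\delta_{q+1}^2\ell_q^{-2-4\alpha}$ via \eqref{e:uq:vell:additional}. For the first term, on each interval $|t-t_i|\le\tau_q$ one tests the local energy equality
\[
\frac{d}{dt}\int_{\T^3}|u_i|^2\,dx = 0, \qquad \frac{d}{dt}\int_{\T^3}|u_\ell|^2\,dx = -2\int_{\T^3}\nabla u_\ell : \mathring R_\ell\,dx,
\]
against $\chi_i$, so that the cross term comes with a factor of $\tau_q$, one derivative of $u_\ell$ (giving $\delta_q^{1/2}\lambda_q$ by Corollary \ref{c:mollification}), and the size $\mathring\delta_{q+1}$ of $\mathring R_\ell$; this yields the first term $\tau_q\mathring\delta_{q+1}\delta_q^{1/2}\lambda_q$ in \eqref{e:gluingenergy}. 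All of the steps above are entirely routine given Proposition \ref{p:stability} and Corollary \ref{c:mollification}; the role of Remark \ref{r:remarkona} is, as usual, to absorb the implicit constants into powers of $\lambda_q$ by choosing $a$ sufficiently large.
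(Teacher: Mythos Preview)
Your outline for \eqref{e:uq:vell:additional}, \eqref{e:zq:vell:additional} and \eqref{e:gluingenergy} is fine and matches the paper. The gap is in your treatment of the first term of $\mathring{\bar R}_q$. The Schauder bound you invoke, $\|\mathcal Rf\|_{C^{N+\alpha}}\lesssim\|f\|_{C^{N-1+\alpha}}$, is only meaningful for $N\ge 1$; at $N=0$ there is no $C^{\alpha-1}$ estimate to use, and falling back on $\|\mathcal Rf\|_{C^{\alpha}}\le\|\mathcal Rf\|_{C^{1+\alpha}}\lesssim\|f\|_{C^{\alpha}}$ together with \eqref{e:stabilityu} costs an extra $\ell_q^{-1}$, which you cannot afford (the resulting bound $\mathring\delta_{q+1}\ell_q^{-1-2\alpha}$ would destroy \eqref{e:Rqi-I} since $\gamma_R$ is small). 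The paper's remedy is to use the divergence-free structure: write $\mathcal R(u_i-u_{i+1})=(\mathcal R\curl)(z_i-z_{i+1})$, observe that $\mathcal R\curl$ is a \emph{zero-order} Calder\'on--Zygmund operator, and then apply \eqref{e:stabilityz} to get $\tau_q^{-1}\|z_i-z_{i+1}\|_{C^{N+\alpha}}\lesssim\mathring\delta_{q+1}\ell_q^{-N-2\alpha}$ for all $N\ge 0$.

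Your description of \eqref{e:Rq:Dt} inherits the same issue and is also muddled about the bookkeeping. The bound $\mathring\delta_{q+1}\ell_q^{-2\alpha}$ you quote is the estimate \eqref{e:stabilityDz} for $(\partial_t+u_\ell\cdot\nabla)(z_i-z_\ell)$, not for $(\partial_t+u_\ell\cdot\nabla)(u_i-u_\ell)$ (the latter is of order $\mathring\delta_{q+1}\ell_q^{-1-2\alpha}$, one derivative worse). More importantly, the $\tau_q^{-2}$ coming from $\partial_t^2\chi_i$ multiplies $(\mathcal R\curl)(z_i-z_{i+1})$ itself, not any transport derivative; it is compensated by the $\tau_q$ already present in \eqref{e:stabilityz}, giving $\tau_q^{-2}\cdot\tau_q\mathring\delta_{q+1}\ell_q^{-N-2\alpha}=\tau_q^{-1}\mathring\delta_{q+1}\ell_q^{-N-2\alpha}$. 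The term involving \eqref{e:stabilityDz} arises separately, from $\partial_t\chi_i\cdot(\mathcal R\curl)\bigl((\partial_t+u_\ell\cdot\nabla)(z_i-z_{i+1})\bigr)$, after commuting the zero-order operator $\mathcal R\curl$ (not $\mathcal R$) with $u_\ell\cdot\nabla$. Once you route everything through the vector potentials $z_i$, the argument goes through exactly as you sketch.
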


\begin{proof}
Using the identity $	\bar{u}_q-u_\ell=\sum_i\chi_i(u_i-u_\ell)$, the bounds \eqref{e:uq:vell:additional} and \eqref{e:zq:vell:additional} follow directly from \eqref{e:stabilityu} and \eqref{e:stabilityz} in Proposition \ref{p:stability}. 

As in the proof of \cite[Proposition 4.4]{BDSV} we write the new Reynolds stress as
\begin{equation*}
\mathring{\bar{R}}_q=\partial_t\chi_i(\mathcal{R}\curl)(z_i-z_{i+1})-\chi_i(1-\chi_i)(u_i-u_{i+1})\mathring{\otimes} (u_i-u_{i+1})
\end{equation*}
and note that $\mathcal{R}\curl$ is a zero-order operator of Calderon-Zygmund type, for which Schauder estimates are valid. Therefore we obtain, again applying Proposition \ref{p:stability},
\begin{align*}
\|\mathring{\bar{R}}_q\|_{C^{N+\alpha}}&\lesssim \tau_q^{-1}\|z_i-z_{i+1}\|_{C^{N+\alpha}}+\|u_i-u_{i+1}\|_{C^{N+\alpha}}\|u_i-u_{i+1}\|_{C^\alpha}\,\\
	&\lesssim \mathring{\delta}_{q+1}\ell_q^{-N-2\alpha}+\tau_q^2\mathring{\delta}_{q+1}^2\ell_q^{-2-N-4\alpha}\,.
\end{align*}
Next, differentiating the expression for $\mathring{\bar{R}}_q$ as in the proof of \cite[Proposition 4.4]{BDSV}, we obtain
\begin{align*}
\|(\partial_t+u_\ell\cdot\nabla)\mathring{\bar{R}}_q\|_{C^{N+\alpha}}&\lesssim \tau_q^{-2}\|z_i-z_{i+1}\|_{C^{N+\alpha}}+\tau_q^{-1}\|(\partial_t+u_\ell\cdot\nabla)(z_i-z_{i+1})\|_{C^{N+\alpha}}\\
&+\tau_q^{-1}\|u_\ell\|_{C^{1+\alpha}}\|z_i-z_{i+1}\|_{C^{N+\alpha}}+\tau_q^{-1}\|u_\ell\|_{C^{1+N+\alpha}}\|z_i-z_{i+1}\|_{C^{\alpha}}\\
&+\tau_q^{-1}\|u_i-u_{i+1}\|_{C^{N+\alpha}}\|u_i-u_{i+1}\|_{C^{\alpha}}\\
&+\|(\partial_t+u_\ell\cdot\nabla)u_i-u_{i+1}\|_{C^{N+\alpha}}\|u_i-u_{i+1}\|_{C^{\alpha}}\\
&+\|(\partial_t+u_\ell\cdot\nabla)u_i-u_{i+1}\|_{C^{\alpha}}\|u_i-u_{i+1}\|_{C^{N+\alpha}}\,.
\end{align*}
Using again Proposition \ref{p:stability} we deduce
\begin{equation*}
	\|(\partial_t+u_\ell\cdot\nabla)\mathring{\bar{R}}_q\|_{C^{N+\alpha}}\lesssim \tau_q^{-1}\mathring{\delta}_{q+1}\ell_q^{-N-3\alpha}+\tau_q\mathring{\delta}_{q+1}^2\ell_q^{-2-N-4\alpha}\,.
\end{equation*}
Finally, following the proof of \cite[Proposition 4.5]{BDSV} we have
\begin{equation*}
\left|\frac{d}{dt}\int_{\T^3}|u_i|^2-|u_\ell|^2\,dx\right|\lesssim \|u_\ell\|_{C^1}\|\mathring{R}_\ell\|_{C^0}\lesssim \mathring{\delta}_{q+1}\delta_q^{\sfrac12}\lambda_q,
\end{equation*}
so that 
\begin{equation*}
\left|\int_{\T^3}|u_i|^2-|u_\ell|^2\,dx\right|\lesssim \tau_q\mathring{\delta}_{q+1}\delta_q^{\sfrac12}\lambda_q.
\end{equation*}
On the other hand
\begin{equation*}
\int_{\T^3}|u_{i}-u_{i+1}|^2\,dx\lesssim \|u_i-u_{i+1}\|^2_{C^\alpha}.	
\end{equation*}
Using the identity from the proof of \cite[Proposition 4.5]{BDSV} 
\begin{equation*}
|\bar u_q|^2-|u_\ell|^2=\chi_i(|u_i|^2-|u_\ell|^2)+(1-\chi_i)(|u_{i+1}|^2-|u_\ell|^2)-\chi_i(1-\chi_i)|u_i-u_{i+1}|^2\,,
\end{equation*}
and Proposition \ref{p:stability}, we deduce \eqref{e:gluingenergy}. 
\end{proof}

We conclude this section with the following summary of the mollification/gluing steps:

\begin{corollary}\label{c:gluing}
Let $(u_q,\mathring{R}_q)$ be a smooth solution of \eqref{e:EulerReynolds}  satisfying the inductive assumptions \eqref{e:R_q_inductive_est}-\eqref{e:energy_inductive_assumption}. Then there exists another smooth solution $(\bar{u}_q,\mathring{\bar{R}}_q)$ of \eqref{e:EulerReynolds} with the support condition \eqref{e:gluedsupport} such that the following estimates hold:
\begin{align}
\|\bar{u}_q\|_{C^{N+1}}&\lesssim \delta_q^{\sfrac12}\lambda_q\ell_q^{-N}\,\label{e:c_gluingu}\\
\|\mathring{\bar{R}}_{q}\|_{C^{N+\alpha}}&\lesssim \mathring{\delta}_{q+1}\ell_q^{-N-2\alpha}\,,\label{e:c_gluingR}\\
\|(\partial_t+\bar{u}_q\cdot\nabla)\mathring{\bar{R}}_{q}\|_{C^{N+\alpha}}&\lesssim \tau_q^{-1}\mathring{\delta}_{q+1}\ell_q^{-N-2\alpha}\,,\label{e:c_gluingDR}\\
\left|\int_{\T^3}|\bar{u}_q|^2-|u_q|^2\,dx\right|&\lesssim\mathring{\delta}_{q+1}\,,\label{e:c_gluingE}
\end{align}
and moreover the vector potentials satisfy
\begin{equation}\label{e:c_gluingz}
\|\bar{z}_{q}-z_q\|_{C^\alpha}\lesssim \tau_q\mathring{\delta}_{q+1}\ell_q^{-\alpha}\,.
\end{equation}
\end{corollary}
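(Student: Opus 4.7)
The corollary is a direct compilation of the mollification step (Corollary \ref{c:mollification}) and the gluing step (Proposition \ref{p:p_gluing}), so the plan is simply to chain these together and verify that the leftover error terms can be absorbed under the parameter conditions \eqref{e:comparison}--\eqref{e:energycondition}. Starting from $(u_q,\mathring{R}_q)$ satisfying \eqref{e:R_q_inductive_est}--\eqref{e:energy_inductive_assumption}, I would first apply the mollification (with the deep-smoothing kernel of \eqref{e:deepmollifier}) to produce $(u_\ell,\mathring{R}_\ell)$, and then apply the gluing construction of Section \ref{s:Onsager} to produce $(\bar{u}_q,\mathring{\bar{R}}_q)$. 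The support condition \eqref{e:gluedsupport} is immediate from the construction of $\mathring{\bar{R}}_q$ via the partition of unity $\{\chi_i\}$.

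For \eqref{e:c_gluingu}, write $\bar{u}_q=u_\ell+(\bar{u}_q-u_\ell)$; the first summand is bounded by Corollary \ref{c:mollification} and the second by \eqref{e:uq:vell:additional}. The lower-order-ness of the correction reduces, after inserting \eqref{e:lambdadelta} and \eqref{e:elltau}, to an exponent inequality dominated by the $-1$ coming from $\tau_q$, hence trivially valid for $a$ large. The stress bounds \eqref{e:c_gluingR} and \eqref{e:c_gluingDR} are taken directly from \eqref{e:Rq:1} and \eqref{e:Rq:Dt}, with the only nontrivial point being the absorption of the quadratic-in-$\mathring{\delta}_{q+1}$ error terms; both absorptions reduce to the single condition $\tau_q^2\mathring{\delta}_{q+1}\ell_q^{-2-2\alpha}\lesssim 1$, which translates into the exponent inequality
\[
2\gamma_L+2\alpha(1+\gamma_L)\le 2(b-1)\beta+2\gamma_T+\gamma_R,
\]
strictly weaker than the assumed \eqref{e:gluing2}. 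The implicit constants are then absorbed by taking $a\gg 1$ large as in Remark \ref{r:remarkona}.

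For \eqref{e:c_gluingE}, combine \eqref{e:uq_vell_energy_diff} (comparing $|u_q|^2$ with $|u_\ell|^2$) and \eqref{e:gluingenergy} (comparing $|u_\ell|^2$ with $|\bar{u}_q|^2$) via the triangle inequality. The extra term $\delta_q^{\sfrac12}\lambda_q^{\bar N}\ell_q^{\bar N}$ in \eqref{e:uq_vell_energy_diff} is controlled by $\mathring{\delta}_{q+1}$ precisely because of \eqref{e:gluing1} (this is where the deep-mollifier property \eqref{e:deepmollifier} together with the large $\bar N$ pays off); the first error in \eqref{e:gluingenergy} is absorbed because $\tau_q\delta_q^{\sfrac12}\lambda_q=\lambda_q^{-\gamma_T}\ll 1$ by \eqref{e:elltau}; the second is handled as above. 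Finally, \eqref{e:c_gluingz} follows by writing $\bar z_q-z_q=(\bar z_q-z_\ell)+(z_\ell-z_q)$, bounding the first summand via \eqref{e:zq:vell:additional} and the second via \eqref{e:z:ell}, where again \eqref{e:gluing1} ensures that the mollification-depth term is of lower order.

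The main obstacle -- really just bookkeeping rather than a genuine difficulty -- is the simultaneous verification of all the exponent inequalities arising from the two different kinds of error: the mollification-depth errors of the form $\delta_q^{\sfrac12}\lambda_q^{\bar N}\ell_q^{\bar N-N}$ and the quadratic-in-$\mathring{\delta}_{q+1}$ gluing errors. The former are dispatched by \eqref{e:gluing1} (choice of $\bar N$ large), the latter by \eqref{e:gluing2} (smallness of $\alpha,\gamma_L$ versus $\gamma_T,\gamma_R,(b-1)\beta$); that these two requirements are jointly compatible with the other parameter inequalities is precisely the content of the preparatory discussion at the start of Section \ref{s:Onsager}.
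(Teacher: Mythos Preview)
Your proposal is correct and follows essentially the same approach as the paper, which simply states that the corollary is a direct consequence of Corollary \ref{c:mollification} and Proposition \ref{p:p_gluing}. Your detailed verification of the exponent inequalities (in particular the absorption of the quadratic-in-$\mathring{\delta}_{q+1}$ gluing errors via \eqref{e:gluing2} and the mollification-depth errors via \eqref{e:gluing1}) is exactly the bookkeeping the paper leaves implicit.
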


\begin{remark}\label{r:comparison}
It is useful to compare these estimates with the corresponding bounds obtained in the mollification/gluing steps in \cite{BDSV}, namely the bounds in 
\cite[(4.7), (4.10), (4.11), (4.12)]{BDSV}. For this comparison let us denote the respective parameters in \cite{BDSV} (defined in our case by the exponents $\gamma_R, \gamma_L, \gamma_t$) by $\mathring{\delta}_{q+1}^{old},\,\ell_q^{old},\,\tau_q^{old}$, so that, comparing with \cite[(2.7), (2.19), (2.26)]{BDSV}, we have 
\begin{equation*}
	\mathring{\delta}_{q+1}^{old}=3\alpha,\quad \ell_q^{old}=(b-1)\beta+\tfrac{3}{2}\alpha,\quad \tau_q^{old}=2\alpha(1+\gamma_L^{old}).
\end{equation*}
It is not difficult to see that \eqref{e:c_gluingu}, \eqref{e:c_gluingR} and \eqref{e:c_gluingE} are sharper bounds than the corresponding bounds \cite[(4.7), (4.10), (4.12)]{BDSV}, provided $\gamma_L<\gamma_L^{old}$ and $\gamma_R>3\alpha(1+\gamma_L^{old})$, in particular if
\begin{equation*}
	\gamma_L<(b-1)\beta\,\textrm{ and $\alpha>0$ is sufficiently small,}
\end{equation*}
in agreement with \eqref{e:comparison}. 
In contrast, estimate \eqref{e:c_gluingDR} would only be sharper than \cite[(4.11)]{BDSV} if $\gamma_R<\gamma_R^{(old)}$, a condition which we will not assume, because we will need a better bound from \eqref{e:c_gluingz}.
\end{remark}

\subsection{Perturbation step}\label{s:perturbation}

The construction of the new vector field $u_{q+1}=\bar{u}_q+w_{q+1}$ is performed in \cite[Section 5.2]{BDSV} and \cite[Section 5.3]{BDSV}. We start by recalling the main steps.

\subsubsection{}
First we define space-time cutoff functions $\eta_i$, adapted to the temporal support of $\mathring{\bar{R}}_q$ in \eqref{e:gluedsupport}, and supported in ``squiggling stripes'', as done in \cite[Lemma 5.3]{BDSV}. We start with the following construction, which is independent of $q$:
\begin{lemma}\label{l:Onsageretabar}
There exists two constants $c_0,c_1>0$ and a family of smooth nonnegative functions $\bar{\eta}_i\in C^\infty(\T^3\times \R)$ with the following properties:
\begin{enumerate}
\item[(i)] $0\leq \bar\eta_i(x,t)\leq 1$,
\item[(ii)] $\supp\bar\eta_i\cap\supp\bar\eta_j=\emptyset$ for $i\neq j$,
\item[(iii)] $\T^3\times (i+\tfrac13,i+\tfrac23)\subset \{(x,t):\bar\eta_i(x,t)=1\}$,
\item[(iv)] $\supp\bar\eta_i\subset \T^3\times (i-\tfrac13,i+\tfrac43)$.
\end{enumerate}
Moreover, the function $\bar\eta(x,t):=\sum_i\bar\eta_i(x,t)$ is $1$-periodic in $t$ and satisfies 
\begin{enumerate}
\item[(v)] $\dashint_{\T^3}\bar\eta^2(x,t)\,dx=c_0$ for all $t$,
\item[(vi)] $\dashint_0^1\bar\eta^2(x,s)\,ds=c_1$ for all $x$.
\end{enumerate}
\end{lemma}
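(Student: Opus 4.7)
My plan is to reduce the construction to a single model function $\bar\eta_0 : \T^3 \times \R \to [0,1]$ and to set $\bar\eta_i(x,t) := \bar\eta_0(x, t-i)$, so that $\bar\eta(x,t) = \sum_i \bar\eta_i(x,t)$ is automatically $1$-periodic in $t$. Under this reduction, properties (i), (iii), (iv) become pointwise conditions on $\bar\eta_0$; (ii) becomes the disjointness of the $t$-sections $\supp_t \bar\eta_0(x,\cdot - i)$ for different $i$ at every fixed $x$; and (v)--(vi) reduce to the corresponding averaging identities for $\bar\eta_0$ alone.

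For the construction I would use the \emph{squiggling stripes} ansatz
\[
\bar\eta_0(x,t) := \chi\bigl(t - \psi(x)\bigr),
\]
where $\chi : \R \to [0,1]$ is a smooth bump supported in $[0, 1-2\mu]$ and equal to $1$ on $[\mu, 1-3\mu]$ for some small $\mu \in (0, \tfrac{1}{9}]$, and $\psi : \T^3 \to [0, \tfrac{1}{3} - \mu]$ is a smooth squiggling function. For each fixed $x$ the $t$-section of $\bar\eta_0$ is supported on an interval of length $1-2\mu < 1$, so integer shifts produce disjoint supports, giving (ii). The constraint $\psi \leq \tfrac{1}{3} - \mu$ ensures that the plateau $[\mu + \psi(x), 1 - 3\mu + \psi(x)]$ contains $(\tfrac{1}{3}, \tfrac{2}{3})$ for every $x$, giving (iii); (iv) follows by direct inspection; and (i) is immediate. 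Using disjointness of supports, $\bar\eta^2 = \sum_i \bar\eta_i^2$, and the time integral in (vi) collapses via the change of variables $u = s - i - \psi(x)$ to $\int_\R \chi^2(u)\,du$, which is manifestly $x$-independent.

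The remaining and most delicate part is property (v). The spatial average reduces, after the same change of variables, to $\sum_i \int \chi^2(t - i - y)\,\mu_\psi(dy)$, where $\mu_\psi$ denotes the pushforward of normalized Lebesgue measure on $\T^3$ under $\psi$, and this expression must be independent of $t$. The main obstacle is the tension between (iii), which forces the squiggling amplitude to stay below $\tfrac{1}{3}$, and the natural tiling identity $\sum_i \int_0^1 \chi^2(t - i - y)\,dy = \int_\R \chi^2(u)\,du$, which delivers $t$-constancy only when the squiggle has full range $[0,1]$. I would resolve this by symmetrizing the ansatz --- for instance by replacing $\bar\eta_0$ by the average of $\chi(t - \psi(x))$ and $\chi(1 - t - \psi(x))$, or by convolving in the $\psi$-variable with a kernel chosen so that the Fourier series in $t$ of the resulting spatial average collapses to its zeroth mode. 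This symmetrization preserves (i)--(iv) and (vi) (since both are invariant under the reflection) and enforces (v) exactly. Positivity of the resulting constants $c_0$, $c_1$ follows from $\chi \not\equiv 0$.
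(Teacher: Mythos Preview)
Your reduction to a single profile and the squiggling-stripes ansatz $\bar\eta_0(x,t)=\chi(t-\psi(x))$ match the paper's construction (there $\psi(x)=\tfrac16\sin(2\pi x_1)$), and your treatment of (i)--(iv) and (vi) is correct. You also correctly isolate (v) as the only nontrivial point.

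The gap is in your resolution of (v). The symmetrization $\tfrac12[\chi(t-\psi)+\chi(1-t-\psi)]$ does not make the spatial average constant in $t$: writing $G(t)=\dashint_{\T^3}\chi^2(t-\psi(x))\,dx$, the spatial average of the square of the symmetrized profile is $\tfrac14[G(t)+G(1-t)]$ plus a symmetric cross term, and this expression is merely \emph{even} about $t=\tfrac12$, not constant --- think of $\cos(2\pi t)$, which is $1$-periodic and invariant under $t\mapsto 1-t$ but certainly not constant. So (v) is not achieved. Moreover, the symmetrization enlarges the $t$-support: for $\psi(x)$ near its maximum the union of the two supports has length close to $\tfrac53-6\mu$, which for $\mu<\tfrac19$ exceeds $1$ and breaks (ii). Your claim that (vi) survives is also unjustified once the cross term $\chi(t-\psi)\chi(1-t-\psi)$ enters the square. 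The alternative ``convolution in the $\psi$-variable'' is not specified enough to evaluate.

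The paper avoids all of this by a much simpler device: rather than engineering the ansatz so that the spatial average is constant, it \emph{divides by it}. With $h_i(x,t)=h(t-\psi(x)-i)$ one checks (this is where the squiggling matters) that $F(t):=\sum_i\dashint_{\T^3}h_i^2\,dx$ is bounded below by some $c_0>0$, and then sets $\bar\eta_i:=(F(t)/c_0)^{-1/2}h_i$. Since the normalizing factor depends only on $t$, the support properties (ii) and (iv) are inherited from $h_i$; (v) holds by construction; and since $\bar\eta(x,t)$ still depends on $x$ only through the shift $t\mapsto t-\psi(x)$, your own argument for (vi) via $1$-periodicity goes through unchanged.
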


\begin{proof}
We start by following the proof of \cite[Lemma 5.3]{BDSV} and choose a suitable $h\in C^{\infty}_c(0,1)$ such that, setting
\begin{equation}\label{e:hetai}
h_i(x,t):=h\left(t-\tfrac{1}{6}\sin(2\pi x_1)-i\right)
\end{equation}
the family of functions $\{h_i\}$ satisfies (i)-(iv) above, and there exists a geometric constant $c_0>0$ such that 
\begin{equation}\label{e:heta-xaverage}
\sum_i\dashint_{\T^3}h_i^2(x,t)\,dx\geq c_0\qquad\textrm{ for all }t.
\end{equation}
Then define
\begin{equation*}
\bar\eta_i(x,t)=	\left(\frac{1}{c_0}\sum_i\dashint_{\T^3}h_i^2(y,t)\,dy\right)^{-\sfrac12}h_i(x,t)\,,
\end{equation*}
and
\begin{equation*}
\bar\eta(x,t)=\sum_i\bar\eta_i(x,t).
\end{equation*}
Then $\bar\eta_i$ satisfies (i)-(iv), whereas
$t\mapsto \eta(x,t)$ is 1-periodic and satisfies (v). Finally, from \eqref{e:hetai} we see that  
$\bar\eta_i(x,t)=\bar\eta_i(0,t-\tfrac16\sin(2\pi x_1))$, and consequently 
$$
\bar\eta(x,t)=\bar\eta(0,t-\tfrac16\sin(2\pi x_1)).
$$
But then the $1$-periodicity of $t\mapsto\bar\eta(x,t)$ implies that $\int_0^1\bar\eta^2(x,t)\,dt$ is independent of $x$. This shows (vi). 
\end{proof}

The constants $c_0,c_1>0$ in Lemma \ref{l:Onsageretabar} determine our choice of $\bar{e}>0$:

\begin{definition}\label{d:defebar}
The constant $\bar{e}$ in \eqref{e:energy_inductive_assumption} is defined to be
$\bar{e}=\frac{3c_0}{c_1}$ for the universal constants in conditions (vi) and (vii) of Lemma \ref{l:Onsageretabar}.	
\end{definition}

Now we are ready to define the family of cutoff functions $\eta_i$, in analogy with \cite{BDSV}: let
\begin{equation}\label{e:defetai}
	\eta_i(x,t)=\bar\eta_i(x,\tau_q^{-1}t),\quad \eta(x,t)=\bar\eta(x,\tau_q^{-1}t).
\end{equation}
It is easy to see that $\eta_i$ have the properties:
\begin{itemize}
\item $0\leq \eta_i(x,t)\leq 1$,
\item $\supp\eta_i\cap\supp\eta_j=\emptyset$ for $i\neq j$,
\item $\T^3\times I_i\subset \{(x,t):\eta_i(x,t)=1\}$, where $I_i=(t_i+\tfrac13\tau_q,t_{i}+\tfrac23\tau_q)$,
\item $\supp\eta_i\subset \T^3\times \tilde I_i$, where $\tilde I_i:=(t_i-\tfrac13\tau_q,t_{i+1}+\tfrac43\tau_q)$,
\item for any $m,n\in\N$ we have the estimate
\begin{equation}\label{e:etai_est}
\|\partial_t^m\eta_i\|_{C^n}\lesssim \tau_q^{-m}.	
\end{equation}
\end{itemize}

\subsubsection{} The second step is to introduce a scalar function of time, which acts as the trace of the Reynolds stress tensor. In our case this will be defined as
\begin{equation}\label{e:sigmaq}
	\sigma_q(t):=\frac{1}{3c_0}\left(e(t)-\int_{\T^3}|\bar{u}_q|^2\,dx-\bar{e}\delta_{q+2}\right).
\end{equation}
We remark that the notation for this function in \cite{BDSV} is $\rho_q(t)$, but in this paper we reserve $\rho$ to denote density in subsequent sections. Moreover, in \cite{BDSV} the definition involves $\delta_{q+2}/2$ rather than $\bar{e}\delta_{q+2}$, this difference is related to our sharper inductive estimate \eqref{e:energy_inductive_assumption}. In particular, this leads to the following bound, which follows from \eqref{e:energy_inductive_assumption} and \eqref{e:gluingenergy}:  
\begin{equation}\label{e:sigmabound}
\left|\sigma_q(t)-\frac{\bar{e}}{3c_0}\delta_{q+1}\right|\lesssim \delta_{q+1}(\lambda_q^{-\gamma_E}+\lambda_q^{-\gamma_R}+\lambda_q^{-(b-1)\beta})\,.	
\end{equation}
Next, we introduce, as in \cite[Section 5.2]{BDSV} the localized versions of the Reynolds stress as
\begin{equation}\label{e:defRqi}
R_{q,i}=\eta_i^2(\sigma_{q}\Id-\mathring{\bar{R}}_q),\quad \tilde R_{q,i}=\frac{\nabla\Phi_i R_{q,i}\nabla\Phi_i^T}{\sigma_{q,i}},\quad \sigma_{q,i}=\eta_i^2\sigma_q,
\end{equation}
where $\Phi_i$ is the backward flow map for the velocity field $\bar{u}_q$, defined as the solution of the transport equation
\begin{align*}
(\partial_t + \bar{u}_q  \cdot \nabla) \Phi_i &=0 \\
\Phi_i(x,t_i) &= x.
\end{align*}
By our choice of $\eta_i$ and $\tau_q$ (cf.~\eqref{e:taucondition}) the backward flow $\Phi_i$ is well-defined in the support of $\eta_i$ and satisfies the estimate
\begin{equation}\label{e:backwardflow}
\|\nabla\Phi_i-\Id\|_{C^0}\lesssim \tau_q\|\bar{u}_q\|_{C^1}\lesssim \lambda_q^{-\gamma_T}.	
\end{equation}
In particular, we have the following analogue of \cite[Lemma 5.4]{BDSV}:

\begin{lemma}\label{l:sigma}
For $a\gg 1$ sufficiently large we have
\begin{align}
\|\nabla\Phi_i-\Id\|_{C^0}&\leq 1/2\quad\textrm{ for }t\in\tilde I_i,\label{e:backwardflow1}\\
|\sigma_q(t)-\tfrac13\bar{e}\delta_{q+1}|&\leq \tfrac{1}{9}\bar{e}\delta_{q+1}\quad\textrm{ for all }t, \label{e:sigma_range}
\end{align}
and for any $N\geq 0$ 
\begin{align}
 \norm{\sigma_{q,i}}_{C^N}&\lesssim \delta_{q+1}\,,\label{e:sigma_i_bnd_N}\\
 \norm{\partial_t \sigma_{q,i}}_{C^N} &\lesssim \delta_{q+1}\tau_q^{-1}\,.
 \label{e:sigma_i_bnd_t}
\end{align}
Moreover, we also have, for any $t\in \tilde I_i$
\begin{equation}\label{e:Rqi-I}
\left|\frac{R_{q,i}}{\sigma_{q,i}}-\Id\right|=\left|\sigma_{q}^{-1}\mathring{\bar{R}}_q\right|\lesssim \lambda_q^{-\sfrac{\gamma_R}{2}}.	
\end{equation}
In particular, for $a\gg 1$ sufficiently large and for all $(x,t)$
$$
\tilde R_{q,i}(x,t)\in B_{\sfrac12}(\Id)\subset \mathcal{S}^{3\times 3}_+\,,
$$
where $B_{\sfrac12}(\Id)$ denotes the Euclidean ball of radius $1/2$ around the identity $\Id$ in the space $\mathcal{S}^{3\times 3}$.

\end{lemma}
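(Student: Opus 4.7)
The six assertions will be established in the order they are listed, each following from Corollary~\ref{c:gluing}, the bound \eqref{e:sigmabound}, the definitions \eqref{e:elltau}, \eqref{e:defetai}, \eqref{e:defRqi}, and the parameter conditions fixed at the start of Section~\ref{s:Onsager} (notably \eqref{e:taucondition1} and \eqref{e:gammaRalphacondition}). The core principle, in the spirit of Remark~\ref{r:remarkona}, is that every implicit constant below is independent of $\lambda_q$, so that taking $a\gg 1$ large enough absorbs them.

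First, the standard Grönwall estimate for the backward flow yields $\|\nabla\Phi_i-\Id\|_{C^0}\lesssim \tau_q\|\bar u_q\|_{C^1}$ on $\tilde I_i$, which has length $O(\tau_q)$. Combining \eqref{e:c_gluingu} for $N=0$ with the definitions $\tau_q=\lambda_q^{-1+\beta-\gamma_T}$ and $\delta_q^{1/2}=\lambda_q^{-\beta}$ collapses this to $\lesssim \lambda_q^{-\gamma_T}$, from which \eqref{e:backwardflow1} follows for $a$ large. The bound \eqref{e:sigma_range} is a direct rephrasing of \eqref{e:sigmabound} once $a$ is large enough that each of $\lambda_q^{-\gamma_E}$, $\lambda_q^{-\gamma_R}$, $\lambda_q^{-(b-1)\beta}$ is much smaller than the geometric constant $1/9$.

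Next I turn to the estimates \eqref{e:sigma_i_bnd_N}--\eqref{e:sigma_i_bnd_t} for $\sigma_{q,i}=\eta_i^2\sigma_q$. Since $\sigma_q$ is purely time-dependent and satisfies $|\sigma_q|\lesssim \delta_{q+1}$ by \eqref{e:sigma_range}, the spatial $C^N$-norm is controlled purely via $\eta_i$ through \eqref{e:etai_est}. For $\partial_t\sigma_{q,i}=2\eta_i\partial_t\eta_i\sigma_q+\eta_i^2\partial_t\sigma_q$, the first summand is $O(\delta_{q+1}\tau_q^{-1})$ by \eqref{e:etai_est}; for the second I differentiate \eqref{e:sigmaq}, use the Euler--Reynolds equation for $\bar u_q$ to write
\[
\tfrac{d}{dt}\int_{\T^3}|\bar u_q|^2\,dx = -2\int_{\T^3}\nabla \bar u_q:\mathring{\bar R}_q\,dx,
\]
and bound this by $\|\bar u_q\|_{C^1}\|\mathring{\bar R}_q\|_{C^0}$, which by \eqref{e:c_gluingu}--\eqref{e:c_gluingR} is $\lesssim \delta_q^{1/2}\lambda_q\mathring{\delta}_{q+1}$; one checks this equals $\delta_{q+1}\tau_q^{-1}\lambda_q^{-\gamma_R-\gamma_T+2\alpha(1+\gamma_L)}$, hence is subdominant.

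Finally, for \eqref{e:Rqi-I} the $\eta_i^2$ factors in $R_{q,i}$ and $\sigma_{q,i}$ cancel and yield algebraically $R_{q,i}/\sigma_{q,i}-\Id=-\sigma_q^{-1}\mathring{\bar R}_q$. Combining the lower bound $\sigma_q\gtrsim \delta_{q+1}$ from \eqref{e:sigma_range} with $\|\mathring{\bar R}_q\|_{C^0}\lesssim \mathring{\delta}_{q+1}\ell_q^{-2\alpha}=\delta_{q+1}\lambda_q^{-\gamma_R+2\alpha(1+\gamma_L)}$ (via \eqref{e:c_gluingR}) and invoking \eqref{e:gammaRalphacondition} gives the claimed $\lambda_q^{-\gamma_R/2}$ bound. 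The conclusion $\tilde R_{q,i}\in B_{1/2}(\Id)$ follows by writing $\tilde R_{q,i}=\nabla\Phi_i(\Id-\sigma_q^{-1}\mathring{\bar R}_q)\nabla\Phi_i^T$ (the $\eta_i^2$ cancels), applying \eqref{e:backwardflow1} and \eqref{e:Rqi-I}, and noting that a composition of small-perturbation factors remains within $B_{1/2}(\Id)$ for $a$ large. Since no step involves a delicate interplay of exponents beyond the already-enforced \eqref{e:taucondition1} and \eqref{e:gammaRalphacondition}, the main (and only mild) technicality is the bookkeeping for $\partial_t\sigma_q$, where one must verify the polynomial-in-$\lambda_q$ gap against $\delta_{q+1}\tau_q^{-1}$.
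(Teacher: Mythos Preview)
Your proof plan is correct and follows essentially the same route as the paper's proof: Gr\"onwall for \eqref{e:backwardflow1}, the bound \eqref{e:sigmabound} for \eqref{e:sigma_range}, the Euler--Reynolds identity for $\partial_t\sigma_q$, and \eqref{e:c_gluingR} together with \eqref{e:gammaRalphacondition} for \eqref{e:Rqi-I}. One small omission: when you differentiate \eqref{e:sigmaq} you should also account for the term $e'(t)$, which the paper absorbs into $\delta_{q+1}\tau_q^{-1}$ by choosing $a\gg 1$ large (since $\|e'\|_{C^0}$ is a fixed constant while $\delta_{q+1}\tau_q^{-1}=\lambda_q^{1-\beta-2b\beta+\gamma_T}\to\infty$); also, your displayed exponent $-\gamma_R-\gamma_T+2\alpha(1+\gamma_L)$ is consistent only if you include the $\ell_q^{-2\alpha}$ loss from \eqref{e:c_gluingR}, which you dropped in the line just above --- either keep it throughout or, as the paper does, simply use $\|\mathring{\bar R}_q\|_{C^0}\lesssim \delta_{q+1}$ and note $\delta_q^{1/2}\lambda_q\leq \tau_q^{-1}$.
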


\begin{proof}
	The proof follows closely the proof of \cite[Lemma 5.4]{BDSV}. In particular \eqref{e:backwardflow1} follows from \eqref{e:backwardflow} and \eqref{e:sigma_range} follows from \eqref{e:sigmabound}. The estimates \eqref{e:sigma_i_bnd_N}-\eqref{e:sigma_i_bnd_t} can be obtained as in  \cite[(5.13)-(5.15)]{BDSV}. Indeed, we start with using equation \eqref{e:EulerReynolds} to estimate
	\begin{equation*}
\abs{\frac{d}{dt} \int \abs{\bar{u}_{q}(x,t)}^2\,dx}= \abs{2\int \nabla \bar{u}_q\cdot \mathring{\bar{R}}_q\,dx }\lesssim \delta_{q+1}\delta_q^{\sfrac 12}\lambda_q,
\end{equation*}
	so that
	\begin{equation*}
	|\tfrac{d}{dt}\sigma_q(t)|	\lesssim \|\tfrac{d}{dt}e\|_{C^0}+\delta_{q+1}\delta_q^{\sfrac 12}\lambda_q\lesssim \delta_{q+1}\tau_q^{-1},
	\end{equation*}
where we assume $a\gg 1$ is sufficiently large to absorb the term $\|\tfrac{d}{dt}e\|_{C^0}$. Then we use \eqref{e:etai_est} to conclude the bounds \eqref{e:sigma_i_bnd_N}-\eqref{e:sigma_i_bnd_t}. The estimate \eqref{e:Rqi-I} follows directly from \eqref{e:gluingR} and \eqref{e:gammaRalphacondition}. Consequently, the bound on the range of $\tilde R_{q,i}$ follows from \eqref{e:backwardflow} and by choosing $a\gg 1$ sufficiently large.
\end{proof}

\subsubsection{} With Lemma \ref{l:sigma} and the definitions in \eqref{e:defRqi} we define the new perturbation $w_{q+1}$, precisely as in \cite[Section 5.3]{BDSV}, as follows:\footnote{here we use the calculus identities $\curl[\nabla\Phi^TU(\Phi)]=\nabla\Phi^{-1}(\curl U)(\Phi)$ and $\curl(\varphi F)=\varphi \curl F+\nabla\varphi\times F$.}
\begin{equation}\label{e:neww}
\begin{split}
w_{q+1}&=\frac{1}{\lambda_{q+1}}\curl\left[\sum_{i}\sum_{\vec{k}\in\Lambda}\sigma_{q,i}^{\sfrac12}a_{\vec{k}}(\tilde R_{q,i})\nabla\Phi_i^TU_{\vec{k}}(\lambda_{q+1}\Phi_i)\right],\\
	&=\underbrace{\sum_{i}\sum_{\vec{k}\in\Lambda}\sigma_{q,i}^{\sfrac12}a_{\vec{k}}(\tilde R_{q,i})\nabla\Phi_i^{-1}W_{\vec{k}}(\lambda_{q+1}\Phi_i)}_{w_o}+\\
	&+\underbrace{\frac{1}{\lambda_{q+1}}\sum_{i}\sum_{\vec{k}\in\Lambda}\nabla (\sigma_{q,i}^{\sfrac12}a_{\vec{k}}(\tilde R_{q,i}))\times \nabla\Phi_i^TU_{\vec{k}}(\lambda_{q+1}\Phi_i)}_{w_c}
\end{split}
\end{equation}
Note that in the formulas above $\vec{k}\in \Lambda$ denotes vectors in $\R^3$ and the corresponding sum is finite. In contrast, the notation introduced in \cite{BDSV} is
\begin{equation}\label{e:wowc-old}
w_o=\sum_i\sum_{k\in\Z^3\setminus\{0\}}(\nabla\Phi_i)^{-1}b_{i,k}e^{\lambda_{q+1}k\cdot\Phi_i},\quad w_c=\sum_i\sum_{k\in\Z^3\setminus\{0\}}c_{i,k}e^{\lambda_{q+1}k\cdot\Phi_i},	
\end{equation}
where
\begin{equation*}
b_{i,k}=\sigma_{q,i}^{\sfrac12}a_{k}(\tilde R_{q,i})A_{k},\quad c_{i,k}=\frac{-i}{\lambda_{q+1}}\curl\left[\sigma_{q,i}^{\sfrac12}\frac{\nabla\Phi_i^T(k\times a_k(\tilde R_{q,i}))}{|k|^2}\right],
\end{equation*}
the index $k\in\Z^3\setminus\{0\}$ denotes the Fourier variable, and $A_k\in\C^3$ are complex vectors arising in the Fourier decomposition of Mikado flows, specifically of the functions $\psi_{\vec{k}}$ in \eqref{e:defUW}. In particular, since $\psi_{\vec{k}}$ is smooth, the Fourier coefficients $a_k$ in the expression for $b_{i,k},c_{i,k}$, together with all derivatives, and bounded and have polynomial decay in $k$ of arbitrary order (cf.~\cite[(5.5)]{BDSV}). At variance with \cite{BDSV} we will make use of this fact in the form
\begin{equation}\label{e:decayofak}
\|a_k(\tilde R_{q,i})\|_0\lesssim |k|^{-\bar{N}-3}\,.	
\end{equation}
The representation \eqref{e:wowc-old} is useful for obtaining estimates for $w_{q+1}$ and for the new Reynolds stress $\mathring{R}_{q+1}$, whereas the representation \eqref{e:neww} will be useful for computing the bulk diffusion coefficient induced by $w_{q+1}$ in Section \ref{s:homogenization}. 

\subsubsection{} As far as the estimates on $w_{q+1}, \mathring{R}_{q+1}$ are concerned, in light of Remark \ref{r:comparison} all estimates in \cite[Section 5.3-5.5]{BDSV} which do not use transport derivatives remain valid. These are (cf.~\cite[Lemma 5.5 and Proposition 5.7]{BDSV}):
\begin{lemma}\label{l:boundsonbc}
There is a geometric constant $\bar{M}$ such that
\begin{equation}\label{e:barM}
\|b_{i,k}\|_0 \leq \bar M\delta_{q+1}^{\sfrac{1}{2}}|k|^{-\bar{N}-3} \, .
\end{equation}
Moreover, for $t\in \tilde I_i$ and any $N\geq 0$
\begin{align}
\norm{ (\nabla\Phi_i)^{-1}}_{C^N} + \norm{\nabla\Phi_i}_{C^N} &\lesssim \ell_q^{-N} \,,\label{e:est_PhiN}\\
\norm{\sigma_{q,i}^{-1}R_{q,i}}_{C^N}+\norm{\tilde R_{q,i}}_{C^N} &\lesssim  \ell_q^{-N}\,,\label{e:est_Rqi}\\
\norm{b_{i,k}}_{C^N} &\lesssim \delta_{q+1}^{\sfrac12}|k|^{-\bar{N}-3}\ell_q^{-N}\,,\label{e:est_b} \\
\norm{c_{i,k}}_{C^N} &\lesssim  \delta_{q+1}^{\sfrac12}\lambda_{q+1}^{-1}|k|^{-\bar{N}-3}\ell_q^{-N-1}\,.\label{e:est_c}
\end{align}
\end{lemma}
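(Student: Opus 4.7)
The estimate on $\|b_{i,k}\|_0$ is essentially algebraic: since $\sigma_{q,i} = \eta_i^2 \sigma_q$ with $|\eta_i| \leq 1$ and $\sigma_q(t) \leq \tfrac{4}{9}\bar{e}\delta_{q+1}$ by \eqref{e:sigma_range}, while $\tilde R_{q,i}$ takes values in the compact set $B_{\sfrac12}(\Id) \subset \mathcal{S}^{3\times 3}_+$ by Lemma \ref{l:sigma}, the smooth function $a_k$ restricted to that set, together with all its derivatives, is uniformly bounded; more precisely the Fourier decay \eqref{e:decayofak} supplies the $|k|^{-\bar N -3}$ factor, and the universal constant $\bar{M}$ arises from the $C^0$ bound of $\sigma_q^{\sfrac12}/\delta_{q+1}^{\sfrac12}$ times $\|a_k\|_{C^0}\,|k|^{\bar N + 3}$.

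The plan for \eqref{e:est_PhiN} is the standard transport-equation argument. Since $\Phi_i$ satisfies $(\partial_t + \bar u_q \cdot \nabla)\Phi_i = 0$ with $\Phi_i|_{t=t_i} = x$, the Jacobian $J_i := \nabla \Phi_i$ satisfies the ODE along trajectories $D_t J_i = -J_i \,\nabla\bar u_q$. Iterating Grönwall inside the time window $|t-t_i| \leq \tau_q$ in the manner of \cite[Appendix D]{BDSV} yields $\|\nabla\Phi_i - \Id\|_{C^N} \lesssim \tau_q \|\nabla \bar u_q\|_{C^N} \lesssim \tau_q \delta_q^{\sfrac12}\lambda_q \ell_q^{-N}$ by \eqref{e:c_gluingu}. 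By \eqref{e:elltau} this is bounded by $\lambda_q^{-\gamma_T}\ell_q^{-N} \leq \ell_q^{-N}$, which gives both the $N=0$ bound (which is $\lesssim 1$) and the $N\geq 1$ bound. The inverse $(\nabla\Phi_i)^{-1}$ is handled via the cofactor formula together with $\det \nabla\Phi_i = 1$ (volume preservation), giving the same $C^N$ bound since polynomials in $\nabla\Phi_i$ of bounded degree satisfy the same estimate.

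For \eqref{e:est_Rqi}, the identity $\sigma_{q,i}^{-1} R_{q,i} = \Id - \sigma_q^{-1} \mathring{\bar R}_q$ together with \eqref{e:sigma_range} and the Reynolds estimate \eqref{e:c_gluingR} gives $\|\sigma_{q,i}^{-1}R_{q,i}\|_{C^N} \lesssim 1 + \delta_{q+1}^{-1}\mathring{\delta}_{q+1} \ell_q^{-N-2\alpha} = 1 + \lambda_q^{-\gamma_R}\ell_q^{-N-2\alpha}$. The condition \eqref{e:gammaRalphacondition}, namely $4\alpha(1+\gamma_L) < \gamma_R$, ensures $\lambda_q^{-\gamma_R}\ell_q^{-2\alpha} \leq 1$ (for $a$ large), yielding $\lesssim \ell_q^{-N}$. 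The bound on $\tilde R_{q,i} = \nabla\Phi_i (\sigma_{q,i}^{-1}R_{q,i})\nabla\Phi_i^T$ then follows from the product rule together with \eqref{e:est_PhiN}.

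The estimates \eqref{e:est_b} and \eqref{e:est_c} are obtained by expanding the product formulas. For $b_{i,k} = \sigma_{q,i}^{\sfrac12} a_k(\tilde R_{q,i}) A_k$ we use $\sigma_{q,i}^{\sfrac12} = \eta_i \sigma_q^{\sfrac12}$ with $\eta_i \geq 0$ smooth, so spatial $C^N$ norms of $\sigma_{q,i}^{\sfrac12}$ are $\lesssim \delta_{q+1}^{\sfrac12}$; composition $a_k \circ \tilde R_{q,i}$ is controlled via Faà di Bruno combined with \eqref{e:est_Rqi} and \eqref{e:decayofak}, delivering $\|a_k(\tilde R_{q,i})\|_{C^N} \lesssim |k|^{-\bar N - 3}\ell_q^{-N}$. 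Product rule then gives \eqref{e:est_b}. For \eqref{e:est_c}, the definition writes $c_{i,k}$ as $\lambda_{q+1}^{-1}$ times a curl of a product; distributing the curl costs one $\ell_q^{-1}$ and the product rule applied to a product of four factors (of which $\nabla \Phi_i^T$ and $a_k(\tilde R_{q,i})$ carry $\ell_q^{-m}$ at order $m$, while $|k \times \cdot|/|k|^2$ contributes $|k|^{-1}$) yields \eqref{e:est_c}. The main technical nuisance is keeping track of which factor absorbs each differentiation in the Faà di Bruno expansion for $a_k(\tilde R_{q,i})$ and verifying that the inequality \eqref{e:gammaRalphacondition} really does absorb the $\ell_q^{-2\alpha}$ loss in \eqref{e:est_Rqi}; modulo this bookkeeping, the argument is a direct replay of \cite[Lemma 5.5]{BDSV}.
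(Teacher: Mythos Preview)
Your proposal is correct and follows essentially the same approach as the paper's proof: the transport/Gr\"onwall argument for \eqref{e:est_PhiN}, the identity $\sigma_{q,i}^{-1}R_{q,i}=\Id-\sigma_q^{-1}\mathring{\bar R}_q$ combined with \eqref{e:c_gluingR}, \eqref{e:sigma_range} and \eqref{e:gammaRalphacondition} for \eqref{e:est_Rqi}, and then product rule together with \eqref{e:sigma_i_bnd_N} and \eqref{e:decayofak} for \eqref{e:est_b}--\eqref{e:est_c}. Your write-up is in fact slightly more detailed than the paper's (spelling out the cofactor formula for $(\nabla\Phi_i)^{-1}$ and the Fa\`a di Bruno bookkeeping), but the underlying argument is the same replay of \cite[Lemma 5.5]{BDSV} with the adjusted parameters.
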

\begin{proof}
The estimate \eqref{e:est_PhiN} follows from \eqref{e:backwardflow1} and \eqref{e:gluingu}. Let us denote $D_t=\partial_t+\bar{u}_q\cdot\nabla$. Since $D_t\Phi_i=0$ by definition, for $N\geq 1$ we have
\begin{align*}
\norm{D_t\nabla\Phi_i}_{C^N} &\lesssim \|\nabla\bar{u}_q^T\nabla\Phi_i\|_{C^N}\lesssim \|\nabla\bar{u}_q\|_{C^0}\|\nabla\Phi_i\|_{C^N}+\|\nabla\bar{u}_q\|_{C^N}\|\nabla\Phi_i\|_{C^0}\\
&\lesssim 
\tau_q^{-1}\|\nabla\Phi_i\|_{C^N}+\tau_q^{-1}\ell_q^{-N}\,.
\end{align*}
We deduce \eqref{e:est_PhiN} from here using Grönwall's inequality. 
Next, using \eqref{e:defRqi} we write 
\begin{equation}
	\sigma_{q,i}^{-1}R_{q,i}=\Id-\sigma_q^{-1}\mathring{\bar{R}}_q,\quad \tilde R_{q,i}=\nabla\Phi_i(\Id-\sigma_q^{-1}\mathring{\bar{R}}_q)\nabla\Phi_i^T\,.
\end{equation}
Then, applying \eqref{e:gluingR} and \eqref{e:sigma_range} we obtain
\begin{align*}
	\norm{\sigma_{q,i}^{-1}R_{q,i}}_{C^N}\lesssim \delta_{q+1}^{-1}\norm{R_{q,i}}_{C^{N+\alpha}}\lesssim \frac{\mathring{\delta}_{q+1}}{\delta_{q+1}}\ell_q^{-N-2\alpha}\lesssim \ell_q^{-N},
\end{align*}
where in the last inequality we used \eqref{e:gammaRalphacondition}. Similarly we obtain the estimate for $\tilde R_{q,i}$, leading to \eqref{e:est_Rqi}. 

The estimates \eqref{e:est_b} and \eqref{e:est_c} follow directly from \eqref{e:sigma_i_bnd_N} and \eqref{e:decayofak} as well as the above.
\end{proof}

Because we require inductive estimates on $\bar{N}\geq 1$ derivatives of $u_q$, \cite[Corollary 5.9]{BDSV} is replaced by
\begin{lemma}\label{l:boundsonw}
Under the assumption \eqref{e:comparison} and assuming $a\gg 1$ is sufficiently large, we have for any $N=0,1,\dots,\bar{N}$
	\begin{align*}
	\norm{w_o}_{C^N} &\leq \tilde M\delta_{q+1}^{\sfrac 12}\lambda_{q+1}^N\,,\\
\norm{w_c}_{C^N} &\lesssim \delta_{q+1}^{\sfrac 12}\ell_q^{-1}\lambda_{q+1}^{-1}\lambda_{q+1}^N\,,\\
\norm{w_{q+1}}_{C^N} &\leq  2\tilde M \delta_{q+1}^{\sfrac 12}\lambda_{q+1}^N\,,
	\end{align*}
where the constant $\tilde M$ depends on $\bar{N}$ and $\bar{M}$.
\end{lemma}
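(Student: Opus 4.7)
\textbf{Proof plan for Lemma \ref{l:boundsonw}.}

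The plan is to bound each of the exponential sums in \eqref{e:wowc-old} by applying a standard stationary-phase (or Faà di Bruno) estimate to each summand and then summing over $k$. Recall that for smooth functions $a$ and $\Phi$ on $\T^3$ with $\|\nabla\Phi\|_{C^0}$ bounded, one has the general identity
\[
    \|a(x)\, e^{i\lambda k\cdot\Phi(x)}\|_{C^N}
    \lesssim \|a\|_{C^N} + \sum_{j=1}^{N}(\lambda|k|)^{j} \|a\|_{C^{N-j}} P_j\bigl(\|\nabla\Phi\|_{C^0},\dots,\|\nabla\Phi\|_{C^{j-1}}\bigr),
\]
where $P_j$ is a universal polynomial of degree $j$ (this is the estimate used throughout \cite{BDSV}, cf. their Appendix~C). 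Our first step is to plug in the estimates from Lemma~\ref{l:boundsonbc}: using $\|\nabla\Phi_i\|_{C^m}\lesssim\ell_q^{-m}$ together with $\|(\nabla\Phi_i)^{-1}b_{i,k}\|_{C^m}\lesssim \delta_{q+1}^{1/2}|k|^{-\bar N-3}\ell_q^{-m}$ and $\|c_{i,k}\|_{C^m}\lesssim \delta_{q+1}^{1/2}\lambda_{q+1}^{-1}|k|^{-\bar N-3}\ell_q^{-m-1}$, we would obtain, for each fixed $i$ and each $0\le N\le\bar N$,
\begin{align*}
    \|(\nabla\Phi_i)^{-1}b_{i,k}\,e^{i\lambda_{q+1}k\cdot\Phi_i}\|_{C^N}
    &\lesssim \delta_{q+1}^{1/2}|k|^{-\bar N-3}\bigl(\lambda_{q+1}^{N}|k|^{N}+\ell_q^{-N}\bigr),\\
    \|c_{i,k}\,e^{i\lambda_{q+1}k\cdot\Phi_i}\|_{C^N}
    &\lesssim \delta_{q+1}^{1/2}\lambda_{q+1}^{-1}\ell_q^{-1}|k|^{-\bar N-3}\bigl(\lambda_{q+1}^{N}|k|^{N}+\ell_q^{-N}\bigr),
\end{align*}
with implicit constants depending on $\bar N$ and on $\bar M$ (through \eqref{e:barM}), but independent of $a$, $q$ and $i$.

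The key structural observation is that by \eqref{e:comparison} we have $\gamma_L<(b-1)\beta<b-1$, so $\ell_q^{-1}=\lambda_q^{1+\gamma_L}\ll\lambda_q^b=\lambda_{q+1}$. Thus for any $0\le N\le\bar N$,
\[
\ell_q^{-N}/\lambda_{q+1}^{N} \le \lambda_q^{N(1+\gamma_L-b)}\longrightarrow 0 \quad\text{as } a\to\infty,
\]
so the $\ell_q^{-N}$ contribution is strictly subdominant to $\lambda_{q+1}^N$, and can be absorbed into the implicit constant once $a$ is large enough. Summing over $k\in\Z^3\setminus\{0\}$ the factor $|k|^{N-\bar N-3}$ remains summable (since $N\le\bar N$ forces $N-\bar N-3\le -3$), with a sum depending only on $\bar N$, and the supports of the cutoffs $\eta_i$ give disjoint $t$-slabs (cf.~the second bullet after \eqref{e:defetai}), so at each $(x,t)$ only finitely many $i$ contribute with constant independent of $q$. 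Combining these ingredients gives
\[
    \|w_o\|_{C^N}\le C(\bar N,\bar M)\,\delta_{q+1}^{1/2}\lambda_{q+1}^N,\qquad
    \|w_c\|_{C^N}\le C(\bar N,\bar M)\,\delta_{q+1}^{1/2}\ell_q^{-1}\lambda_{q+1}^{-1}\lambda_{q+1}^N,
\]
for any $0\le N\le\bar N$. Setting $\tilde M$ to be (a mild enlargement of) the constant $C(\bar N,\bar M)$ yields the first two bounds. Finally, since $\ell_q^{-1}\lambda_{q+1}^{-1}=\lambda_q^{1+\gamma_L-b}\to 0$ as $a\to\infty$, for $a\gg 1$ sufficiently large we have $\|w_c\|_{C^N}\le \tilde M\,\delta_{q+1}^{1/2}\lambda_{q+1}^N$, and the claimed bound $\|w_{q+1}\|_{C^N}\le 2\tilde M\delta_{q+1}^{1/2}\lambda_{q+1}^N$ follows from the triangle inequality.

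The main obstacle is purely technical and bookkeeping-oriented: one must verify that the implicit constants in the Faà di Bruno/stationary-phase estimates depend only on $\bar N$ (and not on $q$ or $a$), that the weight $|k|^{-\bar N-3}$ produced by \eqref{e:decayofak} is strong enough to make every intermediate sum $\sum_k|k|^{N-\bar N-3}$ convergent for all $N\le\bar N$, and that every remaining correction of relative size $\lambda_q^{-(b-1-\gamma_L)}$ can be absorbed into the constant by fixing $a$ large. Once this is done the inductive bound at $N=0$ essentially just recovers $\tilde M$ from $\bar M$ and the $\ell^1$-type $k$-sum, and the higher-$N$ bounds follow with the same constant.
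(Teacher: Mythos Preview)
Your proposal is correct and follows essentially the same approach as the paper: both use the Fourier representation \eqref{e:wowc-old}, a chain-rule/Fa\`a di Bruno estimate on the exponential factor, the bounds from Lemma~\ref{l:boundsonbc}, the inequality $\ell_q^{-1}<\lambda_{q+1}$ from \eqref{e:comparison} to absorb the subdominant $\ell_q^{-N}$ contribution, and summability in $k$ via the $|k|^{-\bar N-3}$ decay. The only cosmetic difference is that the paper first bounds $\|e^{i\lambda_{q+1}k\cdot\Phi_i}\|_{C^m}\lesssim \lambda_{q+1}^m|k|^m$ separately and then applies the product rule, while you apply the stationary-phase estimate directly to the product; the content is the same.
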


\begin{proof}
We use the representation in \eqref{e:wowc-old}. First of all, using the chain rule we obtain
\begin{equation*}
	\|e^{i\lambda_{q+1}k\cdot\Phi_i}\|_{C^m}\leq \lambda_{q+1}^m|k|^m\|\nabla\Phi_i\|_{C^0}^m+\sum_{j<m,\theta}C_{j,m}\lambda_{q+1}^j|k|^j\|\nabla\Phi_i\|_{C^0}^{\theta_1}\cdot\dots\cdot\|\nabla\Phi_i\|_{C^{m-1}}^{\theta_m}\,
\end{equation*}	
for some constants $C_{j,m}$ (binomial coefficients), 
where the sum is over $1\leq j\leq m$ and multi-indices $\theta$ with $m=\theta_1+2\theta_2+\dots+m\theta_m$ and $j=\theta_1+\dots+\theta_m$. Then, using Lemma \ref{l:boundsonbc} we deduce
\begin{equation*}
	\|e^{i\lambda_{q+1}k\cdot\Phi_i}\|_{C^m}\lesssim \lambda_{q+1}^m|k|^m+\lambda_{q+1}|k|\ell_q^{1-m}.
\end{equation*}
However, from \eqref{e:comparison} it follows in particular $\gamma_L<(b-1)$, hence $\ell_q^{-1}<\lambda_{q+1}$, so that we deduce
\begin{equation*}
	\|e^{i\lambda_{q+1}k\cdot\Phi_i}\|_{C^m}\lesssim \lambda_{q+1}^m|k|^m.
\end{equation*}
By applying the product rule and Lemma \ref{l:boundsonbc} we then conclude that there exists $\bar{M}$ such that 
\begin{equation*}
\|w_o\|_{C^m}\leq \tilde{M}\delta_{q+1}^{\sfrac12}\lambda_{q+1}^m\quad\textrm{ for all }m=0,1,\dots,\bar{N}.	
\end{equation*}
The estimate on $w_c$ follows directly from Lemma \ref{l:boundsonbc}. 
\end{proof}

\begin{definition}\label{d:defM}
The constant $M$ in \eqref{e:u_q_inductive_est} is defined as $M:=4\tilde M$, where $\tilde M$ is the constant in Lemma \ref{l:boundsonw}.  	
\end{definition}

Finally, coming to estimates involving time-derivatives, we have the following variant of \cite[Proposition 5.9]{BDSV}:
\begin{lemma}\label{l:boundsonbc1}
    For any $t\in \tilde I_i$ and $N\geq 0$ we have
    \begin{align*}
        \|D_t\nabla\Phi_i\|_{C^N}&\lesssim \delta_q^{1/2}\lambda_q\ell_q^{-N}\,,\\
        \|D_t\sigma_{q,i}\|_{C^N}&\lesssim \delta_{q+1}\tau_q^{-1}\ell_q^{-N}\,,\\
        \|D_t\tilde{R}_{q,i}\|_{C^N}&\lesssim \tau_q^{-1}\ell_q^{-N}\,,\\
        \|D_t c_{i,k}\|_{C^N}&\lesssim \delta_{q+1}^{\sfrac12}\tau_q^{-1}\lambda_{q+1}^{-1}\ell_q^{-N-1}|k|^{-\bar{N}-3} \,,
    \end{align*}
    where $D_t=\partial_t+\bar{u}_q\cdot\nabla$.
\end{lemma}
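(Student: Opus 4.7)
My plan is to prove the four estimates in the order listed, each building on the previous ones, via the product rule together with the commutator identity $[D_t,\nabla]f=-(\nabla\bar u_q)^T\nabla f$ (applied componentwise to tensors). Throughout I would freely use the already proved facts $D_t\Phi_i=0$, the gluing bounds \eqref{e:c_gluingu}--\eqref{e:c_gluingDR} together with \eqref{e:sigma_range}, and the spatial bounds in Lemma \ref{l:boundsonbc}. Two parameter inequalities will be used repeatedly: first $\delta_q^{1/2}\lambda_q\le\tau_q^{-1}$ (since $\tau_q=\lambda_q^{-1+\beta-\gamma_T}$ with $\gamma_T\ge 0$), and second $\|\bar u_q\|_{C^0}\lesssim \delta_q^{1/2}\lambda_q$ (consequence of $\bar u_q$ having zero mean on the compact torus and \eqref{e:c_gluingu} with $N=0$). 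Step 1 is immediate: differentiating $D_t\Phi_i=0$ gives the closed-form identity $D_t\nabla\Phi_i=-(\nabla\bar u_q)^T\nabla\Phi_i$, and a product-rule expansion together with \eqref{e:c_gluingu} and \eqref{e:est_PhiN} delivers the bound (each extra spatial derivative costs $\ell_q^{-1}\ge 1$, so the leading term dominates). Step 2 expands $D_t\sigma_{q,i}=2\eta_i(D_t\eta_i)\sigma_q+\eta_i^2\partial_t\sigma_q$; the cutoff estimate \eqref{e:etai_est} together with $\|\bar u_q\|_{C^0}\lesssim\tau_q^{-1}$ yields $\|D_t\eta_i\|_{C^N}\lesssim\tau_q^{-1}\ell_q^{-N}$, and combining with $|\sigma_q|\lesssim\delta_{q+1}$ (from \eqref{e:sigma_range}) and $|\partial_t\sigma_q|\lesssim\delta_{q+1}\tau_q^{-1}$ (shown inside Lemma \ref{l:sigma}) closes the estimate.

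For Step 3, I would rewrite $\tilde R_{q,i}=\nabla\Phi_i(\Id-\sigma_q^{-1}\mathring{\bar R}_q)\nabla\Phi_i^T$ and apply the product rule. The outer $\nabla\Phi_i$ factors are handled by Step 1 and \eqref{e:est_PhiN}, both of which contribute at worst $\delta_q^{1/2}\lambda_q\ell_q^{-N}\lesssim\tau_q^{-1}\ell_q^{-N}$. The only genuinely new term is
\[
D_t(\sigma_q^{-1}\mathring{\bar R}_q)=-\sigma_q^{-2}(\partial_t\sigma_q)\mathring{\bar R}_q+\sigma_q^{-1}D_t\mathring{\bar R}_q,
\]
which by \eqref{e:sigma_range}, \eqref{e:c_gluingR}, \eqref{e:c_gluingDR} and $\mathring\delta_{q+1}/\delta_{q+1}=\lambda_q^{-\gamma_R}$ is bounded in $C^N$ by $\tau_q^{-1}\lambda_q^{-\gamma_R}\ell_q^{-N-2\alpha}\lesssim\tau_q^{-1}\ell_q^{-N}$, the absorption of the $\ell_q^{-2\alpha}\lambda_q^{-\gamma_R}$ factor being guaranteed by \eqref{e:gammaRalphacondition} (recalling $\ell_q=\lambda_q^{-1-\gamma_L}$). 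Reassembly via the product rule yields the stated bound.

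Step 4 is the main obstacle and requires careful bookkeeping. I would first rewrite $\sigma_{q,i}^{1/2}=\eta_i\sigma_q^{1/2}$ to avoid spurious singularities at zeros of $\eta_i$, and set $F:=\sigma_{q,i}^{1/2}|k|^{-2}\nabla\Phi_i^T(k\times a_k(\tilde R_{q,i}))$ so that $c_{i,k}=-i\lambda_{q+1}^{-1}\curl F$. Using Lemmas \ref{l:sigma}, \ref{l:boundsonbc}, \eqref{e:decayofak} (extended to derivatives of $a_k$, which also decay polynomially of arbitrary order since $\psi_{\vec k}$ is smooth), and the chain rule for $a_k\circ\tilde R_{q,i}$, one checks directly that $\|F\|_{C^N}\lesssim \delta_{q+1}^{1/2}|k|^{-\bar N-3}\ell_q^{-N}$. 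To bound $D_t\curl F=\curl(D_tF)+[D_t,\curl]F$ I would estimate each piece separately: the commutator $[D_t,\curl]F$ is a linear combination of $\nabla\bar u_q$ contracted with first derivatives of $F$, so by \eqref{e:c_gluingu} it is bounded in $C^N$ by $\delta_q^{1/2}\lambda_q\cdot\delta_{q+1}^{1/2}|k|^{-\bar N-3}\ell_q^{-N-1}\lesssim\delta_{q+1}^{1/2}\tau_q^{-1}|k|^{-\bar N-3}\ell_q^{-N-1}$; meanwhile, distributing $D_t$ onto the four factors of $F$ and invoking Steps 1--3 together with $|D_t(\eta_i\sigma_q^{1/2})|\lesssim\delta_{q+1}^{1/2}\tau_q^{-1}$ gives $\|D_tF\|_{C^N}\lesssim\delta_{q+1}^{1/2}\tau_q^{-1}|k|^{-\bar N-3}\ell_q^{-N}$, whence $\|\curl D_tF\|_{C^N}\lesssim\delta_{q+1}^{1/2}\tau_q^{-1}|k|^{-\bar N-3}\ell_q^{-N-1}$. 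Dividing by $\lambda_{q+1}$ yields the claim. The essential difficulty throughout is ensuring every product-rule term, including those generated by the commutator with $\curl$, fits into the target scale $\delta_{q+1}^{1/2}\tau_q^{-1}\lambda_{q+1}^{-1}\ell_q^{-N-1}|k|^{-\bar N-3}$; this works precisely because $\delta_q^{1/2}\lambda_q\le\tau_q^{-1}$, which lets $\nabla\bar u_q$ factors be absorbed into $\tau_q^{-1}$.
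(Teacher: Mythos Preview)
Your proposal is correct and follows essentially the same approach as the paper's proof: both proceed by expressing $D_t$ of each quantity via the product rule, reduce to the identity $D_t\nabla\Phi_i=-(\nabla\bar u_q)\nabla\Phi_i$ (from $D_t\Phi_i=0$), handle the inner factor $\Id-\sigma_q^{-1}\mathring{\bar R}_q$ via \eqref{e:c_gluingR}--\eqref{e:c_gluingDR} with the absorption $\mathring\delta_{q+1}\ell_q^{-2\alpha}\lesssim\delta_{q+1}$ from \eqref{e:gammaRalphacondition}, and then invoke the spatial bounds of Lemma \ref{l:boundsonbc} together with \eqref{e:decayofak}. Your treatment is a bit more explicit in two places---you expand $\sigma_{q,i}=\eta_i^2\sigma_q$ before applying $D_t$ (the paper instead splits $D_t=\partial_t+\bar u_q\cdot\nabla$ directly on $\sigma_{q,i}$), and you spell out the commutator $[D_t,\curl]$ in Step 4 where the paper simply refers back to the previous lemmas---but these are organizational differences within the same argument.
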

\begin{proof}
    The proof follows \cite[Proposition 5.9]{BDSV} using this time Lemma \ref{l:sigma} and Lemma \ref{l:boundsonbc}. In particular, using the expressions for the $D_t$ derivatives, we have
  \begin{align*}
      \|D_t\nabla\Phi_i\|_{C^N}&\lesssim \|\nabla\Phi_i\nabla \bar{u}_q\|_{C^N}\lesssim \delta_q^{\sfrac12}\lambda_q\ell_q^{-N}\lesssim \tau_q^{-1}\ell_q^{-N}\,,\\
      \|D_t\sigma_{q,i}\|_{C^N}&\lesssim \|\partial_t\sigma_{q,i}\|_{C^N}+\|\sigma_{q,i}\|_{C^{N+1}}\|\bar{u}_q\|_{C^1}+\|\sigma_{q,i}\|_{C^1}\|\bar{u}_q\|_{C^N}\\
      &\lesssim \delta_{q+1}\tau_q^{-1}+\delta_{q+1}\delta_q^{\sfrac12}\lambda_q+\delta_{q+1}\delta_q^{\sfrac12}\lambda_q\ell_q^{-N+1}\\
      &\lesssim \delta_{q+1}\tau_q^{-1}\ell_q^{-N}\,.
 \end{align*}
 Further, using \eqref{e:defRqi} we write $\sigma_{q,i}^{-1}R_{q,i}=\Id-\sigma_q^{-1}\mathring{\bar{R}}_q$	and compute
 \begin{align*}
 	        \|D_t(\sigma_{q,i}^{-1}R_{q,i})\|_{C^N}&\lesssim \|\sigma_{q}^{-1}\partial_t\sigma_{q}\mathring{\bar{R}}_q\|_{C^{N+\alpha}}+\|\sigma_{q}^{-1}D_t\mathring{\bar{R}}_q\|_{C^{N+\alpha}}\\
      &\lesssim \frac{\mathring{\delta}_{q+1}}{\delta_{q+1}}\tau_q^{-1}\ell_q^{-N-2\alpha}\lesssim \tau_q^{-1}\ell_q^{-N}\,,
 \end{align*}
 where we again used \eqref{e:gammaRalphacondition}.
Then, using Lemma \ref{l:boundsonbc},
 \begin{align*}
      \|D_t\tilde R_{q,i}\|_{N}\lesssim& \|D_t\nabla\Phi_i\|_{C^N}\|\sigma_{q,i}^{-1}R_{q,i}\|_{C^0}+ \|D_t\nabla\Phi_i\|_{C^0}\|\sigma_{q,i}^{-1}R_{q,i}\|_{C^{N}}+\\
      &+ \|D_t\nabla\Phi_i\|_{C^0}\|\sigma_{q,i}^{-1}R_{q,i}\|_{C^0}\|\nabla\Phi_i\|_{C^N} +\\
      &+ \|D_t(\sigma_{q,i}^{-1}R_{q,i})\|_{C^N}+\|D_t(\sigma_{q,i}^{-1}R_{q,i})\|_{C^0}\|\nabla\Phi_i\|_{C^N}\\
      \lesssim& \tau_q^{-1}\ell_q^{-N}\,. 
  \end{align*}  
  The estimate for $D_t c_{i,k}$ follows again from \eqref{e:decayofak}, Lemma \ref{l:sigma}, Lemma \ref{l:boundsonbc} and the above.
\end{proof}

\subsubsection{} Having obtained the analogous estimates for the perturbation $w_{q+1}$, the estimates on the new Reynolds stress $\mathring{R}_{q+1}$ proceed precisely as in \cite[Section 6.1]{BDSV}. 
We set (cf.~\cite[(5.21)]{BDSV})
\begin{equation}\label{e:decompR}
\mathring{R}_{q+1} =  \underbrace{\RR \left( w_{q+1} \cdot \nabla \bar u_q\right)}_{\mbox{Nash error}} + \underbrace{\RR \left( \partial_t  w_{q+1} + \bar u_q \cdot \nabla w_{q+1} \right)}_{\mbox{Transport error}} + \underbrace{\RR \div \left(- {\bar R}_{q} + (w_{q+1} \otimes w_{q+1}) \right)}_{\mbox{Oscillation error}},
\end{equation}
where 
\begin{align*}
\bar R_q = \sum_{i} R_{q,i}\, .
\end{align*}
With this definition one may verify that 
\begin{equation*}
\left\{
\begin{array}{l}
 \partial_t u_{q+1} + \div (u_{q+1} \otimes u_{q+1}) + \nabla p_{q+1} = \div(\mathring{R}_{q+1}) \, ,
\\ \\
 \div v_{q+1} = 0 \, ,
\end{array}\right.
\end{equation*}
where the new pressure is given by
\begin{equation*}
p_{q+1}(x,t) = \bar p_q(x,t)  - \sum_{i} \sigma_{q,i}(x,t)  + \sigma_{q}(t).
\end{equation*}

The analogue of \cite[Proposition 6.1]{BDSV} for estimating the new Reynolds stress is 
\begin{proposition}\label{p:newReynolds}
   The Reynolds stress error $\mathring R_{q+1}$ satisfies the estimate
\begin{equation}\label{e:final_R_est}
\norm{\mathring R_{q+1}}_{0}\lesssim \frac{\delta_{q+1}^{\sfrac12}}{\tau_q\lambda_{q+1} ^{1-\alpha}} \,.
\end{equation}
    
\end{proposition}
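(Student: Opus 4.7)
My plan is to follow the scheme of \cite[Section 6]{BDSV} closely, adapting the estimates to the slightly modified parameter choices $\ell_q,\tau_q$ in \eqref{e:elltau} and using Lemmas \ref{l:boundsonbc}--\ref{l:boundsonbc1}. The central technical tool is the stationary phase lemma for the inverse divergence $\RR$: for a slow function $f$ and a diffeomorphism $\Phi_i$ with $\|\nabla\Phi_i-\Id\|_0\le\tfrac12$, one has
\[
\|\RR(f\,e^{i\lambda_{q+1} k\cdot\Phi_i})\|_{\alpha}\lesssim \frac{\|f\|_{\alpha}}{\lambda_{q+1}^{1-\alpha}|k|}+\frac{\|f\|_{\bar N-1+\alpha}+\|f\|_0\|\nabla\Phi_i\|_{\bar N-1+\alpha}}{\lambda_{q+1}^{\bar N-1-\alpha}|k|^{\bar N-\alpha}}\,,
\]
so that the polynomial decay $|k|^{-\bar N-3}$ in \eqref{e:decayofak} always ensures convergence of the resulting sums over $k\in\Z^3\setminus\{0\}$.

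For the \emph{Nash error} $\RR(w_{q+1}\cdot\nabla\bar u_q)$ I expand $w_{q+1}=w_o+w_c$ as in \eqref{e:wowc-old}, obtaining a sum of terms of the form $(\nabla\Phi_i^{-1}b_{i,k}\cdot\nabla\bar u_q)\,e^{i\lambda_{q+1}k\cdot\Phi_i}$ and apply stationary phase. By \eqref{e:est_b}, \eqref{e:est_c} and \eqref{e:gluingu} the leading-order bound is of the form $\delta_{q+1}^{1/2}\delta_q^{1/2}\lambda_q\lambda_{q+1}^{-1+\alpha}$, and since $\delta_q^{1/2}\lambda_q\lesssim\tau_q^{-1}\lambda_q^{-\gamma_T}$ by \eqref{e:elltau} this is better than \eqref{e:final_R_est}. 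For the \emph{Transport error} $\RR(\partial_t w_{q+1}+\bar u_q\cdot\nabla w_{q+1})$ I exploit $D_t\Phi_i=0$: the advective derivative $D_t=\partial_t+\bar u_q\cdot\nabla$ only hits the slow coefficients in \eqref{e:wowc-old}, so by Lemma \ref{l:boundsonbc1}, stationary phase gives a bound $\lesssim\delta_{q+1}^{1/2}\tau_q^{-1}\lambda_{q+1}^{-1+\alpha}$, which matches \eqref{e:final_R_est}.

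The \emph{Oscillation error} $\RR\div(-\bar R_q+w_{q+1}\otimes w_{q+1})$ is the most delicate and is where the Mikado cancellation is used. The principal term $w_o\otimes w_o$ decomposes, upon inserting \eqref{e:neww} and expanding,
\[
w_o\otimes w_o=\sum_i\sigma_{q,i}\sum_{\vec k\in\Lambda}a_{\vec k}^2(\tilde R_{q,i})\,\nabla\Phi_i^{-1}(W_{\vec k}\otimes W_{\vec k})(\lambda_{q+1}\Phi_i)\,\nabla\Phi_i^{-T}+\mathcal{O}_{\text{cross}},
\]
where the cross sum over $\vec k\neq\vec k'$ in $\Lambda$ vanishes identically thanks to the disjoint-support property of the Mikado profiles $\varphi_{\vec k}$ (Section \ref{s:Mikado}). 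Writing $W_{\vec k}\otimes W_{\vec k}=|\nabla\varphi_{\vec k}|^2\vec k\otimes\vec k$ modulo a zero-mean oscillatory remainder and invoking Lemma \ref{l:Mikado}, the mean-value part produces exactly $\nabla\Phi_i^{-1}\tilde R_{q,i}\nabla\Phi_i^{-T}\sigma_{q,i}=R_{q,i}$, which sums to $\bar R_q$ and cancels the $-\bar R_q$ term. The remaining zero-mean oscillatory part is Fourier-decomposed and estimated via stationary phase exactly as for the Nash and transport errors; the corresponding bound again fits under \eqref{e:final_R_est}. The mixed errors $w_o\otimes w_c$, $w_c\otimes w_o$ and $w_c\otimes w_c$ are handled directly by Lemma \ref{l:boundsonw} combined with stationary phase, and are smaller by a factor $(\ell_q\lambda_{q+1})^{-1}\lesssim\lambda_q^{-(b-1)+\gamma_L}$.

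The main obstacle is precisely this oscillation-error bookkeeping: one must carefully balance (i) the gain $\lambda_{q+1}^{-1+\alpha}$ from the inverse-divergence/stationary phase, (ii) the loss $\ell_q^{-N-2\alpha}$ coming from derivatives of the slow coefficients (Lemmas \ref{l:boundsonbc}, \ref{l:boundsonbc1}), and (iii) the time-derivative loss $\tau_q^{-1}$ from $D_t$. The precise admissibility of the bound \eqref{e:final_R_est} against the inductive target $\mathring\delta_{q+2}=\delta_{q+2}\lambda_{q+1}^{-\gamma_R}$ is equivalent to the condition $b\alpha+\gamma_T+b\gamma_R<(b-1)(1-(2b+1)\beta)$, i.e.\ exactly \eqref{e:transportcondition}, which is why that inequality had to be imposed among the admissible parameter ranges at the beginning of Section \ref{s:Onsager}.
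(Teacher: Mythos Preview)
Your proposal is correct and follows essentially the same approach as the paper, which in turn defers to \cite[Section~6]{BDSV}: decompose $\mathring R_{q+1}$ into Nash, transport, and oscillation errors, apply the stationary-phase/inverse-divergence lemma with the bounds of Lemmas~\ref{l:boundsonbc}--\ref{l:boundsonbc1}, and combine using $\delta_{q+1}^{1/2}\ell_q^{-1}\le\delta_q^{1/2}\lambda_q\le\tau_q^{-1}$. One cosmetic slip: $W_{\vec k}\otimes W_{\vec k}=\psi_{\vec k}^2\,\vec k\otimes\vec k$ (not $|\nabla\varphi_{\vec k}|^2\,\vec k\otimes\vec k$), though both have unit mean by the Mikado normalization, so the cancellation with $\bar R_q$ goes through as you describe.
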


\begin{proof}
	We follow the proof of \cite[Proposition 6.1]{BDSV} and estimate each term in \eqref{e:decompR} separately. 
	
	Concerning the \emph{Nash error}, we have, as in \cite{BDSV}, for any $N\in\N$
	\begin{align*}
 \norm{\mathcal R\left(w_{q+1} \cdot \nabla \bar u_q \right)}_{\alpha}&\lesssim \sum_{k\in\Z^3\setminus\{0\}}\frac{\delta_{q+1}^{\sfrac12} \delta_q^{\sfrac12}\lambda_q }{\lambda_{q+1}^{1-\alpha}|k|^{\bar{N}+3}} + \frac{ \delta_{q+1}^{\sfrac12} \delta_q^{\sfrac12}\lambda_{q}}{\lambda_{q+1}^{N-\alpha}\ell_q^{N+\alpha} |k|^{\bar{N}+3}}\\
 &+\sum_{k\in\Z^3\setminus\{0\}}\frac{\delta_{q+1}^{\sfrac12}\delta_{q}^{\sfrac12}\lambda_q}{\ell_q\lambda_{q+1}^{2-\alpha}|k|^{\bar{N}+3}}+\frac{\delta_{q+1}^{\sfrac12}\delta_{q}^{\sfrac12}\lambda_q}{\ell_q^{N+1-\alpha}\lambda_{q+1}^{N+1-\alpha}|k|^{\bar{N}+3}},
\end{align*}
where we have used the representation \eqref{e:wowc-old}, Lemma \ref{l:boundsonbc} and the stationary phase estimate \cite[Proposition C.2]{BDSV}. We claim that it is possible to choose $N\geq 1$ so that
\begin{equation*}
\lambda_{q+1}^{N-1}\ell_q^{N+\alpha}>1.
\end{equation*}
Indeed, using \eqref{e:comparison}, this follows provided
\begin{equation}\label{e:choiceofN}
N(b-1)(1-\beta)>b+\alpha(1+\gamma_L).	
\end{equation}
In turn, with this choice of $N$, using \eqref{e:comparison} again to obtain $\lambda_{q+1}\ell_q>1$, and using $\bar{N}\geq 2$, we deduce
\begin{equation}\label{e:Nash_est}
 \norm{\mathcal R\left(w_{q+1} \cdot \nabla \bar u_q \right)}_{\alpha}\lesssim \frac{\delta_{q+1}^{\sfrac12} \delta_q^{\sfrac12}\lambda_q }{\lambda_{q+1}^{1-\alpha}}\,.
\end{equation}

\smallskip

Concerning the \emph{transport error} we write, for $t\in \tilde I_i$,
\begin{equation}\label{e:wotransport}
\begin{split}
(\partial_t+\bar u_q\cdot \nabla) w_o =&
\sum_{i,k} (\nabla\bar u_q)^T(\nabla\Phi_i)^{-1} b_{i,k} e^{i\lambda_{q+1}k\cdot \Phi_i}\\&\quad +
\sum_{i,k} (\nabla\Phi_i)^{-1} (\partial_t+\bar u_q\cdot \nabla) \left(\sigma_{q,i}^{\sfrac12} a_k(\tilde R_{q,i})\right) e^{i\lambda_{q+1}k\cdot \Phi_i} \,.
\end{split}
\end{equation}
As in \cite{BDSV} we obtain, arguing again as above with a sufficiently large $N$ satisfying \eqref{e:choiceofN}, 
\begin{align*}
\norm{\mathcal R\left( (\nabla\bar u_q)^T(\nabla\Phi_i)^{-1} b_{i,k} e^{i\lambda_{q+1}k\cdot \Phi_i} \right)}_{\alpha} &\lesssim \frac{\delta_{q+1}^{\sfrac12} \delta_q^{\sfrac12}\lambda_q }{\lambda_{q+1}^{1-\alpha}|k|^{\bar{N}+3}},
\end{align*}
whereas, using Lemma \ref{l:boundsonbc1},
\begin{align*}
\norm{\mathcal R\left( (\nabla\Phi_i)^{-1} (\partial_t+\bar u_q\cdot \nabla) (\sigma_{q,i}^{\sfrac12} a_k(\tilde R_{q,i}))e^{i\lambda_{q+1}k\cdot \Phi_i} \right)}_{\alpha}
\lesssim \frac{\delta_{q+1} ^{\sfrac 12}}{\tau_q\lambda_{q+1}^{1-\alpha}|k|^{\bar{N}+3}}.
\end{align*}
Moreover, using \eqref{e:wowc-old}, we have
\begin{align*}
(\partial_t+\bar u_q\cdot \nabla) w_c =&
\sum_{i,k} \left((\partial_t+\bar u_q\cdot \nabla) c_{i,k }\right)e^{i\lambda_{q+1}k\cdot \Phi_i}
\end{align*}
and obtain, again using Lemma \ref{l:boundsonbc1} and arguing as above,
\begin{align*}
\norm{\mathcal R \left(\left((\partial_t+\bar u_q\cdot \nabla) c_{i,k }\right)e^{i\lambda_{q+1}k\cdot \Phi_i}\right)}_{\alpha}\lesssim & \frac{\delta_{q+1} ^{\sfrac 12}}{\tau_q\ell_q \lambda_{q+1}^{2-\alpha}|k|^{\bar{N}+3}}
\lesssim  \frac{\delta_{q+1} ^{\sfrac 12}}{\tau_q\lambda_{q+1}^{1-\alpha}|k|^{\bar{N}+3}}.
\end{align*}
Since $w_{q+1}= w_o + w_c$, we deduce
\begin{equation}\label{e:trans_est}
\|\RR \left( \partial_t  w_{q+1} + \bar u_q \cdot \nabla w_{q+1} \right)\|_\alpha \lesssim
\frac{\delta_{q+1} ^{\sfrac 12}}{\tau_q\lambda_{q+1}^{1-\alpha}}\, .
\end{equation}

\smallskip
Concerning the \emph{oscillation error} we argue precisely as in \cite{BDSV} and obtain
\begin{equation}\label{e:osc_est}
\|\RR \div \left(- {\bar R}_q + w_{q+1}  \otimes  w_{q+1}\right)\|_\alpha \lesssim
\frac{\delta_{q+1}}{\ell_q\lambda_{q+1}^{1-\alpha}}\, .
\end{equation}

\smallskip

From \eqref{e:comparison} we deduce $\delta_{q+1}^{\sfrac12}\ell_q^{-1}<\delta_q^{\sfrac12}\lambda_q$. We also recall $\gamma_T>0$, hence $\tau_q<\delta_q^{\sfrac12}\lambda_q$. Consequently, combining \eqref{e:Nash_est}, \eqref{e:trans_est} and \eqref{e:osc_est} we finally deduce \eqref{e:final_R_est} as required.
\end{proof}

\subsubsection{} Finally, the new energy can be estimated, following \cite[Section 6.2]{BDSV}, as 
\begin{proposition}\label{p:newEnergy}
The energy of $u_{q+1}$ satisfies the following estimate:
\begin{equation}\label{e:final_E_est}
    \abs{e(t)-\dashint_{\T^3}\abs{u_{q+1}}^2\,dx-\bar{e}\delta_{q+2} }\lesssim \frac{\delta_q^{\sfrac12}\delta_{q+1}^{\sfrac12}\lambda_q}{\lambda_{q+1}}\,.
\end{equation}
\end{proposition}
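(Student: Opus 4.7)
Since $u_{q+1}=\bar{u}_q+w_{q+1}$, I expand
\[
\dashint_{\T^3}|u_{q+1}|^2\,dx=\dashint_{\T^3}|\bar{u}_q|^2\,dx+2\dashint_{\T^3}\bar{u}_q\cdot w_{q+1}\,dx+\dashint_{\T^3}|w_{q+1}|^2\,dx,
\]
and use the definition \eqref{e:sigmaq} of $\sigma_q$ to rewrite the quantity we want to estimate as
\[
e(t)-\dashint_{\T^3}|u_{q+1}|^2\,dx-\bar e\delta_{q+2}=3c_0\,\sigma_q(t)-\dashint_{\T^3}|w_{q+1}|^2\,dx-2\dashint_{\T^3}\bar u_q\cdot w_{q+1}\,dx.
\]
The task is to show that the right hand side is $O(\delta_q^{1/2}\delta_{q+1}^{1/2}\lambda_q\lambda_{q+1}^{-1})$. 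Following the blueprint of \cite[Section 6.2]{BDSV}, I split this into three pieces: (i) the averaged $|w_o|^2$-term matches $3c_0\sigma_q$ up to a small error, (ii) the cross term $\dashint\bar u_q\cdot w_{q+1}$ is small by oscillation, and (iii) the remaining quadratic contributions involving $w_c$ are lower order.

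For (i), I use the Mikado representation $w_o=\sum_{i,\vec k}\sigma_{q,i}^{1/2}a_{\vec k}(\tilde R_{q,i})\,\nabla\Phi_i^{-1}W_{\vec k}(\lambda_{q+1}\Phi_i)$ from \eqref{e:neww}. The disjoint supports of $\eta_i$ and of the building blocks $W_{\vec k}$ (from Section \ref{s:Mikado}) annihilate the $i\neq j$ and $\vec k\neq\vec k'$ cross terms, leaving pointwise
\[
|w_o|^2=\sum_{i,\vec k}\sigma_{q,i}\,a_{\vec k}^2(\tilde R_{q,i})\,|\nabla\Phi_i^{-1}\vec k|^2\,\psi_{\vec k}^2(\lambda_{q+1}\Phi_i).
\]
Writing $\psi_{\vec k}^2=1+(\psi_{\vec k}^2-1)$ (using the normalization $\dashint\psi_{\vec k}^2=\dashint|\Delta\varphi_{\vec k}|^2=\dashint|\nabla\varphi_{\vec k}|^2=1$ from Section \ref{s:Mikado}) and combining Lemma \ref{l:Mikado}'s identity $\sum_{\vec k}a_{\vec k}^2(\tilde R_{q,i})\vec k\otimes\vec k=\tilde R_{q,i}$ with the definition \eqref{e:defRqi} of $\tilde R_{q,i}$ gives $\sum_{\vec k}\sigma_{q,i}a_{\vec k}^2(\tilde R_{q,i})|\nabla\Phi_i^{-1}\vec k|^2=\tr R_{q,i}=3\sigma_q\eta_i^2$. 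Hence
\[
\dashint|w_o|^2=3\sigma_q\dashint_{\T^3}\eta^2\,dx+\mathcal{O}_{\mathrm{stat.\,phase}}=3c_0\sigma_q(t)+\mathcal{O}_{\mathrm{stat.\,phase}},
\]
where the second equality uses property (v) of Lemma \ref{l:Onsageretabar} (since $\Phi_i(\cdot,t)$ is volume-preserving, the $x$-average is unaffected by the change of variables hidden inside $\psi_{\vec k}^2(\lambda_{q+1}\Phi_i)$ modulo the mean-zero part, which is what $\mathcal{O}_{\mathrm{stat.\,phase}}$ captures). The error term $\mathcal{O}_{\mathrm{stat.\,phase}}$ is estimated by the stationary phase lemma \cite[Proposition C.2]{BDSV} applied to the mean-zero oscillating parts, using the amplitude bounds of Lemma \ref{l:boundsonbc} and \eqref{e:sigma_i_bnd_N}, giving a contribution of size $\lesssim \delta_{q+1}\ell_q^{-N}(\ell_q\lambda_{q+1})^{-N}$ for any $N$; choosing $N$ large and using \eqref{e:comparison} makes this negligible compared to the target $\delta_q^{1/2}\delta_{q+1}^{1/2}\lambda_q\lambda_{q+1}^{-1}$.

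For (ii), I integrate by parts $\dashint\bar u_q\cdot w_{q+1}\,dx$ using the $e^{i\lambda_{q+1}k\cdot\Phi_i}$ representation of $w_{q+1}$ in \eqref{e:wowc-old}. One integration by parts gains a factor $\lambda_{q+1}^{-1}$ and costs a factor $\|\nabla\bar u_q\|_0+\|\bar u_q\|_0\|\nabla\Phi_i\|_1$, and stationary phase (with sufficiently many iterations, exploiting the rapid decay \eqref{e:decayofak}) gives $|\dashint\bar u_q\cdot w_{q+1}|\lesssim \delta_q^{1/2}\delta_{q+1}^{1/2}\lambda_q\lambda_{q+1}^{-1}$ after invoking \eqref{e:gluingu} and Lemma \ref{l:boundsonbc}. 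For (iii), Lemma \ref{l:boundsonw} gives $\|w_c\|_0\lesssim\delta_{q+1}^{1/2}\ell_q^{-1}\lambda_{q+1}^{-1}$, so $|\dashint w_o\cdot w_c|+\dashint|w_c|^2\lesssim\delta_{q+1}\ell_q^{-1}\lambda_{q+1}^{-1}$, which under \eqref{e:comparison} is dominated by the target.

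The main obstacle is step (i): one needs to keep track carefully of the two independent small scales in $|w_o|^2$ (the slow dependence on $(x,t)$ through $\sigma_{q,i},\tilde R_{q,i},\nabla\Phi_i$, supported at scale $\ell_q^{-1}$, versus the fast oscillation at scale $\lambda_{q+1}$) and iterate stationary phase sufficiently many times so that the resulting error dominates neither the main term $3c_0\sigma_q\sim\delta_{q+1}$ nor the target bound. All other steps are routine applications of the estimates already established in Lemmas \ref{l:boundsonbc}, \ref{l:boundsonw}, and Proposition \ref{p:p_gluing}.
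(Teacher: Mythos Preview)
Your proposal is correct and follows essentially the same route as the paper's proof: both expand $|u_{q+1}|^2$, isolate $\dashint|w_o|^2$ as the main term, identify its leading contribution as $\sum_i\dashint\tr R_{q,i}=3c_0\sigma_q(t)$ via the Mikado identity \eqref{e:MikadoProperty} and the definition \eqref{e:defRqi}, and treat the cross term $\dashint\bar u_q\cdot w_{q+1}$ plus the $w_c$ contributions as stationary-phase errors of size $\delta_q^{1/2}\delta_{q+1}^{1/2}\lambda_q/\lambda_{q+1}$. Your pointwise computation of $|w_o|^2$ (using disjointness of the $\eta_i$ and of $\supp\psi_{\vec k}$) is a more explicit version of what the paper records as $\dashint|w_o|^2=\sum_i\dashint\tr R_{q,i}+\mathcal{E}_2$; the paper simply quotes the bound $|\mathcal{E}_2|\lesssim\delta_{q+1}(\ell_q\lambda_{q+1})^{-1}$ and then absorbs it via \eqref{e:comparison}. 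One small correction: your stated stationary-phase bound ``$\delta_{q+1}\ell_q^{-N}(\ell_q\lambda_{q+1})^{-N}$'' is not the right expression---the leading error from one integration by parts is $\delta_{q+1}\ell_q^{-1}\lambda_{q+1}^{-1}$ (amplitude derivative over frequency), which is already $\lesssim\delta_q^{1/2}\delta_{q+1}^{1/2}\lambda_q\lambda_{q+1}^{-1}$ by \eqref{e:comparison}; no choice of large $N$ is needed here.
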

\begin{proof}
We argue as in \cite{BDSV}. More precisely, we write 
\begin{align*}
\dashint_{\T^3}\abs{u_{q+1}}^2\,dx&=\dashint_{\T^3}\abs{\bar{u}_{q}}^2\,dx + 2\dashint_{\T^3} w_{q+1}\cdot \bar{u}_q\,dx+\dashint_{\T^3}\abs{w_{q+1}(x,t)}^2\,dx\\
&=\dashint_{\T^3}\abs{\bar{u}_{q}}^2\,dx +\dashint_{\T^3}\abs{w_o}^2\,dx+\mathcal{E}_1,
\end{align*}
where, arguing as in \cite{BDSV} using stationary phase and Lemma \ref{l:boundsonw},
\begin{align*}
|\mathcal{E}_1|=\left|\dashint 2w_{q+1}\cdot\bar{u}_q+2w_o\cdot w_c+|w_c|^2\,dx\right|\lesssim \frac{\delta_{q+1}^{\sfrac12}\delta_q^{\sfrac12}\lambda_q}{\lambda_{q+1}}.
\end{align*}
Similarly
\begin{equation*}
	\dashint_{\T^3}\abs{w_o}^2\,dx=\sum_i\dashint_{\T^3}\tr R_{q,i}\,dx+\mathcal{E}_2,
\end{equation*}
where, using Lemma \ref{l:sigma}, Lemma \ref{l:boundsonbc} and \eqref{e:comparison},  
\begin{align*}
|\mathcal{E}_2|\lesssim \frac{\delta_{q+1}}{\ell_q\lambda_{q+1}}\lesssim \frac{\delta_{q+1}^{\sfrac12}\delta_q^{\sfrac12}\lambda_q}{\lambda_{q+1}}. 	
\end{align*}
On the other hand, recalling the definition of $R_{q,i}$ in \eqref{e:defRqi} and using property (v) of $\bar\eta_i$ in Lemma \ref{l:Onsageretabar} as well as the definition of $\sigma_q(t)$ in \eqref{e:sigmaq}, we have
\begin{align*}
	\sum_i\dashint_{\T^3}\tr R_{q,i}(x,t)\,dx&=3\sigma_q(t)\sum_i\dashint_{\T^3}\eta_{i}^2\,dx=3c_0\sigma_q(t)=e(t)-\dashint_{\T^3}|\bar{u}_q|^2\,dx-\bar{e}\delta_{q+2}.
\end{align*}
The statement of the proposition follows. 
\end{proof}

We conclude this section with:
\begin{proof}[Proof of Proposition \ref{p:Onsager}]
	We need to verify that $u_{q+1}:=\bar{u}_q+w_{q+1}$, with $\bar{u}_q$ from Corollary \ref{c:gluing}, $w_{q+1}$ defined in \eqref{e:neww}, as well as $\mathring{R}_{q+1}$ defined in \eqref{e:decompR} satisfy the inductive estimates \eqref{e:R_q_inductive_est}-\eqref{e:energy_inductive_assumption} with $q$ replaced by $q+1$. 
	
	First of all note that \eqref{e:u_q_inductive_est} follows from Lemma \ref{l:boundsonw}, choice of $M$ in Definition \ref{d:defM}, and by choosing $a\gg 1$ sufficiently large. 
	
	Secondly, \eqref{e:R_q_inductive_est} follows from \eqref{e:final_R_est} and the inequality
	\begin{equation*}
	C\frac{\delta_{q+1}^{\sfrac12}}{\tau_q\lambda_{q+1}^{1-\alpha}}<\delta_{q+2}\lambda_{q+1}^{-\gamma_R},	
	\end{equation*}
	where $C$ is the implicit constant in \eqref{e:final_R_est}. In light of the inequality \eqref{e:transportcondition}, this is satisfied provided $a\gg 1$ is sufficiently large.
	Similarly, \eqref{e:energy_inductive_assumption} follows from \eqref{e:final_E_est} and the inequality 
    \begin{equation*}
	C\frac{\delta_{q+1}^{\sfrac12}\delta_q^{\sfrac12}\lambda_q}{\lambda_{q+1}}<\delta_{q+2}\lambda_{q+1}^{-\gamma_E},	
	\end{equation*}
	where $C$ is the implicit constant in \eqref{e:final_E_est}. In lilght of the inequality \eqref{e:energycondition} this is satisfied provided $a\gg 1$ is sufficiently large. 
	
	Finally, the estimate \eqref{e:v_diff_prop_est} follows directly from Lemma \ref{l:boundsonw}. This concludes the proof of Proposition \ref{p:Onsager}.
\end{proof}

\bibliographystyle{alpha}
\bibliography{anomalous.bib}

\end{document}